\theoremstyle{plain}
\newtheorem{theorem}{Theorem}
\newtheorem{lemma}[theorem]{Lemma}
\newtheorem{corollary}[theorem]{Corollary}
\newtheorem{remark}{Remark}
\newcommand{\Reel}{\mathbb{R}}
\newcommand{\R}{\mathbb{R}}
\newcommand{\N}{\mathbb{N}}
\newcommand{\dd}{{\mathrm{d}}}
\DeclareMathOperator*{\argmin}{arg\,min}
\definecolor{applegreen}{rgb}{0.55, 0.71, 0.0}
\definecolor{green-yellow}{rgb}{0.68, 1.0, 0.18}
\definecolor{darkpastelgreen}{rgb}{0.01, 0.75, 0.24}
\definecolor{chartreuse(web)}{rgb}{0.5, 1.0, 0.0}
\newcommand{\Prob}[2][]{ \ifthenelse{\equal{#1}{}}
				    {\mathbb{P} \left( #2 \right)}
				    {\mathbb{P} \left( #2 \mid #1 \right)}
			  }
\newcommand{\Ind}[1]{\mathbf{1}_{#1}}
\newcommand{\Bigo}[1]{\mathcal{O}\left(#1\right)}
\newcommand{\cP}{\mathcal{P}}
\newcommand{\cF}{\mathcal{F}}
\newcommand{\bN}{\mathbb{N}}
\newcommand{\un}{\mathds{1}}
\newcommand\xrightharpoonup[2][]{%
  \ext@arrow 9999{\hpnfill@}{#1}{#2}}
\newcommand\hpnfill@{%
  \arrowfill@\relbar\relbar\rightharpoonup}
\definecolor{verde}{rgb}{0.2, 0.5, 0.3}
\renewcommand\p@subfigure{\thefigure.}
\title{
	Constrained overdamped Langevin dynamics for symmetric multimarginal optimal transportation
}
\date{\today}
\author{
	Aur\'elien Alfonsi \and Rafa\"el Coyaud \and Virginie Ehrlacher
}
\begin{document}

\maketitle

\begin{abstract}
  The Strictly Correlated Electrons (SCE) limit of the Levy-Lieb functional in Density Functional Theory (DFT) gives rise to a symmetric multi-marginal optimal transport problem with Coulomb cost, where the number of marginal laws is equal to the number of electrons in the system, which can be very large in relevant applications. In this work, we design a numerical method, built upon constrained overdamped Langevin processes to solve Moment Constrained Optimal Transport (MCOT) relaxations (introduced in A. Alfonsi, R. Coyaud, V. Ehrlacher and D. Lombardi, Math. Comp. 90, 2021, 689--737) of symmetric multi-marginal optimal transport problems with Coulomb cost. Some minimizers of such relaxations can be written as discrete measures charging a low number of points belonging to a space whose dimension, in the symmetrical case, scales linearly with the number of marginal laws. We leverage the sparsity of those minimizers in the design of the numerical method and prove that any local minimizer to the resulting problem is actually a \emph{global} one. We illustrate the performance of the proposed method by numerical examples which solves MCOT relaxations of 3D systems with up to 100 electrons.
\end{abstract}

\setcounter{tocdepth}{2}
\tableofcontents

\section{Introduction}

Optimal transport (OT) problems~\cite{santambrogio2015optimal, villani2008optimal} appear in numerous application fields such as data science~\cite{peyre2019computational},
finance~\cite{BeHLPe}, economics~\cite{carlier2012optimal,galichon2017survey,galichon2018optimal} or physics~\cite{villani2003topics}. Hence an increasing interest in developing efficient numerical methods for this types of problems among the applied mathematics community.

\medskip

In this article, we specifically focus on multi-marginal symmetric optimal transportation problems arising from quantum chemistry. Density Functional Theory (DFT)~\cite{parr1980density} is one of the most popular theories in quantum chemistry in order to compute the ground state of electrons within a molecule.
It is exact in principle, due to the Hohenberg-Kohn theorem, up to the knowledge of the Levy-Lieb functional, which is unfortunately not computable in practice. Hence, a wide zoology of electronic structure models have been developped in
the chemistry community where approximations of this Levy-Lieb functional are computed \cite{lewin2019universal}. Actually, it has been recently proved~\cite{MR3714366, buttazzo2018continuity, buttazzo2012optimal, cotar2013density,cotar2018smoothing,friesecke2013n, lewin2018semi}
that the semi-classical limit of this Levy-Lieb functional is the solution of a symmetric multi-marginal optimal transport problem which we state now.

\medskip

For all $p\in \mathbb{N}^*$ (where $\N^*$ denotes the set of positive integers $\{1, 2, 3, \dots \}$), we denote by $\mathcal{P}\left( \mathbb{R}^p\right)$ the set of probability measures on $\mathbb{R}^p$.
For $d\in \mathbb{N}^*$, for all $\mu\in \mathcal{P}(\mathbb{R}^d)$ and $M\in \mathbb{N}^*$ a fixed number of marginal laws (the number of electrons in DFT), we will denote the set of $M$-couplings for $\mu$ by
\begin{equation}
  \Pi(\mu; M) := \Bigg\{ \pi \in \cP\left((\R^d)^M\right) :
  \forall 1 \le m \le M, \int_{(\R^d)^{M-1}} d \pi(x_1, \dots, x_M) = d \mu(x_m) \Bigg\}.
\end{equation}
Let $c: (\R^d)^M \to \R_+ \cup \{ + \infty\}$ be a $M$-symmetric (i.e.\ such that for all $(x_1,\cdots, x_M)\in \left(\mathbb{R}^d\right)^M$, $c(x_1, \dots, x_M) = c(x_{\sigma(1)}, \dots, x_{\sigma(M)})$ for $\sigma \in \mathcal{S}_M$ a $M$-permutation) non-negative lower semi-continuous (l.s.c.)
function. The function $c$ is called hereafter the \emph{cost function}. Then, the multimarginal symmetric optimal transport problem associated to $\mu$, $M$ and $c$ is defined as
\begin{equation}\label{eqn:OT}
  I(\mu) = \inf_{\pi \in \Pi(\mu; M)} \int_{(\R^d)^M} c(x_1, \dots, x_M) d \pi(x_1, \dots, x_M).
\end{equation}

In DFT applications, the cost $c$ is defined as the Coulomb cost $c(x_1 , \dots, x_M) = \sum_{m_1 < m_2} \frac{1}{|x_{m_1} - x_{m_2}|}$. Then,
this multimarginal symmetric optimal transport problem allows to compute the interaction energy between electrons,
given an electronic density (equal to $M\mu$), in the Strictly Correlated Electrons (SCE) limit -- bringing interest in numerical methods for large multimarginal systems.

\medskip

A straightforward discretization of problem (\ref{eqn:OT}) (using a discretization of the state space $\mathbb{R}^d$ with a discrete $d$ dimensional grid for instance) leads to a linear programming problem, whose size scales exponentially with $M$. Hence,
for large values of $M$, specific numerical methods have to be used in order to circumvent the curse of dimensionality. Hence, new application or efficiency oriented approaches have been developed for such problems,
using entropic relaxation and the Sinkhorn algorithm~\cite{benamou2015iterative,benamou2016numerical}, dual formulations of the problem~\cite{mendl2013kantorovich} or sparsity structure of the minimizers of the
discrete problems~\cite{friesecke2018breaking,vogler2019kantorovich}, which can be combined with a semidefinite relaxation \cite{khoo2019semidefinite, khoo2019convex}.

\medskip

In a recent paper~\cite{alfonsi2019approximation}, the authors considered a relaxation of the optimal transport problem  (Moment Constrained Optimal Transport -- MCOT)
which boils down to considering a particular instance of Generalized Moment Problem~\cite{henrion2009gloptipoly, lasserre2008semidefinite, lasserre2010moments}. The idea of the proposed approach is to change the discretization approach in the sense the state space
$\mathbb{R}^d$ is not discretized anymore, but the marginal constraints in (\ref{eqn:OT}) are relaxed into a finite number of moment constraints.
Taking advantage of the $M$-symmetry of the problem, it was proved in~\cite[Proposition 3.3]{alfonsi2019approximation} that some minimizers of the obtained relaxed problems could be written as discrete measures charging a low number of points which scales independently of $M$.

Thus, a natural idea inspired from this result is to restrict the minimization set considered in the MCOT problem to the set of probability measures of $(\mathbb{R}^d)^M$
which can be written as discrete measures charging a low number of points and satisfying the associated moment constraints.
The resulting problem, called hereafter the \itshape particle problem\normalfont, amounts to optimize the positions of the points and the weights charging the associated Dirac measures\footnotemark. In principle, the low number of points needed to obtain a representation of a minimizer to the MCOT problem
should help in tackling the curse of dimensionality. However, the non-convexity of the particle problem remains a numerical challenge.

\footnotetext{Note that we use, in this article, the term \emph{particle} to designate a \emph{Dirac measure} (seen in the minimization problem as a vector in $\R_+ \times (\R^d)^M$ accounting for a nonnegative weight and the coordinates of a point in $(\R^d)^M$), and not with the physics meaning that encompasses electrons -- the electronic density of which, in the DFT application, would correspond in this article to $M$ times the marginal law $\mu$.}

\medskip

One of the first contribution of this paper is to prove that, despite the non-convexity of the obtained particle problem, any of its local minimizers are actually \bfseries global minimizers\normalfont. Besides, we prove that the set of local minimizers, which is hence
identical to the set of global minimizers, is polygonally connected. This first result is stated in Section~\ref{sect:MCOT} of the article.

The second contribution of the paper is to propose a numerical scheme in order to find an optimum solution to the particle problem. The numerical method builds on the use
of a constrained overdamped Langevin process projected on a submanifold defined by the constraints of the problem, in the spirit
of~\cite{ciccotti2008projection, leimkuhler2004simulating, stoltz2010free, lelievre2012langevin, lelievre2019hybrid, zhang2020ergodic}.
Such processes are actually already used in the context of molecular dynamics (for which the constraint is defined through the use of a so-called \itshape reaction coordinate \normalfont function). We give in this paper
some elements of theoretical analysis justifying the interest of such processes for the resolution of multi-marginal optimal transportation problems and outline the link between such constrained overdamped Langevin processes and entropic regularization
of optimal transport problems. This is the object of Section~\ref{sec:Langevin}. Finally, we present the numerical scheme we consider in this article in Section~\ref{sect:numericaldiscretization} and the numerical results
obtained with this approach in Section~\ref{sect:numericaltests}. Proofs of our main theoretical results are postponed until Section~\ref{sect:proofs}.

We want here to stress on the fact that this numerical scheme enabled us to obtain
approximations of solutions to (\ref{eqn:OT}) for very high-dimensional problems, for instance in cases where $d=3$ and $M = 100$. Such a method thus appears to be a very promising approach in order to solve large-scale problems in DFT
for systems involving a large number of electrons.

\medskip
Let us point out here that algorithms based on constrained overdamped Langevin dynamics can also be used in principle for the resolution of general multimarginal optimal transport problems and multimarginal martingale optimal transport problems,
as there exist an MCOT approximation for both types of problems \cite{alfonsi2019approximation}. In these cases, the number of marginal constraints to be imposed scales linearly in $M$\footnotemark, hence
the practical implementation of the numerical method proposed in this paper is more intricate than in the symmetric case studied here, where the number of constraints is independant of $M$.

\footnotetext{In the case of multimarginal martingale optimal transport, if there is no assumption of Markovian relationship between the marginal laws, the scaling in the number of constraints for the approximation of the martingale constraints may be exponential in $M$.}

%

\section{Mathematical properties of MCOT particle problems}\label{sect:MCOT}

We recall in this section the MCOT problem which was introduced in \cite{alfonsi2019approximation}, together with
the associated particle problem. We also state here our first theoretical results which describe the set of minimizers associated to the particle problem.

\subsection{MCOT and particle problems}

As introduced in~\cite{alfonsi2019approximation}, the Moment Constrained Optimal Transport (MCOT) problem is a particular case of Generalized Moment Problem~\cite{lasserre2010moments} which may be seen as a relaxation of optimal transport
where the marginal constraints are alleviated and replaced by a finite number of moment constraints. In the following, we restrain our analysis to symmetrical multimarginal optimal
transport for the sake of clarity but let us mention here that the results presented here can be extended to general multimarginal optimal transport, as well as martingale optimal transport.

\medskip

Let $d \in \N^*$, $\mu \in \cP(\R^d)$, $M \in \N^*$ and $c: (\R^d)^M \to \R_+ \cup \{ +\infty\}$ be a lower semi-continuous symmetric function. The MCOT problem is  a relaxation of the optimal transport problem \eqref{eqn:OT} which we present now.
Let $N\in \mathbb{N}^*$ and let us consider a set $(\phi_n)_{1\leq n \leq N} \subset L^1(\R^d,\mu;\R)$ of $N$ continuous real-valued functions, integrable with respect to $\mu$ and called hereafter \emph{test functions}.
For all $1\leq n \leq N$, let us denote by
\begin{equation}
  \mu_n = \int_{\R^d} \phi_n(x) \dd \mu(x),
\end{equation}
the moments of $\mu$, by
\begin{align}
  &\Pi(\mu;(\phi_n)_{1\leq n \leq N}; M) := \Bigg\{ \pi \in \cP\left((\R^d)^M\right) : \\
  & \forall 1\le n \le N,\notag \int_{(\R^d)^{M}} \sum_{m=1}^M |\phi_n(x_m)| \dd \pi(x_1, \dots, x_M) <\infty, \\
  & \int_{(\R^d)^M} \left( \frac{1}{M} \sum_{m=1}^M \phi_n(x_m)\right) \dd \pi(x_1, \dots, x_M) = \mu_n \Bigg\}, \notag
\end{align}
the set of probability measures on $(\R^d)^M$ for which the mean of the moments against the test functions of the marginal laws are equal to the one of $\mu$, and by
\begin{align}
  &\Pi^S(\mu;(\phi_n)_{1\leq n \leq N}; M) := \Bigg\{ \pi \in \cP\left((\R^d)^M\right) : \\
  & \forall 1\le n \le N,\notag \int_{(\R^d)^{M}} \sum_{m=1}^M |\phi_n(x_m)| \dd \pi(x_1, \dots, x_M) <\infty, \\
  & \forall 1 \le m \le M, \int_{(\R^d)^M} \phi_n(x_m) \dd \pi(x_1, \dots, x_M) = \mu_n \Bigg\} \notag
\end{align}
the set of probability measures on $(\R^d)^M$ that have, for each marginal law, the same moments as $\mu$ against the test functions.

\medskip

For technical reasons linked to the fact that the optimal problem is defined on the unbounded state space $\mathbb{R}^d$, we assume in addition that there exists a non-decreasing non-negative continuous function
$\theta:\R_+\to \R_+$ satisfying $\theta(r) \xrightarrow[r \to + \infty]{} + \infty$ and for which there exists $C > 0$ and $0 < s < 1$ such that for all $1 \le n \le N$ and all $x \in \R^d$,
\begin{equation}
  | \phi_n(x) | \le C (1 + \theta(|x|))^s.
\end{equation}
We finally choose a positive real number $A>0$ satisfying $A \geq A_0 := \int_{\mathbb{R}^d} \theta(|x|) d \mu(x)$.

\vspace{0.5cm}

Then, the \itshape MCOT problem \normalfont is defined by
\begin{equation}\label{eqn:MCOTs}
 I^N:= \inf_{\substack{\pi \in \Pi^S(\mu; (\phi_n)_{1\leq n \leq N}; M) \\  \frac 1 M \int_{(\R^d)^M} \sum_{m=1}^M \theta(|x_m|) d \pi(x_1, \dots, x_M) \le A}} \int_{(\R^d)^M} c(x_1, \dots, x_M) d \pi(x_1, \dots, x_M). \tag{MCOT$^S$}
\end{equation}
Under appropriate assumptions on the family of test functions $(\phi_n)_{1\leq n \leq N}$, it is proved in~\cite{alfonsi2019approximation} that the value of $I^N$ can be made arbitrarily close to $I$ as $N$, the number of
test functions, goes to infinity. Besides, converging subsequences of minimizers to (\ref{eqn:MCOTs}) necessarily converge to some minimizer of (\ref{eqn:OT}).
This is the reason why (\ref{eqn:MCOTs}) can be seen as a particular discretization approach for the numerical approximation of  Problem (\ref{eqn:OT}).

\begin{remark}
It is proved in~\cite{alfonsi2019approximation} that the value of $I^N$ does not depend on the value of $A$ provided that $A$ satisfies $A\geq A_0$.
\end{remark}

\medskip

Using the symmetry of the cost $c$ and the marginal constraints, it can be easily checked that $I^N$ is also equal to
\begin{equation}\label{eqn:MCOT}
  I^N = \inf_{\substack{\pi \in \Pi(\mu; (\phi_n)_{1\leq n \leq N}; M) \\ \frac 1 M \int_{(\R^d)^M} \sum_{m=1}^M \theta(|x_m|) d \pi(x_1, \dots, x_M) \le A}} \int_{(\R^d)^M} c(x_1, \dots, x_M) d \pi(x_1, \dots, x_M). \tag{MCOT}
\end{equation}

Then, from \cite[Proposition 3.3]{alfonsi2019approximation}, there exists at least one minimizer to problem (\ref{eqn:MCOT}), which can be written as
\begin{equation}\label{eq:pidiscrete}
\pi^N = \sum_{k = 1}^{K} w_k \delta_{\left(x^k_1,\dots,x^k_M\right)},
\end{equation}
for some $0 < K \leq N + 2$, with $w_k\geq 0$ and  $x^k_m \in \mathbb{R}^d$ for all $1\le m\le M$ and $1\leq k \leq K$. Besides, the symmetrized measure associated to $\pi^N$, which is defined by
\begin{equation}\label{eq:defpisym}
\pi_{S}^{N} := \frac{1}{M!}\sum_{\sigma \in \mathcal{S}_M}\sum_{k = 1}^{K} w_k \delta_{\left(x^k_{\sigma(1)},\dots,x^k_{\sigma(M)}\right)}
\end{equation}
where $\mathcal S_M$ is the set of permutations of $\{1, \cdots, M\}$, is a minimizer to (\ref{eqn:MCOTs}).

\medskip

The proof of this result makes use of Tchakaloff's theorem~\cite[Corollary 2]{bayer2006proof}, which is recalled in Theorem~\ref{cor:Tchakaloff} in Section~\ref{sec:Tchakaloff}.
Note that since $\Pi(\mu;(\phi_n)_{1\leq n \leq N}; M) \subset \Pi(\mu; M)$, when $I$ is finite, it naturally holds that $I^N \le I < \infty$.

\vspace{0.5cm}

These theoretical results naturally lead us to consider an optimization problem similar to (\ref{eqn:MCOT}) but where the optimization set is reduced to the set of measures of $\Pi(\mu; (\phi_n)_{1\leq n \leq N}; M)$ which can be written
as discrete measures under the form (\ref{eq:pidiscrete}) for some $K\in \mathbb{N}^*$. This naturally leads to the following optimization problem,
which we call hereafter the \itshape MCOT particle problem \normalfont with $K$ particles:
\begin{equation}\label{eqn:MCOTK}
  I^N_K := \inf_{  (W,Y) \in \mathcal U_K^N} \quad \sum_{k = 1}^K w_k c\left(X^k\right), \tag{MCOT$^K$}
\end{equation}
where
\begin{align}
  &\mathcal U^N_K := \Bigg\{ (W,Y) \in \mathbb{R}_+^K \times \left( (\mathbb{R}^d)^M \right)^K, \quad W = (w_k)_{1\leq k \leq K}, \; Y=(X^k)_{1\leq k \leq K},  \\
  & \sum_{k=1}^K w_k = 1, \quad \sum_{k=1}^K  w_k \vartheta(X^k) \le A, \quad \forall 1\le n \le N,\; \sum_{k=1}^K w_k \varphi_n(X^k) = \mu_n\Bigg\}, \notag
\end{align}
with, for all $X = (x_1, \cdots, x_M) \in (\mathbb{R}^d)^M$ and all $1\leq n \leq N$,
\begin{equation}\label{eq:Phin}
\vartheta(X) := \frac 1 M\sum_{m=1}^M\theta\left(|x_{m}|)\right) \quad \mbox{ and } \quad \varphi_n(X) := \frac{1}{M}\sum_{m=1}^M  \phi_n(x_{m}).
\end{equation}

In view of \cite[Proposition 3.3]{alfonsi2019approximation}, we have $I^N_K = I^N$ as soon as $K\geq N+2$.

\medskip

A few remarks are in order at this point.
\begin{remark}
  \begin{enumerate}[(i)]
    \item Considering problem~\ref{eqn:MCOTK} as a starting point for a numerical scheme seems very appealing, especially in contexts when $M$ is large. Indeed, in principle, the resolution of (\ref{eqn:MCOTK}) only requires the optimization of at most
    $K (M+1)$ scalars, thus would require the resolution of an optimization problem defined on a continuous optimization set involving a number of parameters which only scales \bfseries linearly \normalfont\itshape
    with respect to the number of marginal laws. Thus, gradient-based algorithms are natural to consider for the numerical resolution of (\ref{eqn:MCOTK}), at least for differentiable test functions.
    \item Problem~\ref{eqn:MCOTK} is highly non-convex, whereas the original MCOT problem~(\ref{eqn:MCOT}) reads as a (high-dimensional) linear problem\footnotemark. This definitely makes the numerical resolution of (\ref{eqn:MCOTK}) a challenging task. This is the reason why we consider
    in this article \bfseries randomized versions of gradient-based \normalfont \itshape algorithms for the resolution of (\ref{eqn:MCOTK}).
    Nevertheless, strikingly, we prove in this article that, despite the lack of convexity, any local minimizers to the MCOT particle problem~(\ref{eqn:MCOT}) are actually \bfseries global \normalfont\itshape minimizers, provided that $K\geq 2N+6$.
    This is the object of Section~\ref{sec:minimizers} to state this result and further mathematical properties of the set of minimizers to (\ref{eqn:MCOTK}).
 \end{enumerate}
\end{remark}

\footnotetext{More generally, any non-linear minimization problem can be reframed as a linear minimization problem in a much larger space (the measure space), as $\min_{x \in \R^d} c(x) = \min_{\mathbb{P}} \int_{\R^d} c(y) \dd \mathbb{P}(y)$.}

\medskip

The main focus of this article is to propose numerical schemes relying on stochastic versions of gradient-based algorithms in order to find minimizers to the MCOT particle problem. Such numerical schemes actually make use of \itshape constrained
overdamped Langevin processes\normalfont, which are usually encountered in the context of molecular dynamics simulations~\cite{stoltz2010free, lelievre2012langevin}. In Section~\ref{sec:Langevin}, we relate such stochastic processes
with MCOT problems and entropic regularizations of the latter.

\medskip


In numerical tests, and especially in the 3D case, the schemes proposed in this article perform better when using a large number of particles $K$, with weights $w_k$ assumed to be fixed and equal to $\frac{1}{K}$ which are not optimized upon. That is why we introduce here the resulting optimization, called the \itshape MCOT fixed-weight particle problem \normalfont with $K$ particles, which reads as follows:
\begin{equation}\label{eqn:MCOTKfiwedweight}
  J^N_K := \inf_{\substack{ Y:=\left( X^k\right)_{1\leq  k \leq K} \in ((\R^d)^M)^K,\\
    \forall 1 \le n \le N,\, \frac{1}{K}\sum_{k=1}^K  \varphi_n(X^k) = \mu_n, \\
     \frac{1}{K}\sum_{k=1}^K\vartheta(X^k) \le A}} \quad \sum_{k = 1}^K \frac{1}{K} c\left(X^k\right). \tag{MCOT$^K$ -fixed weight}
\end{equation}

\begin{remark}
\begin{enumerate}[(i)]
\item Let us stress on the fact that the existence of a solution to (\ref{eqn:MCOTKfiwedweight}) is not guaranteed in general. This stems from the fact that there may not exist a set of points
$Y= \left( X^k\right)_{1\leq  k \leq K}$ satisfying the constraints of problem (\ref{eqn:MCOTKfiwedweight}). However, for all $N,K\in \mathbb{N}^*$, it always holds that $J^N_K \geq I^N_K$.

Let however consider $(W,Y)\in \mathcal{U}^N_{N+2}$ a minimizer of~\eqref{eqn:MCOTK} and assume that the cost $c$ and the test functions $\phi_n$ are bounded. Then, by rounding the weights $w_k$ to a multiple of $1/K$, and then by using $\ell$ copies of particles with weight $\ell/K$, we can construct $\tilde{Y}=\left( \tilde{X}^k\right)_{1\leq  k \leq K}$ such that
$$
\frac{1}{K}\sum_{k=1}^K \varphi_n(\tilde{X}^k)\approx \mu_n + \mathcal{O}\left( \frac{1}{K}\right).
$$
Thus, $\tilde{Y}$ satisfies the moment constraints of problem
 (\ref{eqn:MCOTKfiwedweight}) up to an error of order $\mathcal O\left( \frac{1}{K}\right)$ and achieves a cost that is also $\mathcal O\left( \frac{1}{K}\right)$ away from the optimal cost achieved by $(W,Y)$.

Furthermore, in the limit $K \to \infty$ optima of problems \eqref{eqn:MCOTKfiwedweight} (with an accepted error $\Bigo{\frac{1}{K}}$ on the constraints) converge to the optimum of the problem \eqref{eqn:MCOT}.

\item Yet, in the numerical experiments in the fixed weight case in 3D, the convergence in $K$ appears to be faster than $\Bigo{\frac{1}{K}}$ and even low values of $K$ can give sharp approximations of the optimum of \eqref{eqn:MCOT}.
\end{enumerate}
\end{remark}

\subsection{Properties of the set of minimizers of the particle problem}\label{sec:minimizers}

The aim of this section is to present the first main theoretical result of this paper, which states some mathematical properties on the set of minimizers of the particle problem~\ref{eqn:MCOTK}.

For any $(W,Y)\in \mathbb{R}_+^K \times ((\mathbb{R}^d)^M)^K$, we define by
$$
\mathcal I(W,Y):= \sum_{k=1}^K w_k c(X^k),
$$
where $W := (w_k)_{1\leq k \leq K}$ and $Y := (X^k)_{1\leq k \leq K}$. Problem~(\ref{eqn:MCOTK}) can then be equivalently rewritten as
\begin{equation}\label{eq:reformulation}
I^N_K = \mathop{\inf}_{(W,Y)\in \mathcal U_K^N} \mathcal I(W,Y).
\end{equation}

We begin this section by Theorem~\ref{lem:monotonecost}, which states that for any two elements of $\mathcal U_K^N$,
there exists a continuous path with values in $\mathcal U_K^N$ which connects these two elements, and such that $\mathcal I$ monotonically varies along this path.


\begin{theorem}\label{lem:monotonecost}
  Let us assume that $K \ge 2N + 6$. Let $(W_0, Y_0), (W_1, Y_1) \in \mathcal U_K^N$. Then, there exists a continuous application $\psi: [0,1] \to \mathcal U_K^N$ made of a polygonal chain such that
  $\psi(0) = (W_0, Y_0)$, $\psi(1) = (W_1, Y_1)$ and such that the application $[0,1]\ni t \mapsto \mathcal I(\psi(t))$ is monotone.
\end{theorem}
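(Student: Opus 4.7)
Set $c_0 := \mathcal I(W_0,Y_0)$ and $c_1 := \mathcal I(W_1,Y_1)$, and assume without loss of generality that $c_0 \le c_1$ (otherwise reverse the path). The plan is to build $\psi$ as the concatenation of three macro-pieces: a polygonal piece at constant cost $c_0$ from $(W_0,Y_0)$ to an intermediate $(\hat W_0,\hat Y)$; a single linear-in-$W$ segment from $(\hat W_0,\hat Y)$ to $(\hat W_1,\hat Y)$ sharing a common position vector $\hat Y$; and a mirror polygonal piece at constant cost $c_1$ from $(\hat W_1,\hat Y)$ back to $(W_1,Y_1)$. The middle segment will produce the affine cost $(1-t)c_0 + tc_1$, so the concatenated function $t\mapsto \mathcal I(\psi(t))$ will be non-decreasing.

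\textbf{Phase 1 (reduction at constant cost).} With $Y=Y_0$ held fixed, consider
\[
  P_0 := \left\{ W \in \R_+^K : (W, Y_0) \in \mathcal U_K^N,\; \mathcal I(W, Y_0) = c_0 \right\}.
\]
This is a nonempty (it contains $W_0$) compact convex polytope, pinned by the $N+2$ affine equalities (normalisation, the $N$ moment equalities, and the cost equality), by the half-space $\sum_k w_k \vartheta(X_0^k) \le A$, and by non-negativity. A standard face-counting argument (in the spirit of Tchakaloff/Carath\'eodory) shows that any vertex of $P_0$ has support of size at most $N+3$, the extra unit occurring only if the $\vartheta$-inequality is saturated. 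Pick any such vertex $W'_0$: by convexity of $P_0$, the segment $t\mapsto (1-t)W_0+tW'_0$ is a single polygonal piece inside $P_0$, at constant cost $c_0$. The same reduction applied to $(W_1, Y_1)$ yields $(W'_1, Y_1)$ with support of size at most $N+3$, at cost $c_1$.

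\textbf{Phase 2 (aligning positions).} Let $a_1,\dots,a_{k_0}$ denote the distinct atoms of $(W'_0,Y_0)$, sitting at slots $I_0 \subset \{1,\dots,K\}$ with weights $\omega_0^i$, and $b_1,\dots,b_{k_1}$ those of $(W'_1,Y_1)$, at slots $I_1$ with weights $\omega_1^j$. We construct an injection $\pi_1 : \{1,\dots,k_1\} \to \{1,\dots,K\}\setminus I_0$ by setting $\pi_1(j) := k_j$ whenever $k_j \notin I_0$ and assigning the remaining ``conflicting'' indices (those $j$ with $k_j \in I_0 \cap I_1$) to fresh slots in $\{1,\dots,K\}\setminus (I_0 \cup I_1)$; the count $K-|I_0 \cup I_1|\ge K-2(N+3)$ makes this feasible precisely when $K \ge 2N+6$. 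Define $\hat Y$ so that $\hat Y^i = Y_0^i$ for $i \in I_0$, $\hat Y^{\pi_1(j)} = b_j$ for all $j$, and arbitrary entries elsewhere; set $\hat W_0 := W'_0$, and let $\hat W_1$ carry weights $\omega_1^j$ at slot $\pi_1(j)$. Both pairs $(\hat W_0,\hat Y), (\hat W_1,\hat Y)$ lie in $\mathcal U_K^N$ at costs $c_0$ and $c_1$, since they encode the same atomic measures as $(W'_0,Y_0)$ and $(W'_1,Y_1)$. Reaching $(\hat W_0,\hat Y)$ from $(W'_0,Y_0)$ is a single linear segment moving only zero-weight positions, hence at constant cost. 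Reaching $(\hat W_1,\hat Y)$ from $(W'_1,Y_1)$ requires a short sub-chain: first, linearly move each zero-weight position of $(W'_1,Y_1)$ to its target value in $\hat Y$ (notably, for each conflicting $j$, bring the zero-weight slot $\pi_1(j)$ to position $b_j$); then, with positions frozen, batch the weight transfers $w_{k_j}(t):=(1-t)\omega_1^j$, $w_{\pi_1(j)}(t):=t\omega_1^j$ on the conflicting pairs into one segment --- since the two slots of each such pair temporarily share position $b_j$, every affine constraint, the $\vartheta$-inequality, and the cost remain invariant --- and finally relocate each released slot $k_j$ from $b_j$ to $\hat Y^{k_j}$, again at zero weight. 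The whole phase stays at cost $c_1$.

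\textbf{Phase 3 (interpolation) and conclusion.} With $\hat Y$ fixed, the segment $W(t):=(1-t)\hat W_0 + t\hat W_1$ remains in $\mathcal U_K^N$ by affineness of the equality constraints and convexity of the $\vartheta$-inequality, and $\mathcal I(W(t),\hat Y) = (1-t)c_0 + tc_1$ is linear in $t$. Concatenating Phase 1, Phase 2 forward, Phase 3, Phase 2 reversed from $(\hat W_1,\hat Y)$ back to $(W'_1, Y_1)$, and Phase 1 reversed from $(W'_1, Y_1)$ to $(W_1, Y_1)$ produces a polygonal chain $\psi:[0,1]\to\mathcal U_K^N$ with $\mathcal I\circ\psi$ monotone, as required. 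We expect the main obstacle to be the combinatorial bookkeeping in Phase 2: ensuring enough ``free'' slots both to accommodate the combined atomic support and to serve as destinations for the weight transfers is precisely what forces $K \ge 2(N+3) = 2N+6$, since each of the two reductions may produce up to $N+3$ atoms in the saturated-$\vartheta$ case.
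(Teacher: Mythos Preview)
Your argument is correct and follows the same three-phase architecture as the paper: sparsify each endpoint to at most $N+3$ weighted atoms, park the two sparse supports in disjoint slot ranges (this is exactly where $K\ge 2(N+3)$ enters), build a common position vector $\hat Y$, and interpolate the weights linearly over a single segment. Two implementation choices differ from the paper and are worth a remark. First, for the sparsification you take a vertex of the compact polytope $P_0$ and count active constraints, whereas the paper invokes Tchakaloff's theorem (Theorem~\ref{cor:Tchakaloff}) with the test map $(\varphi_1,\dots,\varphi_N,1,c,\vartheta)$; the two are equivalent, both resting on Carath\'eodory, but your version is more self-contained and makes the $N+3$ bound (with the extra unit coming from possible saturation of the $\vartheta$-inequality) transparent. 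Second, for the slot reindexing the paper factors the work through a separate permutation lemma (Lemma~\ref{lem_permutation}), which shuffles particle labels via a zero-weight ``parking'' slot; you instead handle conflicts in $I_0\cap I_1$ directly by copying the position $b_j$ into a fresh slot $\pi_1(j)$, transferring the weight across one linear segment between two slots at identical position, and then freeing the old slot. The effect and the counting are identical, and your route arguably sidesteps the mild awkwardness in the paper's ``without loss of generality'' invocation of the permutation lemma before any zero weights have been created.
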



In order to explain the main ideas of the proof of Theorem~\ref{lem:monotonecost}, let us remark that,
using Tchakaloff's theorem (recalled in Section~\ref{sec:Tchakaloff}), for any measure $\pi \in \Pi(\mu;(\phi_n)_{1 \le n \le N}; M)$, satisfying,
\begin{equation}\label{eqn:ineqconstmonotonecost}
  \int_{(\R^d)^M} \vartheta d \pi \le A,
\end{equation}
and charging $K \ge 2N + 6$ points, one can find a measure $\tilde{\pi} \in \Pi(\mu;(\phi_n)_{1 \le n \le N}; M)$ charging $N+3$ points, whose support is included in the one of $\pi$, and having the same cost and the same moment against $\vartheta$.
Then, the segment $((1-t)\pi + t \tilde{\pi})_{t \in [0, 1]}$ is included in $\Pi(\mu;(\phi_n)_{1 \le n \le N}; M)$, charges at most $2N +6$ points and keeps the cost and the moment against $\vartheta$ constant.
Besides, let $\tilde{\pi}_0, \tilde{\pi}_1 \in \Pi(\mu;(\phi_n)_{1 \le n \le N}; M)$ be two measures with support on at most $N + 3$ points, and such that for $i=0,1$, $\tilde{\pi}_i$ satisfies \eqref{eqn:ineqconstmonotonecost}.
Then, the segment $((1-t)\tilde{\pi}_0 + t \tilde{\pi}_1)_{t \in [0, 1]}$ is included in $\Pi(\mu;(\phi_n)_{1 \le n \le N}; M)$, satisfies the inequality constraint \eqref{eqn:ineqconstmonotonecost} for all $t \in [0, 1]$, charges at most $2N +6$ points, and the cost varies linearly along it.
By identifying $(W_0, Y_0)$ with $\pi_0$ (resp. $(W_1, Y_1)$ with $\pi_1$), one can join $\pi_0$ to $\pi_1$ by segments (with appropriately defined intermediate measures $\tilde{\pi}_0$ and $\tilde{\pi}_1$) satisfying the constraints, and along which the cost varies linearly. The adaptation of these ideas to vectors $(W_0, Y_0), (W_1, Y_1) \in \mathcal U_K^N$, which requires to take into account the displacement of the positions between $Y_0$ and $Y_1$ as well as the ordering of the coordinates, is the object of Section \ref{sec:prooftheorem}.

A direct consequence of Theorem~\ref{lem:monotonecost} is then Corollary~\ref{lem:globalminimas} which states that any local minimizer to problem~\eqref{eq:reformulation} (or equivalently problem~\ref{eqn:MCOTK})
is actually a \itshape global minimizer \normalfont as soon as $K\geq 2N+6$. In addition, the set of minimizers forms an polygonally connected (and thus arc-connected) set.

\begin{corollary}\label{lem:globalminimas}
   Let us assume that $K \ge 2N + 6$. Then, any local minimizer of the MCOT particle problem~\eqref{eqn:MCOTK} is actually a global minimizer. Besides, the set of (local or global) minimizers of the MCOT particle problem~\eqref{eqn:MCOTK}
   is an polygonally connected subset of $\mathbb{R}_+^K \times ((\mathbb{R}^d)^M)^K$.
\end{corollary}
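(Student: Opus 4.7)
The plan is to deduce both statements directly from Theorem~\ref{lem:monotonecost}, which, under the assumption $K \ge 2N+6$, provides a polygonal chain in $\mathcal U_K^N$ between any two prescribed points along which $\mathcal I$ is monotone.

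I treat the second statement first, as it is immediate. Let $(W_0, Y_0), (W_1, Y_1) \in \mathcal U_K^N$ be two minimizers of~\eqref{eqn:MCOTK}, both with cost equal to $I_K^N$. Theorem~\ref{lem:monotonecost} provides a polygonal chain $\psi : [0,1] \to \mathcal U_K^N$ joining them with $\mathcal I \circ \psi$ monotone; since a monotone function on $[0,1]$ taking the same value at $0$ and at $1$ is necessarily constant, $\mathcal I(\psi(t)) = I_K^N$ for every $t \in [0,1]$. Every point of the chain is therefore a global minimizer, which proves that the set of minimizers is polygonally connected.

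For the first statement, I argue by contradiction. Let $(W^*, Y^*)$ be a local minimum with value $I^* := \mathcal I(W^*, Y^*)$, witnessed by a radius $r>0$ (so that $\mathcal I \ge I^*$ on $\mathcal U_K^N \cap B_r((W^*, Y^*))$), and suppose it is not global: pick $(W_1, Y_1) \in \mathcal U_K^N$ with $\mathcal I(W_1, Y_1) < I^*$. Theorem~\ref{lem:monotonecost} yields a polygonal chain $\psi$ from $(W^*, Y^*)$ to $(W_1, Y_1)$ with $\mathcal I \circ \psi$ monotone; the strict decrease between the endpoint values forces this function to be non-increasing. By continuity of $\psi$, there exists $\delta > 0$ such that $\psi([0,\delta]) \subset B_r((W^*, Y^*))$, so that $\mathcal I(\psi(t)) \ge I^*$ on $[0,\delta]$ by local minimality; combined with the non-increasing property and $\mathcal I(\psi(0)) = I^*$, this enforces $\mathcal I(\psi(t)) = I^*$ on $[0,\delta]$.

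The remaining task, and the principal obstacle, is to propagate this equality up to $t = 1$, contradicting $\mathcal I(\psi(1)) < I^*$. My plan is to exploit the finer piecewise-affine structure of $\mathcal I \circ \psi$ coming from the proof of Theorem~\ref{lem:monotonecost}, where the chain is composed of a constant-cost Tchakaloff-type reduction, a linearly varying interpolation, and a constant-cost reconstruction, together with the observation that the local minimality of $(W^*, Y^*)$ forces $W^*$ to be a minimizer of the linear map $W \mapsto \mathcal I(W, Y^*)$ on the feasible polytope $\{W : (W, Y^*) \in \mathcal U_K^N\}$ obtained by freezing the positions. This LP structure should let me rebuild $\psi$ so that the first non-constant segment emanating from $(W^*, Y^*)$ is the strictly decreasing interpolation piece itself, which directly contradicts local minimality near $t = 0$. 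Carrying out this rebuilding while keeping the chain anchored close enough to $(W^*, Y^*)$ for the local minimality condition to apply is the delicate point the argument must address.
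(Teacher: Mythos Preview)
Your argument for polygonal connectedness of the minimizer set is correct and is exactly the intended deduction (the paper offers no separate proof, only the phrase ``direct consequence''). For the first claim you set up the natural contradiction argument and then rightly flag a genuine obstacle: a non-increasing path from $(W^*,Y^*)$ can remain at level $I^*$ while it exits the ball $B_r$, and only afterwards decrease, so local minimality by itself yields no contradiction. This is not a mere technicality; in general, the existence of monotone paths between all pairs of points does \emph{not} exclude non-global local minima --- a flat plateau lying strictly above the global minimum already furnishes a counterexample in which every plateau point is a (non-strict) local minimizer. The paper does not address this point either.

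Your proposed repair, however, is only a plan and does not go through as written. The LP observation that $W^*$ minimizes $W\mapsto \mathcal I(W,Y^*)$ over $\{W:(W,Y^*)\in\mathcal U_K^N\}$ is correct, but it does not let you bypass the constant-cost segments in the construction of Theorem~\ref{lem:monotonecost}. Those segments --- the Tchakaloff weight reduction and the repositioning of zero-weight particles --- are precisely what load the target positions from $Y_1$ into the configuration before the strictly decreasing weight interpolation can begin; having that interpolation emanate directly from $(W^*,Y^*)$ would require $W^*$ to already be supported on at most $N+3$ indices and $Y^*$ to already carry the target positions on the complementary indices, and neither follows from local minimality or from the LP fact (an LP optimum may sit anywhere on the optimal face, not only at a vertex). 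So your proof is incomplete at exactly the point you yourself acknowledge, and the sketched fix does not yet close it.
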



\section{Overdamped Langevin processes for MCOT particle problems}\label{sec:Langevin}

The motivation of this section is twofold: first, the numerical method used in this article for the resolution of the particle problems~\eqref{eqn:MCOTK} and~\eqref{eqn:MCOTKfiwedweight} can be seen as a time discretization of
constrained overdamped Langevin dynamics, which are usually encountered in molecular dynamics simulation; second, we draw here a link, on the formal level, between the long-time and large number of particles limit of these processes and a regularized version
of the MCOT problem~\eqref{eqn:MCOT} using the so-called Kullback-Leibler entropy regularization, very similar to the regularization which is at the core of
the Sinkhorn algorithm for the resolution of optimal transportation problem~\cite{peyre2019computational}.

The objective of Section~\ref{sec:general} is to recall some fundamental properties
of general constrained overdamped Langevin processes. Then, in Section~\ref{sec:appliMCOT}, we consider specific processes which are related to
the MCOT problem presented in Section~\ref{sect:MCOT}.

\subsection{Properties of general constrained overdamped Langevin processes}\label{sec:general}

\subsubsection{Definition}\label{sect:gencase}

Let $p\in \mathbb{N}^*$. Let us first introduce \itshape unconstrained \normalfont overdamped Langevin processes in the state space $\mathbb{R}^p$.
Let $(\Omega, \mathcal F, \mathbb{P})$ be a probability space. An overdamped Langevin stochastic process is
a stochastic process $(Y_t)_{t\geq 0}$ solution to the following stochastic differential equation
$$
dY_t = -\nabla V(Y_t)\,dt + \beta \,dW_t,
$$
where $V: \mathbb{R}^p \to \mathbb{R}$ is a smooth function, called hereafter the \itshape potential function \normalfont of the overdamped Langevin process,
$\beta >0$ is a positive coefficient which is proportional to the square root of the temperature of the system in
molecular dynamics ($\beta=\sqrt{2 \bar{T}}$ with $\bar{T}$ the temperature), and $(W_t)_{t\geq 0}$ is a $p$-dimensional Brownian motion.

\medskip

\itshape Constrained \normalfont overdamped Langevin processes are overdamped Langevin processes whose trajectory is enforced to be included into a given submanifold. In the sequel,
we assume that the submanifold is characterized as the zero isovalued set of a given smooth function
$\Gamma: \R^p \to \R^q$ for some $q\in \mathbb{N}^*$, so that the corresponding submanifold is defined by
$$
{\cal M} = \{ Y \in \R^p, \Gamma(Y) = 0 \}.
$$
We assume in the sequel that the submanifold $\mathcal M$ is arc connected. In addition, let us assume that there exists a neighborhood $\mathcal W$ of $\cal M$ such that, for all $Y\in \mathcal W$,
\begin{equation}\label{eq:xifullrank}
G(Y):=\nabla \Gamma(Y)^T \nabla \Gamma(Y) \in \R^{q \times q}
\end{equation}
is an invertible matrix, where $\nabla \Gamma(Y)_{i,j}=\partial_i \Gamma_j$ for $1\le i\le p$ and $1\le j \le q$. These two assumptions on the function $\Gamma$, together with the implicit function theorem, imply that $\cal M$ is a regular $(p-q)$-dimensional submanifold.

\medskip

A constrained overdamped Langevin process~\cite[Section 3.2.3]{stoltz2010free} is a $\mathbb{R}^p$-valued stochastic process $(Y_t)_{t\geq 0}$ that solves the stochastic differential equation
\begin{equation}\label{eqn:colp}
  \left\{
  \begin{aligned}
    d Y_t &= -\nabla V(Y_t) \,dt + \beta d W_t + \nabla \Gamma(Y_t) d \Lambda_t, \\
    \Gamma(Y_t) &= 0,
  \end{aligned}
  \right.
\end{equation}
where~$\beta > 0$, $(W_t)_{t\geq 0}$ is a $p$-dimensional Brownian process and $(\Lambda_t)_{t\geq 0}$ is a $q$-dimensional stochastic adapted stochastic process,
which ensures that $Y_t$ belongs to the submanifold~$\cal M$ almost surely for all $t \in \R^+$. More precisely, $\Lambda_t$ is the Lagrange multiplier associated to the constraint $\Gamma(Y_t) = 0$ and is defined by
\begin{equation}\label{def_Lambda}
  d\Lambda_t=G^{-1}(Y_t) \left[\left(\nabla \Gamma(Y_t)^T \nabla V(Y_t)-\frac {\beta^2}2 \left(\begin{array}{c}
      \sum_{i=1}^p \partial_i^2 \Gamma_1(Y_t) \\ \vdots \\ \sum_{i=1}^p \partial_i^2 \Gamma_q(Y_t)
   \end{array}\right) \right)dt -\beta \nabla \Gamma(Y_t)^TdW_t\right].
\end{equation}
Thus, if we define $P(y)=\mathrm{Id}-\nabla \Gamma(y)^TG^{-1}(u) \nabla \Gamma(y)$ the projection operator, we get $$dY_t=P(Y_t)[-\nabla V(Y_t)+ \beta dW_t ]-\frac{\beta^2}{2} \nabla \Gamma(Y_t)^TG^{-1}(Y_t)\left(\begin{array}{c}
      \sum_{i=1}^p \partial_i^2 \Gamma_1(Y_t) \\ \vdots \\ \sum_{i=1}^p \partial_i^2 \Gamma_q(Y_t)
   \end{array}\right)dt.$$

Let us assume in addition that
\begin{equation}\label{eqn:Zdef}
  Z := \int_{\R^p} e^{-\frac{2V(Y)}{\beta^2}}  \,d \sigma_{{\cal M}} (Y) < + \infty,
\end{equation}
where $\,d\sigma_{{\cal M}}$ is the surface measure (induced by the Lebesgue measure in $\R^p$, see \cite[Remark 3.4]{stoltz2010free} for a precise definition) on the submanifold $\cal M$.
Let us introduce the probability measure $\eta \in \mathcal{P}(\mathbb{R}^p)$ defined by
\begin{equation}\label{eqn:etadef}
  d \eta(Y) := \frac{1}{Z} e^{-\frac {2V(Y)}{\beta^2}} |\det G(Y) |^{-1/2} d \sigma_{{\cal M}} (Y).
\end{equation}

Under suitable assumptions, \cite[Proposition 3.20]{stoltz2010free} states that $\eta$ is the unique equilibrium distribution of the stochastic process $Y_t$ solution to the constrained overdamped Langevin dynamics~\eqref{eqn:colp} and that
\begin{equation}\label{ergo_Y}
  Y_t \text{ weakly converges to } \eta \text{ as } t\to+\infty.
\end{equation}

\subsubsection{Long-time and large number of particles limit}\label{sect:largeparticleslimit}

We recall here some results proved in~\cite[Section 2.3 and Proposition 5.1]{samaey2011numerical}, where the authors consider the so-called large-particle limit of constrained overdamped Langevin dynamics subject to average moment constraints.
The objective of the work~\cite{samaey2011numerical} was to study the properties of the constrained overdamped Langevin process in a large number of particles limit and to show the convergence towards $\eta$ of the invariant distribution of the approximating particle system when the number of particles $K\to \infty$.\footnotemark More precisely, from now on,
let us consider $p' = K p$ for some $K\in \mathbb{N}^*$. We define for any $K\in \mathbb{N}^*$ the potential function $V^K$ and the constraint function $\Gamma^K$ by:
$$
\forall Y  = (X^k)_{1\leq k \leq K} \in (\mathbb{R}^p)^K, \quad V^K(Y):= \frac{1}{K}\sum_{k=1}^K V\left(X^k\right) \quad \mbox{ and } \Gamma^K(Y):= \frac{1}{K}\sum_{k=1}^K \Gamma\left(X^k\right).
$$
We then consider the following constrained overdamped Langevin process $(Y^K_t)_{t\geq 0}$ that is assumed to be solution to the stochastic differential equation
\begin{equation}\label{eqn:colpK}
  \left\{
  \begin{aligned}
    d Y^K_t &= -\nabla V^K(Y^K_t) \,dt + \beta d W^K_t + \nabla \Gamma^K(Y^K_t) d \Lambda^K_t, \\
    \Gamma^K(Y^K_t) &= 0,
  \end{aligned}
  \right.
\end{equation}
where~$(W^K_t)_{t\geq 0}$ is a $Kp$-dimensional Brownian process and $(\Lambda^K_t)_{t\geq 0}$ is a $q$-dimensional stochastic adapted stochastic process,
which ensures that $Y^K_t$ satisfies the constraint $\Gamma^K(Y^K_t) = 0$ almost surely. The process $Y^K$ is usually called a particle system: each
coordinate $X^k$ for $1\le k\le K$ is seen as a particle.  The large number of particles limit consists in considering the limit as $K$ goes
to infinity of the stochastic process $(Y^K_t)_{t\geq 0}$.

\footnotetext{We use here the notation $K$ for the number of particles in view of the use of the Langevin dynamics to solve \eqref{eqn:MCOTK} problems, from Section \ref{sec:appliMCOT}, for which $K \ge 2N + 6$. Yet, the results recalled in Section \ref{sect:largeparticleslimit} are general and unrelated to MCOT applications.}

\medskip

It follows from~\eqref{ergo_Y} that, under suitable assumptions, as $t$ goes to $\infty$, the law of the process $Y^K_t$ converges to the probability
measure $\eta^K  \in \mathcal P\left( (\mathbb{R}^p)^K\right)$ defined for all $ Y^K = (X^1, \cdots, X^K) \in (\mathbb{R}^p)^K$ by
\begin{equation}
 \,d\eta^K(Y^K) = \frac{1}{Z^K} \left( \Pi_{k=1}^K e^{-\frac{2V(X^k)}{\beta^2}}\right)  d \sigma_{{\cal M}^K}(Y^K),
\end{equation}
where
$$
\mathcal M^K := \left\{Y^K \in (\mathbb{R}^p)^K, \; \Gamma^K(Y^K) = 0  \right\},
$$
$$
Z^K:= \int_{(\R^p)^K} e^{-\frac{2V^K(Y^K)}{\beta^2}} \,d \sigma_{{\cal M}^K} (Y^K),
$$
and
$$
G^K (Y^K) := \nabla \Gamma^K(Y^K)^T \nabla \Gamma^K(Y^K) \in \R^{q \times q}.
$$
For $1\leq k \leq K$, $\left(X_t^{k}\right)_{t\geq 0}$ is a $p$-dimensional stochastic process.
Let us denote by $\zeta^K_t\in \mathcal{P}(\mathbb{R}^p)$ the law of the first particle $X^{1}_{t}$. Then, the symmetry of the functions $V^K$ and $\Gamma^K$
implies that $\zeta^K_t$ weakly converges in law when $t \to \infty$ to the probability measure $\zeta^K_\infty$ defined for all $X\in \mathbb{R}^p$ by
\begin{equation}
  \,d \zeta^K_\infty(X) = \int_{(\R^p)^{K-1}} \,d \eta^K (X,X^2, \cdots,X^K).
\end{equation}

\medskip

Under appropriate assumptions on $V$ and $\Gamma$ which we do not detail here \cite[Proposition 5.1]{samaey2011numerical}, the sequence $\left(\zeta^K_\infty\right)_{K\in \mathbb{N}^*}$ weakly converges in $\mathcal{P}(\mathbb{R}^p)$ as $K$ goes to infinity to a
probability measure $\pi_\beta^* \in \mathcal{P}(\mathbb{R}^p)$ which is the unique solution to
\begin{equation} \label{eq:entropicrelaxlimotpb}
  \pi_\beta^*  := \argmin_{\substack{\pi \in {\cal P}\left(\R^p\right) \\ \int_{\R^p} \Gamma \,d \pi = 0}}
  \int_{\R^p} \ln\left(\frac{\,d \pi(X)}{(Z^\infty)^{-1} e^{-\frac{2V(X)}{\beta^2}}\,dX }\right) d \pi(X),
\end{equation}
where $Z^\infty := \int_{\R^p} e^{-\frac{2v(X)}{\beta^2}} \,dX$. In other words, $\pi_\beta^*$ is thus a probability measure on $\mathbb{R}^p$, which is absolutely continuous
with respect to the Lebesgue measure and which is solution to
\begin{equation} \label{eq:entropicrelaxlimotpb2}
  \pi_\beta^*  := \argmin_{\substack{\pi \in {\cal P}\left(\R^p\right) \\ \int_{\R^p} \Gamma \,d \pi = 0}}
 \int_{\mathbb{R}^p} V\left(X\right)\,d\pi(X) + \frac{\beta^2}{2}\int_{\R^p} \ln\left(\frac{\,d \pi(X)}{dX }\right) d \pi(X).
\end{equation}

\subsection{Application to MCOT problems}\label{sec:appliMCOT}

The aim of this section is to illustrate the link between the MCOT problems presented in Section~\ref{sect:MCOT} and the constrained overdamped Langevin processes introduced in Section~\ref{sec:general}.
We start by considering the \itshape fixed weight MCOT particle problem\normalfont~\eqref{eqn:MCOTKfiwedweight}, before considering the MCOT particle problem with adaptive weights~\eqref{eqn:MCOT}.

\subsubsection{Fixed-weight MCOT particle problem}\label{sec:fixedweight}

We first draw the link between constrained Langevin overdamped dynamics and the fixed weight MCOT particle problem\normalfont~\eqref{eqn:MCOTKfiwedweight}. Then, for all $K\in \mathbb{N}^*$, let us consider $(Y^K_t)_{t\geq 0}$
a constrained overdamped Langevin process solution to the stochastic differential equation (\ref{eqn:colpK}) with $p = dM$, $q=N$, $V = c$ and $\Gamma = \left(\varphi_1-\mu_1, \cdots, \varphi_N-\mu_N\right)$
where for all $1\leq n \leq N$, $\varphi_n$ is defined by~(\ref{eq:Phin}).

\medskip

Then, the stochastic dynamics (\ref{eqn:colpK}) can be viewed as a randomized version of a constrained gradient numerical method for the resolution of problem (\ref{eqn:MCOTKfiwedweight}), where for all $t\geq 0$,
$Y_t^K = (X_t^{1}, \cdots, X_t^{K})\in ((\mathbb{R}^d)^M)^K$ and where for all $1\leq k \leq K$,
$$
X_t^{k} = \left( x_{1,t}^{k}, \cdots, x_{M,t}^{k}\right) \in \left(\mathbb{R}^d\right)^M.
$$
Note that it is not clear in general that $V$ and $\Gamma$ satisfy the regularity assumptions which ensure the convergence results stated in Section~\ref{sect:largeparticleslimit} to hold true. But,
formally, assuming that the long-time limit and large number of particles convergence holds nevertheless, the associated measure $\pi_\beta^*$ solution~(\ref{eq:entropicrelaxlimotpb2}) can be equivalently rewritten as
\begin{equation} \label{eq:entropicrelaxlimotpb3}
  \pi_\beta^*  := \argmin_{\substack{\pi \in {\cal P}\left( (\R^d)^M\right) \\ \forall 1\leq n \leq N, \; \int_{(\R^d)^M} \varphi_n \,d \pi = \mu_n}}
  \mathcal{J}(\pi),
\end{equation}
where
$$
\mathcal J(\pi):= \int_{(\mathbb{R}^d)^M} c\left(X\right)\,d\pi(X) + \frac{\beta^2}{2}\int_{(\R^d)^M} \ln\left(\frac{\,d \pi(X)}{dX}\right) d \pi(X).
$$

Recall that $\pi_\beta^*$ is the large number of particles limit of the long-time limit of the law of one particle associated to the constrained overdamped Langevin process. Notice that $\pi_\beta^*$
can be equivalently seen as the solution of an entropic regularization of the
MCOT problem (\ref{eqn:MCOT}), where the term $ \int_{(\R^d)^M} \ln\left(\frac{\,d \pi(X)}{dX}\right) d \pi(X)$ can be identified as the Kullback-Leibler entropy of the measure $\pi$ with respect to the Lebesgue measure.
Thus, Problem~\ref{eq:entropicrelaxlimotpb3} is close to the entropic regularization of optimal transport problems used in several works~\cite{benamou2015iterative, de2018entropic,nenna2016numerical, peyre2019computational},
in particular for the so-called Sinkhorn algorithm~\cite{benamou2015iterative}.

\medskip

Let us point out here that, at least on the formal level, we expect the family $(\pi^*_\beta)_{\beta >0}$ to weakly converge to a minimizer of (\ref{eqn:MCOT}) as $\beta$ goes to $0$ (a similar result is proven in \cite[Theorem 2.7]{MR3635459}).

\subsubsection{Adaptive-weight MCOT particle problem}\label{sec:adaptiveweight}

A similar link can be drawn between constrained Langevin overdamped dynamics and the MCOT particle problem\normalfont~\eqref{eqn:MCOTK} with adaptive weights.

In order to fit in the framework of the constrained Langevin overdamped dynamics, without any positivity constraint, let us introduce a continuous surjective function $f: \mathbb{R} \to \mathbb{R}_+$, which we call hereafter a \itshape weight function\normalfont. We assume that $f$ satisfies the following assumption: there exists an interval
$I\subset \mathbb{R}$ such that the Lebesgue measure of $I$ is equal to $1$ and such that $\int_I f = 1$. A simple choice of admissible weight function can be given by $f(a) = a^2$ for all $a\in \mathbb{R}$ with $I = (\frac 1 2, \frac {25^{1/3}} 2)$.

\medskip

Then, for all $K\in \mathbb{N}^*$, let us consider $\left(\overline{Y}^K_t\right)_{t\geq 0}$
a constrained overdamped Langevin process solution to the stochastic differential equation (\ref{eqn:colpK}) with $p = dM +1$, $q = N+1$, and where for all $\overline{X} = (a,X) \in \mathbb{R} \times (\mathbb{R}^d)^M$,
$\overline{V}(\overline{X}) = f(a)c(X)$ and $\overline{\Gamma}(\overline{X}) = \left(f(a) -1, f(a)\varphi_1(X)-\mu_1, \cdots, f(a)\varphi_N(X)-\mu_N\right)$.
Then, the stochastic dynamics (\ref{eqn:colpK}) can be viewed as a randomized version of a constrained gradient numerical method for the resolution of the optimization problem
\begin{equation}
\inf_{  (A,Y) \in \mathcal V_K^N} \quad \sum_{k = 1}^K f(a_k) c\left(X^k\right),
\end{equation}
where
\begin{align}
  &\mathcal V^N_K := \Bigg\{ (A,Y) \in \mathbb{R}^K \times \left( (\mathbb{R}^d)^M \right)^K, \quad A = (a_k)_{1\leq k \leq K}, \; Y=(X^k)_{1\leq k \leq K},  \\
  & \sum_{k=1}^K f(a_k) = 1, \quad \sum_{k=1}^K  f(a_k) \vartheta(X^k) \le A, \quad \forall 1\le n \le N,\; \sum_{k=1}^K f(a_k) \varphi_n(X^k) = \mu_n\Bigg\}, \notag
\end{align}
which is equivalent to problem (\ref{eqn:MCOTK}) using the surjectivity of $f$.

Note that the choice of the function $f$ can influence the dynamics as it regulates both the way the brownian motion $W$ affects the weights, and the balance, in the minimization of $\overline{V}$ and in the enforcement of the constraint $\overline{\Gamma}(\overline{X}) = 0_N$, between a displacement of particles and a change in weights.

\medskip

Here again, it is not clear in general that $\overline{V}$ and $\overline{\Gamma}$ satisfies the regularity assumptions which ensures the convergence results stated in Section~\ref{sect:largeparticleslimit} to hold true. But,
using formal computations, we can consider the associated measure $\overline{\pi}_\beta^{\rm a} \in \mathcal{P}\left( \mathbb{R} \times (\mathbb{R}^d)^M \right)$ solution to
\begin{equation} \label{eq:entropicrelaxlimotpb4}
  \overline{\pi}_\beta^{\rm a}  := \argmin_{\substack{\overline{\pi} \in {\cal P}\left( \mathbb {R} \times (\R^d)^M\right) \\
  \int_{a\in \mathbb{R}}\int_{X\in(\R^d)^M} f(a) \,d \overline{\pi}(a,X) = 1\\
  \forall 1\leq n \leq N, \; \int_{a\in \mathbb{R}}\int_{X\in(\R^d)^M} f(a)\varphi_n(X) \,d \overline{\pi}(a,X) = \mu_n}} \overline{\mathcal J}(\overline{\pi}),
\end{equation}
where
$$
\overline{\mathcal J}(\overline{\pi}) := \int_{a\in \mathbb{R}}\int_{X\in(\mathbb{R}^d)^M} f(a) c\left(X\right)\,d\overline{\pi}(a,X)
 + \frac{\beta^2}{2}\int_{a\in \mathbb{R}}\int_{X\in(\R^d)^M} \ln\left(\frac{\,d \overline{\pi}(a,X)}{dadX}\right) d \overline{\pi}(a,X).
$$
Let us introduce now $\pi_\beta^{\rm a}\in \mathcal{P}\left((\mathbb{R}^d)^M\right)$ defined by
$$
\,d\pi_\beta^{\rm a}(X) = \int_{a\in \mathbb{R}} f(a)\,d\overline{\pi}_\beta^{\rm a}(a,X).
$$
Then $\pi_\beta^{\rm a}$ satisfies the constraints of problem (\ref{eq:entropicrelaxlimotpb3}) and
$$
\overline{\mathcal J}(\overline{\pi}_\beta^{\rm a}) = \int_{X\in (\mathbb{R}^d)^M} c(X)\,d\pi_\beta^{\rm a}(X) + \frac{\beta^2}{2}\int_{a\in \mathbb{R}}\int_{X\in(\R^d)^M} \ln\left(\frac{\,d \overline{\pi}_\beta^{\rm a}(a,X)}{dadX}\right)d \overline{\pi}_\beta^{\rm a}(a,X).
$$
%

\noindent Notice that, as a consequence, problem (\ref{eq:entropicrelaxlimotpb4}) may be seen as a second kind of entropic regularization of (\ref{eqn:MCOT}) and that $\pi_\beta^{\rm a}$ is expected to be an approximation of some minimizer to (\ref{eqn:MCOT}) as $\beta$ goes to $0$.

\medskip

Let us notice here that the assumption made on $f$ ensures that, for all $\pi \in \mathcal{P}\left( (\mathbb{R}^d)^M\right)$, there exists a probability measure $\overline{\pi} \in \mathcal{P}\left(\mathbb{R} \times (\mathbb{R}^d)^M\right)$ such that
$$
\,d\pi(X) = \int_{a\in \mathbb{R}} f(a)\,d\overline{\pi}(a,X).
$$
Indeed, defining $\,d\overline{\pi}(a,X):= \un_I(a)\,da \otimes \,d\pi(X)$ yields the desired result. Besides, we easily check that $\overline{\mathcal{J}}(\overline{\pi})=\mathcal{J}(\pi)$, which leads immediately to $\overline{\mathcal J}(\overline{\pi}_\beta^{\rm a}) \le \mathcal J(\pi_\beta^*)$ from the optimality of $\overline{\pi}_\beta^{\rm a}$.

\section{Numerical optimization method}\label{sect:numericaldiscretization}

We present in this section the numerical procedure we use in our numerical tests to compute approximate solutions to the particle problems with fixed weights (\ref{eqn:MCOTKfiwedweight}) or adaptive weights~(\ref{eqn:MCOTK}) for a fixed
given $K\in \mathbb{N}^*$.
Note that (\ref{eqn:MCOTKfiwedweight}) can be equivalently rewritten as
\begin{equation}\label{eqn:MCOTKfiwedweight2}
  J^N_K := \inf_{\substack{ Y^K \in ((\R^d)^M)^K,\\
 \Gamma^K(Y^K)= 0, \\
 \Theta^K(Y^K) \le A}} \quad V^K(Y^K),
\end{equation}
where $V^K$ and $\Gamma^K$ are defined in Section~\ref{sec:fixedweight}, and where
$$
\Theta^K:
\left\{
\begin{array}{ccc}
 ((\R^d)^M)^K & \mapsto & \mathbb{R}\\
Y:=(X^1, \cdots, X^K) & \to & \frac{1}{K}\sum_{k=1}^K \vartheta(X^k).\\
\end{array}
\right.
$$
Besides, problem~(\ref{eqn:MCOTK}) can be rewritten equivalently as
\begin{equation}\label{eqn:MCOTK2}
  I^N_K := \inf_{  \substack{  \overline{Y}^K \in \left(\mathbb{R} \times ((\R^d)^M)\right)^K\\
  \overline{\Gamma}^K(\overline{Y}^K)= 0, \\
 \overline{\Theta}^K(\overline{Y}^K) \le A}} \quad \overline{V}^K(\overline{Y}^K),
\end{equation}
where $\overline{V}^K$ and $\overline{\Gamma}^K$ are defined in Section~\ref{sec:adaptiveweight}, and where
$$
\overline{\Theta}^K:
\left\{
\begin{array}{ccc}
 ( \mathbb{R} \times (\R^d)^M)^K & \mapsto & \mathbb{R}\\
\overline{Y}:=((a^1,X^1), \cdots, (a^K,X^K) ) & \to & \frac{1}{K}\sum_{k=1}^K f(a^k)\vartheta(X^k).\\
\end{array}
\right.
$$

For the sake of simplicity, we restrict the presentation here to the method used for the resolution of (\ref{eqn:MCOTKfiwedweight2}), since the method used for the resolution of (\ref{eqn:MCOTK2}) follows exactly the same lines.

\subsection{Time-discretization of constrained overdamped Langevin dynamics}\label{sec:TdLangevin}

The numerical procedure considered in this paper consists in a time discretization of the dynamics~\eqref{eqn:colp} with an adaptive time step and noise level. The main idea of the algorithm is the following:
let $(W_n)_{n\in \mathbb{N}}$ be a sequence of iid normal vectors of dimension $dMK$. At each iteration $n\in \mathbb{N}^*$ of the procedure, starting from an initial guess $Y_0^K \in \mathcal M^K$ for $n=0$,
a new approximation $Y_{n+1}^K \in \mathcal M^K$ is computed as the projection in some sense of $Y_{n+1/2}^K := Y_n^K - \nabla V^K(Y_n^K)\Delta t_n + \beta_n \sqrt{\Delta t_n} W_n$ onto $\mathcal M^K$, where $\Delta t_n>0$ is
the time step and $\beta_n >0$ is the noise level
at iteration $n$. Precisely, the next iterate $Y^K_{n+1}$ is computed as $Y^K_{n+1/2} + \nabla \Gamma^K(Y^K_n) \cdot \Lambda_{n+1}^K$ where $\Lambda_{n+1}^K\in \mathbb{R}^N$ is a Lagrange multiplier which ensures that
the constraint $\Gamma^K(Y_{n+1}^K) = 0$ is satisfied.

The complete resulting procedure is summarized in Algorithm~\ref{algo:ConstOvrdmpdLang}.

\begin{algorithm}[htp]
  \caption{Constrained Overdamped Langevin Algorithm \label{algo:ConstOvrdmpdLang}}
  \begin{algorithmic}
    \STATE Input $Y^K_0 \in {\cal M}^K$,  $\Delta t_0 >0$, $\beta_0 >0$, $\tau_0 >0$, $i_{\mathrm{const}} \in \N^*$,  $i_{\mathrm{max}} \in \N^*$, $\mathrm{NoiseDecrease}: \R^+\times \N \to \R^+ $, $n_\mathrm{max} \in \N^*$
    \STATE Fix $n=0$, $\Lambda^K_0 = 0 $.
    \STATE Define $(W_n)_{n \in \N}$ a sequence of i.i.d. normal vectors of the same dimension as $Y^K_0$.
    \WHILE{ $n \le n_\mathrm{max}$}
      \STATE $\mathrm{AdaptTimeStep}(Y^K_n, \Lambda^K_n, \Delta t_n, \beta_n, \tau_n)$
      \STATE $Y^K_{n + 1/2} := Y^K_n - \nabla V^K(Y^K_n) \Delta t_n + \beta_n \sqrt{\Delta t_n} W_n$
      \IF{$\mathrm{Projection}(Y^K_{n + 1/2}, \nabla \Gamma^K(Y^K_n), \Lambda^K_n, i_{\rm max})$ succeeds}
        \STATE $Y^K_{n + 1}, \Lambda^K_{n + 1}, i_n \leftarrow \mathrm{Projection}(Y^K_{n + 1/2}, \nabla \Gamma^K(Y^K_n), \Lambda^K_n, i_{\rm max})$
        \IF{$i_n \le i_{\mathrm{const}}$}
          \STATE $\tau_{n+1} \leftarrow 2\tau_n$
        \ENDIF
        \STATE $\beta_{n+1} \leftarrow \mathrm{NoiseDecrease}(\beta_n, n)$
        \STATE $\Delta t_{n+1} \leftarrow \Delta t_n$; $\tau_{n+1} \leftarrow \tau_n$
        \STATE $n \leftarrow n+1$
      \ELSE
        \STATE $\tau_n \leftarrow \tau_n /2$
      \ENDIF
    \ENDWHILE
    \RETURN $\min(V^K(Y^K_n), 0\leq n \leq n_{\rm max} )$
  \end{algorithmic}
\end{algorithm}

We discuss here three main difficulties about the algorithm we propose:
\begin{itemize}
 \item the initialization step which consists in finding an element $Y_0^K\in \mathcal M^K$;
 \item the choice of the values of the time step $\Delta t_n$ and noise level $\beta_n$ at each iteration of the algorithm;
 \item the practical method used in order to compute a projection of $Y_{n+1/2}^K$ onto the submanifold $\mathcal M^K$, and in particular the value of the Lagrange multiplier $\Lambda_{n+1}^K$.
\end{itemize}

\medskip

The procedure chosen to adapt the time step and noise level is discussed in Section~\ref{sec:adapttime}. The algorithm used to compute a projection of $Y_{n+1/2}^K$ onto the submanifold $\mathcal M^K$ and the value of the Lagrange multiplier $\Lambda_{n+1}^K$
is detailed in Section~\ref{sec:projection}. Finally, the initialization procedure used to compute a starting guess $Y_0^K\in \mathcal M^K$ is exaplined in Section~\ref{sec:init}.

\subsection{Time step and noise level adaptation procedure}\label{sec:adapttime}

Two remarks are in order to motivate the procedure we propose here:
\begin{enumerate}[(i)]
  \item \label{it:implem:const} the computation of the Lagrange multiplier $\Lambda^K_{n+1}$ at each iteration $n$ of the algorithm and of the resulting value of $Y^K_{n+1}$ must be fast (as it is executed at each step).
  \item \label{it:implem:deltat} the time-step $\Delta t_n$ must be:
  \begin{enumerate}[(a)]
    \item \label{it:implem:deltat:small}small enough for the procedure that computes the Lagrange multiplier to be well-defined,
    \item \label{it:implem:deltat:large}large enough for the total number of iterations needed to observe convergence to be reasonable. In practice, $n_{\rm max}$ was chosen to be of the order of $20000$ in the numerical experiments
    presented in Section~\ref{sect:numericaltests}.
  \end{enumerate}
\end{enumerate}

  To address item (\ref{it:implem:const}), we use a Newton method similar to the one proposed in~\cite{lelievre2012langevin, lelievre2019hybrid} to enforce the constraints and compute the Lagrange multiplier $\Lambda^K_{n+1}$
  which is summarized in Algorithm~\ref{algo:EnforceConstraints} and detailed in Section~\ref{sec:projection}. This method is observed to converge fast if
 the value $Y_{n+1/2}^K$  is close enough to the submanifold $\mathcal M^K$. The tolerance threshold allowed at each step on the satisfiability of the constraints is given by $\tau_n>0$, the value of which is also adapted at each step.
 Its precise value is inferred as follows:
  if the Newton method converges fast enough (i.e.\ if the number of iterations needed to ensure convergence $i_n$ is lower than some fixed value $i_{\mathrm{const}}$),
  then the value of $\tau_n$ is multiplied by $2$. On the other hand, if the Newton method does not converge in a maximum number of iterations (given by $i_\mathrm{max}$),
  then $\tau_n$ is divided by 2. This step may involve a new time-step computation for iteration $n$, which we detail below.

  The time-step $\Delta t_n$ is adapted (in order to answer item (\ref{it:implem:deltat})) through the $\mathrm{AdaptTimeStep}$ subprocedure (Algorithm \ref{algo:AdaptTimeStep}).
  It is increased at each step $n$  if the constraints are satified up to a tolerance threshold \emph{lower} than $\tau_n$ (in order to answer
  (\ref{it:implem:deltat:large})). Otherwise, the time-step
  is divided by $2$ as many times as needed for $Y^K_{n + 1/2}$ to satisfy the constraints defining the submanifold up to a tolerance lower than $\tau_n$ (in order to satisfy item (\ref{it:implem:deltat:small})).

Moreover, the noise-level $\beta_n$ is decreased at each iteration $n$ at a rate inspired from Robbins-Siegmund Lemma \cite[Theorem 6.1]{pages2018numerical} for non-constrained stochastic gradient optimization,
using the $\mathrm{NoiseDecrease}$ function in Algorithm \ref{algo:ConstOvrdmpdLang}. This is managed through the $\mathrm{NoiseDecrease}$ function. In the numerical experiments presented in Section~\ref{sect:numericaltests}, we used two possible choices of $\mathrm{NoiseDecrease}$ function defined respectively by $(\beta, n) \mapsto \beta$ (noise level unchanged)
and $(\beta, n) \mapsto \sqrt{\frac{n}{n+1}}\beta$ (slow decrease of the noise level: note that this is the relative decrease, so that after $n$ steps, the noise is $\beta_0/\sqrt{1+n}$).

\begin{algorithm}[htp]
  \caption{AdaptTimeStep subprocedure \label{algo:AdaptTimeStep}}
  \begin{algorithmic}
    \STATE Input: $Y^K$, $\Lambda$, $\Delta t$, $\beta$, $\tau$, $n$
    \IF{$ \| \Gamma^K (Y^K - \nabla V^K(Y^K) 2 \Delta t + W_n \sqrt{2\Delta t} \beta) \| \le \tau$}
      \STATE $\Delta t \leftarrow 2 \Delta t$;
    \ELSE
      \WHILE{$ \| \Gamma^K ( Y^K - \nabla V^K(Y^K) \Delta t + W_n \sqrt{2\Delta t} \beta) \| \ge \tau$}
        \STATE  $\Delta t \leftarrow \Delta t /2$; $\Lambda \leftarrow \Lambda /2$
      \ENDWHILE
    \ENDIF
  \end{algorithmic}
\end{algorithm}

\subsection{Projection method}\label{sec:projection}

As mentioned earlier, to compute $Y^K_{n+1} \in \mathcal M^K$ and $\Lambda^K_{n+1}$ from $Y^K_{n+1/2}$, we use a Newton method similar to the one proposed
in~\cite{lelievre2012langevin, lelievre2019hybrid}. We refer the reader to~\cite[Section 2.2.2]{lelievre2019hybrid} for theoretical considerations on such projections.

More precisely, the procedure reads as follows: given $Y^K_n, Y^K_{n+1/2}\in ((\mathbb{R}^d)^M)^K$, the aim of the Newton procedure is to find a solution $\Lambda_{n+1}^K\in \mathbb{R}^N$ to the equation
$$
\Gamma^K\left( Y^K_{n+1/2} + \nabla \Gamma^K (Y^K_n) \cdot \Lambda_{n+1}^K\right) = 0.
$$
We numerically observe that this Newton procedure only converges in cases when $Y^K_{n+1/2}$ and $Y^K_n$ are close enough to the manifold $\mathcal M^K$. Provided that $Y_n^K$ belongs to $\mathcal M^K$, $Y^K_{n+1/2}$ can be made arbitrarily
close to the submanifold
provided that the value of the time step $\Delta t_n$ is chosen small enough. We also refer the reader to~\cite[Theorem 1.4.1]{polak1997optimization} for theoretical conditions which guarantee the convergence of this Newton procedure.

This projection procedure, together with the routine for the adaptation of the error tolerance $\tau_n$ on the satisfiability of the constraints, is summarized in Algorithm~\ref{algo:EnforceConstraints}. Note that this Newton algorithm requires
the inversion of matrices of the form
$$
\nabla \Gamma^K(Y^K_{n+1/2} + \nabla \Gamma^K(Y^K_n) \cdot \Lambda)^T \cdot \nabla \Gamma(Y^K_n)
$$
for $\Lambda\in \mathbb{R}^N$ and that we cannot theoretically guarantee the invertibility of this matrix in general. In practice, it naturally depends significantly on the choice of test functions $(\phi_n)_{1\leq n \leq N}$.

\begin{algorithm}[htp]
  \caption{Projection subprocedure (Newton method) \label{algo:EnforceConstraints}}
  \begin{algorithmic}
    \STATE Input: $Y^K_{n+1/2}$, $\nabla \Gamma^K(Y^K_n)$, $\Lambda^K_n$, $i_{\rm max}$
    \STATE $i = 0$, $\Lambda'_0 \leftarrow \Lambda_n$
    \WHILE{$ \| \Gamma^K(Y^K_{n+1/2} + \nabla \Gamma^K(Y^K_n) \cdot \Lambda'_i) \| > 10^{-16}$ and $i \le i_{\max}$}
    \STATE $\Lambda'_{i+1} \leftarrow \Lambda'_i - \left(\nabla \Gamma^K(Y^K_{n+1/2} + \nabla \Gamma^K(Y^K_n) \cdot \Lambda'_i)^T \cdot \nabla \Gamma(Y^K_n)\right)^{-1} \cdot \Gamma^K((Y^K_{n+1/2} + \nabla \Gamma^K(Y^K_n) \cdot \Lambda'_i)$
      \STATE $i \leftarrow i+1$
    \ENDWHILE
    \IF{$ \| \Gamma^K(Y^K_{n+1/2} + \nabla \Gamma^K(Y^K_n) \cdot \Lambda'_i) \| \le 10^{-16}$}
      \RETURN $Y^K_{n+1/2} + \nabla \Gamma^K(Y^K_n) \cdot \Lambda'_i$, $\Lambda'_i$, $i$
    \ELSE
      \RETURN Projection failure.
    \ENDIF
  \end{algorithmic}
\end{algorithm}

\subsection{Initialization procedure}\label{sec:init}

Algorithm~\ref{algo:ConstOvrdmpdLang} is initialized with an initial guess $Y^K_0$ which is assumed to belong to the constraints submanifold $\mathcal M^K$.
In practice, finding an element which belongs to this submanifold is a delicate task, especially when the number of test functions is large. Indeed, as mentioned in the preceding section, the Newton procedure described in
Section~\ref{sec:projection} only converges if the starting point of the algorithm is sufficiently close to the manifold $\mathcal M^K$. This is the reason why this initialization step is rather performed using a method inspired from~\cite[Section 5 example 3]{zhang2020ergodic}.
A Runge-Kutta~3
(Bogacki-Shampine) numerical scheme~\cite[(5.8-42)]{ralston2001first} is used in order to discretize the dynamics
$$
\frac{d}{dt}Y^K(t) =  F(Y^K(t))
$$
starting from a random initial state $Y^K(t=0) = Y^{K,0}\in \left((\mathbb{R}^d)^M\right)^K$, where $F$ is defined as
\begin{equation}
\forall Y^K \in \left((\mathbb{R}^d)^M\right)^K,\quad  F(Y^K) = -\left\| \Gamma^K\left(Y^K\right) \right\|_2^2 \frac{\nabla \Gamma^K(Y^K) \cdot \Gamma^K(Y^K) }{\left\| \nabla \Gamma^K(Y^K) \cdot \Gamma^K(Y^K) \right\|_2^2}.
\end{equation}
We observe that such a numerical procedure is more robust than a Newton algorithm, even if it can converge very slowly.

\medskip

Let us mention here that, in the case of the particle problem~(\ref{eqn:MCOTK2}) with adaptive weights, an additional step may be used prior to such a Runge-Kutta method, which consists in using a Carath\'eodory-Tchakaloff subsampling procedure.
Carath\'eodory-Tchakaloff subsampling~\cite{piazzon2017caratheodory, tchernychova2016caratheodory} has been introduced to compute low nodes cardinality cubatures.

In our context, this method can be adapted to find a low nodes cardinality starting point, as close as possible to the constraints submanifold $\overline{\mathcal M}^K$.
More precisely, the method works as follows: we fix a value $K_\infty \gg K$ and compute $(X^1, \cdots, X^{K_\infty})$ iid samples of random vectors according to the probability law $\mu$. A
Non-Negative Least Squares (NNLS) is then used to find a sparse solution to the optimization problem
\begin{equation} \label{eq:nnlspb}
  w^* \in \argmin_{w \in \R_+^{K_\infty}} \left\|\Phi w - \bar{\mu} \right\|^2,
\end{equation}
where $\Phi:=\left(\Phi_{ n,k}\right)_{1\leq n \leq N+1, 1\leq k \leq K_\infty} \in \mathbb{R}^{N\times K_\infty}$, $\overline{\mu} = (\mu_1, \cdots, \mu_N, 1)\in \mathbb{R}^{N+1}$ and
$$
\forall 1\leq k \leq K_\infty, \quad \forall 1\leq n \leq N, \; \Phi_{n,k} = \varphi_n(X^k) \mbox{ and } \Phi_{N+1,k} = 1.
$$

By Kuhn-Tucker conditions for the NNLS problem \cite[Theorem (23.4)]{lawson1995solving}, there exists a solution $w^*:=(w_k^*)_{1\leq k \leq K_\infty}\in \R_+^{K_\infty}$ to (\ref{eq:nnlspb})
such that $\#J  \le N+1$ with $J:= \left\{ 1\leq k \leq K_\infty, w_k^*>0\right\}$. Common algorithms such as the Lawson-Hanson method~\cite[Theorem (23.10)]{lawson1995solving} enable to compute such a sparse solution.
Let us point out that any solution $w^*$ to (\ref{eq:nnlspb}) then satisfies
\begin{equation}
\sum_{n=1}^N \left| \sum_{k \in J} w^*_k \varphi_n(X^k) - \bar{\mu}_n \right|^2 \le \sum_{n=1}^N \left| \frac 1 {K_\infty} \sum_{k =1}^{K_\infty} \varphi_n(X^k) - \bar{\mu}_n \right|^2.
\end{equation}

In practice, in the case when $\#J \leq K$, the positions and weights returned by the Carath\'eodory-Tchakaloff Subsampling procedure are subdivided and randomly perturbed with a small amount of noise.

\clearpage
\section{Numerical tests}\label{sect:numericaltests}

The aim of this section is to illustrate the results obtained via the numerical procedure described in Section~\ref{sect:numericaldiscretization} for the resolution of the particle problems~(\ref{eqn:MCOTK2}) and~(\ref{eqn:MCOTKfiwedweight2})
in different test cases.

Section~\ref{sect:1D} is devoted to results obtained in cases where $d=1$ and Section~\ref{sect:3D} contains numerical results obtained in examples where $d=3$. The experiments presented in this section have been implemented in python~3 using scipy and numpy modules,
and tested on a server with an Intel Xeon processor with 32 cores (hyperthreaded) and 192 Go RAM.

\subsection{One-dimensional test cases ($d=1$)}\label{sect:1D}

\subsubsection{Theoretical elements}

In the case where $d=1$, the solution to the optimal transport problem~\eqref{eqn:OT} is analytically known in the case when $c$ is a symmetric repulsive cost from~\cite[Theorem 1.1]{colombo2015multimarginal}. For sake of completeness, we recall their result for the cost function that we consider in our numerical experiments.


\begin{theorem}[Colombo, De Pascale, Di Marino, 2015]\label{thm:colombo}
  Let $\epsilon\ge0$ and $c:\R^M \to [0, + \infty]$ be the cost defined by
  \begin{equation}\label{cost_1D}
  \forall x_1,\cdots,x_M\in \mathbb{R},\quad  c(x_1, \dots, x_M) = \sum_{1\le  i, j\le M, i\not=j}   \frac 1 {\epsilon +|x_i-x_j|}.
  \end{equation}
  Let $\mu$ be an non atomic probability measure on $\R$ such that
  \begin{equation}
    \min_{\pi \in \Pi(\mu; M)} \int_{\R^M} c(x_1, \dots, x_M) d \pi (x_1, \dots, x_M) < + \infty.
  \end{equation}
  Let $- \infty = d_0 < d_1 < \dots < d_M = + \infty$ be such that
  \begin{equation}
    \mu([d_i, d_{i+1}]) = \frac 1 M, \quad i = 0, \dots, M-1.
  \end{equation}
  Let $T: \R \to \R$ be the unique (up to $\mu$-null sets) function increasing on each interval $[d_i, d_{i+1}]$, $i=0, \dots, M-1$ and such that
  \begin{equation}
    \begin{split}
      T\#(\Ind{[d_i, d_{i+1}]}\mu) &= \Ind{[d_{i+1}, d_{i+2}]}\mu, \quad i=0,\dots, M-2 \\
      T\#(\Ind{[d_{M-1}, d_{M}]}\mu) &= \Ind{[d_{0}, d_{1}]}\mu.
    \end{split}
  \end{equation}
  Then $T$ is an admissible map for
  \begin{equation}
    \inf_{T: \R \to \R \, \text{Borel}, \, T\# \mu = \mu, \, T^{(M)} = \mathrm{Id}} \int_{\R} c(x, T(x), \dots, T^{(M-1)}(x)) d \mu(x),
  \end{equation}
  where $T^{(i)} = \overbrace{T \circ \dots \circ T}^{\text{$i$ times}}$.

  Moreover, the only symmetric optimal transport plan is the symmetrization of the plan induced by the map $T$.
\end{theorem}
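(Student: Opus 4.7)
The plan is to decompose the statement into three parts: admissibility of $T$, optimality of the induced plan $\pi_T := (\mathrm{Id}, T, T^{(2)}, \dots, T^{(M-1)}) \# \mu$ among symmetric couplings, and uniqueness of the symmetric optimal plan. Admissibility is essentially book-keeping given the construction: $T$ maps $[d_i, d_{i+1}]$ onto $[d_{i+1}, d_{i+2}]$ (indices cyclic) as a monotone, measure-preserving bijection, so $T\#\mu = \mu$, and iterating $M$ times sends each $[d_i, d_{i+1}]$ back to itself through an increasing, measure-preserving self-map, which by Hoeffding--Fr\'echet uniqueness of monotone rearrangements must be the identity $\mu$-a.e. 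The marginals of $\pi_T$ all equal $\mu$ by measure-preservation of each $T^{(i)}$.

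For optimality, I would exploit $c$-cyclical monotonicity together with a non-crossing rearrangement argument tailored to the repulsive cost \eqref{cost_1D}. The key lemma I would establish is a finite-dimensional swap inequality: given two ordered $M$-tuples $(x_{(1)}\le \cdots \le x_{(M)})$ and $(y_{(1)}\le \cdots \le y_{(M)})$, and any pair of indices $i<j$, exchanging the $i$-th coordinate of the first tuple with the $j$-th coordinate of the second (when this produces a ``more interlaced'' pair of $M$-tuples) strictly decreases the total cost $c(X)+c(Y)$. This follows from the strict convexity of $s\mapsto 1/(\epsilon+s)$ on $[0,\infty)$ and a careful accounting of which pairwise distances increase and decrease under the swap. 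Applied to the support of any symmetric optimal plan, this forces the ordered projections of the support to lie in a common ``staircase'' pattern.

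Once the non-crossing structure is in place, I would combine it with the marginal condition: symmetry of the plan, together with non-crossing, implies that the $j$-th ordered projection (the law of the $j$-th smallest coordinate) must be $M\,\mathbf{1}_{[d_{j-1},d_j]}\mu$ (up to normalization), because the masses of the ordered marginals must add to $M\mu$ and each is supported in a set whose preimages under inclusion give exactly the mass $1/M$ on the corresponding interval. Then, the non-crossing constraint between consecutive ordered marginals forces the coupling between the $j$-th and $(j+1)$-th ordered marginals to be the unique monotone rearrangement between $\mathbf{1}_{[d_{j-1},d_j]}\mu$ and $\mathbf{1}_{[d_j,d_{j+1}]}\mu$, which is exactly $T$ restricted to $[d_{j-1},d_j]$. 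By iterating one recovers $\pi_T$ (or rather its symmetrization), yielding both optimality and uniqueness in the class of symmetric plans.

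The main obstacle is the non-crossing/rearrangement lemma: translating the intuitive ``spread-out'' behaviour of a repulsive cost into a rigorous strict inequality valid for \emph{any} configuration of two $M$-tuples is the only step that requires genuine work. The Coulomb-like form $1/(\epsilon+|x-y|)$ is not strictly convex in each pairwise distance jointly with the others, so the swap argument must be done pair-by-pair and the contributions that are unaffected by the swap must be tracked carefully. Once this lemma is secured, the rest is a standard argument using $c$-cyclical monotonicity of supports of optimal plans and the Hoeffding--Fr\'echet uniqueness of monotone rearrangements on $\R$.
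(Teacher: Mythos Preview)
The paper does not prove this theorem at all: it is explicitly attributed to Colombo, De Pascale and Di Marino (2015) and merely recalled (``for sake of completeness, we recall their result'') so as to have an exact benchmark for the one-dimensional numerical tests. There is therefore no proof in the paper to compare your proposal against.

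That said, your outline is essentially the strategy of the original Colombo--De Pascale--Di Marino paper: admissibility of $T$ is indeed elementary, and the heart of the matter is precisely the ordered-swap/rearrangement inequality for two $M$-tuples that you identify. A word of caution on that step: the swap you describe (exchange the $i$-th ordered coordinate of one tuple with the $j$-th of the other) does not in general strictly decrease $c(X)+c(Y)$ for \emph{every} such $i<j$; in the original argument one has to choose the right pair to swap (roughly, adjacent coordinates across the two tuples that are ``out of order'') and the inequality then follows from convexity of $s\mapsto 1/(\epsilon+s)$ on $[0,\infty)$ together with a telescoping of the affected pairwise terms. Your sketch also glosses over the passage from ``ordered marginals are $M\mathbf{1}_{[d_{j-1},d_j]}\mu$'' to the identification of the coupling between consecutive ordered marginals: this requires an induction on the support points of an optimal plan (via $c$-cyclical monotonicity on finite subsets), not just a single swap. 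These are exactly the technical points handled in the cited reference, so your plan is on the right track but would need those refinements to be a complete proof.
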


We make use of Theorem \ref{thm:colombo} to compare the exact solution of problem~\eqref{eqn:OT} together with the approximation given by the numerical procedure described in Section~\ref{sect:numericaldiscretization}
to solve the MCOT particle problems with fixed or adaptive weights.

\subsubsection{Marginals, test functions, cost and weight functions}

\paragraph{Marginals.} The numerical experiments in this section were realized with three different marginal laws, which are respectively denoted by $\mu_1$, $\mu_2$ and $\mu_3$ and defined by

\begin{align}
  \,d\mu_1(x) &:= \frac 1 2 \Ind{[-1, 1]}(x)\,dx, \\
  \,d\mu_2(x) &:= \left[\frac{\pi}{10}\cos\left(\frac{5\pi}{2}x\right) + 0.46\right]\Ind{[-1, 1]}(x)\,dx, \\
  \,d\mu_3(x) &:= \left[0.13\pi\cos\left(\frac{13\pi}{2}x\right) + 0.48\right]\Ind{[-1, 1]}(x)\,dx.
\end{align}

The densities of $\mu_1,\mu_2,\mu_3$ are plotted in Figure \ref{fig:1Dlaws}.

\begin{figure}[tp]
  \centering
  \begin{subfigure}{0.32\textwidth}
    \includegraphics[width=\textwidth]{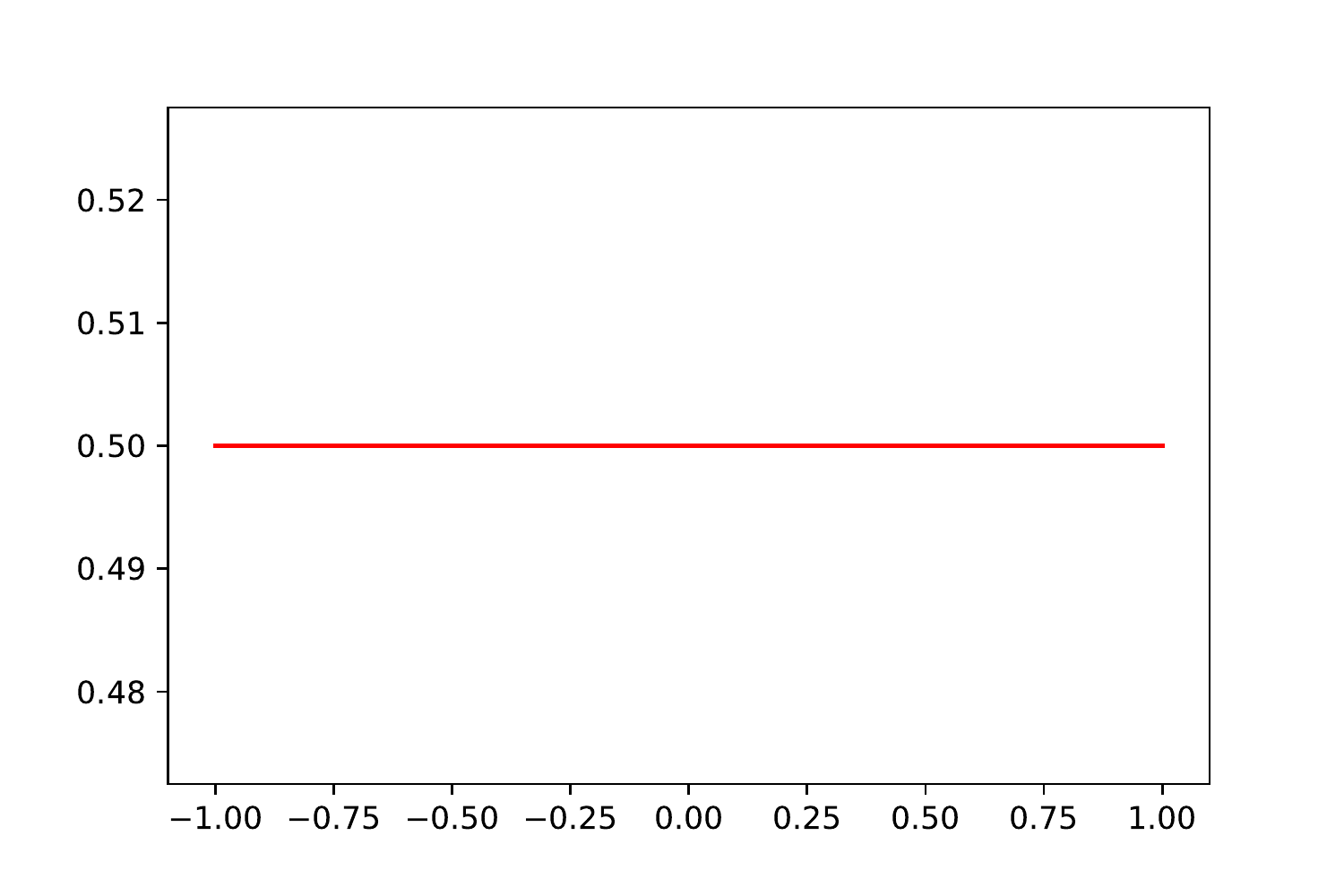}
    \caption{$\mu_1$\label{fig:1Dmu1}}
  \end{subfigure}
  \begin{subfigure}{0.32\textwidth}
    \includegraphics[width=\textwidth]{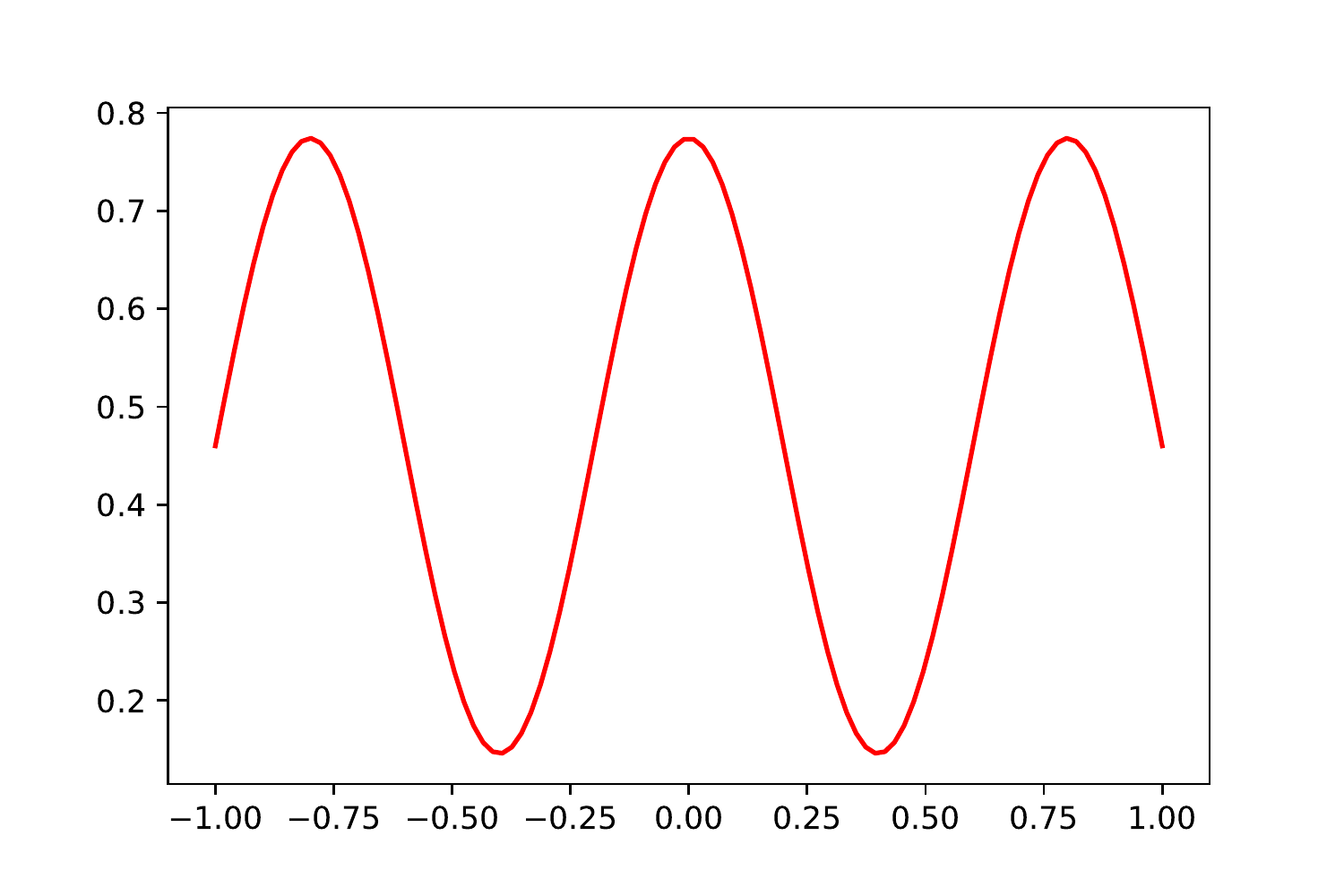}
    \caption{$\mu_2$\label{fig:1Dmu2}}
  \end{subfigure}
  \begin{subfigure}{0.32\textwidth}
    \includegraphics[width=\textwidth]{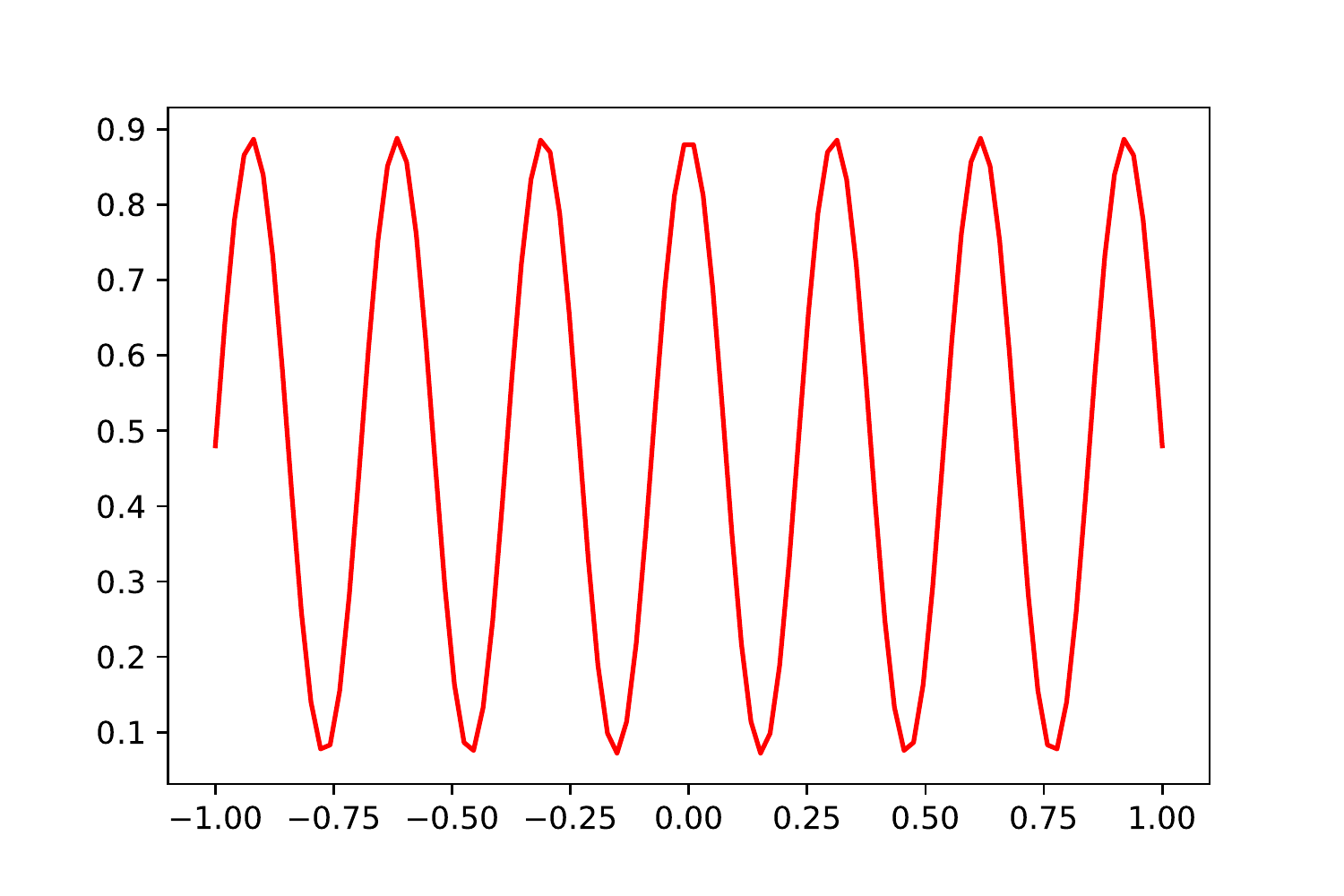}
    \caption{$\mu_3$\label{fig:1Dmu3}}
  \end{subfigure}
  \caption{Densities of the marginal laws tested for 1D numerical tests.
  \label{fig:1Dlaws}}
\end{figure}

\paragraph{Test functions.} The test functions $(\phi_n)_{1\leq n \leq N}$ used are Legendre Polynomials with the following scaling
\begin{equation}
  \phi_n = \frac{\sqrt{2n+\frac 1 2}}{n+1} P_n,
\end{equation}
where $P_n$ is the Legendre Polynomial of degree $n$.
As the marginal laws considered have their support in $[-1,1]$, we chose the Legendre polynomials for their orthogonality property. Besides, by using polynomials, the matrix $\nabla \Gamma(X)$ is related to a Vandermonde matrix, the invertibility of which (crucial to enforce the constraints by Algorithm \ref{algo:EnforceConstraints} or the Runge-Kutta method) is ensured as long as particles are spread on more than $N$ locations.

\paragraph{Cost.} We use in all experiments the regularized  Coulomb  cost function~\eqref{cost_1D} with $\epsilon = 10^{-1}$.

\paragraph{Weight functions.} Two different choices of weight functions $f$ are studied in the numerical experiments presented below: the squared weight function $f: \mathbb{R} \ni a \mapsto a^2$ and the exponential weight function
$f: \mathbb{R} \ni a \mapsto e^{-a}$. Although we do not have strong criteria to chose a weight function, the intuition behind the squared weight function is that it can behave well regarding the enforcement of the constraints by a Newton method, given that $\overline{\Gamma}$ is then a polynomial. The intuition behind the exponential weight function is that it could slow down the cancellation of the weights of the particles, keeping alive more degrees of freedom for the optimization process.

\subsubsection{Initialization step -- Figure~\ref{fig:init1D} }

The aim of Figure~\ref{fig:init1D} is to plot the decrease of $\|\Gamma^K(Y^K_m)\|_\infty$ as a function of the number of iterations of the Runge-Kutta~3 method presented in Section~\ref{sec:init}, in a test case where $M =5$.
We numerically observe here that, as expected, as $N$ increases, the number of iterations needed by the Runge-Kutta procedure to reach convergence increases.
Besides, we observe that the additional degrees of freedom of the cases using weight functions allow a faster initial optimization -- yet not heavily pronounced, as well as an initialization slightly faster for the squared weight function compared to the exponential one.


\begin{figure}[t]
  \centering
  \begin{subfigure}{0.48\textwidth}
    \includegraphics[width=\textwidth]{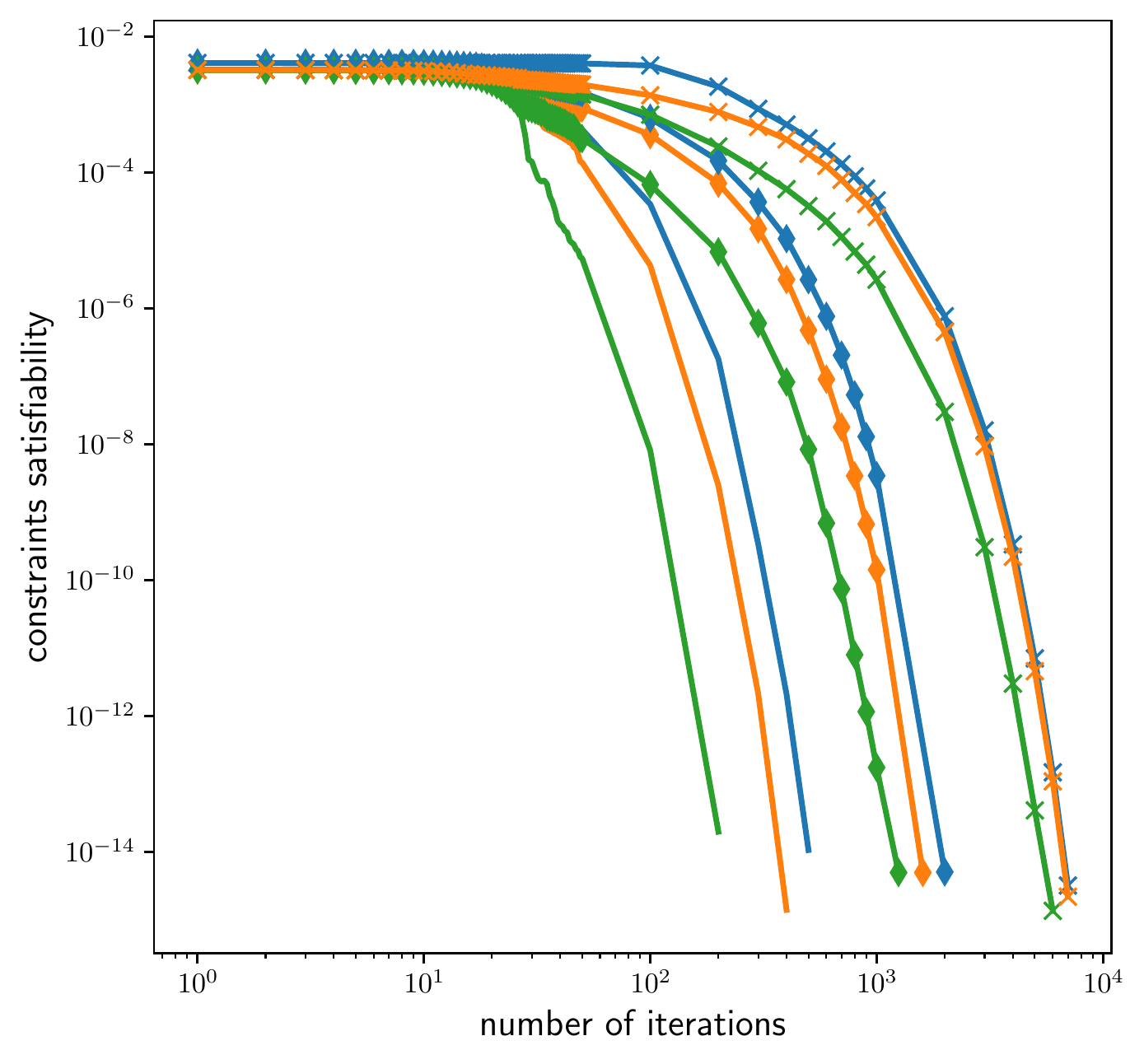}
    \caption{$\mu_2$ \label{fig:init1Dmu2}}
  \end{subfigure}
  \begin{subfigure}{0.48\textwidth}
    \includegraphics[width=\textwidth]{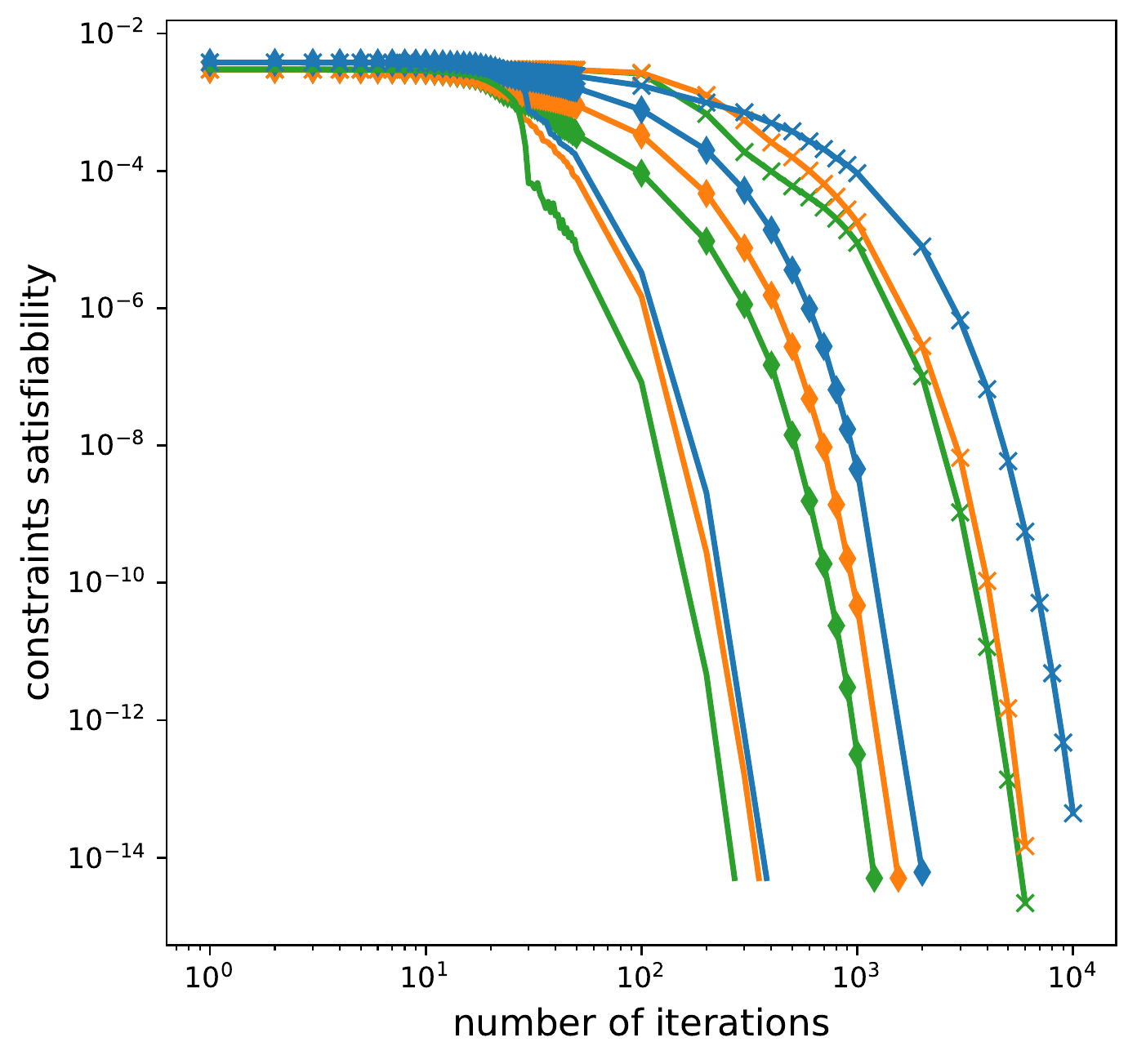}
    \caption{$\mu_3$ \label{fig:init1Dmu3}}
  \end{subfigure}
  \caption{Evolution of $\| \Gamma^K(Y^K_m) \|_\infty$ for different weight functions as a function of the number of iterations $m$ of the Runge-Kutta~3 procedure. Tests were performed with $M=5$, $K=10000$. Blue curves uses fixed weights, orange curves uses an exponential weight function and green curves a squared weight function. No marker is for $N = 10$, a diamond marker for $N=20$ and a ``+" marker for $N= 40$. Caratheodory-Tchakaloff subsampling gave initial values of $1.11 \times 10^{-16}$ ( $3.83 \times 10^{-16}$, $3.02 \times 10^{-16}$) for $\mu_2$, $N = 10$ (resp. $N = 20$, $N = 40$) and $3.33 \times 10^{-16}$ ( $1.28 \times 10^{-16}$, $4.66 \times 10^{-16}$) for $\mu_3$, $N = 10$ (resp. $N = 20$, $N = 40$). \label{fig:init1D}}
\end{figure}

\subsubsection{Decrease of the cost function -- Figures \ref{fig:cv1Dnoise}, \ref{fig:cv1DN} and \ref{fig:cv1Dmetric}}

The aim of Figures \ref{fig:cv1Dnoise}, \ref{fig:cv1DN} and \ref{fig:cv1Dmetric} is to plot the evolution of $V^K(Y^K_n)$ (or $\overline{V}^K(\overline{Y}^K_n)$) as a function of $n$ the number of iterations of the constrained overdamped Langevin algorithm presented in Section~\ref{sect:numericaldiscretization}
for various values of $N$, various weight functions, values of $\beta_0$ and $\mathrm{NoiseDecrease}$ functions, and using or not a subsampling at initialization.
We observe in Figure \ref{fig:cv1Dnoise} that decreasing the noise as the squareroot of the number of iterations $n$ converges faster than keeping it constant, and that keeping $\beta_0 = 0$ is the fastest. In Figure \ref{fig:cv1DN} we remark that the higher $N$ the slower the optimization (with the particular case of $\mu_3$, $N=20$ with the squared weight function which does not converge in 20000 iterations), and that cases initialized by Caratheodory-Tchakaloff subsampling tend to start with a higher cost. In Figure \ref{fig:cv1Dmetric}, we observe that with $K = 10000$ particles, considering fixed or variable weights does not strongly change the speed of convergence (but for the case $\mu_3$, $N=20$ with the squared weight function mentioned above). However, using variable weights with $K = 100$ particles seems to be the fastest set of parameters.

\begin{figure}[htp]
  \centering
  \begin{subfigure}{0.45\textwidth}
    \includegraphics[width=\textwidth]{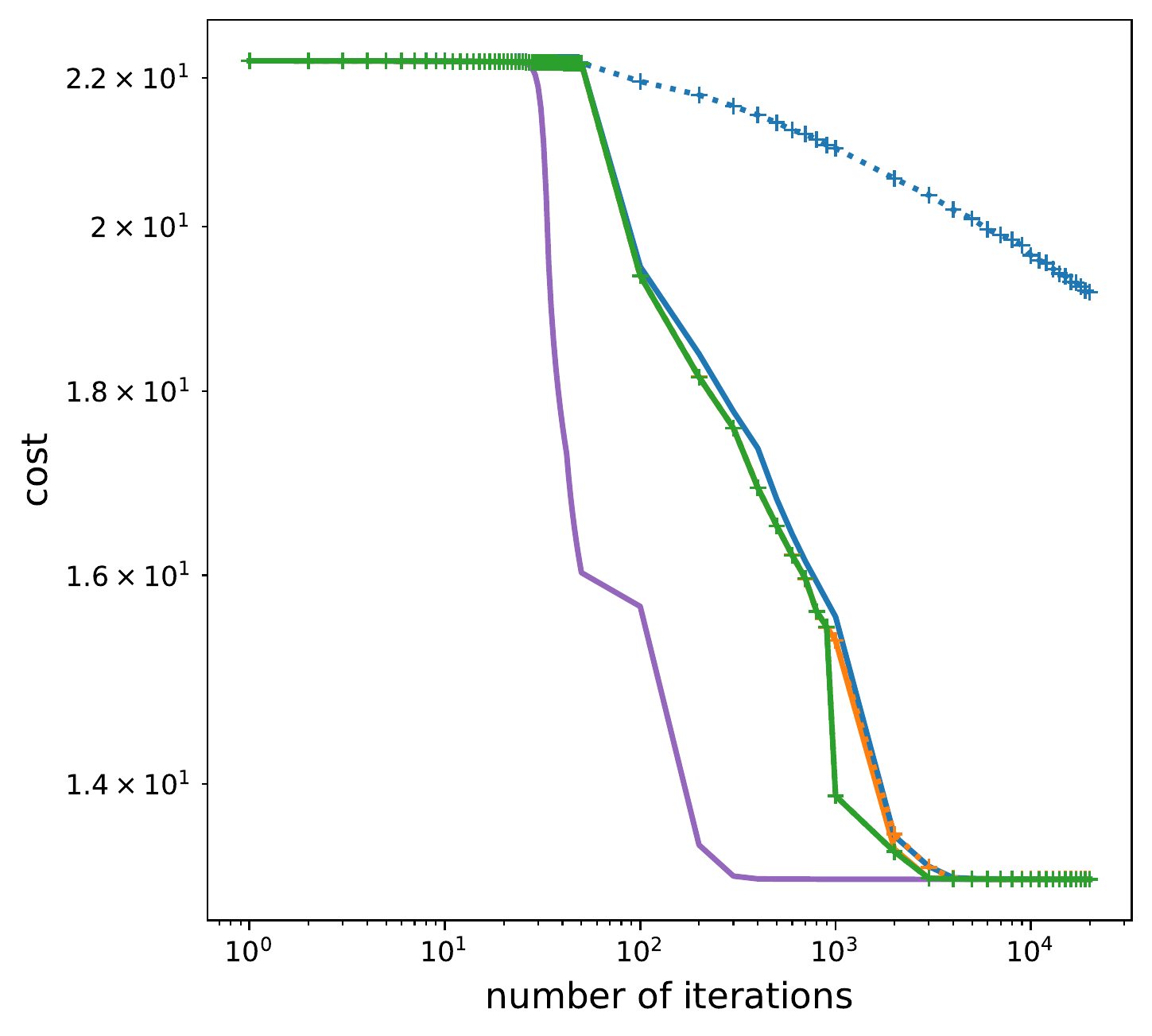}
    \caption{$\mu_2$, fixed weights \label{fig:cvnoisecst1Dmu2}}
  \end{subfigure}
  \begin{subfigure}{0.45\textwidth}
    \includegraphics[width=\textwidth]{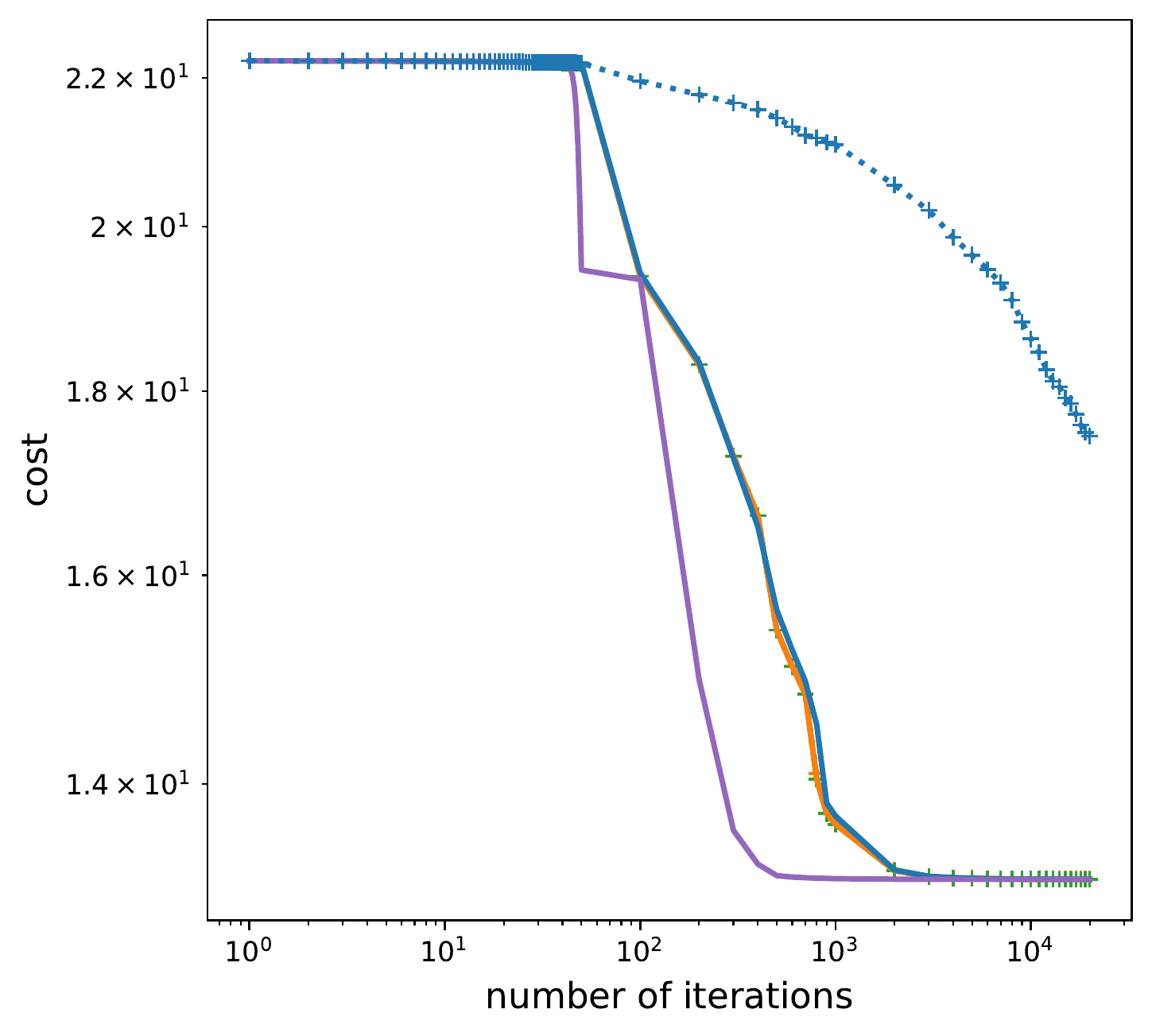}
    \caption{$\mu_2$, squared weight function \label{fig:cvnoisesq1Dmu2}}
  \end{subfigure}
  \begin{subfigure}{0.45\textwidth}
    \includegraphics[width=\textwidth]{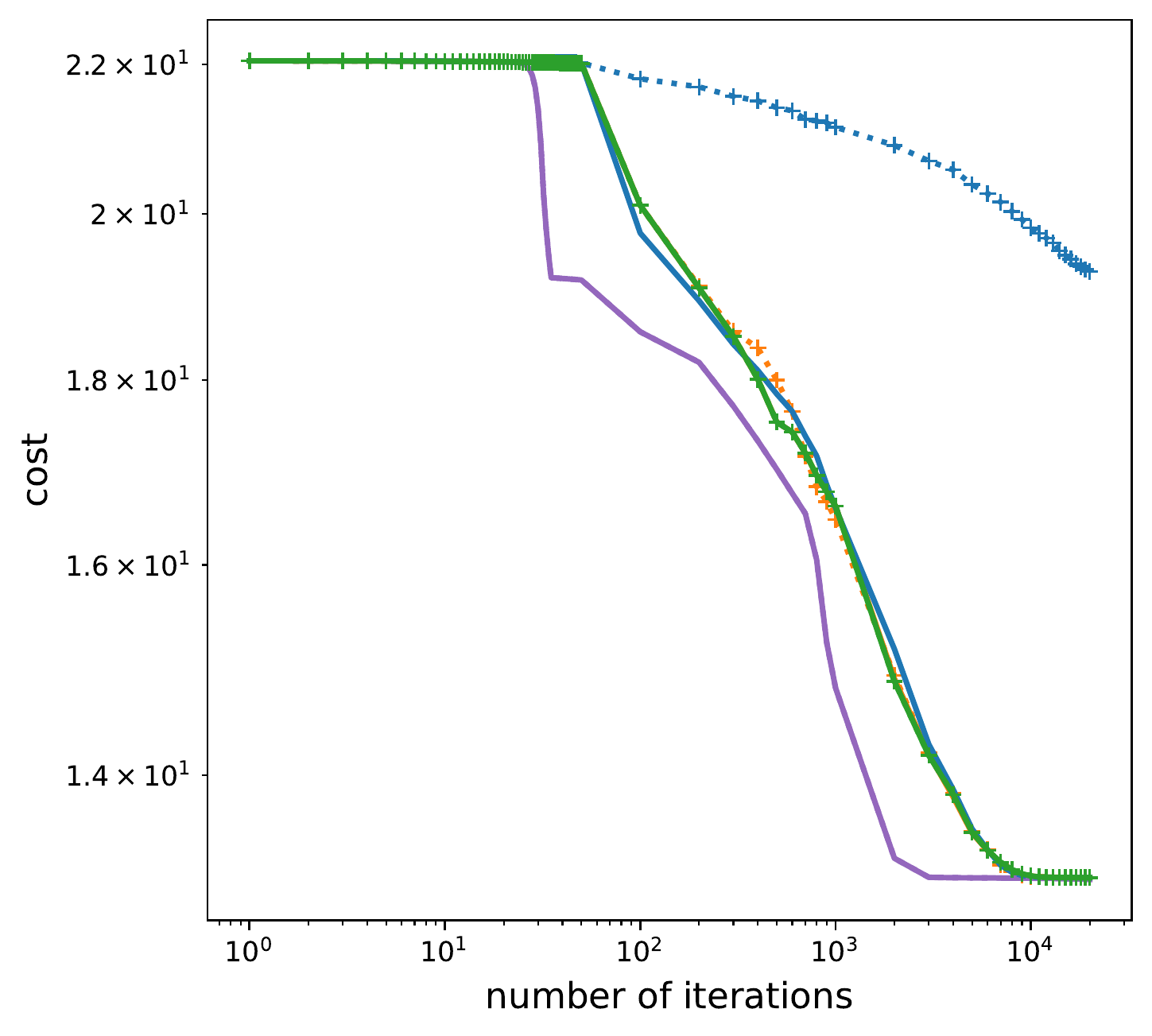}
    \caption{$\mu_3$, fixed weights \label{fig:cvnoisecst1Dmu3}}
  \end{subfigure}
  \begin{subfigure}{0.45\textwidth}
    \includegraphics[width=\textwidth]{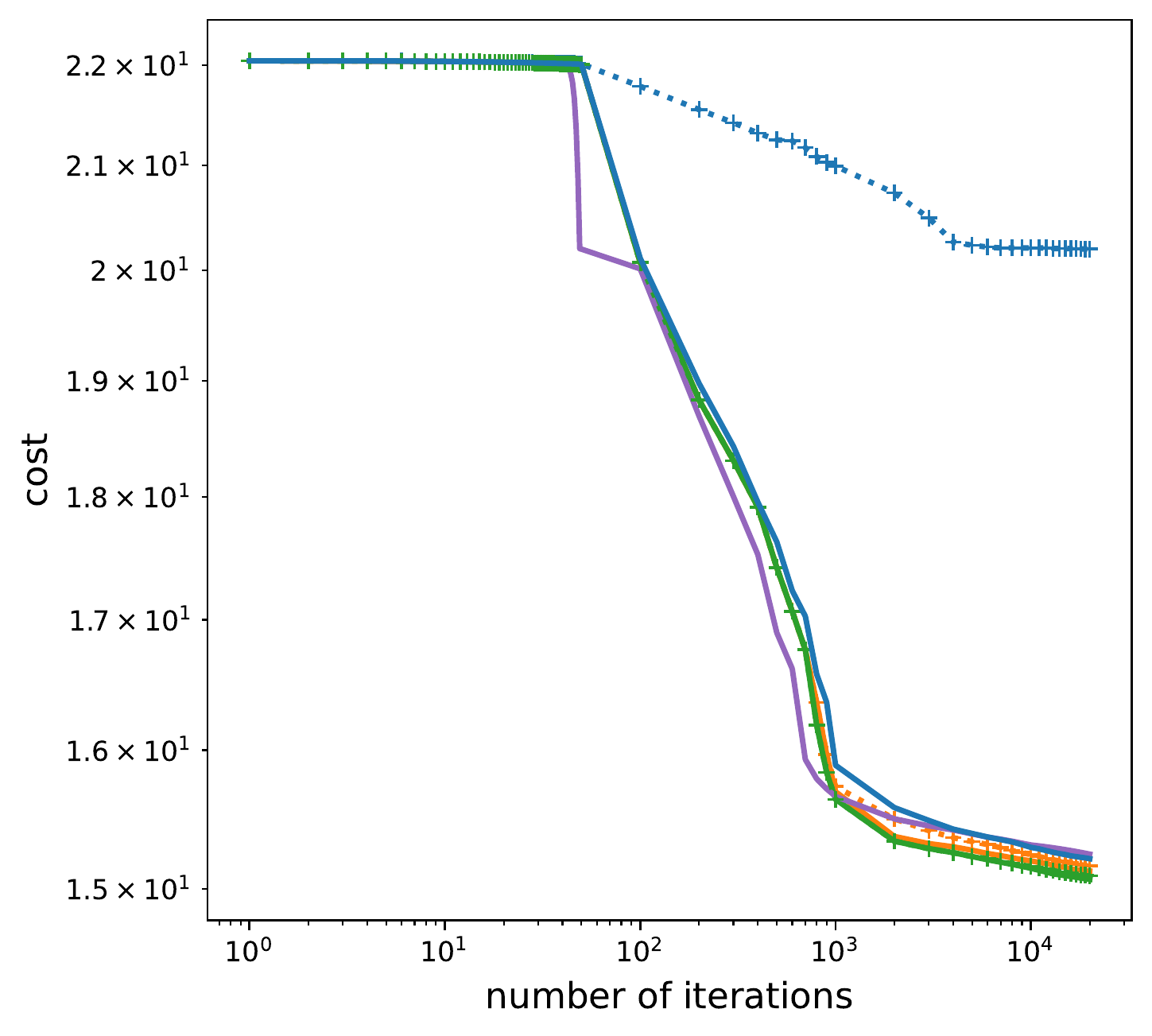}
    \caption{$\mu_3$, squared weight function \label{fig:cvnoisesq1Dmu3}}
  \end{subfigure}
  \caption{Evolution of the cost as a function of the number of iterations $n$ for various weight functions and values of $\beta_0$, for $\mu_2$ and $\mu_3$. Tests were performed with $M=5$, $N = 20$, $K=10000$ and $\Delta t_0 = 10^{-3}$. Blue curves are for $\beta_0 = 10^{-1.5}$, orange curves for $10^{-3.5}$, green curves for $10^{-5.5}$ and purple curves for $\beta_0 = 0$. Solid lines have a decrease of the noise in the squareroot of time whereas dotted lines with a ``+" marker have no decrease of the noise. \label{fig:cv1Dnoise}}
\end{figure}

\begin{figure}[htp]
  \centering
  \begin{subfigure}{0.45\textwidth}
    \includegraphics[width=\textwidth]{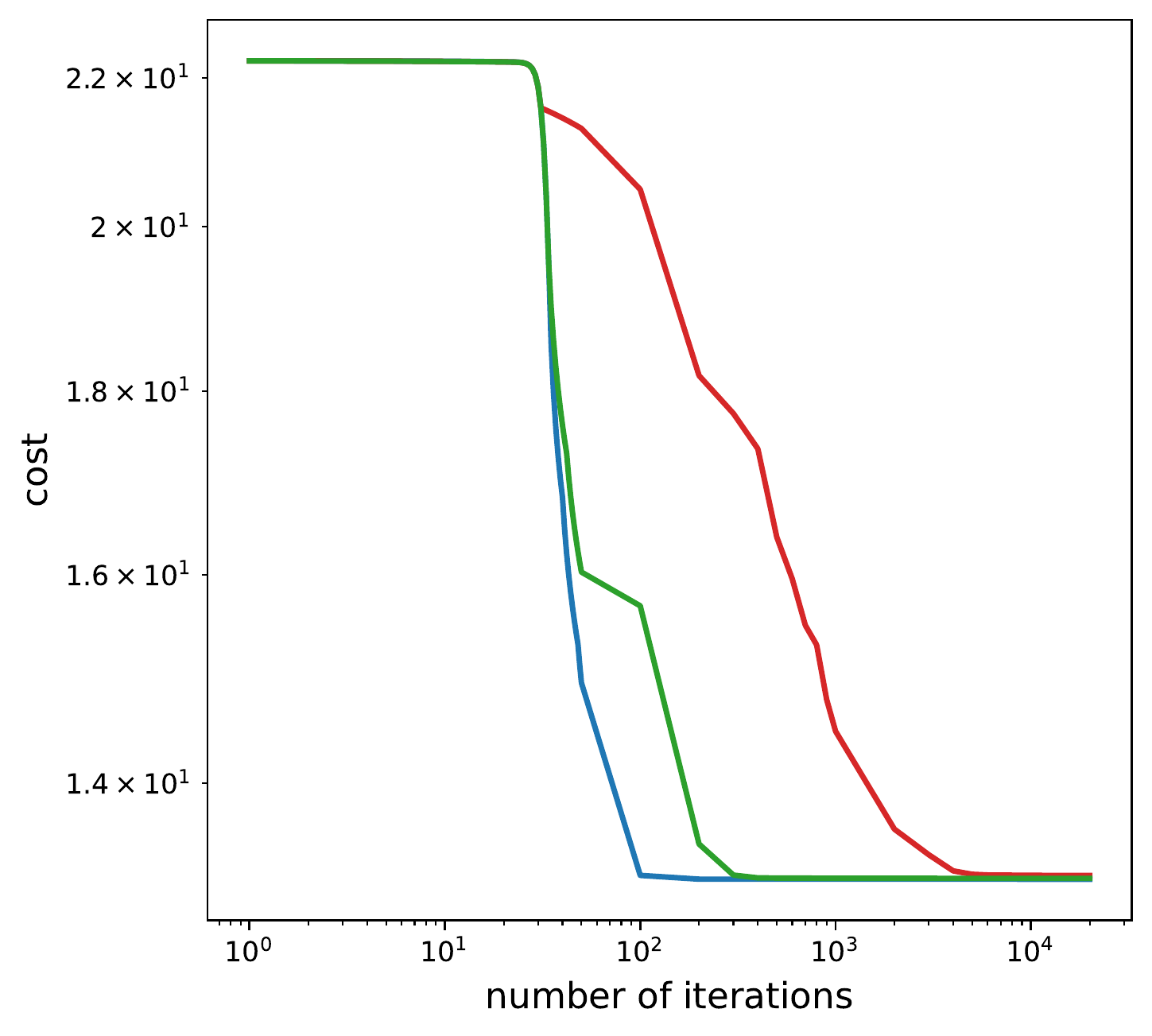}
    \caption{$\mu_2$, fixed weights \label{fig:cvNcst1Dmu2}}
  \end{subfigure}
  \begin{subfigure}{0.45\textwidth}
    \includegraphics[width=\textwidth]{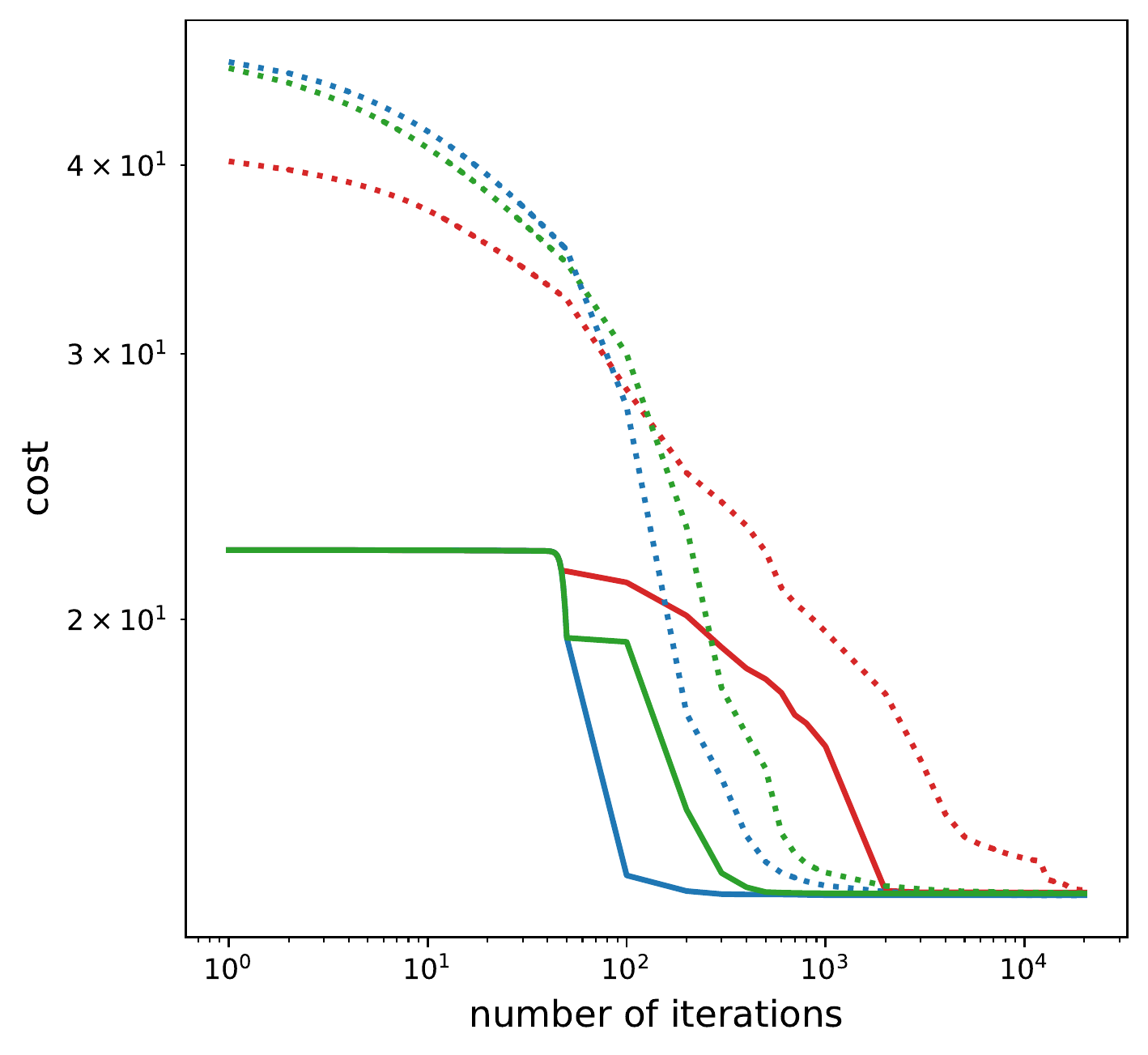}
    \caption{$\mu_2$, squared weight function \label{fig:cvNsq1Dmu2}}
  \end{subfigure}
  \begin{subfigure}{0.45\textwidth}
    \includegraphics[width=\textwidth]{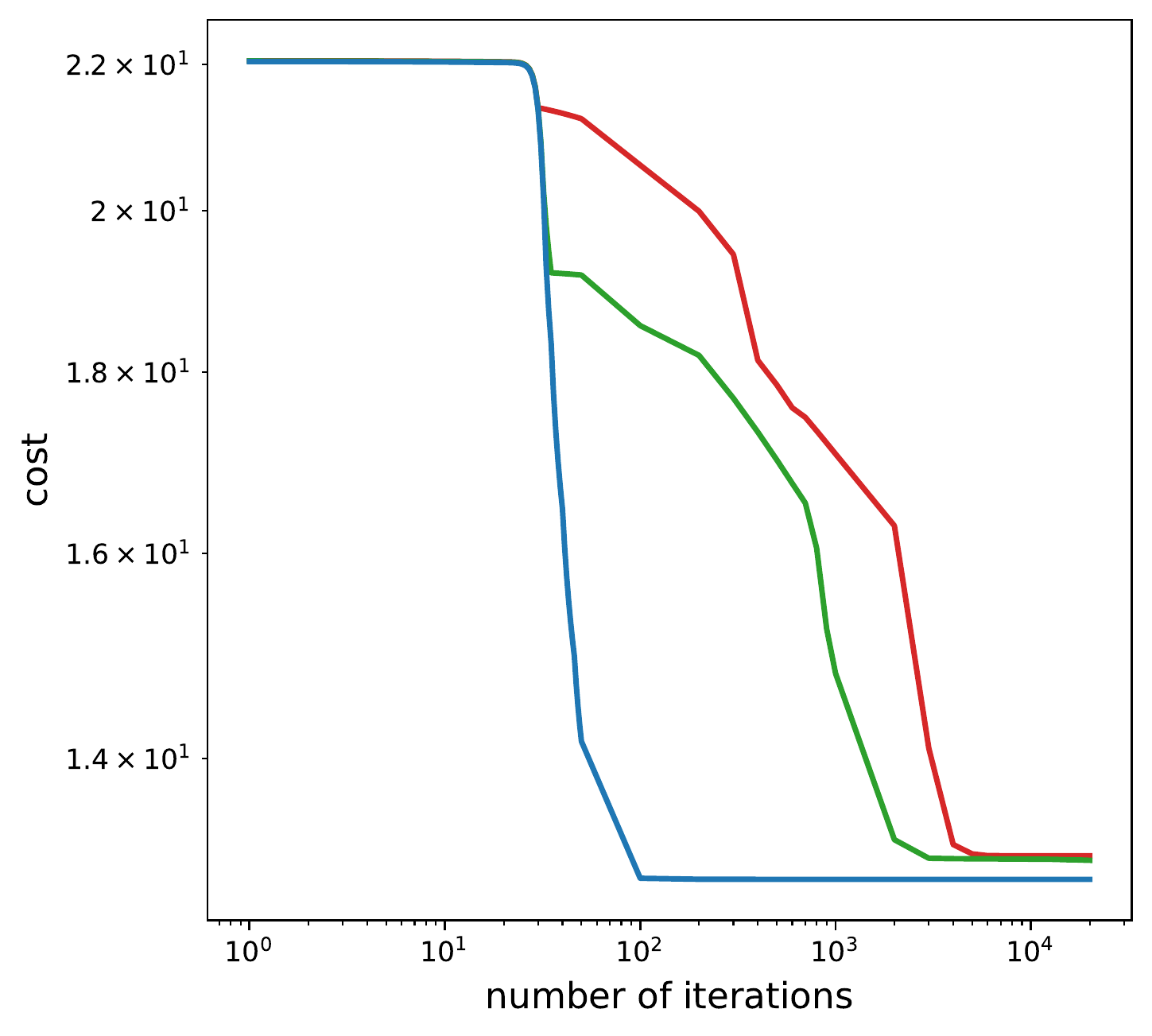}
    \caption{$\mu_3$, fixed weights \label{fig:cvNcst1Dmu3}}
  \end{subfigure}
  \begin{subfigure}{0.45\textwidth}
    \includegraphics[width=\textwidth]{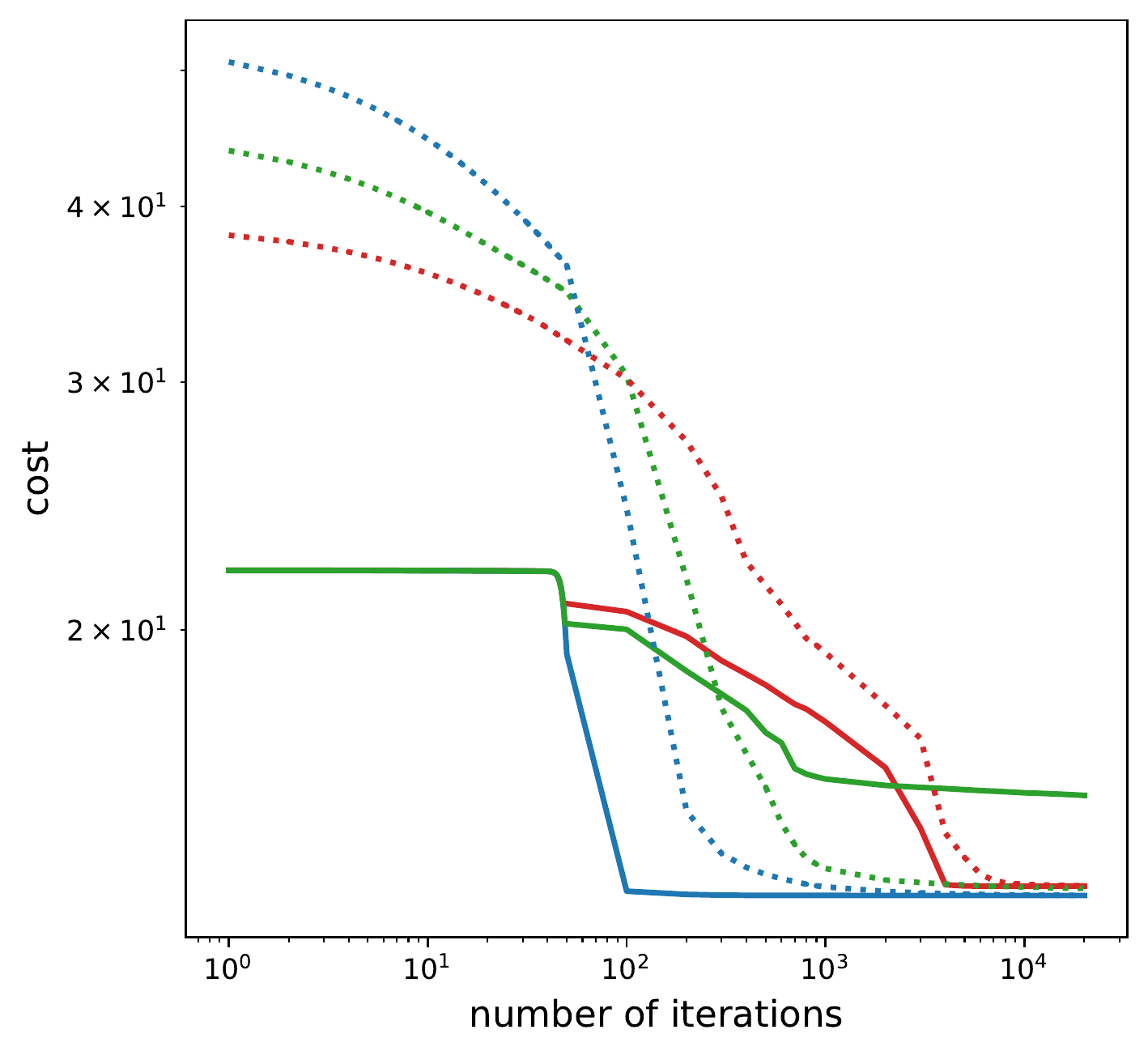}
    \caption{$\mu_3$, squared weight function \label{fig:cvNsq1Dmu3}}
  \end{subfigure}
  \caption{Evolution of the cost as a function of the number of iterations $n$ for various weight functions and values of $N$, for $\mu_2$ and $\mu_3$. Tests were performed with $M=5$, $\beta_0 = 0$, $K=10000$ and $\Delta t_0 = 10^{-3}$. Blue curves for $N =10$, green curves for $N=20$ and red curves for $N = 40$. Dotted lines correspond to tests initialized by Caratheodory-Tchakaloff subsampling whereas tests solid lines correspond to tests initialized by Runge-Kutta~3 method.\label{fig:cv1DN}}
\end{figure}

\begin{figure}[htp]
  \centering
  \begin{subfigure}{0.45\textwidth}
    \includegraphics[width=\textwidth]{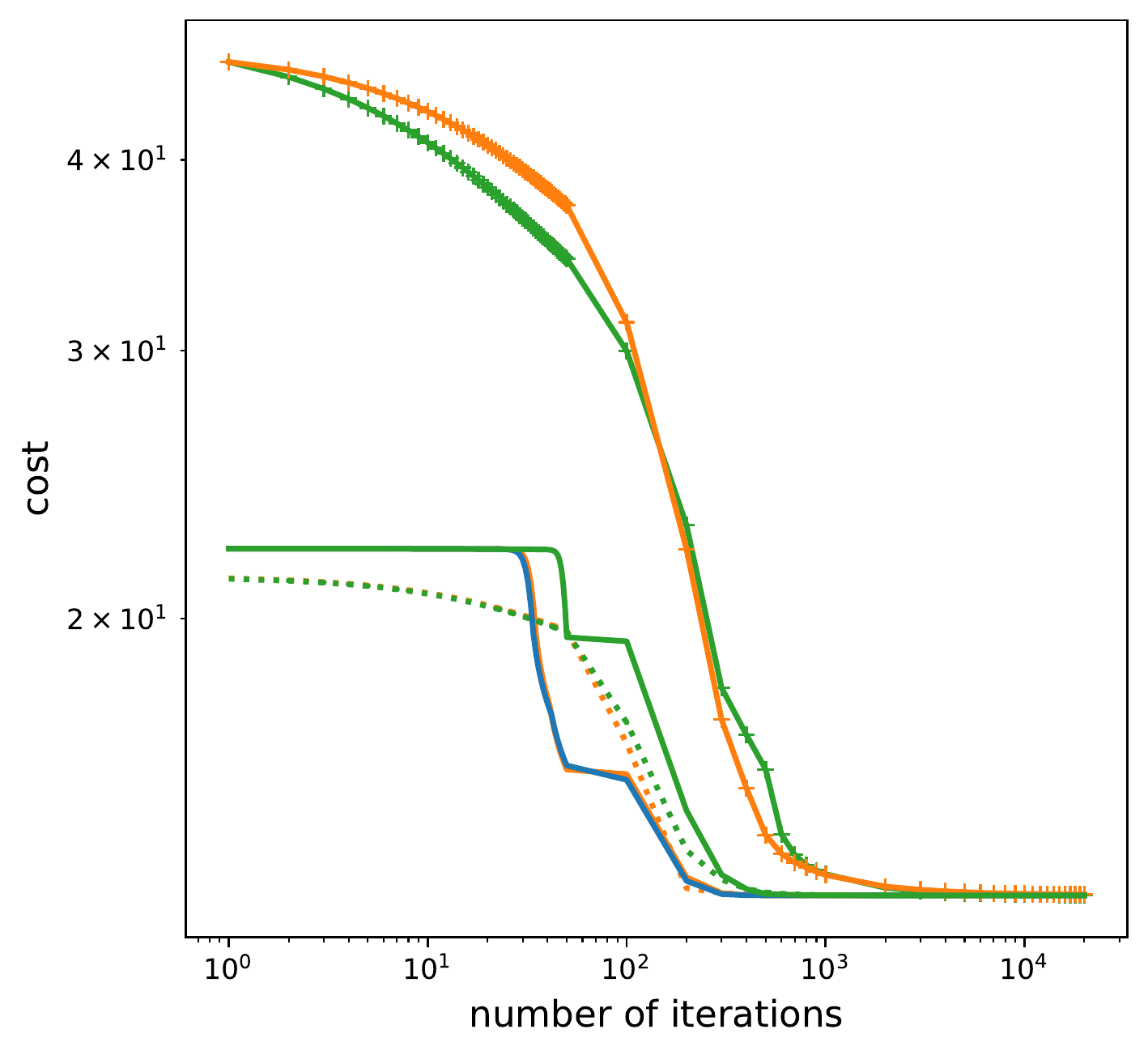}
    \caption{$\mu_2$, \label{fig:cvmetric1Dmu2}}
  \end{subfigure}
  \begin{subfigure}{0.45\textwidth}
    \includegraphics[width=\textwidth]{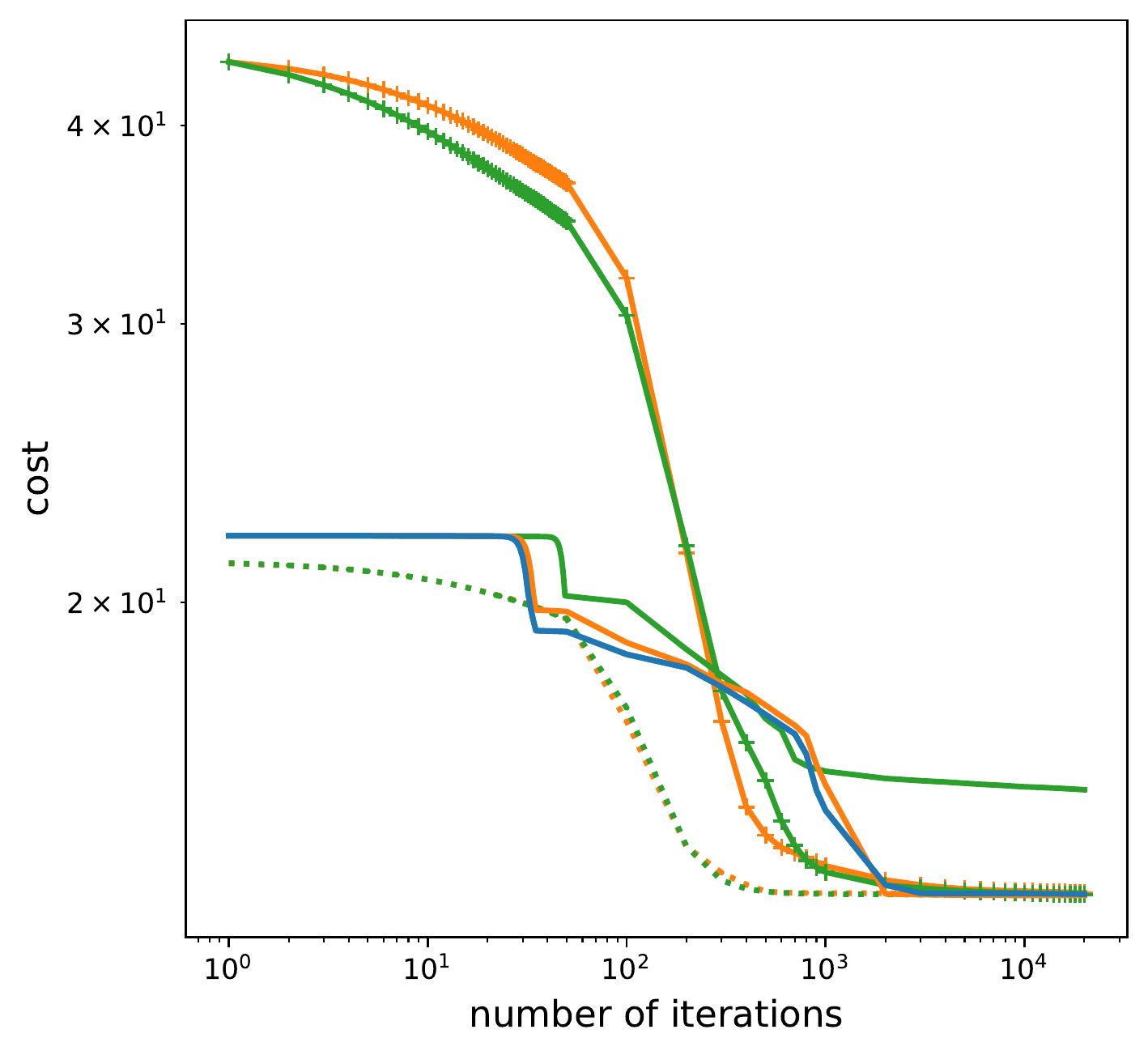}
    \caption{$\mu_3$, \label{fig:cvmetric1Dmu3}}
  \end{subfigure}
  \caption{Evolution of the cost as a function of the number of iterations $n$ for various weight functions, for $\mu_2$ and $\mu_3$. Tests were performed with $M=5$, $N=20$, $\beta_0 = 0$ and $\Delta t_0 = 10^{-3}$. Blue curves uses fixed weights, orange curves uses an exponential weight function and green curves a squared weight function. $K = 10000$ particles for solid lines and $K = 100$ particles for dotted lines. Optimization following a Caratheodory-Tchakaloff subsampling at initialization uses ``+" markers. \label{fig:cv1Dmetric}}
\end{figure}

\begin{figure}[!th]
  \centering
  \includegraphics[width=\textwidth]{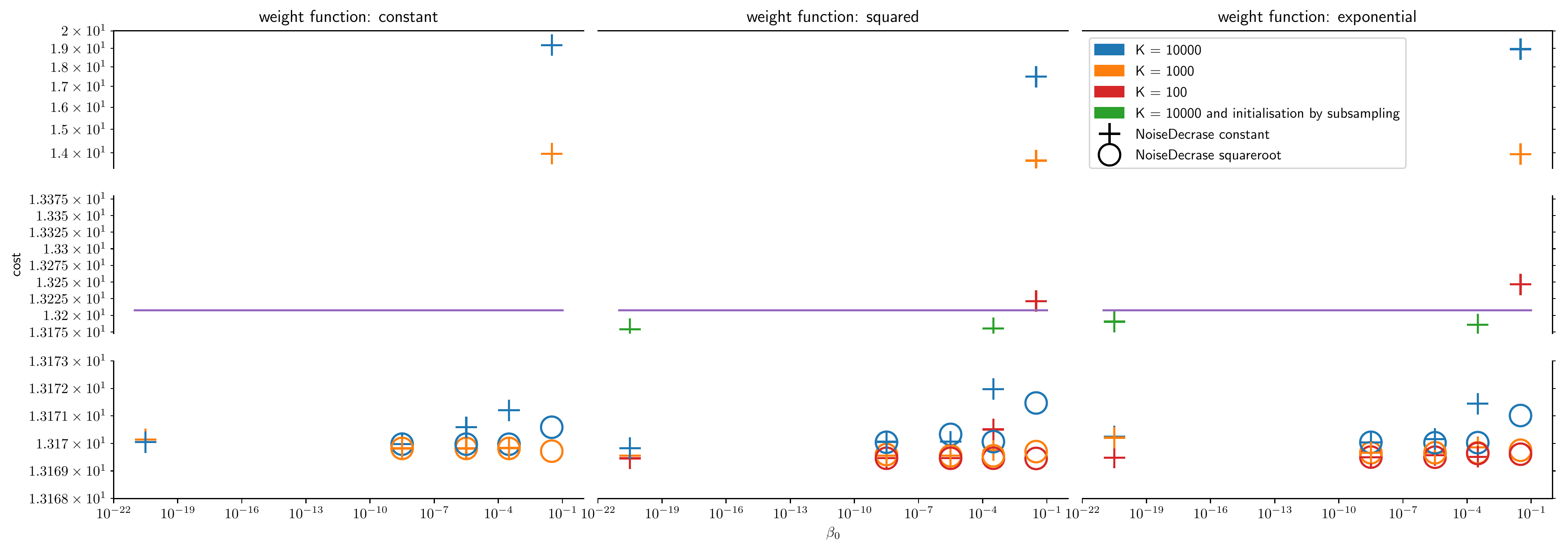}
  \caption{Lowest cost value reached during optimization by the constrained overdamped Langevin algorithm in function of the $\beta_0$, for various weight functions, values of $K$ and choices of $\mathrm{NoiseDecrease}$ functions. The purple line corresponds to the optimal transport cost.
  The marginal law is $\mu_2$, $N=20$, $M=5$, $\Delta t_0 = 10^{-3}$. \label{fig:metricnoiseoptimal1D}}
\end{figure}

\clearpage
\subsubsection{Minimal values of cost -- Figure~\ref{fig:metricnoiseoptimal1D}}

The goal of Figure~\ref{fig:metricnoiseoptimal1D} is to compare the minimal values of the cost obtained by the algorithm for different parameters together with its analytic value.
We observe that considering adaptive weights enable to reach lower optimal costs than with fixed weights, but the relative difference between the approximate minimal cost values is lower than 0.1\%.
When the noise level decreases in the square root of the number of iterations a lower optimal cost can be reached compared to a constant noise level.
In the variable weights cases, the lower $K$ the lower the optimal cost, but when the optimization starts with a Tchakaloff subsampling solution, for which the lowest cost reached is 0.3\% higher than with the Runge-Kutta~3 method.

\subsubsection{Optimal position of particles -- Figures \ref{fig:minimasmu11D}, \ref{fig:minimasmu21D} and \ref{fig:minimasmu31D}}

The aim of Figures \ref{fig:minimasmu11D}, \ref{fig:minimasmu21D} and \ref{fig:minimasmu31D} is to plot the positions of the particles obtained by the numerical procedure presented in Section~\ref{sect:numericaldiscretization} for respectively $\mu_1$, $\mu_2$ and $\mu_3$ and different values of $K$,
$N$, $\beta_0$, initialization methods, and in fixed and variable weights cases.
We numerically observe that the obtained particles are located close to the support of the exact optimal transport plan, and that the higher the value of $N$ the more precise the approximation of this transport map
is~\cite[Theorem 4.1]{alfonsi2019approximation}. Also, when $K= 10000$ and even more when $\beta_0 = 10^{3.5}$, particles are more spreaded around the transport map.

\begin{figure}[!th]
  \centering
  \begin{subfigure}{0.32\textwidth}
    \centering
    \includegraphics[width=\textwidth]{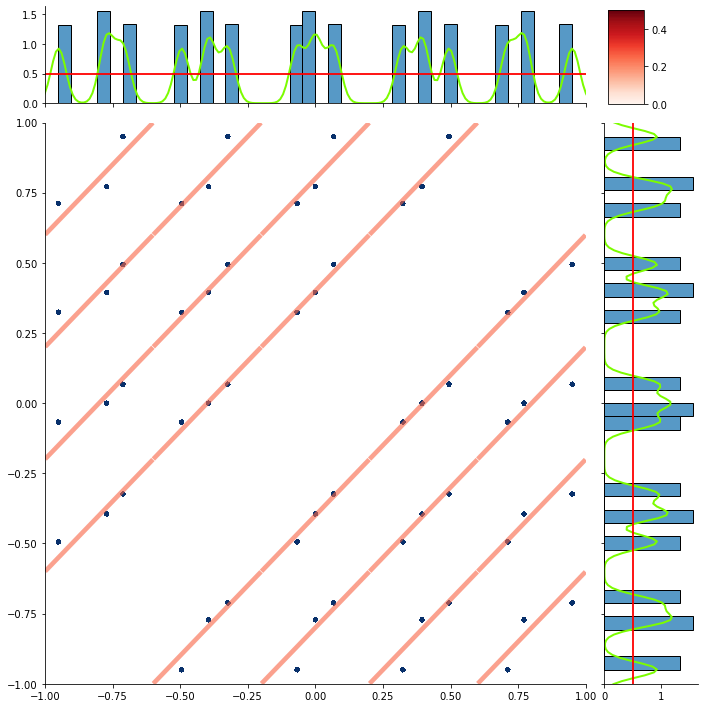}
    \caption{$N = 10$}
  \end{subfigure}
  \begin{subfigure}{0.32\textwidth}
    \centering
    \includegraphics[width=\textwidth]{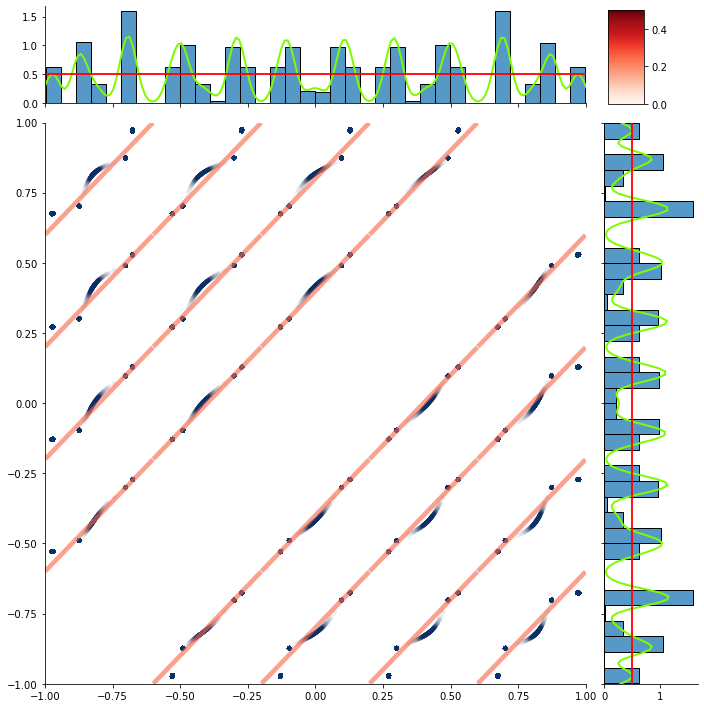}
    \caption{$N = 20$}
  \end{subfigure}
  \begin{subfigure}{0.32\textwidth}
    \centering
    \includegraphics[width=\textwidth]{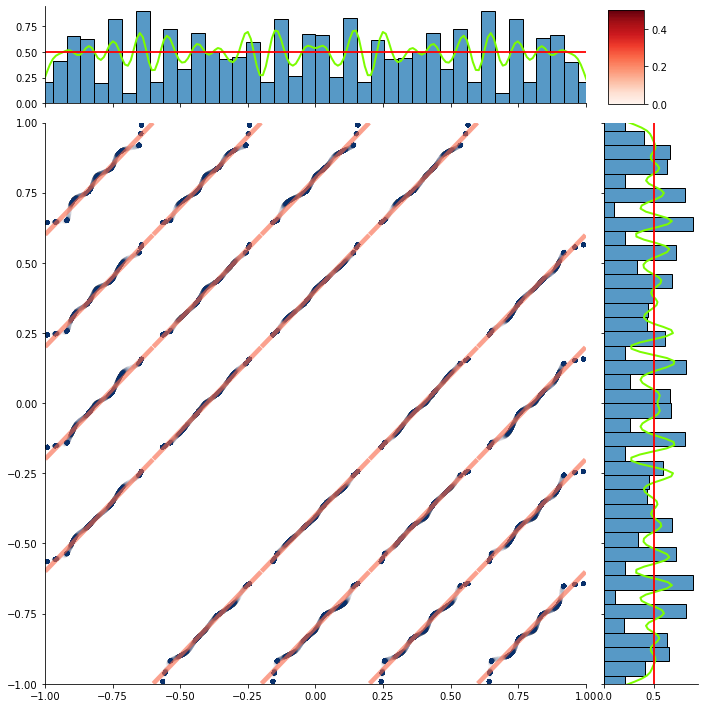}
    \caption{$N = 40$}
  \end{subfigure}
  \caption{
  Optimal transport with $\mu_1$ and $M=5$, $\Delta t_0=  10^{-3}$. In each plot, on the main graph {\small $\frac{1}{M(M-1)} \sum_{k=1}^K \sum_{m \neq m’ = 1}^M w_k \delta_{x_m^k, x_{m’}^k}$} is represented by blue particles. The darker the heavier the particle. Particles have some transparency which allows to see more clearly areas of high concentration. Red curves represent the functions $T^i$ for $i\in \{1,\dots,M-1\}$ defined in Theorem~\ref{thm:colombo}. The higher the density the darker. On side graphs are represented in blue a weighted histogram of the particles, in red the marginal law and in green a normal kernel density estimate based on the weighted particles (with a bandwidth rule based on Scott's rule with $d=0$). \label{fig:minimasmu11D}}
\end{figure}

\begin{figure}[!tp]
  \centering
  \begin{subfigure}{0.24\textwidth}
    \includegraphics[width=\textwidth]{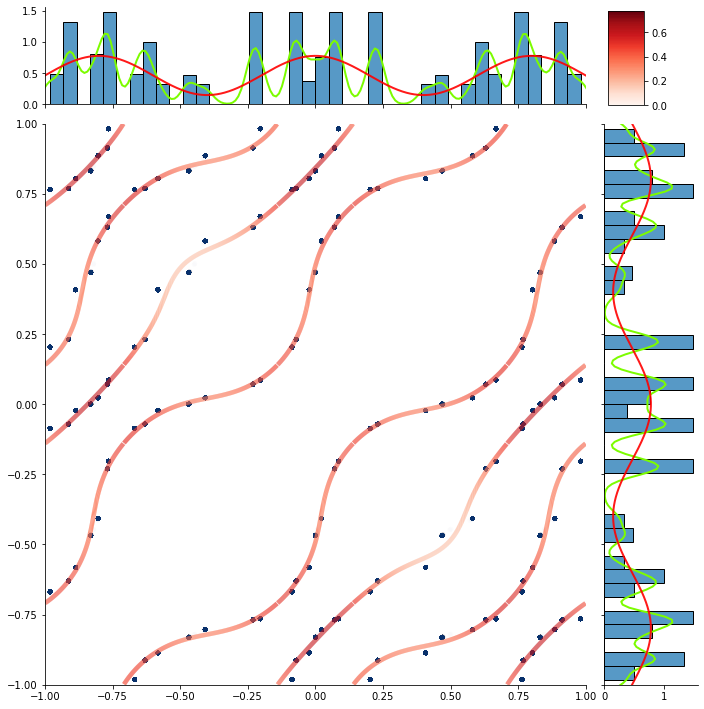}
    \caption{$N=20$, $\beta_0 = 0$, $K=10000$, fixed weights}
  \end{subfigure}
  \begin{subfigure}{0.24\textwidth}
    \includegraphics[width=\textwidth]{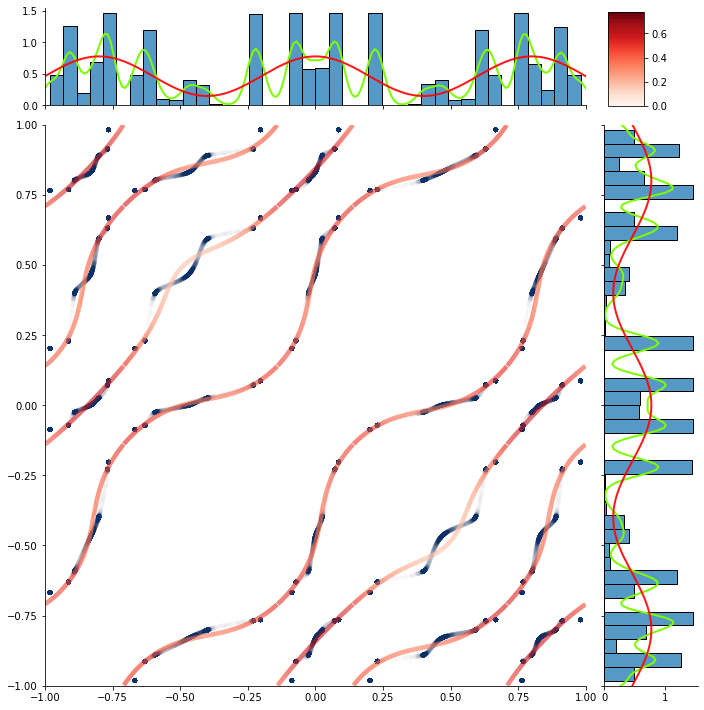}
    \caption{$N=20$, $\beta_0 = 10^{-3.5}$, $K=10000$, fixed weights}
  \end{subfigure}
  \begin{subfigure}{0.24\textwidth}
    \includegraphics[width=\textwidth]{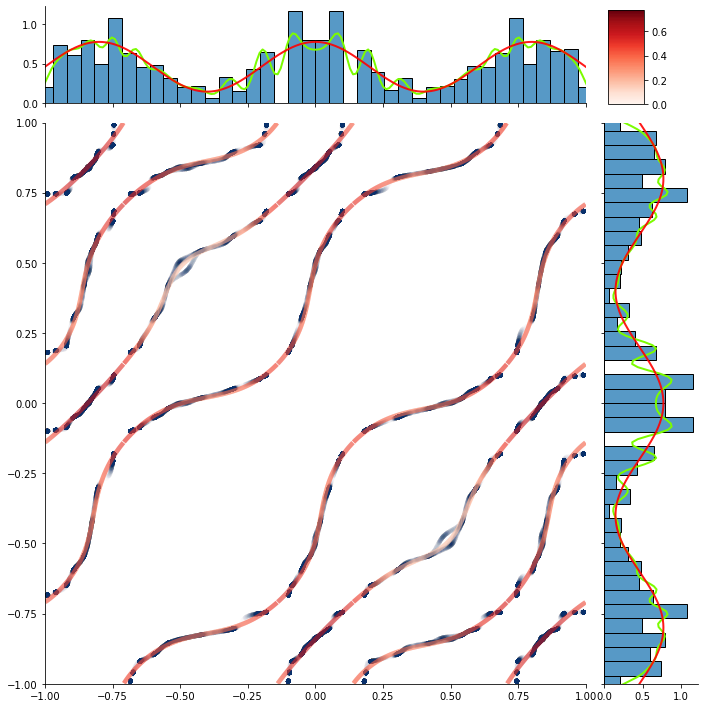}
    \caption{$N=40$, $\beta_0 = 0$, $K=10000$, fixed weights}
  \end{subfigure}
  \begin{subfigure}{0.24\textwidth}
    \includegraphics[width=\textwidth]{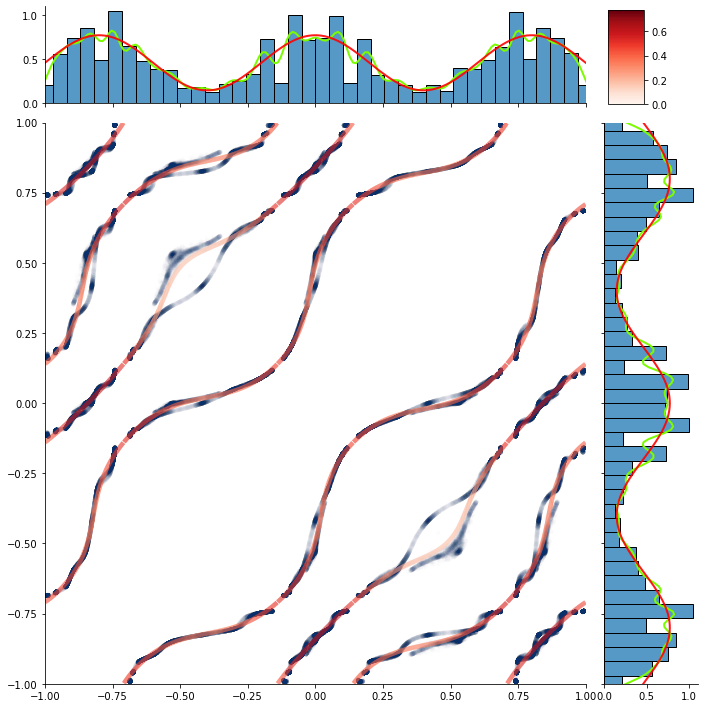}
    \caption{$N=40$, $\beta_0 = 10^{-3.5}$, $K=10000$, fixed weights}
  \end{subfigure}
  \begin{subfigure}{0.24\textwidth}
    \includegraphics[width=\textwidth]{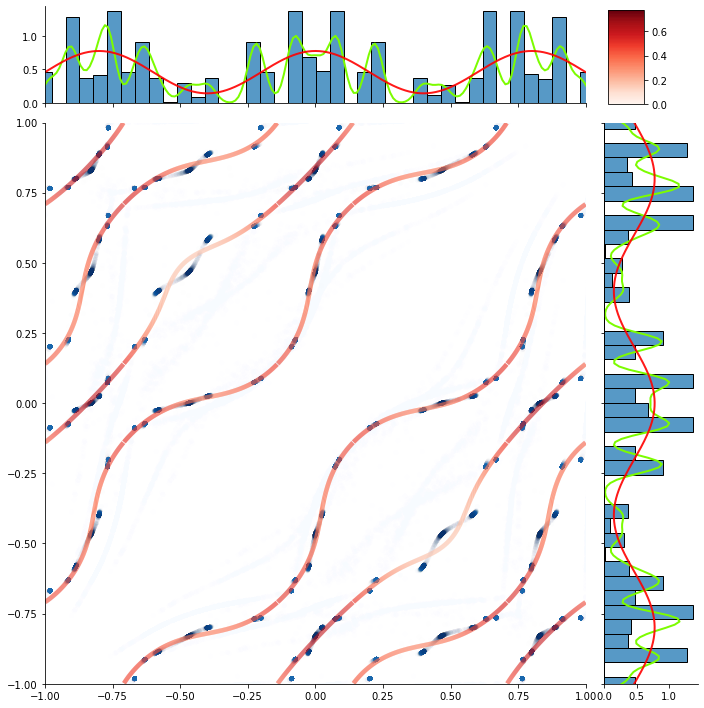}
    \caption{$N=20$, $\beta_0 = 0$, $K=10000$, squared weights}
  \end{subfigure}
  \begin{subfigure}{0.24\textwidth}
    \includegraphics[width=\textwidth]{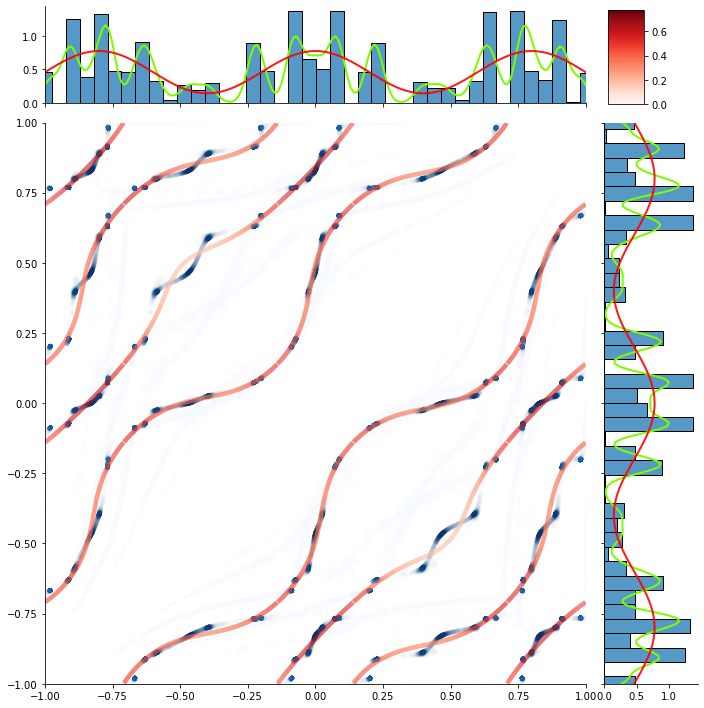}
    \caption{$N=20$, $\beta_0 = 10^{-3.5}$, $K=10000$, squared weights}
  \end{subfigure}
  \begin{subfigure}{0.24\textwidth}
    \includegraphics[width=\textwidth]{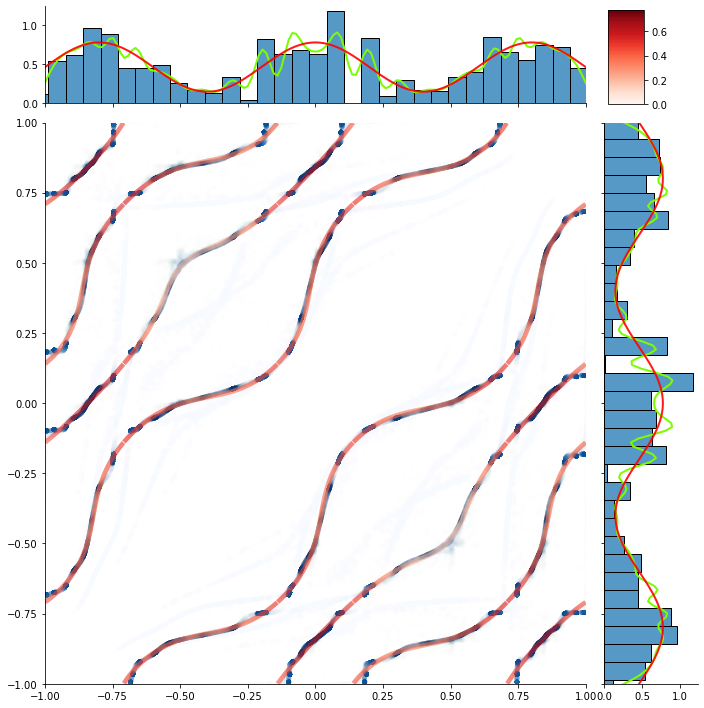}
    \caption{$N=40$, $\beta_0 = 0$, $K=10000$, squared weights}
  \end{subfigure}
  \begin{subfigure}{0.24\textwidth}
    \includegraphics[width=\textwidth]{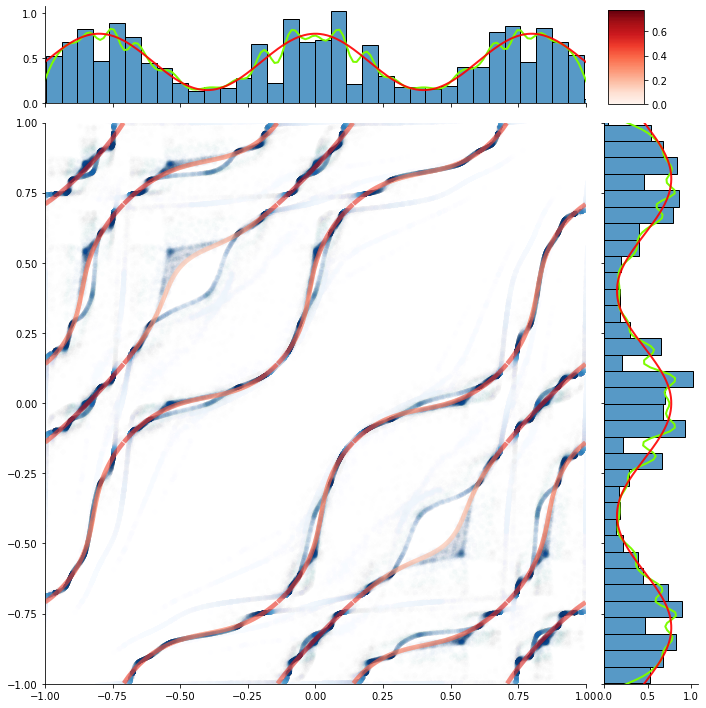}
    \caption{$N=40$, $\beta_0 = 10^{-3.5}$, $K=10000$, squared weights}
  \end{subfigure}
  \begin{subfigure}{0.24\textwidth}
    \includegraphics[width=\textwidth]{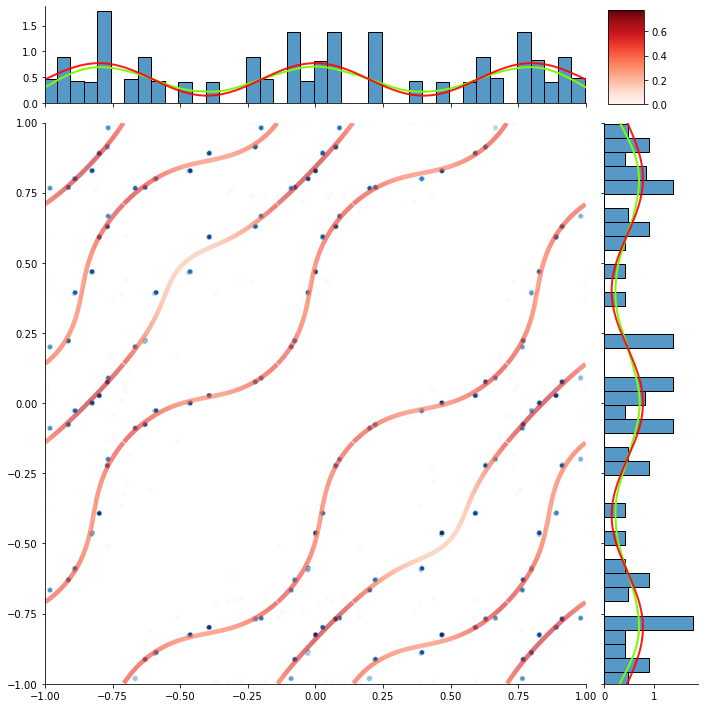}
    \caption{$N=20$, $\beta_0 = 0$, $K=100$, squared weights}
  \end{subfigure}
  \begin{subfigure}{0.24\textwidth}
    \includegraphics[width=\textwidth]{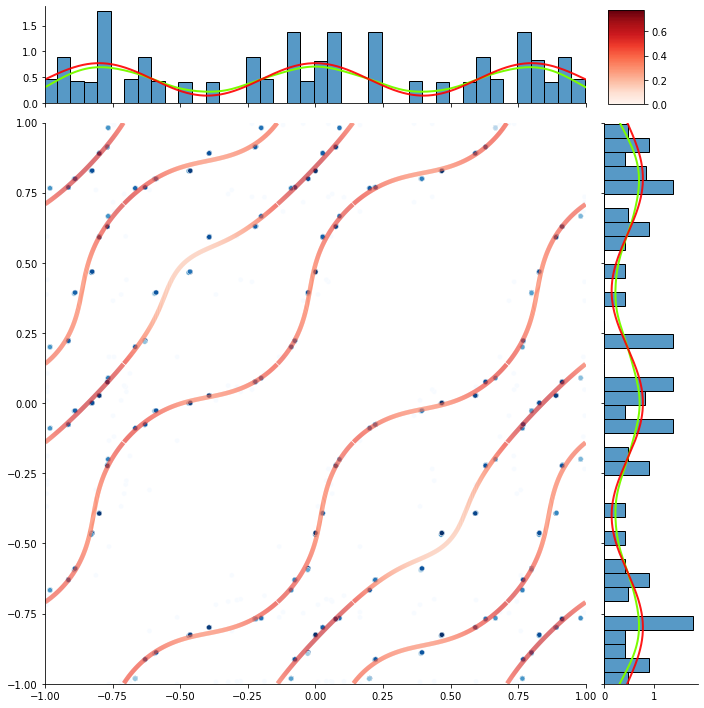}
    \caption{$N=20$, $\beta_0 = 10^{-3.5}$, $K=100$, squared weights}
  \end{subfigure}
  \begin{subfigure}{0.24\textwidth}
    \includegraphics[width=\textwidth]{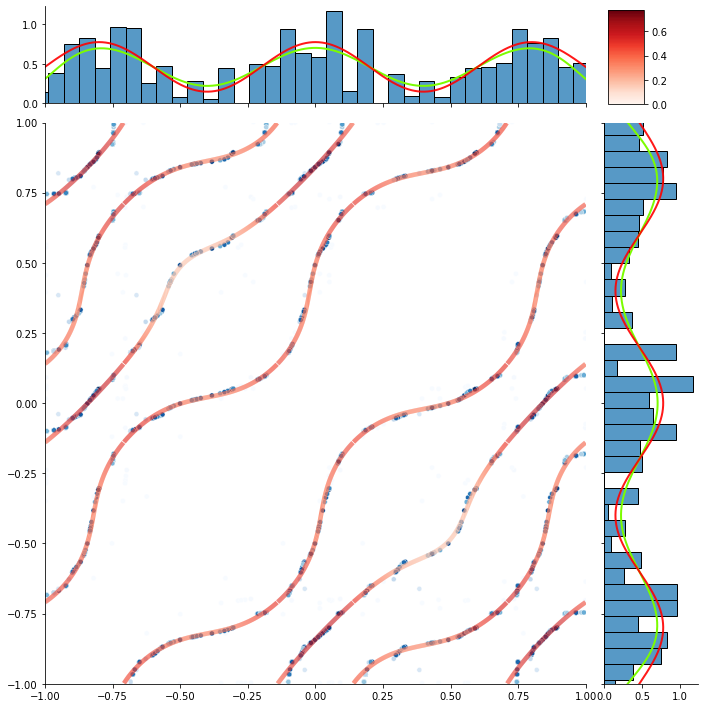}
    \caption{$N=40$, $\beta_0 = 0$, $K=100$, squared weights}
  \end{subfigure}
  \begin{subfigure}{0.24\textwidth}
    \includegraphics[width=\textwidth]{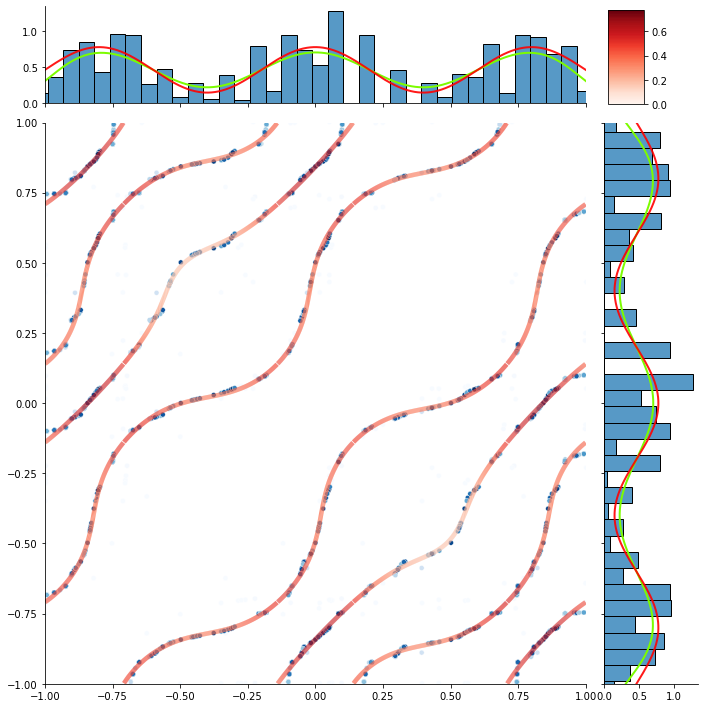}
    \caption{$N=40$, $\beta_0 = 10^{-3.5}$, $K=100$, squared weights}
  \end{subfigure}
  \begin{subfigure}{0.24\textwidth}
    \includegraphics[width=\textwidth]{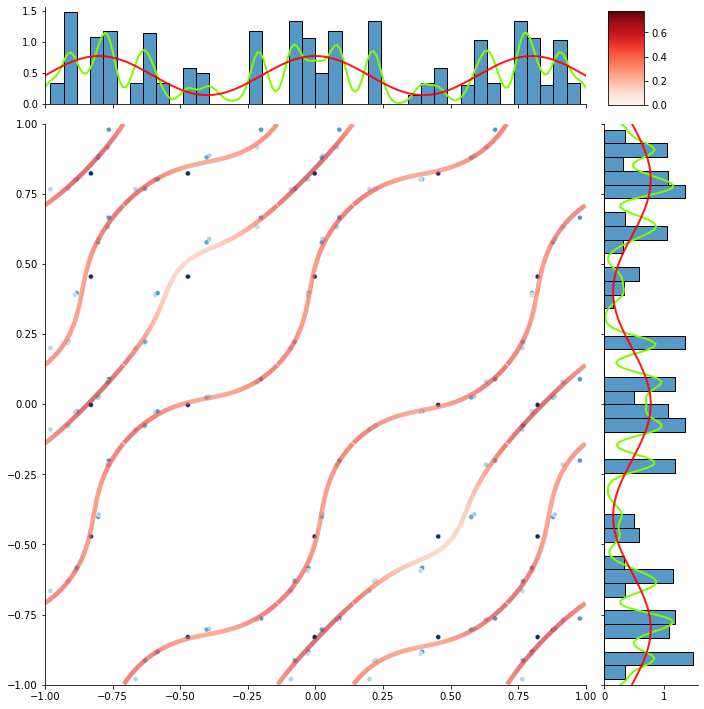}
    \caption{$N=20$, $\beta_0 = 0$, $K=10000$, squared weights, with initial subsampling}
  \end{subfigure}
  \begin{subfigure}{0.24\textwidth}
    \includegraphics[width=\textwidth]{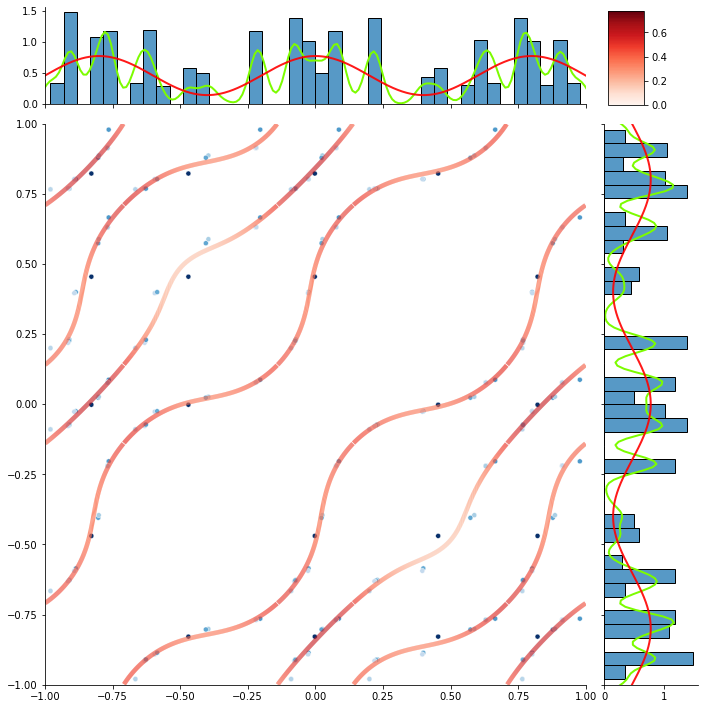}
    \caption{$N=20$, $\beta_0 = 10^{-3.5}$, $K=10000$, squared weights, with initial subsampling}
  \end{subfigure}
  \begin{subfigure}{0.24\textwidth}
    \includegraphics[width=\textwidth]{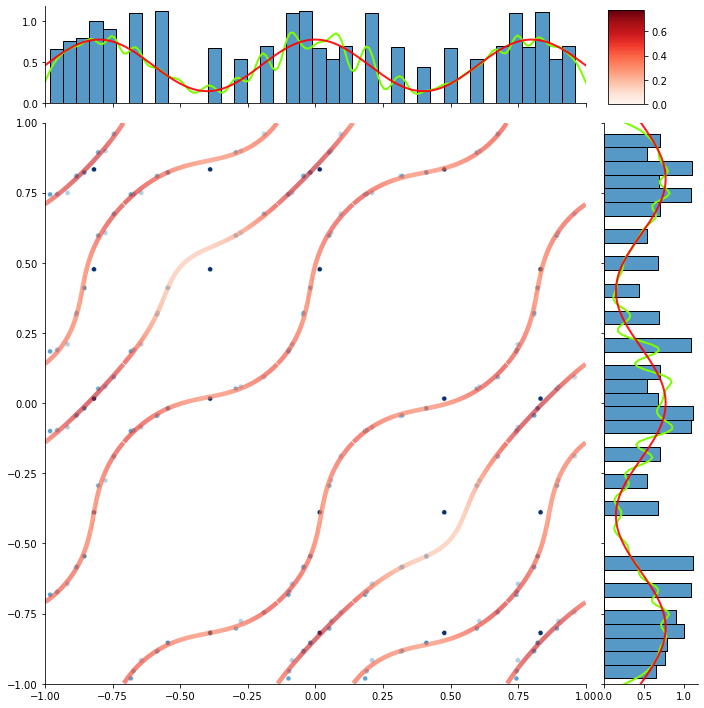}
    \caption{$N=40$, $\beta_0 = 0$, $K=10000$, squared weights, with initial subsampling}
  \end{subfigure}
  \begin{subfigure}{0.24\textwidth}
    \includegraphics[width=\textwidth]{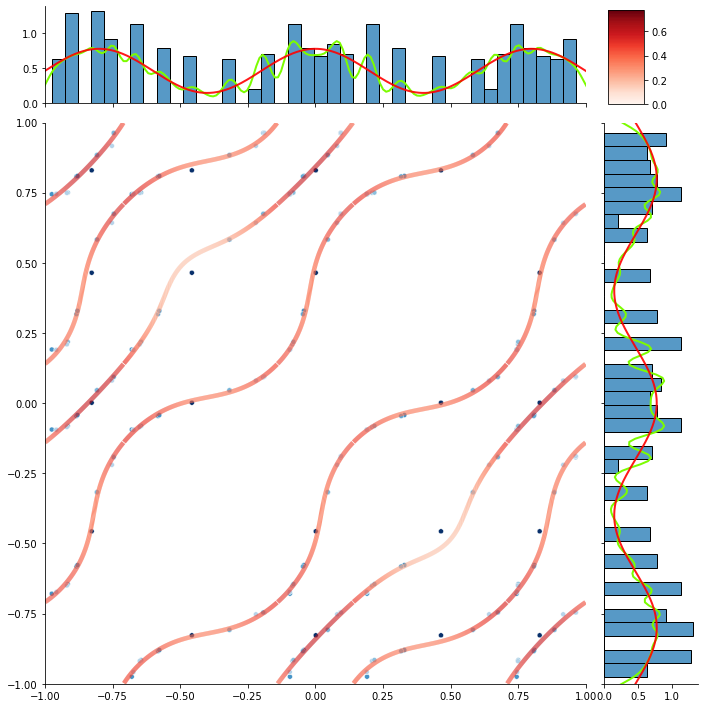}
    \caption{$N=40$, $\beta_0 = 10^{-3.5}$, $K=10000$, squared weights, with initial subsampling}
  \end{subfigure}
  \caption{Optimal transport with $\mu_2$ and $M=5$, $\Delta t_0=  10^{-3}$. In each plot, on the main graph {\small $\frac{1}{M(M-1)} \sum_{k=1}^K \sum_{m \neq m’ = 1}^M w_k \delta_{x_m^k, x_{m’}^k}$} is represented by blue particles. The darker the heavier the particle. Particles have some transparency which allows to see more clearly areas of high concentration. Red curves represent the functions $T^i$ for $i\in \{1,\dots,M-1\}$ defined in Theorem~\ref{thm:colombo}. The higher the density the darker. On side graphs are represented in blue a weighted histogram of the particles, in red the marginal law and in green a normal kernel density estimate based on the weighted particles (with a bandwidth rule based on Scott's rule with $d=0$). \label{fig:minimasmu21D}}
\end{figure}

\begin{figure}[!tp]
  \centering
  \begin{subfigure}{0.24\textwidth}
    \includegraphics[width=\textwidth]{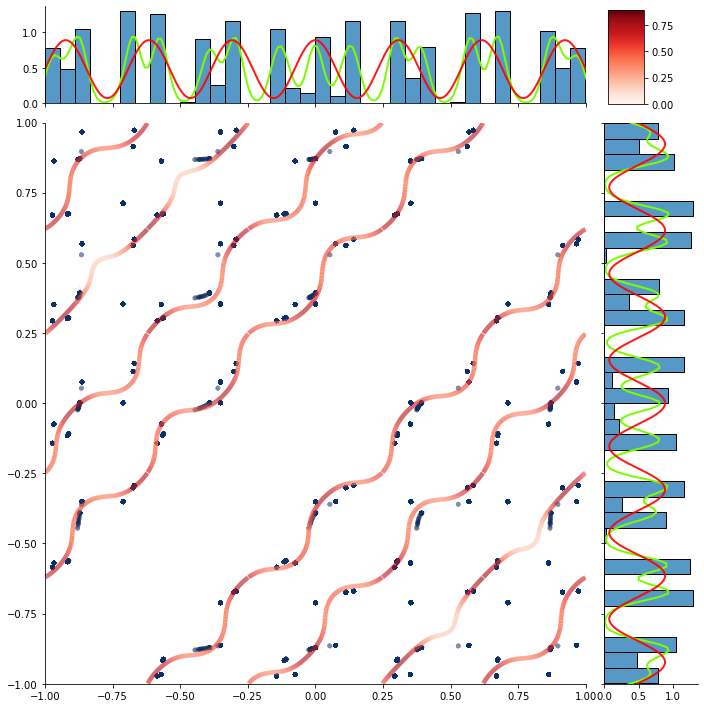}
    \caption{$N=20$, $\beta_0 = 0$, $K=10000$,  fixed}
  \end{subfigure}
  \begin{subfigure}{0.24\textwidth}
    \includegraphics[width=\textwidth]{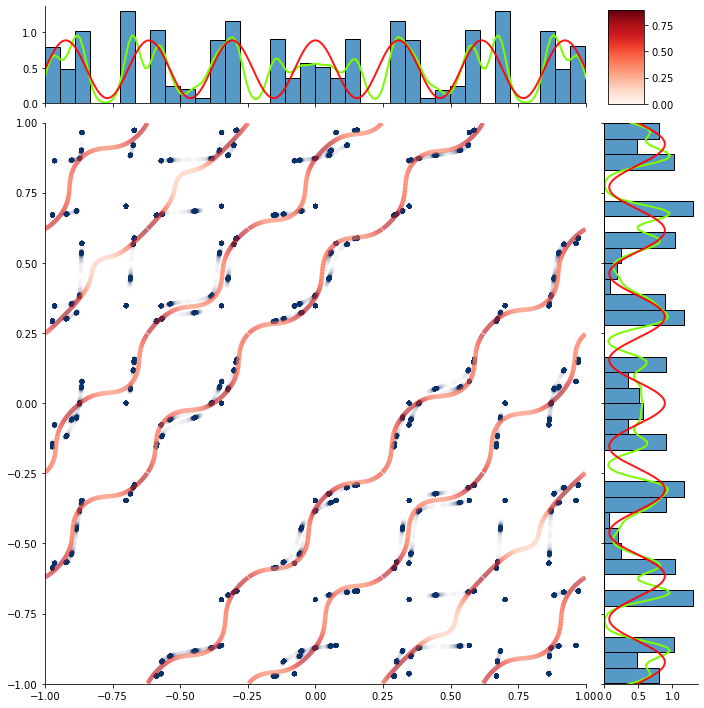}
    \caption{$N=20$, $\beta_0 = 10^{-3.5}$, $K=10000$,  fixed}
  \end{subfigure}
  \begin{subfigure}{0.24\textwidth}
    \includegraphics[width=\textwidth]{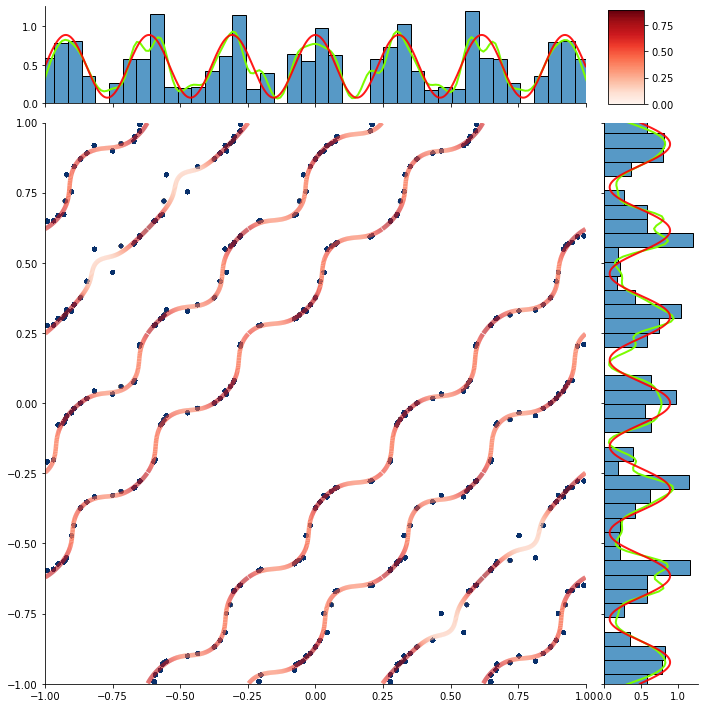}
    \caption{$N=40$, $\beta_0 = 0$, $K=10000$,  fixed}
  \end{subfigure}
  \begin{subfigure}{0.24\textwidth}
    \includegraphics[width=\textwidth]{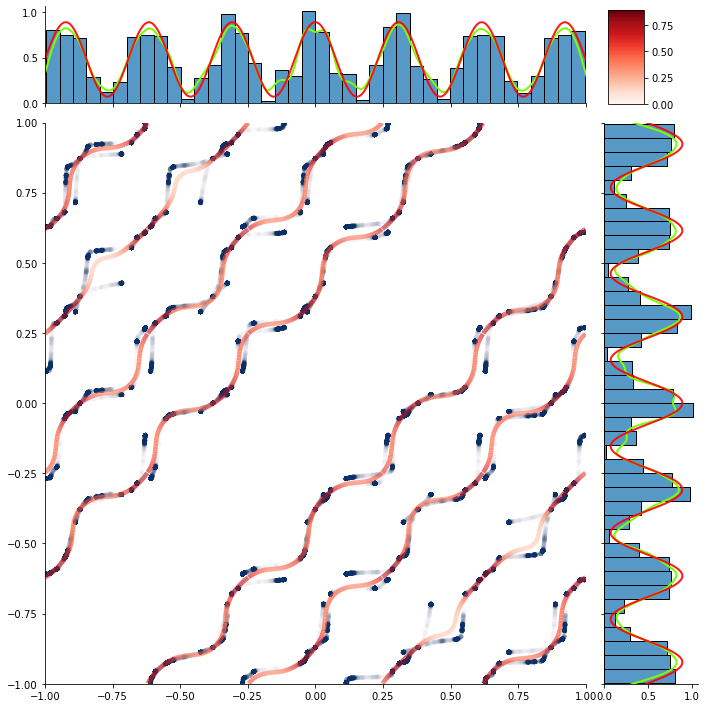}
    \caption{$N=40$, $\beta_0 = 10^{-3.5}$, $K=10000$,  fixed}
  \end{subfigure}
  \begin{subfigure}{0.24\textwidth}
    \includegraphics[width=\textwidth]{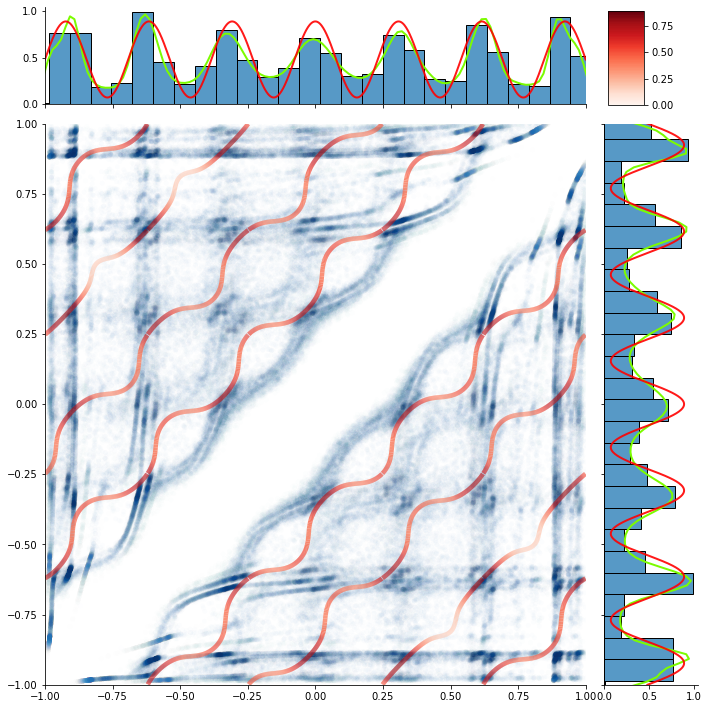}
    \caption{$N=20$, $\beta_0 = 0$, $K=10000$,  squared}
  \end{subfigure}
  \begin{subfigure}{0.24\textwidth}
    \includegraphics[width=\textwidth]{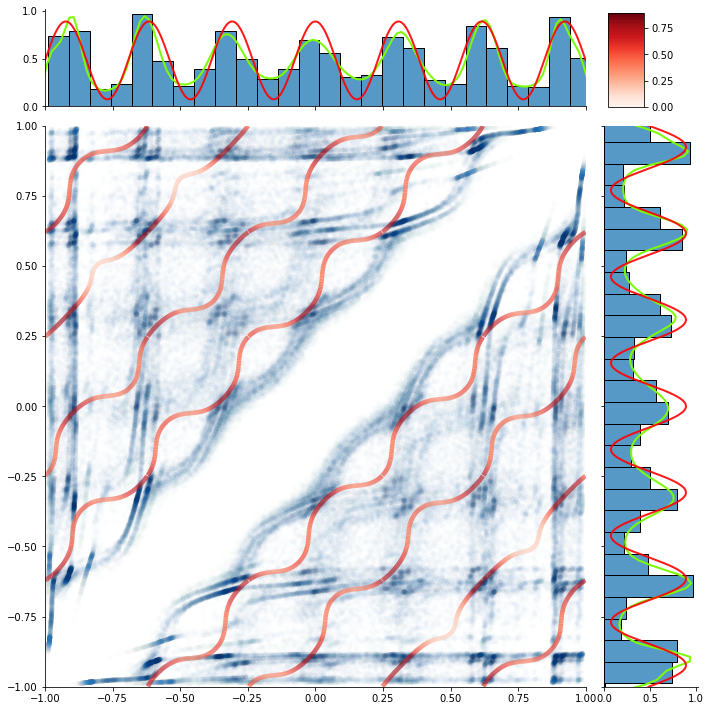}
    \caption{$N=20$, $\beta_0 = 10^{-3.5}$, $K=10000$,  squared}
  \end{subfigure}
  \begin{subfigure}{0.24\textwidth}
    \includegraphics[width=\textwidth]{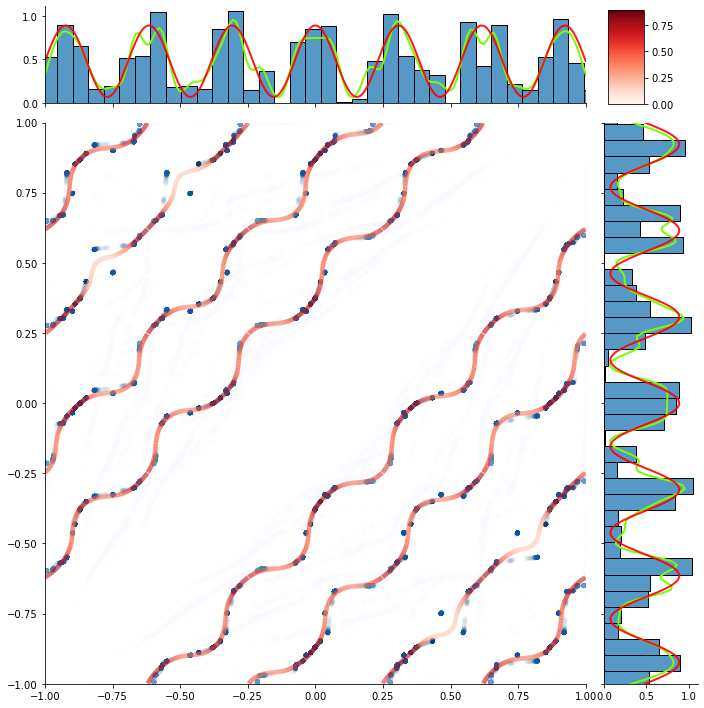}
    \caption{$N=40$, $\beta_0 = 0$, $K=10000$,  squared}
  \end{subfigure}
  \begin{subfigure}{0.24\textwidth}
    \includegraphics[width=\textwidth]{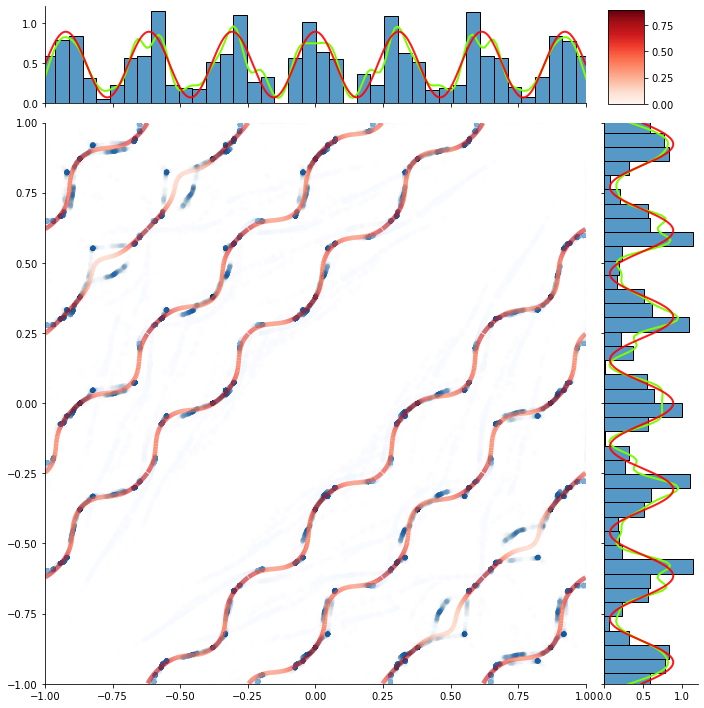}
    \caption{$N=40$, $\beta_0 = 10^{-3.5}$, $K=10000$,  squared}
  \end{subfigure}
  \begin{subfigure}{0.24\textwidth}
    \includegraphics[width=\textwidth]{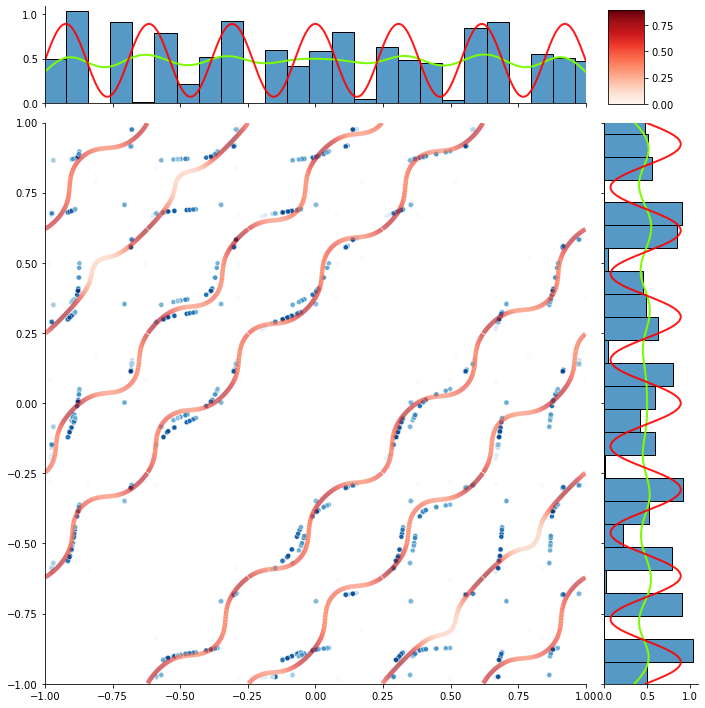}
    \caption{$N=20$, $\beta_0 = 0$, $K=100$,  squared}
  \end{subfigure}
  \begin{subfigure}{0.24\textwidth}
    \includegraphics[width=\textwidth]{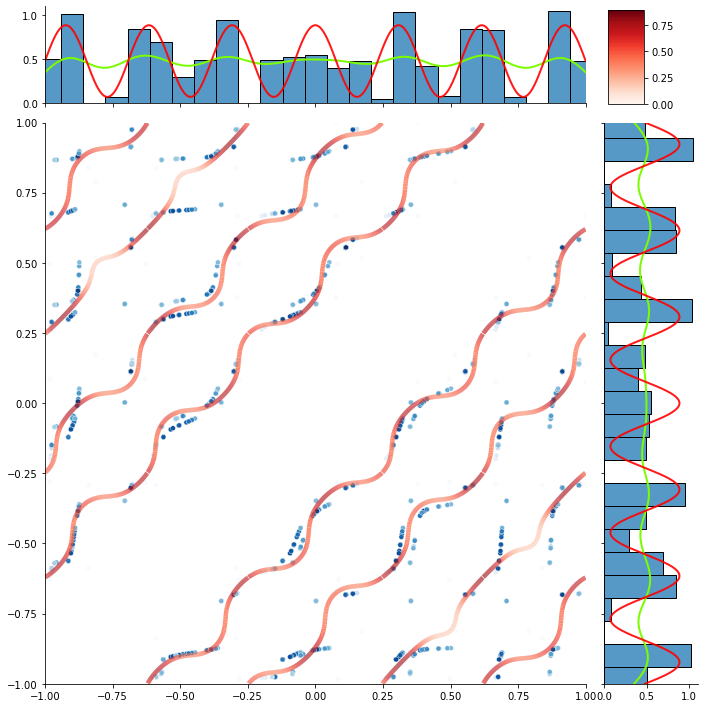}
    \caption{$N=20$, $\beta_0 = 10^{-3.5}$, $K=100$,  squared}
  \end{subfigure}
  \begin{subfigure}{0.24\textwidth}
    \includegraphics[width=\textwidth]{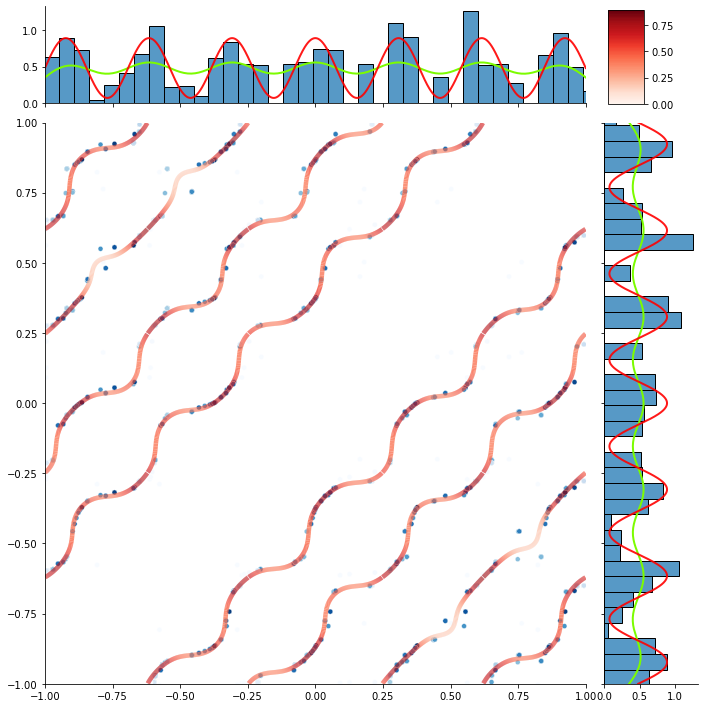}
    \caption{$N=40$, $\beta_0 = 0$, $K=100$,  squared}
  \end{subfigure}
  \begin{subfigure}{0.24\textwidth}
    \includegraphics[width=\textwidth]{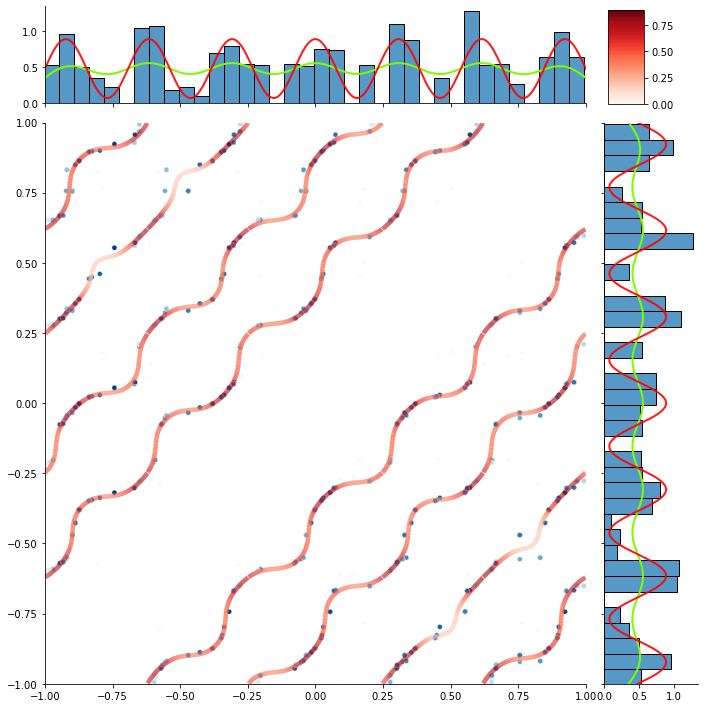}
    \caption{$N=40$, $\beta_0 = 10^{-3.5}$, $K=100$,  squared}
  \end{subfigure}
  \begin{subfigure}{0.24\textwidth}
    \includegraphics[width=\textwidth]{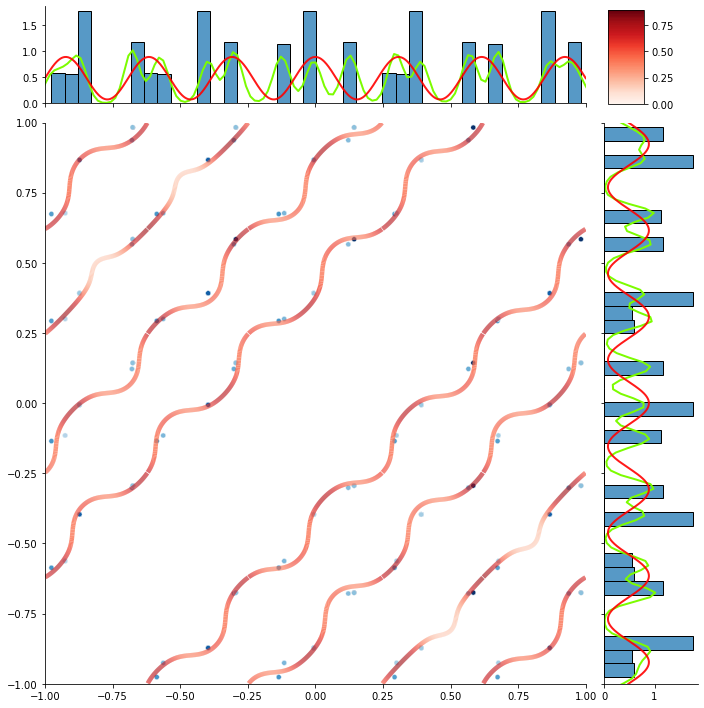}
    \caption{$N=20$, $\beta_0 = 0$, $K=10000$,  squared, with initial subsampling}
  \end{subfigure}
  \begin{subfigure}{0.24\textwidth}
    \includegraphics[width=\textwidth]{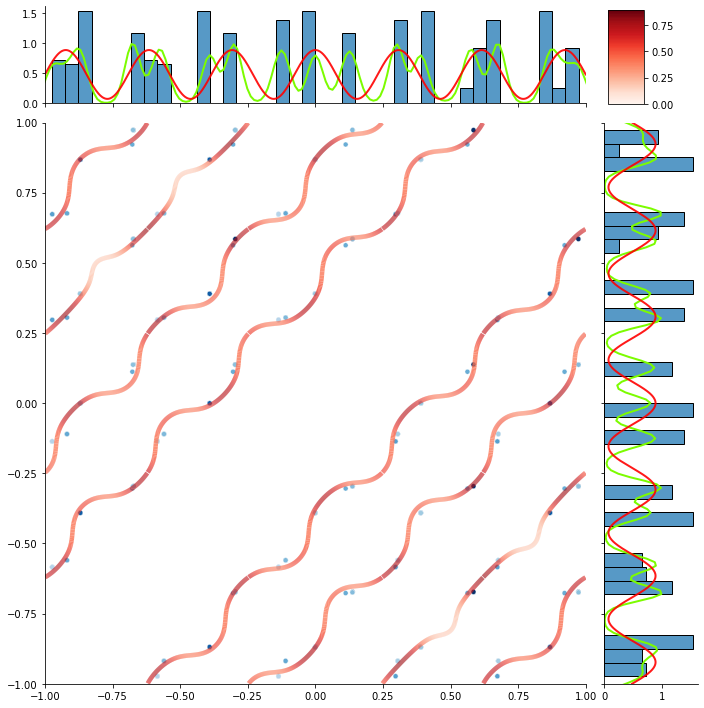}
    \caption{$N=20$, $\beta_0 = 10^{-3.5}$, $K=10000$,  squared, with initial subsampling}
  \end{subfigure}
  \begin{subfigure}{0.24\textwidth}
    \includegraphics[width=\textwidth]{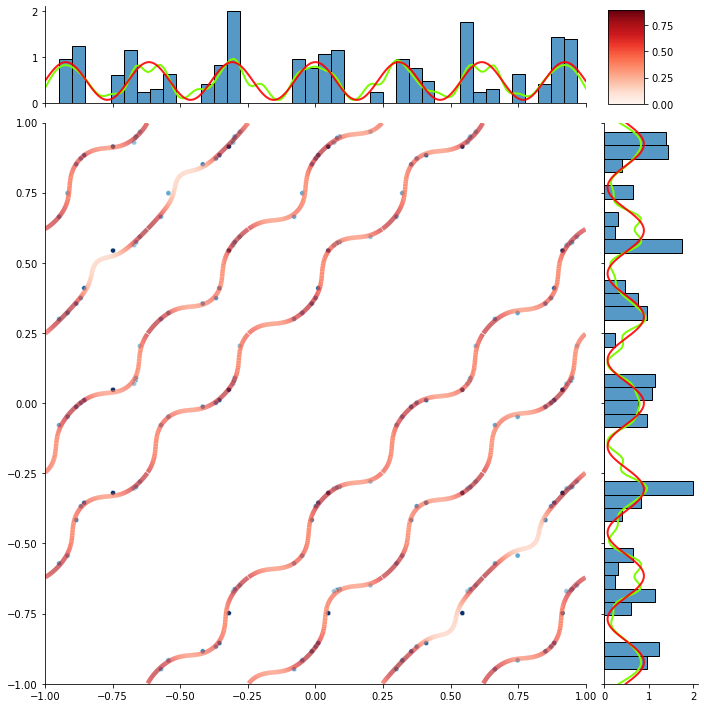}
    \caption{$N=40$, $\beta_0 = 0$, $K=10000$,  squared, with initial subsampling}
  \end{subfigure}
  \begin{subfigure}{0.24\textwidth}
    \includegraphics[width=\textwidth]{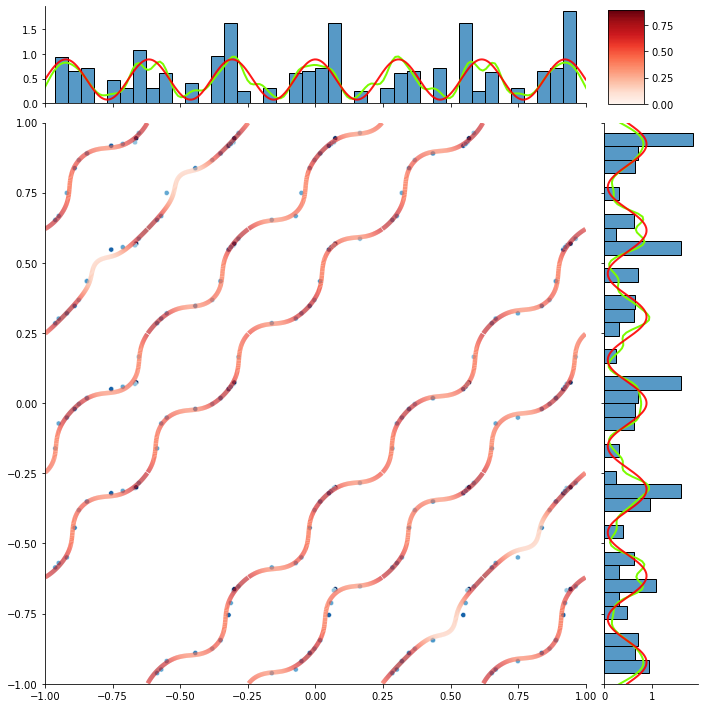}
    \caption{$N=40$, $\beta_0 = 10^{-3.5}$, $K=10000$,  squared, with initial subsampling}
  \end{subfigure}
  \caption{Optimal transport with $\mu_3$ and $M=5$, $\Delta t_0=  10^{-3}$. In each plot, on the main graph {\small $\frac{1}{M(M-1)} \sum_{k=1}^K \sum_{m \neq m’ = 1}^M w_k \delta_{x_m^k, x_{m’}^k}$} is represented by blue particles. The darker the heavier the particle. Particles have some transparency which allows to see more clearly areas of high concentration. Red curves represent the functions $T^i$ for $i\in \{1,\dots,M-1\}$ defined in Theorem~\ref{thm:colombo}. The higher the density the darker. On side graphs are represented in blue a weighted histogram of the particles, in red the marginal law and in green a normal kernel density estimate based on the weighted particles (with a bandwidth rule based on Scott's rule with $d=0$). \label{fig:minimasmu31D}}
\end{figure}

\clearpage
\subsection{Three-dimensional test cases ($d=3$)}\label{sect:3D}

\subsubsection{Tests design}\label{sect:3Dtestdesign}

The numerical experiments were realized with four different marginal laws that are named afterwards as follows:
\begin{align}
  \mu_1 &\sim \mathcal{N} \left(0_3,\mathrm{Id}_3\right), \\
  \mu_2 &\sim \frac 2 3 \mathcal{N}(0_3,\begin{pmatrix}1 & 0.5 & 0.75 \\ 0.5 & 2 & 1.5\\ 0.75 & 1.5 & 3 \end{pmatrix}) + \frac 1 3 \mathcal{N} (\begin{pmatrix}2 \\ 2 \\ 2  \end{pmatrix},\begin{pmatrix} 1 & 0.8 & 0.22 \\ 0.8 & 2 & 1.8 \\ 0.22& 1.8 & 3  \end{pmatrix}),  \\
  \mu_3 &\sim \frac 1 {10} \mathcal{N} (0_3, C) + \frac 1 5 \mathcal{N}(\begin{pmatrix} 4 \\ 0 \\0 \end{pmatrix}, C)
  + \frac 1 5 \mathcal{N}(\begin{pmatrix} 8 \\ 0 \\0 \end{pmatrix}, C)
  + \frac 1 5 \mathcal{N}(\begin{pmatrix} 12 \\ 0 \\0 \end{pmatrix}, C) \nonumber \\
  &\quad+ \frac 1 5 \mathcal{N}(\begin{pmatrix} 16 \\ 0 \\0 \end{pmatrix}, C)
  + \frac 1 {10} \mathcal{N}(\begin{pmatrix} 20 \\ 0 \\0 \end{pmatrix}, C), \qquad \text{with} \quad C = \begin{pmatrix}1 & 0.5 & 0.75 \\ 0.5 & 2 & 1.5\\ 0.75 & 1.5 & 3 \end{pmatrix}, \\
  \mu_4 &\sim {\cal U} \left({\cal B}(0,1)\right).
\end{align}
And, for $i = 1, 2, 3, 4$, using as test functions\footnotemark tensor products of 1D orthonormal polynomials $(P^{\mu_i, j}_l)_{\substack{1 \le j \le 3,\\ l \in \N}}$, defined as, for $j=1, 2, 3$, $l \in \N$,
\begin{align}
  &\mathrm{degree}\left(P^{\mu_i, j}_l\right) = l, && \forall l' < l, \, \int_{\R^3} P^{\mu_i, j}_l(x_j) P^{\mu_i, j}_{l'}(x_j) d \mu_i (x_1, x_2, x_3) = \frac{1}{(l+1)^2}\delta_{l,l'}.
\end{align}

\footnotetext{These polynomials were chosen after a few numerical tests on some optimization procedures for their better convergence properties than the polynomials they were compared to. Their tensorised form both eases the computation of the moments and allows some parallelisation. Note also that the matrix $\nabla \Gamma^K(Y^K)$ is a multivariate Vandermonde matrix. We checked numerically its invertibility throughout the optimization process.}

As for a finite number of multivariate polynomials (and under a suitable control of mixed derivatives), the hyperbolic cross~\cite{MR3887571} seems to behave better than using all polynomials up to a given degree, we used, for a number of test functions $N$ appropriately chosen, the polynomials $P^{\mu_i, 1}_{l_1}\otimes P^{\mu_i, 2}_{l_2}\otimes P^{\mu_i, 3}_{l_3}$, where
\begin{equation}
  (l_1 + 1)(l_2 + 1)(l_3 + 1) \le L_N,
\end{equation}
where $L_N$ is defined such that $\#\{(l_1, l_2, l_3) | (l_1 + 1)(l_2 + 1)(l_3 + 1) \le L_N \} = N$. The map between maximum degree of the polynomials ($L_N - 1$) and $N$ is shown in Table \ref{tbl:hyperboliccross}.

\begin{table}[htp]
\begin{center}
  \begin{tabular}{c|cccccc}
  $L_N - 1$ & 6 & 7 & 8 & 9 & 10 & 11 \\
  \hline
  $N$ & 28 & 38 & 44 & 53 & 56 & 74
  \end{tabular}
 \end{center}
 \caption{Map between the maximum degree of 1D polynomials and the number of test functions using hyperbolic cross in 3D. \label{tbl:hyperboliccross}}
\end{table}

In the numerical examples presented afterwards, as all weights are fixed to $\frac 1 K$, there is no need to use the polynomial of degrees $(l_1, l_2, l_3) = (0, 0,0)$, hence values of $N$ decreased by 1 compared to the values of Table \ref{tbl:hyperboliccross}.

\begin{remark}
  One of the main advantages of using sums of Normal functions (or a uniform measure on a ball) as marginal laws and polynomials as test functions is that their exists in that case close formulas for the computation of the moments. From our experiments in dimension 1, the precision of the computation of the moments is important both for the solution of the MCOT problem to be well-defined (and thus for the algorithm to converge) -- numerically computed moments, though not exact, must allow the existence of $Y^K \in ((\R^d)^M)^K$ such that $\|\Gamma^K(Y^K) \|_\infty \le \epsilon$ for $\epsilon$ the machine-precision; and for the convergence as $N$ increases of the MCOT cost towards the OT cost -- numerically computed moments not precise enough might hide this convergence. Numerical quadratures in 3D could be implemented for dealing with more general marginal laws and test functions, however, their computation and convergence speed put it beyond the scope of this article.
\end{remark}

\paragraph{Mean-Covariance.}

Tests were also performed using as test functions the mean and covariance matrix for $\mu_1$ and $\mu_2$, in order to notice on examples how much those test functions do constrain an optimal transport problem.  Note that this problem of optimal transport when the mean and covariance structure are given may be interesting per se, when only partial information on the distribution is known. We have indicated in Table~\ref{tbl:meancovar} the optimal costs obtained with our algorithm for $\mu_1$ and $\mu_2$ with mean-covariance constraints ($N=9$) and with many moment constraints ($N=52$). We observe on our examples a relative difference around 15-20\%.

\begin{table}[htp]
\begin{center}
  \begin{tabular}{c|cccc}
   & $\mu_1$, $M = 10$ & $\mu_1$, $M = 100$ & $\mu_2$, $M = 10$ & $\mu_2$, $M = 100$ \\
   \hline
  $N = 9$ & 10.65 & 1395 & 8.007 & 1074 \\
  $N = 52$ & 12.50 & 1599 & 9.107 & 1201
  \end{tabular}
 \end{center}
 \caption{Optimal value of the cost obtained for $\mu_1$ and $\mu_2$ with mean-covariance constraints ($N=9$) and with many moment constraints ($N=52$). \label{tbl:meancovar}}
\end{table}

\paragraph{Cost.}
In order to avoid too high values of the cost function, we used in all experiments a regularized  Coulomb  cost $c(x_1, \dots, x_M) = \sum_{m \neq m' = 1}^M \frac{1}{\epsilon + |x_m - x_{m'}|}$, with $\epsilon = 10^{-3}$ and $\forall i= 1,\dots, M, x_m \in \R^3$.

\paragraph{Fixed weights.}
After several tests comparing fixed and variable weights (with various weight functions), we observe that in dimension 3, for the marginal laws considered, both initialization and optimization using variable weights were much slower than using fixed weights. Therefore, all following tests have been performed using fixed weights. Heuristically, when using variable weights, some particles tend to have large weights and are strongly constrained while other ones become lightweight and do not move much since the gradient on positions is proportional to weights.

\subsubsection{Initialization and constraints enforcement -- Figure \ref{fig:initN}}\label{sect:3Dinit}

Initialization was performed by a sampling $K$ particles according to the marginal law, and then using the Runge-Kutta method showed in Section~\ref{sec:init} to bring the particles on the submanifold of the constraints ${\cal M}^K$. This method has been tested for various values of $N$ and $K$, presented respectively in Figures \ref{fig:initN}.

As $N$ increases (Figure \ref{fig:initN}), the submanifold of the constraints becomes harder to reach using the Runge-Kutta method (similarly to the 1D case), 
and large values of $N$ ($ L_N \ge  11$) could not be attained in the time of the numerical experiment (remind that the number of computations involved at each iteration grows linearly with $N$). In the case of each marginal laws ($\mu_1$ and $\mu_2$) for which the tests have been performed, despite the assymmetry of $\mu_2$, the dependence on $N$ of the convergence speed appears to be similar.


Note also that as we use symmetrised test functions (regarding the marginal laws) with fixed weights, the number of independant coordinates involved in the Runge-Kutta method to satisfy the constraints is linear in $KM$ ($M$ being the number of marginal laws). Thus, solving the problem of finding a starting point on the submanifold with $100$ marginal laws and $10^3$ particles is numerically the same as the one with $10$ marginal laws and $10^4$ particles. Although in the case where weights are variable this remark can not be applied, as coordinates on different marginal laws of the same particle share the same weight, increasing the number of marginal laws relaxes the problem of finding a starting point on the submanifold.

\begin{figure}[!t]
  \centering
  \begin{subfigure}{0.48\textwidth}
    \includegraphics[width=\textwidth]{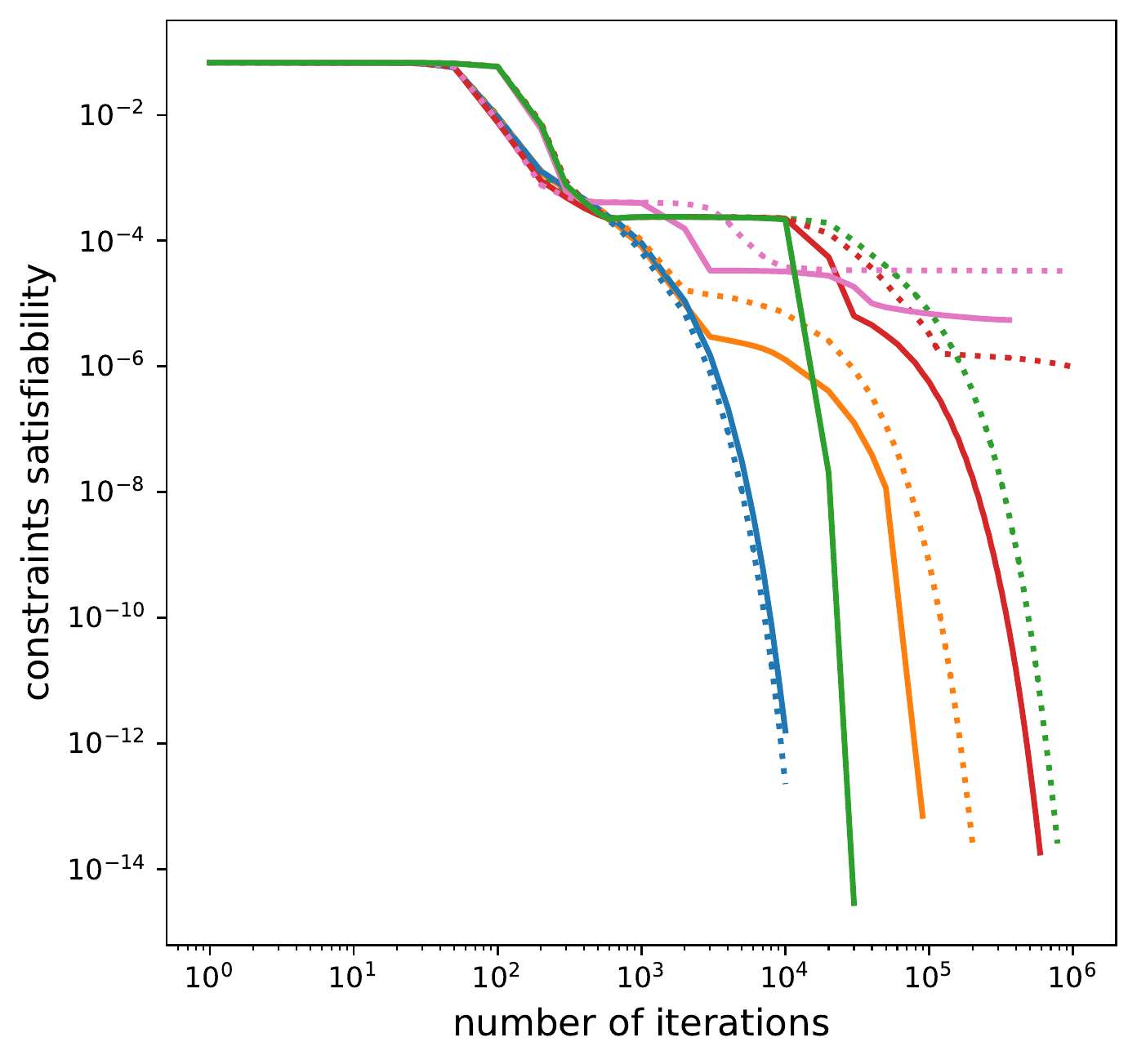}
    \caption{$\mu_1$, $M=10$ \label{fig:initNmu110}}
  \end{subfigure}
  \begin{subfigure}{0.48\textwidth}
    \includegraphics[width=\textwidth]{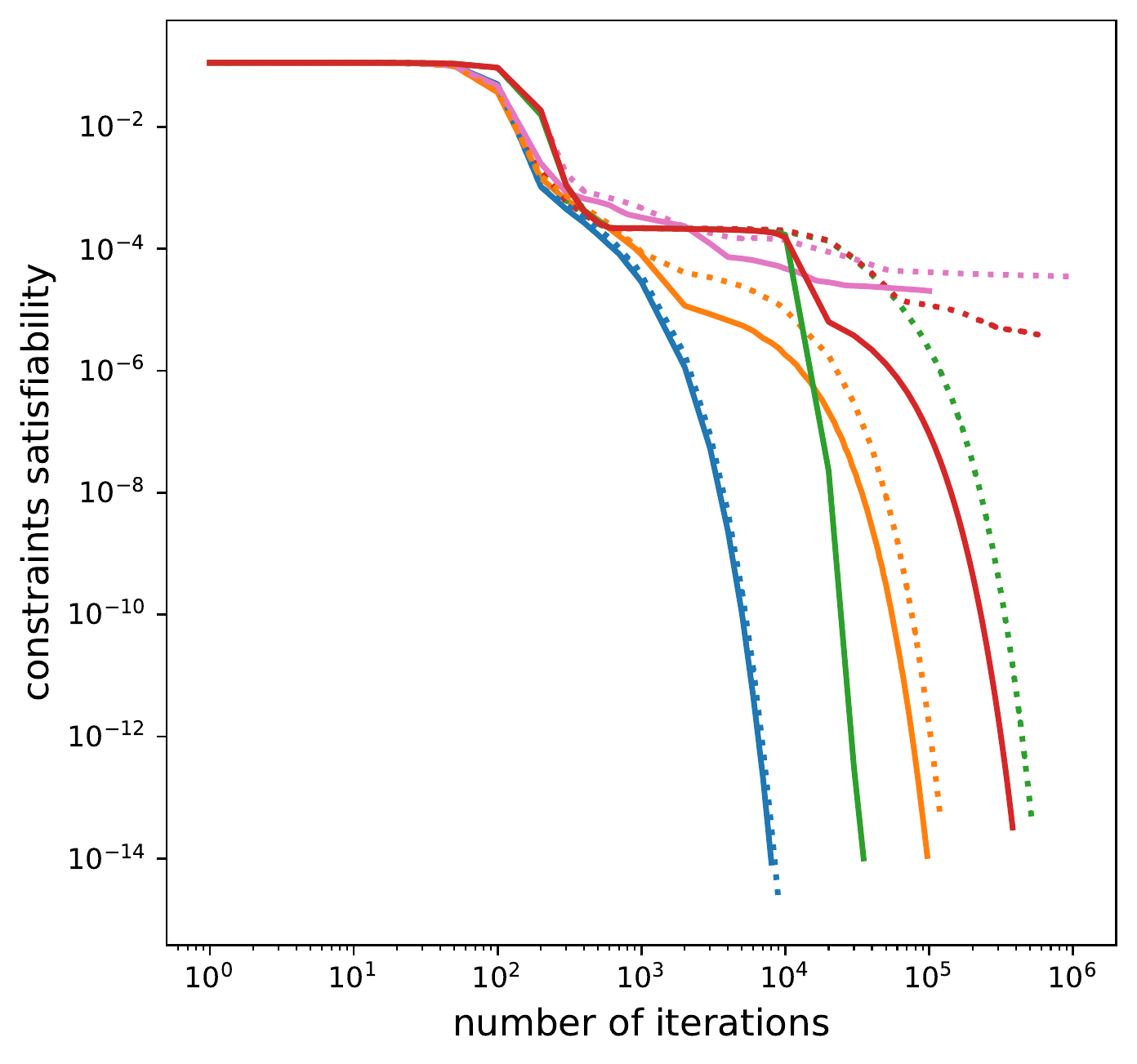}
    \caption{$\mu_2$, $M=10$ \label{fig:initNmu210}}
  \end{subfigure}
  \begin{subfigure}{0.48\textwidth}
    \includegraphics[width=\textwidth]{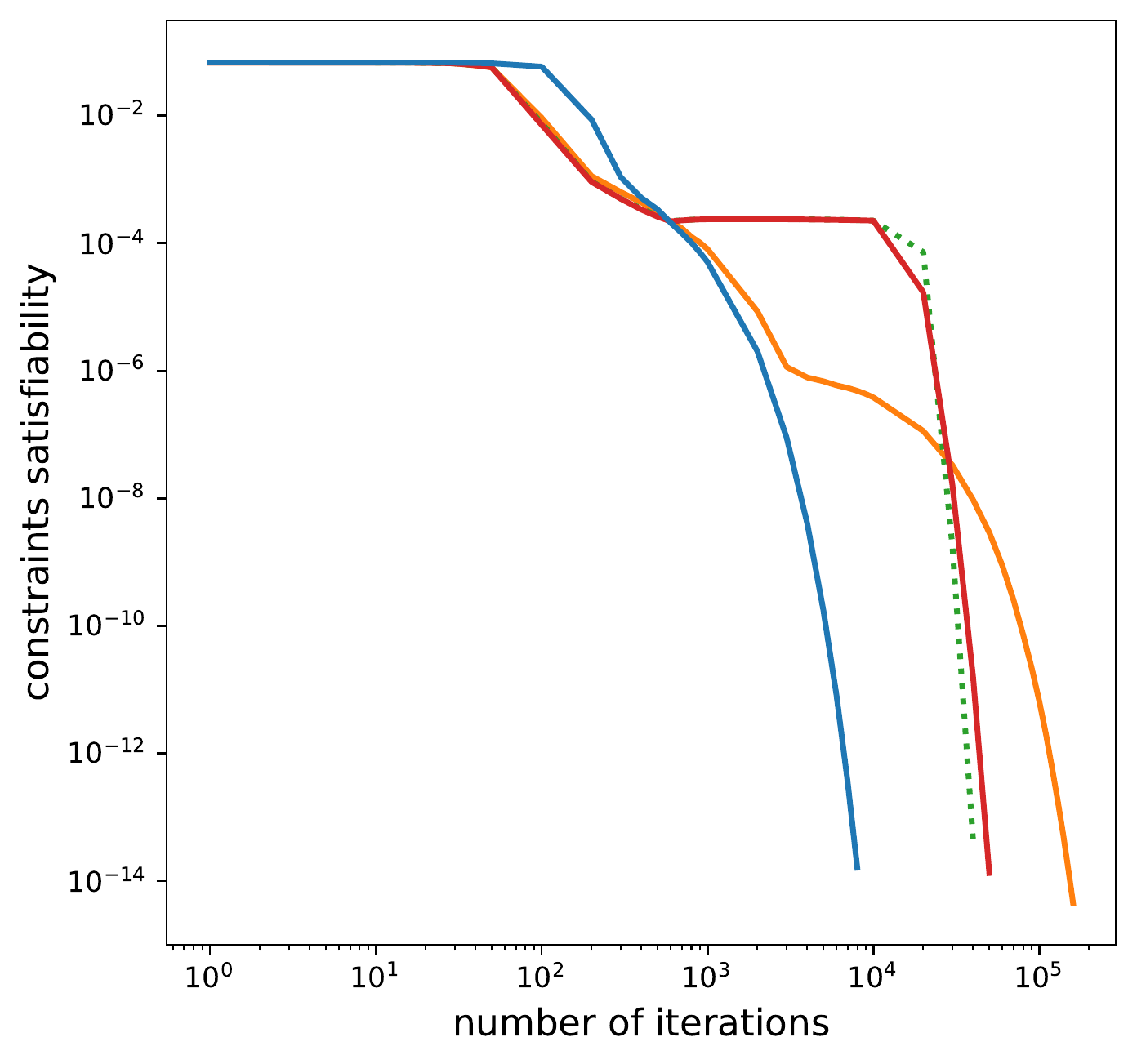}
    \caption{$\mu_1$, $M=100$ \label{fig:initNmu1100}}
  \end{subfigure}
  \begin{subfigure}{0.48\textwidth}
    \includegraphics[width=\textwidth]{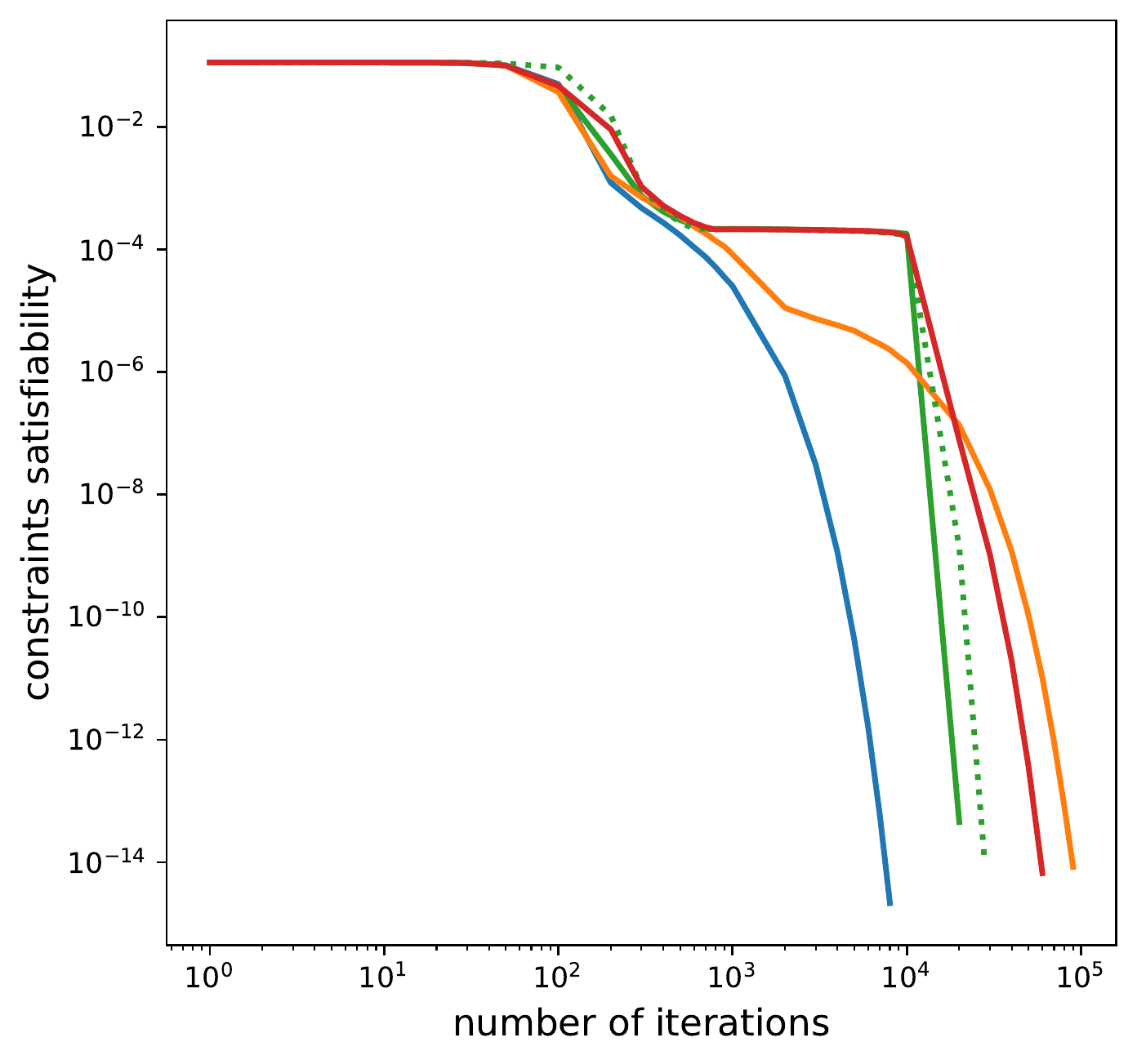}
    \caption{$\mu_2$, $M=100$ \label{fig:initNmu2100}}
  \end{subfigure}
  \caption{Evolution of $\| \Gamma^K(Y^K_m) \|_\infty$ for values of $N$ ranging from 27 to 52 and $K$ between 1000 (dotted lines) and 10000 (solid lines) as a function of the number of iterations $m$ of the Runge-Kutta~3 procedure. $\Delta t_0 = 10^{-4}$. Blue curves are for $N=27$, orange ones for $N = 37$, green ones for $N = 43$, red ones for $N = 52$ and pink ones for $N = 73$.
  \label{fig:initN}}
\end{figure}

\subsubsection{Optimization procedure -- Figures \ref{fig:noise}, \ref{fig:optimN}, \ref{fig:duringoptim1} and \ref{fig:duringoptim2}}

The aim of Figures \ref{fig:noise} and \ref{fig:optimN} is to plot the evolution of $V^K(Y^K_n)$ as a function of $n$ the number of iterations of the constrained overdamped Langevin algorithm presented in Section~\ref{sect:numericaldiscretization}
for various values of $N$ and values of $\beta_0$. As we observed (Figure \ref{fig:noise}), and similarly to the tests in dimension 1, that tests with $\beta_0 = 0$ converges faster than $\beta_0 > 0$, we kept $\beta_0 = 0$ for all the other tests. The convergence of the cost for various values of $N$ and $K$, various number of marginal laws and for $\mu_1$ and $\mu_2$ is presented in Figure \ref{fig:optimN}. And a presentation of how particles move during the optimization procedure can be seen in Figures \ref{fig:duringoptim1} and \ref{fig:duringoptim2}.


\begin{figure}[tp]
  \centering
  \includegraphics[width=0.48\textwidth]{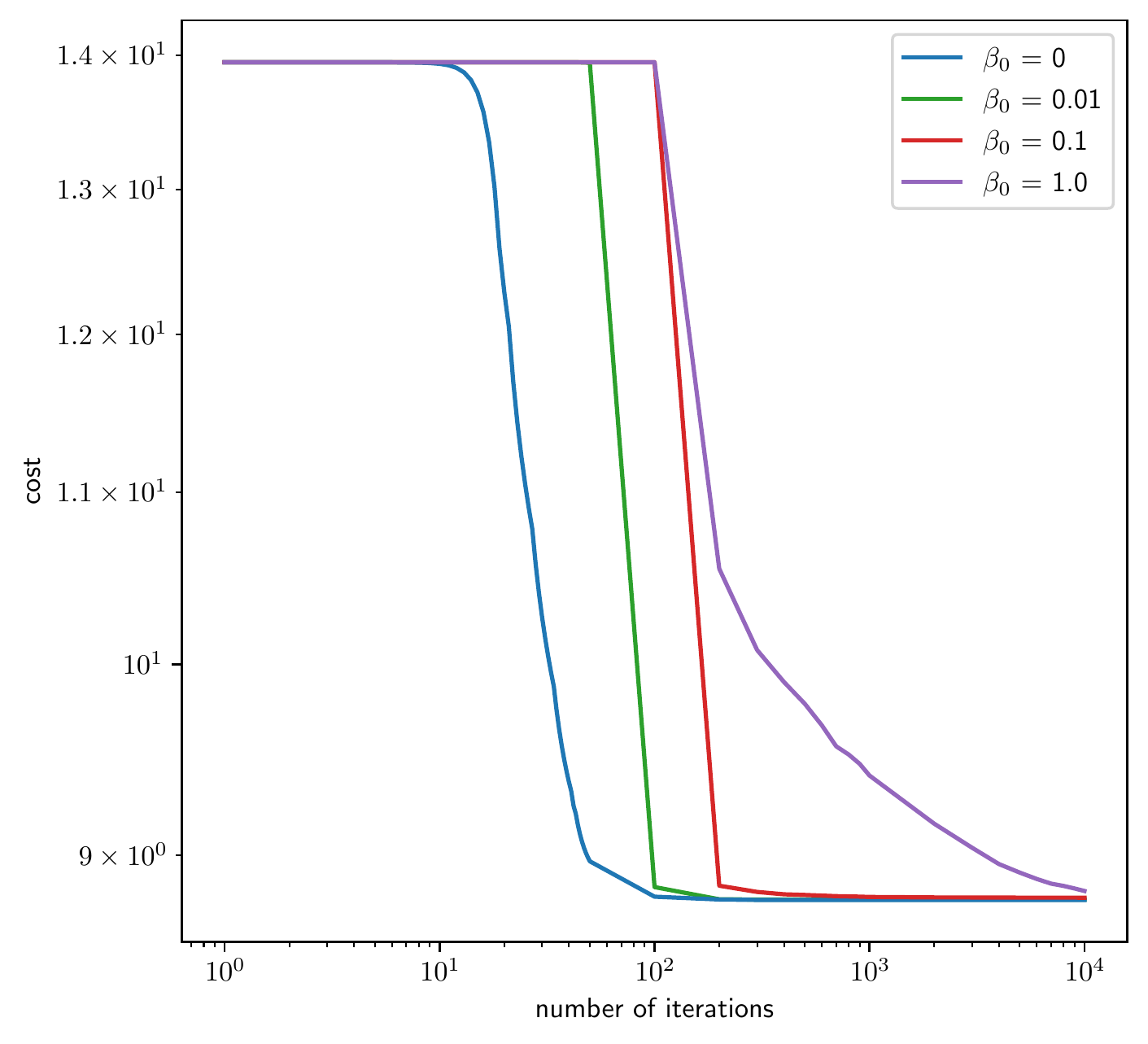}
  \caption{Evolution of the cost as a function of the number of iterations $n$ for various values of $\beta_0$. The marginal law is $\mu_2$, $\beta_0$ varies from $0$ to $1$ and the other parameters are $\Delta t_0 = 10^{-4}$, noise level decreases as the squareroot of the number of iterations, $N=27$, $M=10$, $K = 160$.\label{fig:noise}}
\end{figure}


On all subgraphs of Figure \ref{fig:optimN}, one can observe that the optimization procedure reaches a cost close to the optimal one for the MCOT problem in 50-200 iterations, when $K$ is large enough for a given $N$ (e.g. $K=1000$ is sufficient when $N=27$ but not when $N=43$).
As $N$ increases the value of the optimal costs does as well, which is expected, as MCOT problems get more and more constrained. As $K$ increases, the value of the cost computed converges towards the MCOT cost.
Indeed, the  slight decrease of the computed MCOT cost at the 20000$^\mathrm{th}$ iteration as $K$ increases that can be observed in Table \ref{tbl:minimascost} from $K=320$ to $K=10000$ suggests that their exists $K_0 \in \N$ such that for $K \ge K_0$, the gain in an increase in $K$ reflects weakly on the MCOT cost computed.

On Figures \ref{fig:duringoptim1} and \ref{fig:duringoptim2} is plotted the evolution of some symmetrized visualizations of the process during the optimization for an MCOT problem on $\mu_1$. Although at each iteration it satisfies the moment constraints, it deviates from a Normal sample rapidly and tends to concentrate on some points (a bit like in Tchakaloff's theorem and \cite[Theorem 3.1]{alfonsi2019approximation}).

\begin{figure}[tp]
  \centering
  \begin{subfigure}{0.48\textwidth}
    \includegraphics[width=\textwidth]{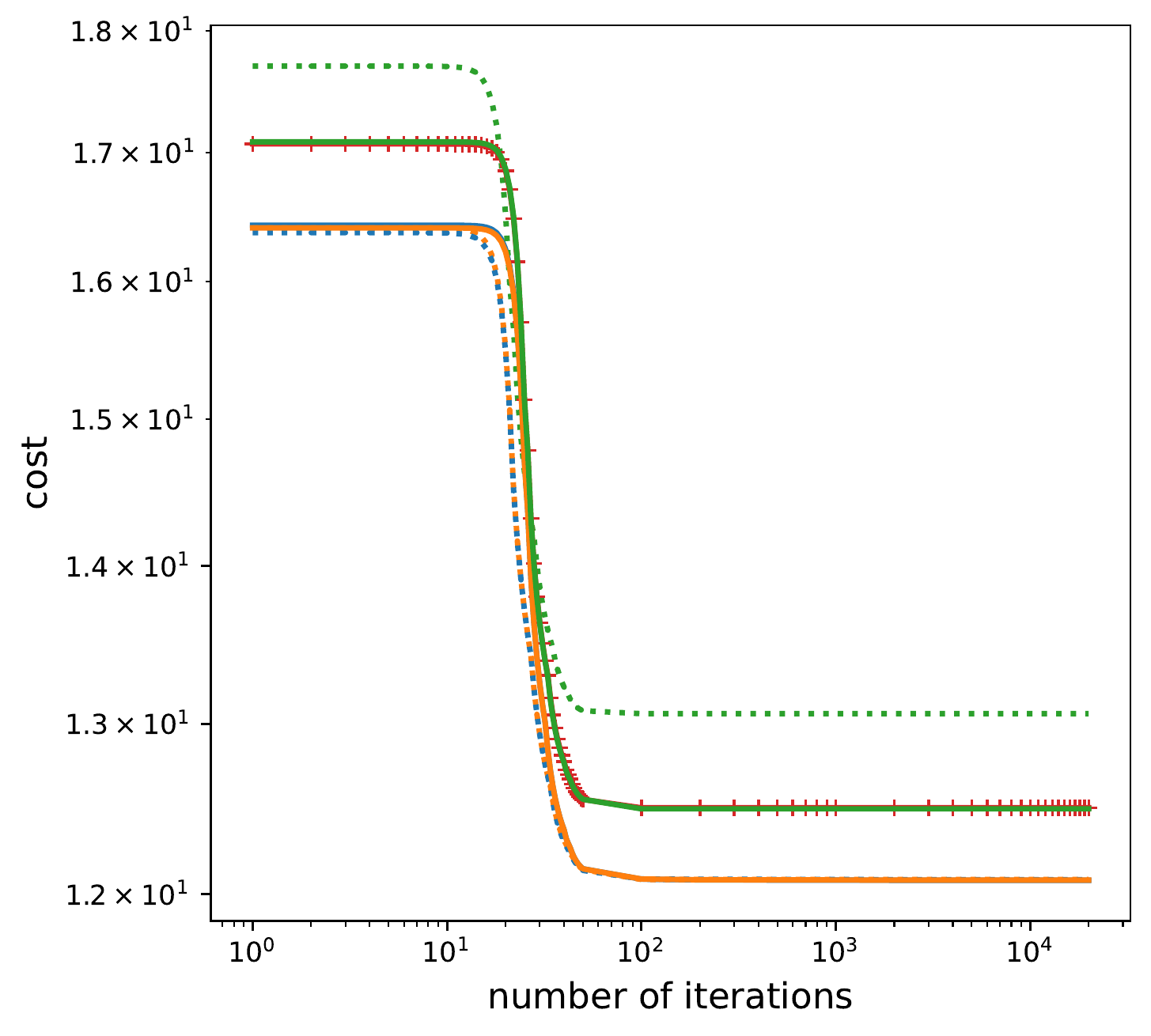}
    \caption{$\mu_1$, $M = 10$ \label{fig:optimNmu110}}
  \end{subfigure}
  \begin{subfigure}{0.48\textwidth}
    \includegraphics[width=\textwidth]{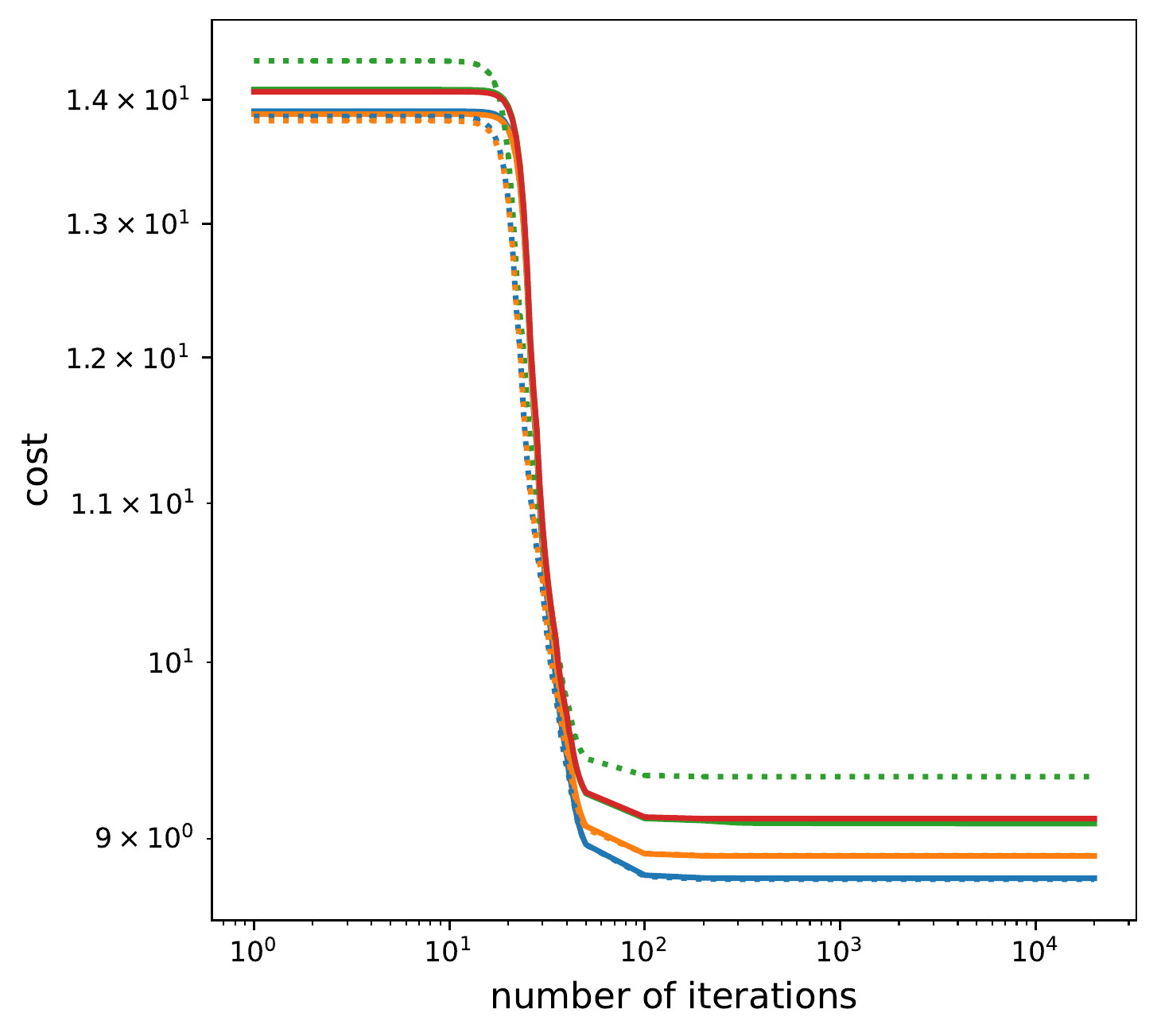}
    \caption{$\mu_2$, $M = 10$ \label{fig:optimNmu210}}
  \end{subfigure}
  \begin{subfigure}{0.48\textwidth}
    \includegraphics[width=\textwidth]{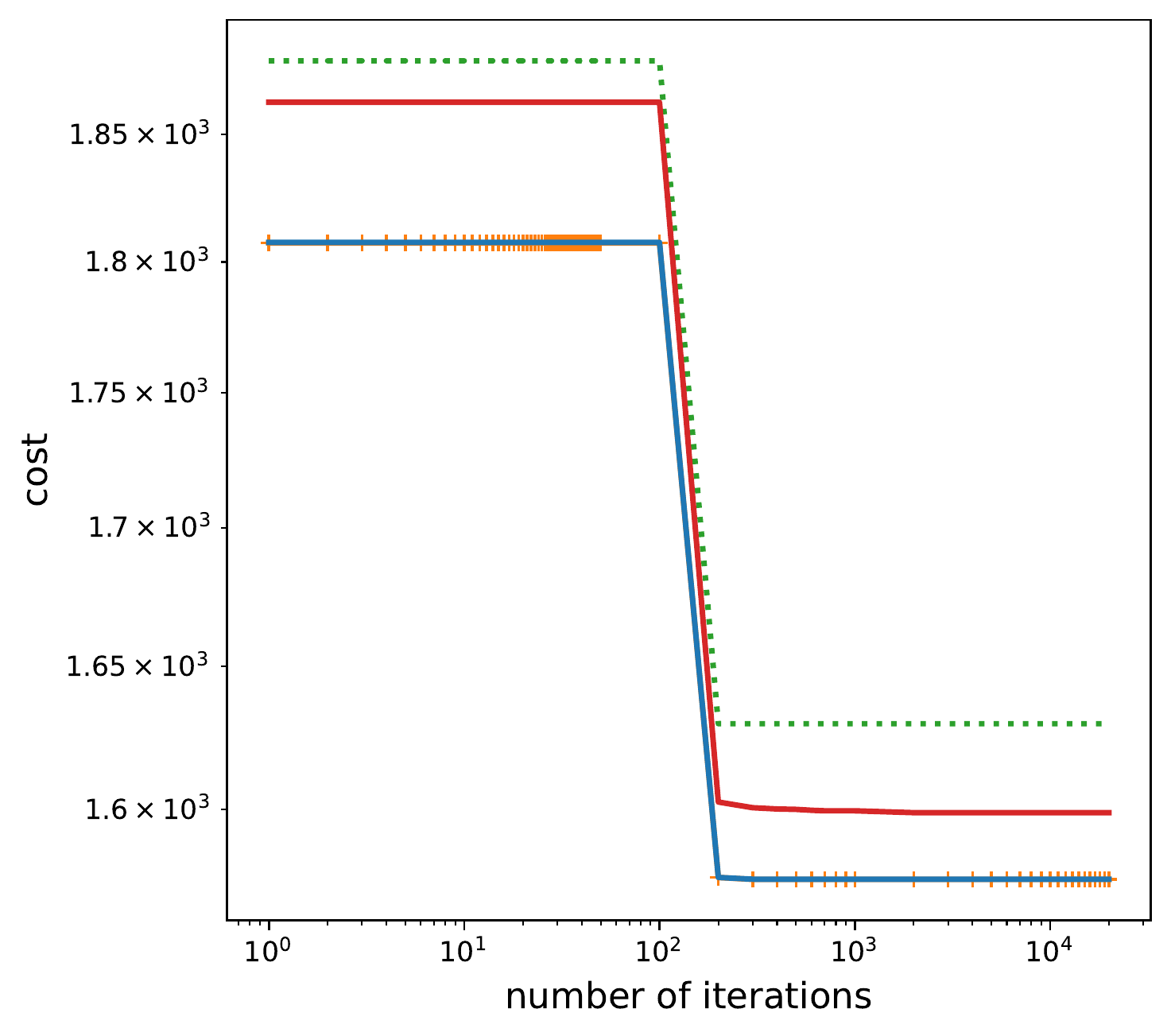}
    \caption{$\mu_1$, $M= 100$ \label{fig:optimNmu1100}}
  \end{subfigure}
  \begin{subfigure}{0.48\textwidth}
    \includegraphics[width=\textwidth]{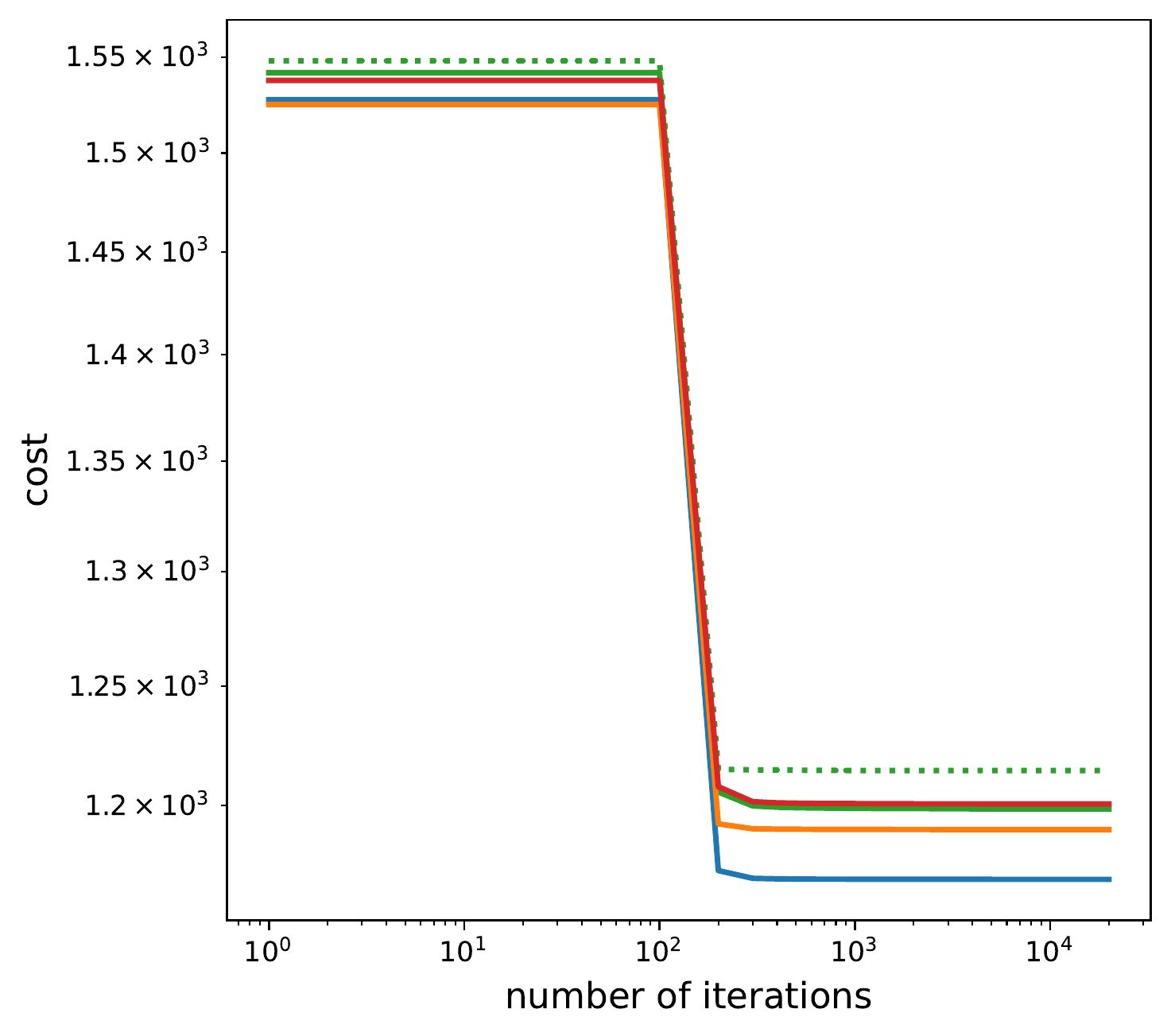}
    \caption{$\mu_2$, $M = 100$ \label{fig:optimNmu2100}}
  \end{subfigure}
  \caption{Evolution of the cost as a function of the number of iterations $n$ for various values of $N$ and $K$ from 1000 (dotted lines) to 10000 (solid lines). $\Delta t_0 = 10^{-4}$, $\beta_0  = 0$. Blue curves are for $N=27$, orange ones for $N = 37$, green ones for $N = 43$, red ones for $N = 52$ and pink ones for $N = 73$. On Figures \ref{fig:optimNmu110} and \ref{fig:optimNmu1100}, ``+" signs are added to better distinguish overlaid curves.\label{fig:optimN}}
\end{figure}


\begin{figure}[htp]
  \centering
  \begin{multicols}{3}
    \captionsetup[sub]{labelformat=anumber}
    \begin{subfigure}{0.32\textwidth}
      \includegraphics[width=\textwidth]{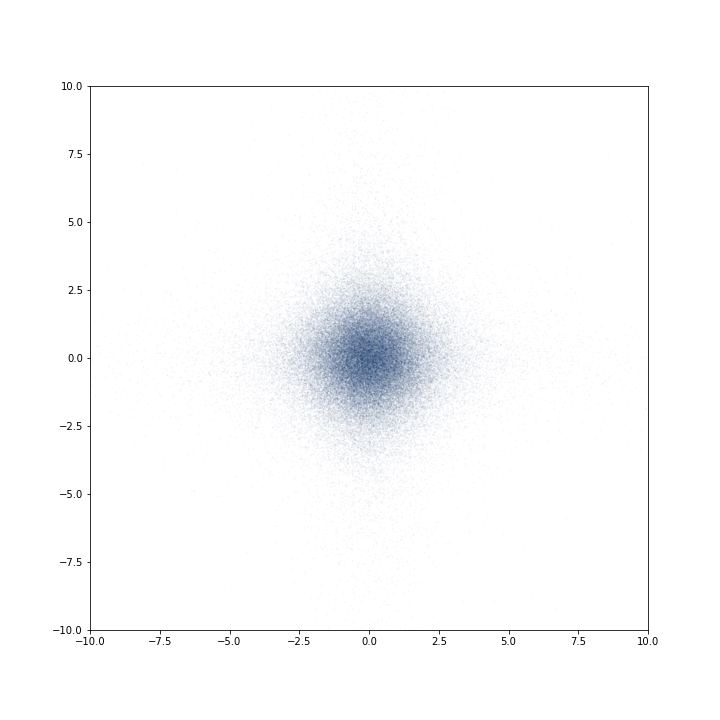}
      \caption{plane XY, iteration 1 \label{fig:duringoptimXY1}}
    \end{subfigure}\par
    \begin{subfigure}{0.32\textwidth}
      \includegraphics[width=\textwidth]{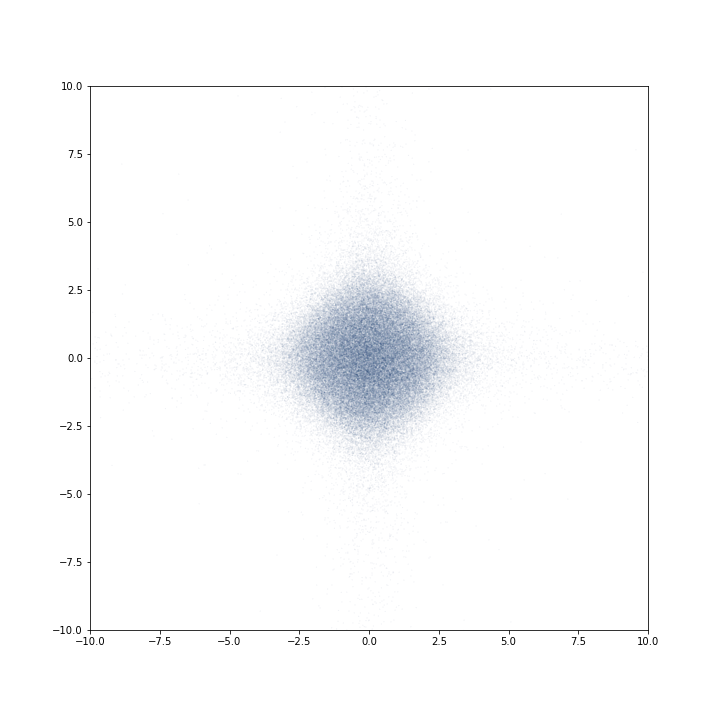}
      \caption{plane XY, iteration 30 \label{fig:duringoptimXY30}}
    \end{subfigure}\par
    \begin{subfigure}{0.32\textwidth}
      \includegraphics[width=\textwidth]{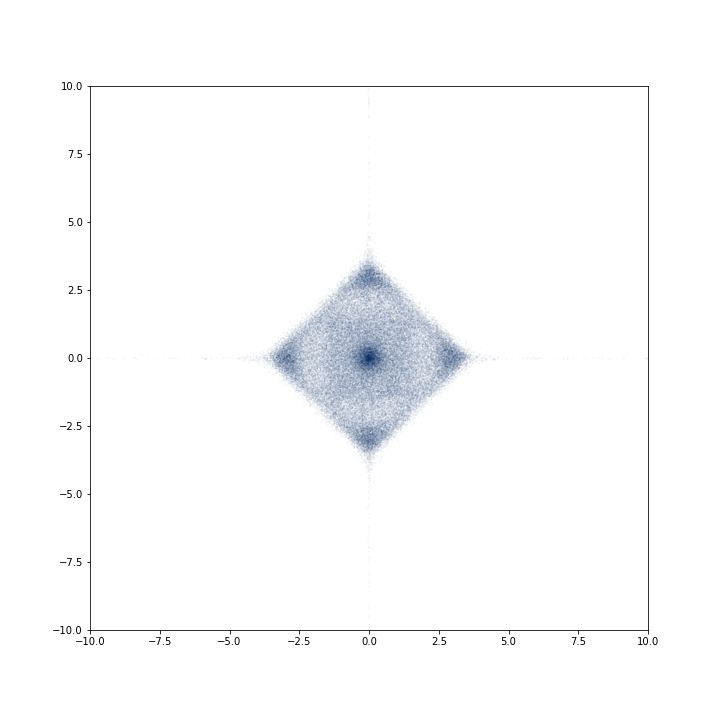}
      \caption{plane XY, iteration 50 \label{fig:duringoptimXY50}}
    \end{subfigure}\par

    \captionsetup[sub]{labelformat=bnumber}
    \setcounter{subfigure}{0}
    \begin{subfigure}{0.32\textwidth}
      \includegraphics[width=\textwidth]{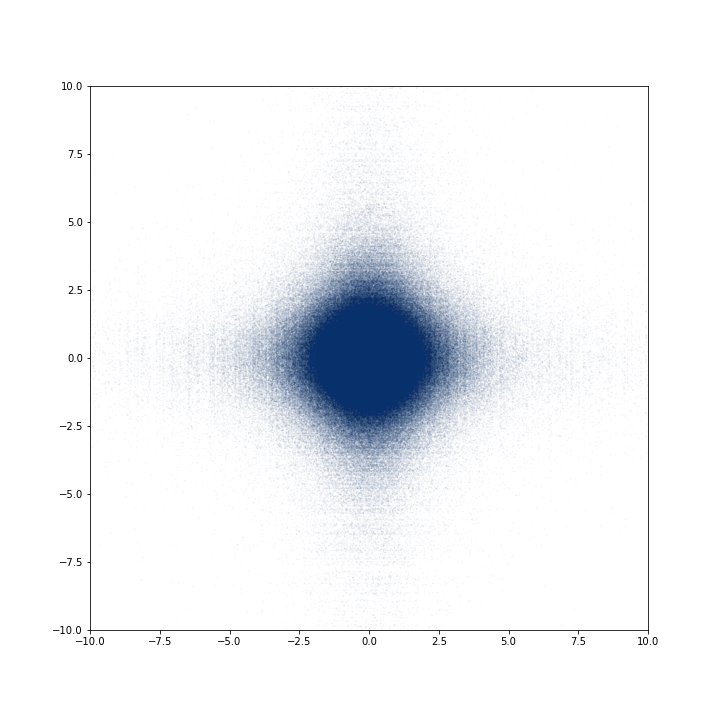}
      \caption{X axis, iteration 1 \label{fig:duringoptimX1}}
    \end{subfigure}\par
    \begin{subfigure}{0.32\textwidth}
      \includegraphics[width=\textwidth]{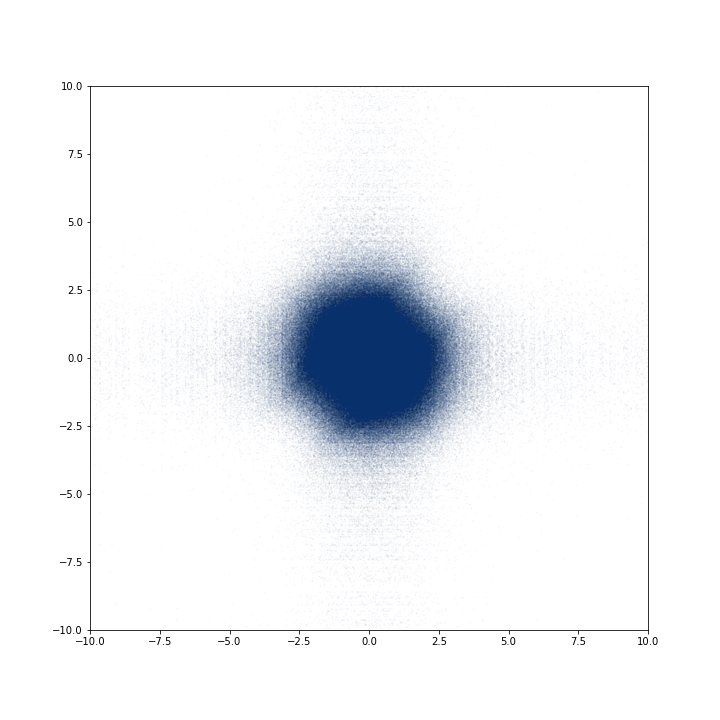}
      \caption{X axis, iteration 30 \label{fig:duringoptimX30}}
    \end{subfigure}\par
    \begin{subfigure}{0.32\textwidth}
      \includegraphics[width=\textwidth]{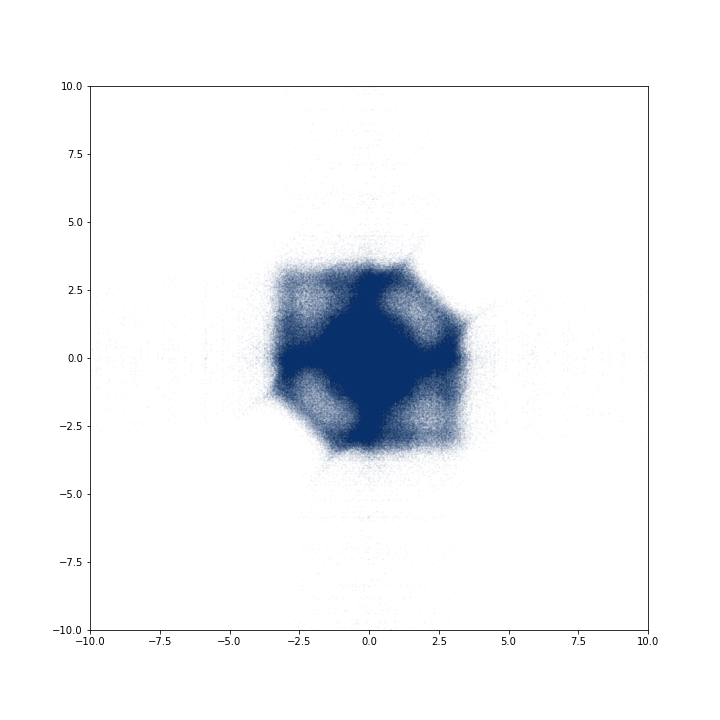}
      \caption{X axis, iteration 50 \label{fig:duringoptimX50}}
    \end{subfigure}\par

    \captionsetup[sub]{labelformat=cnumber}
    \setcounter{subfigure}{0}
    \begin{subfigure}{0.32\textwidth}
      \includegraphics[width=\textwidth]{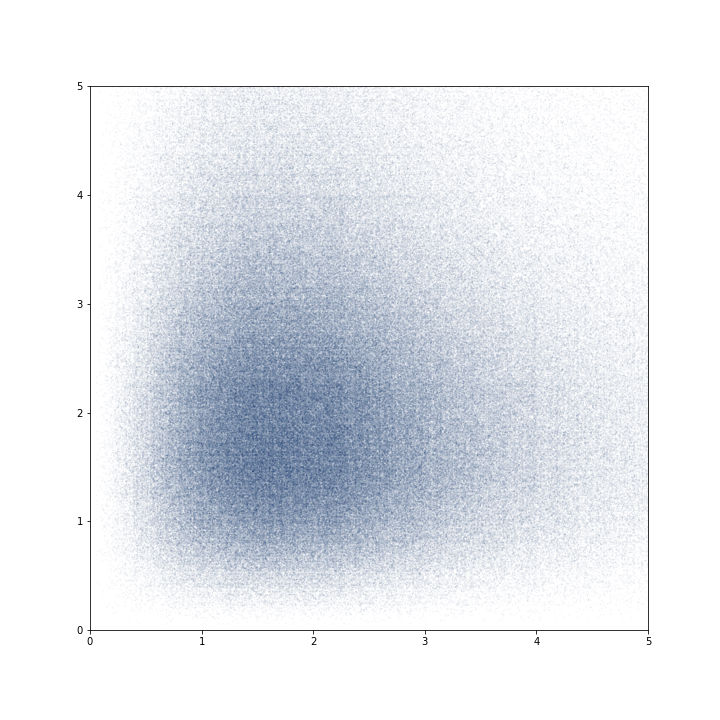}
      \caption{radial, iteration 1 \label{fig:duringoptimR1}}
    \end{subfigure}\par
    \begin{subfigure}{0.32\textwidth}
      \includegraphics[width=\textwidth]{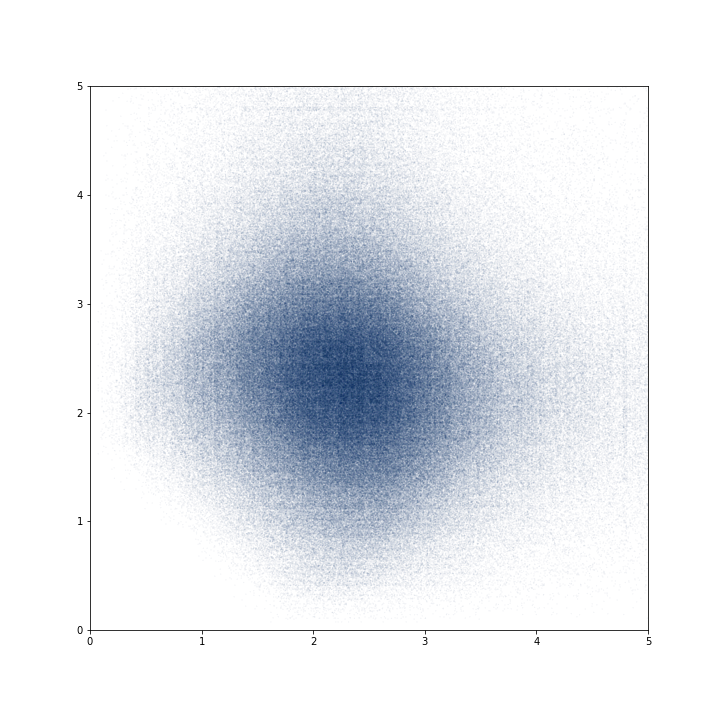}
      \caption{radial, iteration 30 \label{fig:duringoptimR30}}
    \end{subfigure}\par
    \begin{subfigure}{0.32\textwidth}
      \includegraphics[width=\textwidth]{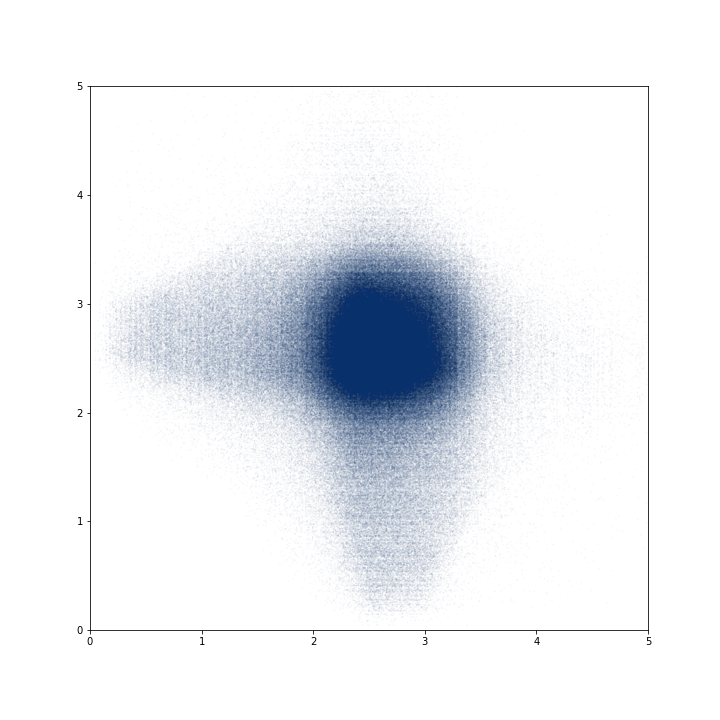}
      \caption{radial, iteration 50 \label{fig:duringoptimR50}}
    \end{subfigure}
  \end{multicols}
  \caption{Transport along optimization for $\mu_1$, $M=10$, $K=10000$, $N=27$, $\beta_0 = 0$, $\Delta t_0 = 10^{-4}$.
  In figures of column (a) is showed {\small $\frac 1 {MK} \sum_{k=1}^{K} \sum_{m = 1}^{M} \delta_{x^k_{m,1},x^k_{m,2}}$}. In figures of column (b) is showed
  {\small $\frac 1 {M(M-1)K} \sum_{k=1}^{K} \sum_{m \neq m' = 1}^{M} \delta_{x^k_{m,1},x^k_{m',1}}$}. In figures of column (c) is showed {\small $\frac 1 {M(M-1)K} \sum_{k=1}^{K} \sum_{m \neq m' = 1}^{M} \delta_{|x^k_m|,|x^k_{m'}|}$}, where {\small $|x^k_{m}| = \sqrt{\sum_{i=1}^3(x^k_{m,i})^2}$}. The evolution of the corresponding cost can be seen in Figure \ref{fig:optimNmu110}.
  \label{fig:duringoptim1}}
\end{figure}

\begin{figure}[htp]
  \centering
  \begin{multicols}{3}
    \captionsetup[sub]{labelformat=anumber}
    \begin{subfigure}{0.32\textwidth}
      \setcounter{subfigure}{3}
      \includegraphics[width=\textwidth]{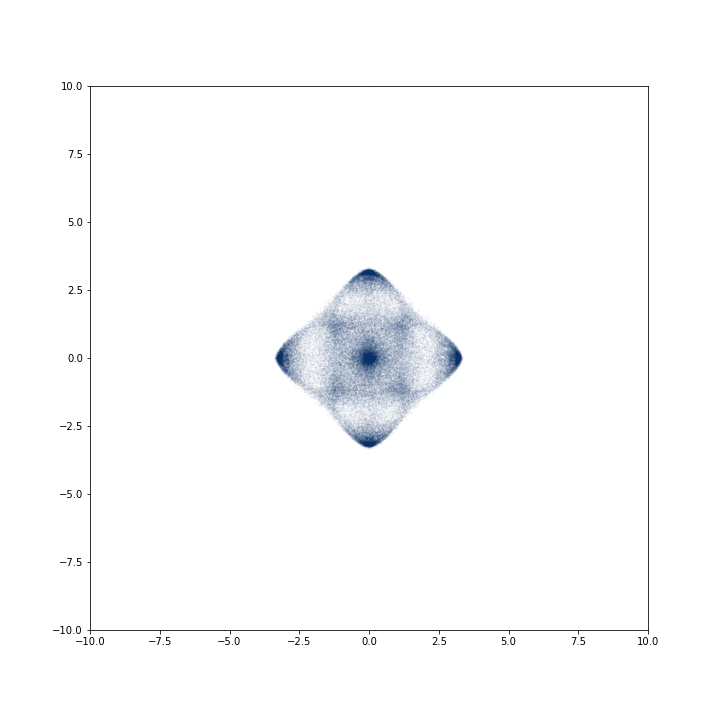}
      \caption{plane XY, iteration 100 \label{fig:duringoptimXY100}}
    \end{subfigure}\par
    \begin{subfigure}{0.32\textwidth}
      \includegraphics[width=\textwidth]{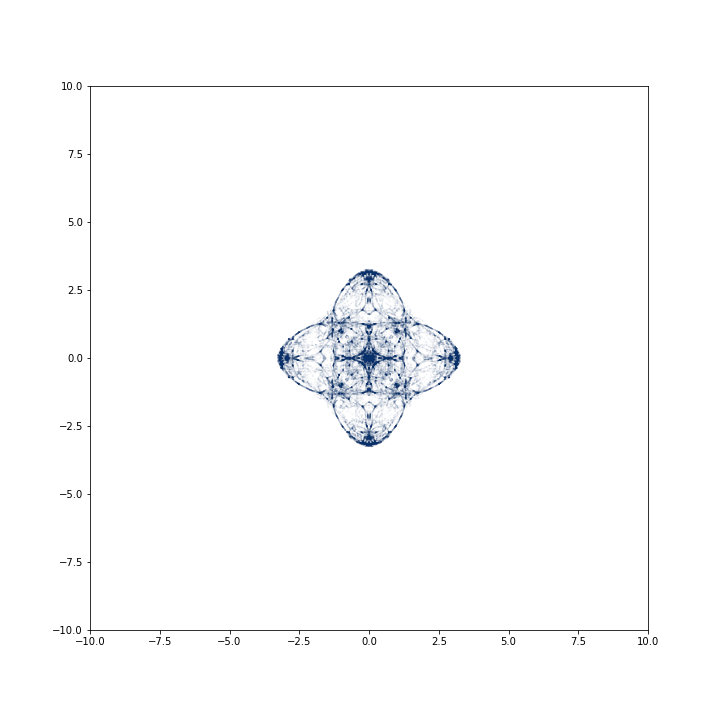}
      \caption{plane XY, iteration 1000 \label{fig:duringoptimXY1000}}
    \end{subfigure}\par
    \begin{subfigure}{0.32\textwidth}
      \includegraphics[width=\textwidth]{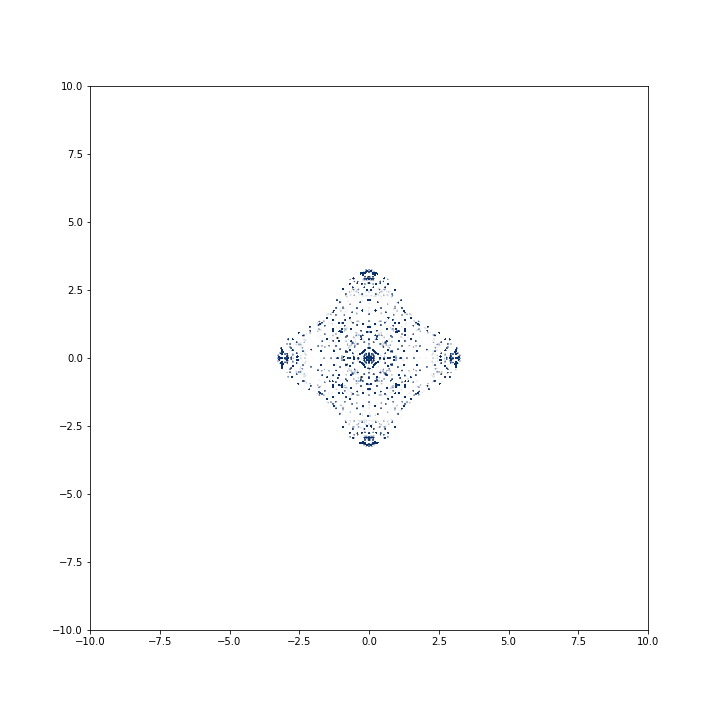}
      \caption{plane XY, iteration 20000 \label{fig:duringoptimXY20000}}
    \end{subfigure}\par

    \captionsetup[sub]{labelformat=bnumber}
    \setcounter{subfigure}{3}
    \begin{subfigure}{0.32\textwidth}
      \includegraphics[width=\textwidth]{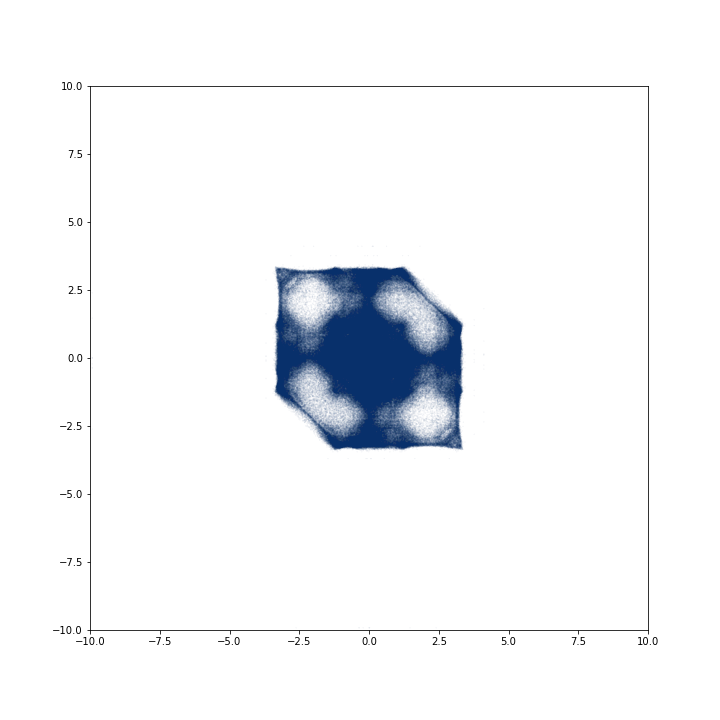}
      \caption{X axis, iteration 100 \label{fig:duringoptimX100}}
    \end{subfigure}\par
    \begin{subfigure}{0.32\textwidth}
      \includegraphics[width=\textwidth]{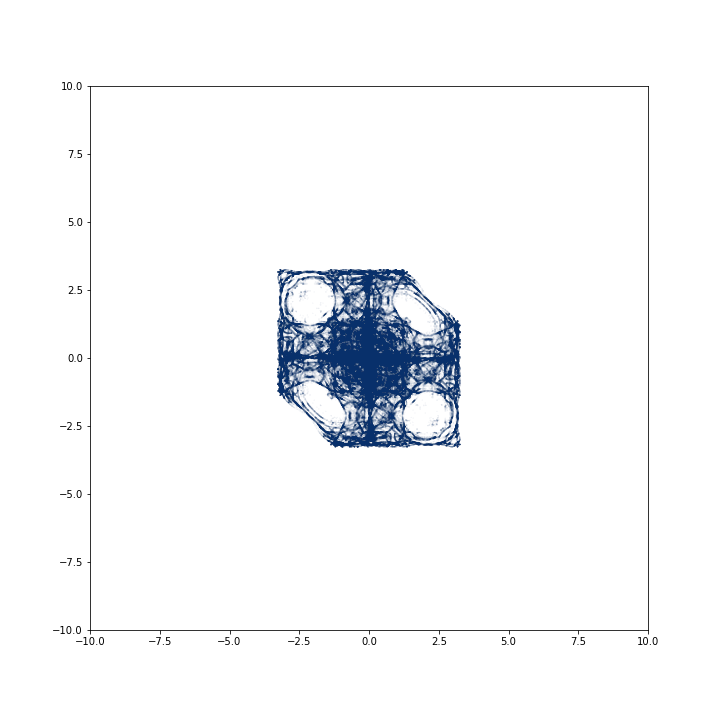}
      \caption{X axis, iteration 1000 \label{fig:duringoptimX1000}}
    \end{subfigure}\par
    \begin{subfigure}{0.32\textwidth}
      \includegraphics[width=\textwidth]{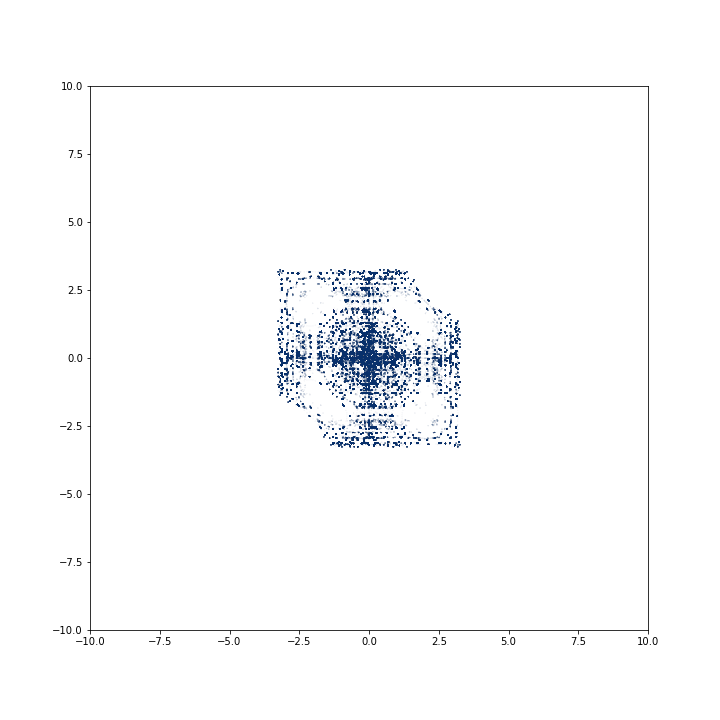}
      \caption{X axis, iteration 20000 \label{fig:duringoptimX20000}}
    \end{subfigure}\par

    \captionsetup[sub]{labelformat=cnumber}
    \setcounter{subfigure}{3}
    \begin{subfigure}{0.32\textwidth}
      \includegraphics[width=\textwidth]{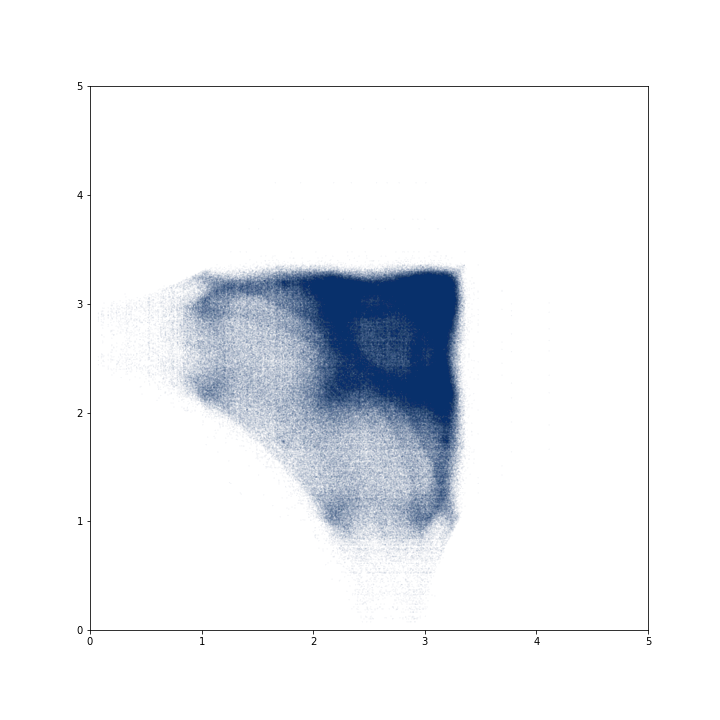}
      \caption{radial, iteration 100 \label{fig:duringoptimR100}}
    \end{subfigure}\par
    \begin{subfigure}{0.32\textwidth}
      \includegraphics[width=\textwidth]{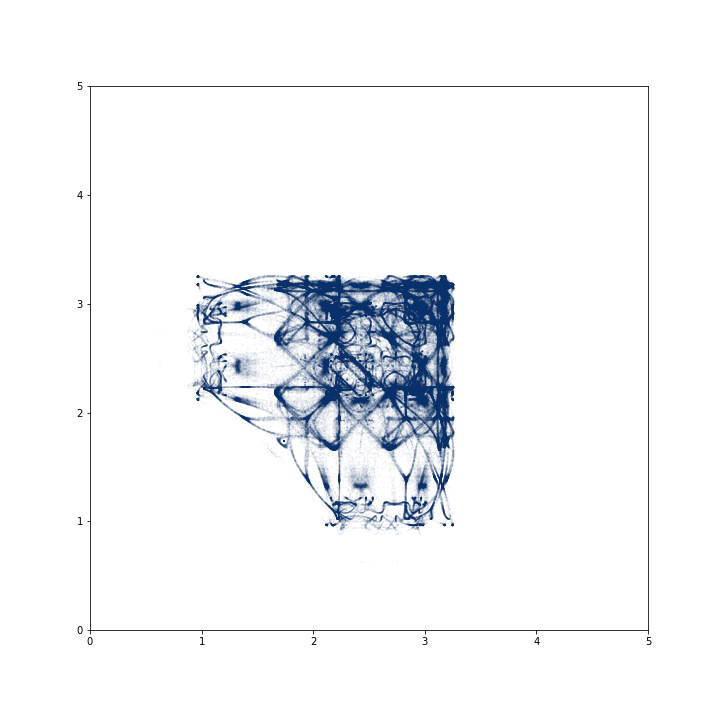}
      \caption{radial, iteration 1000 \label{fig:duringoptimR1000}}
    \end{subfigure}\par
    \begin{subfigure}{0.32\textwidth}
      \includegraphics[width=\textwidth]{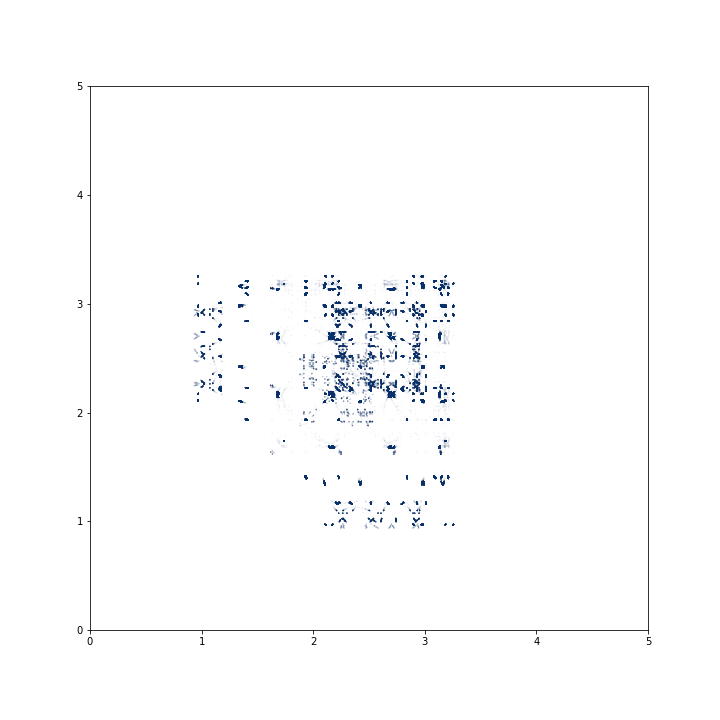}
      \caption{radial, iteration 20000 \label{fig:duringoptimR20000}}
    \end{subfigure}
  \end{multicols}
  \caption{Transport along optimization for $\mu_1$, $M=10$, $K=10000$, $N=27$, $\beta_0 = 0$, $\Delta t_0 = 10^{-4}$.
  In figures of column (a) is showed {\small $\frac 1 {MK} \sum_{k=1}^{K} \sum_{m = 1}^{M} \delta_{x^k_{m,1},x^k_{m,2}}$}. In figures of column (b) is showed
  {\small $\frac 1 {M(M-1)K} \sum_{k=1}^{K} \sum_{m \neq m' = 1}^{M} \delta_{x^k_{m,1},x^k_{m',1}}$}. In figures of column (c) is showed {\small $\frac 1 {M(M-1)K} \sum_{k=1}^{K} \sum_{m \neq m' = 1}^{M} \delta_{|x^k_m|,|x^k_{m'}|}$}, where {\small $|x^k_{m}| = \sqrt{\sum_{i=1}^3(x^k_{m,i})^2}$}. The evolution of the corresponding cost can be seen in Figure \ref{fig:optimNmu110}.
  \label{fig:duringoptim2}}
\end{figure}

%

\subsubsection{Minimas -- Figures \ref{fig:minimas1}, \ref{fig:minimas2}, \ref{fig:minimasnonsym} and \ref{fig:minimasnonsymXY}}\label{sect:minimas}

As $K$ increases, the symmetrized minimizers of Figures \ref{fig:minimas1} and \ref{fig:minimas2} tends to be visually more and more concentrated on some particular points. According to Table \ref{tbl:minimascost}, higher values of $K$ tends to have lower costs.

Some symmetrized visualizations of minimizers for MCOT problems for the non-symmetrical measures $\mu_2$ and $\mu_3$ are  presented in Figures \ref{fig:minimasnonsym} and \ref{fig:minimasnonsymXY}. In those cases, the 1D couplings obtained on each axis (X, Y or Z) are not the same (Figure \ref{fig:minimasnonsym}). A higher number of marginal laws $M$ seems to spread more the particules, although their 1D coupling still shows particles highly concentrated around a few values in the considered examples. Higher values of $N$ increases the concentration of the particles around fewer values in the $\mu_3$ examples. The planar representation of the minimizers for large $M$ (Figure \ref{fig:minimasnonsymXY}), shows that particules are not distributed spatially as a Normal function and tend to concentrate on some 1D curves (for the considered 2D projections) with a higher spreading than for lower values of $M$.

\begin{table}[tp!]
\begin{center}
  \begin{tabular}{c|llllll}
  $K$ & 40 & 80 & 160 & 320 & 1000 & 10000 \\
  \hline
  cost & 12.2558198 & 12.1747815 & 12.1457150 & 12.0916662 & 12.0821615 & 12.0785749 \\
  lower cost & 12.1981977 & 12.0864398 & 12.0862042 & 12.0855486 & 12.0821615 & 12.0785745
  \end{tabular}
 \end{center}
 \caption{Values of the regularized Coulomb cost (see here-named paragraph in Section \ref{sect:3Dtestdesign}) for the MCOT problem with $\mu_1$, $M=10$, $N=27$, $\Delta t_0 = 10^{-4}$, $\beta_0  = 0$ and $K$ ranging from 40 to 10000. The \emph{cost} line corresponds to the value of the regularized cost associated to the minimizing process at iteration 20000 (which also corresponds to the minimizers represented in the graphs of Figures \ref{fig:minimas1} and \ref{fig:minimas2}). The \emph{lower cost} line corresponds to the lower value of the regularized cost encountered by the minmizing process before or at iteration 20000 for each value of $K$. \label{tbl:minimascost}}
\end{table}

\begin{figure}[htp]
  \centering
  \begin{multicols}{3}
    \captionsetup[sub]{labelformat=anumber}
    \setcounter{subfigure}{0}
    \begin{subfigure}{0.32\textwidth}
      \includegraphics[width=\textwidth]{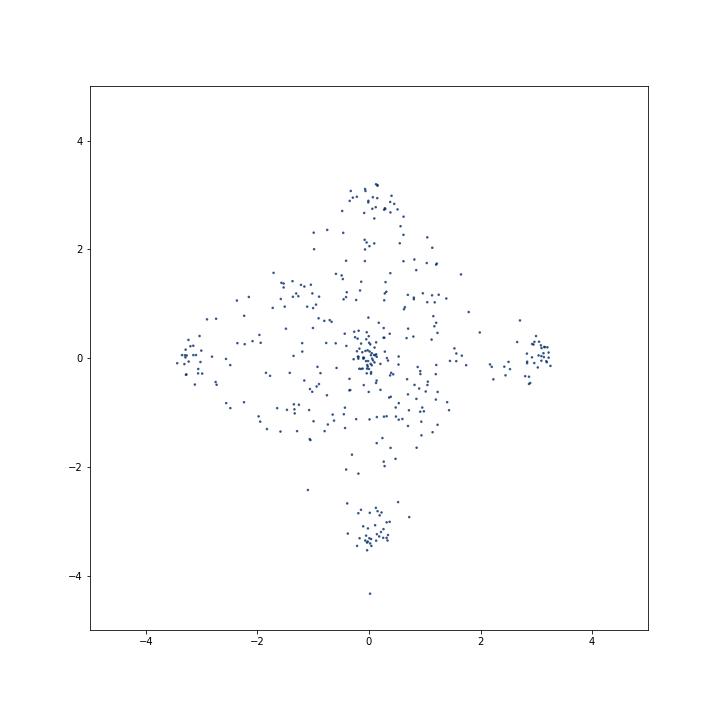}
      \caption{plane XY, K=40 \label{fig:minimasXYK40}}
    \end{subfigure}\par
    \begin{subfigure}{0.32\textwidth}
      \includegraphics[width=\textwidth]{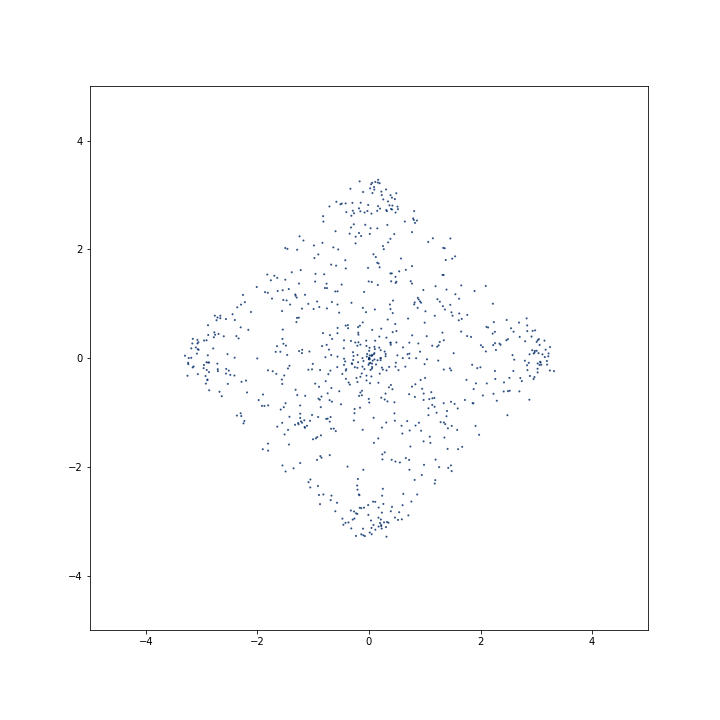}
      \caption{plane XY, K=80 \label{fig:minimasXYK80}}
    \end{subfigure}\par
    \begin{subfigure}{0.32\textwidth}
      \includegraphics[width=\textwidth]{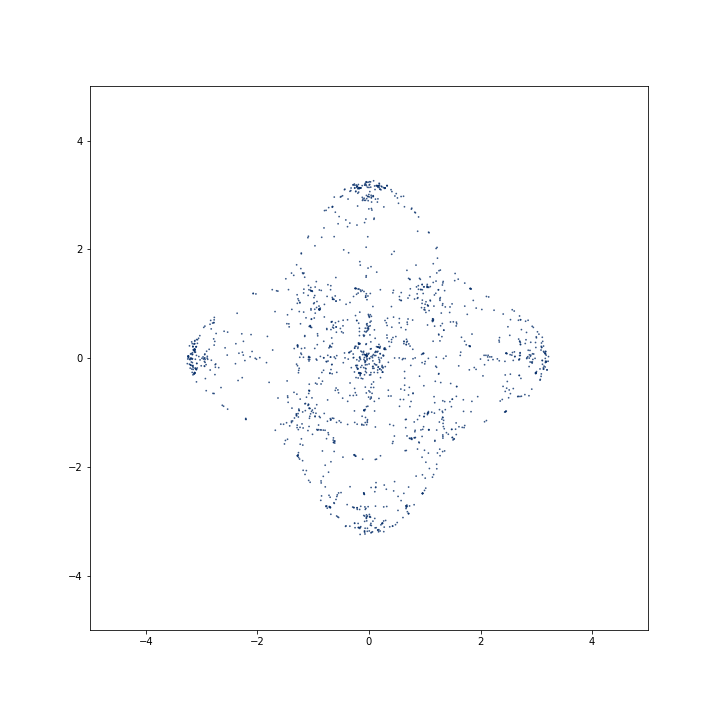}
      \caption{plane XY, K=160 \label{fig:minimasXYK160}}
    \end{subfigure}\par

    \captionsetup[sub]{labelformat=bnumber}
    \setcounter{subfigure}{0}
    \begin{subfigure}{0.32\textwidth}
      \includegraphics[width=\textwidth]{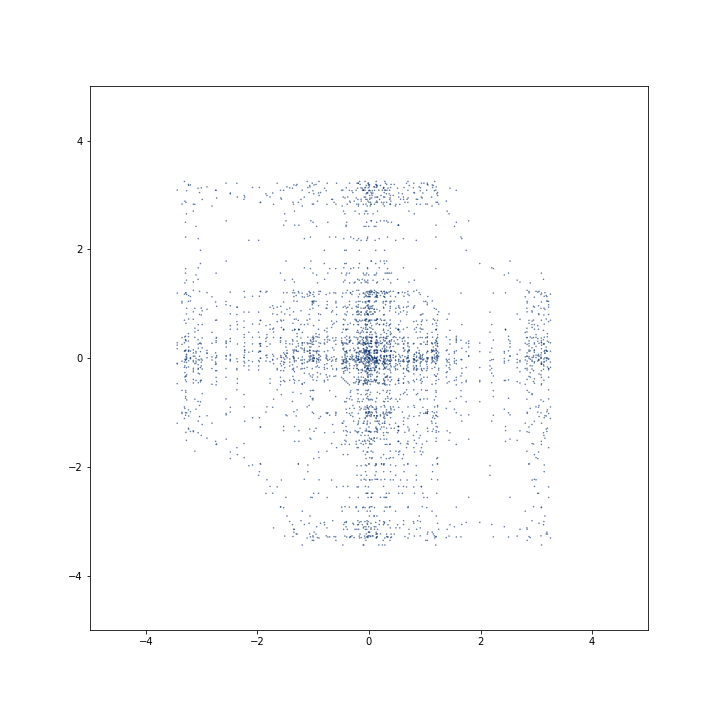}
      \caption{X axis, K=40 \label{fig:minimasXK40}}
    \end{subfigure}\par
    \begin{subfigure}{0.32\textwidth}
      \includegraphics[width=\textwidth]{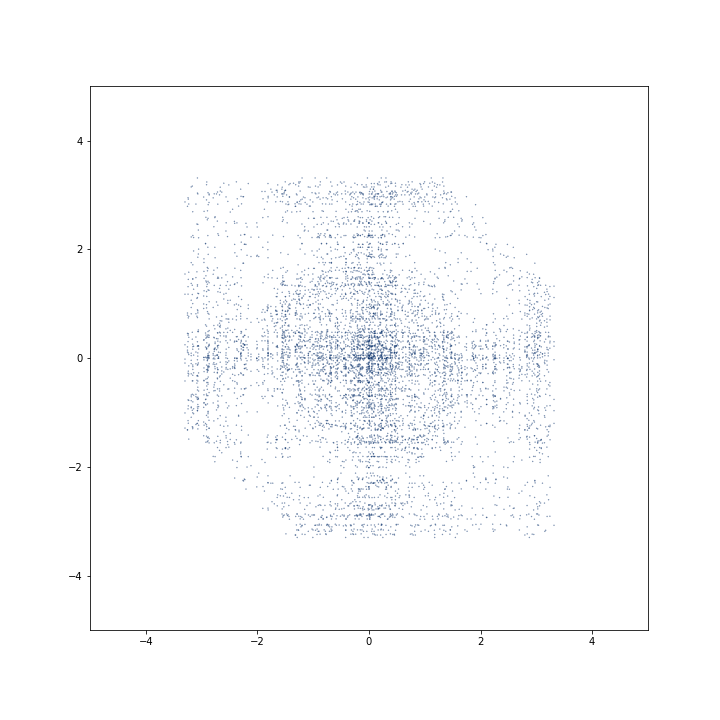}
      \caption{X axis, K=80 \label{fig:minimasXK80}}
    \end{subfigure}\par
    \begin{subfigure}{0.32\textwidth}
      \includegraphics[width=\textwidth]{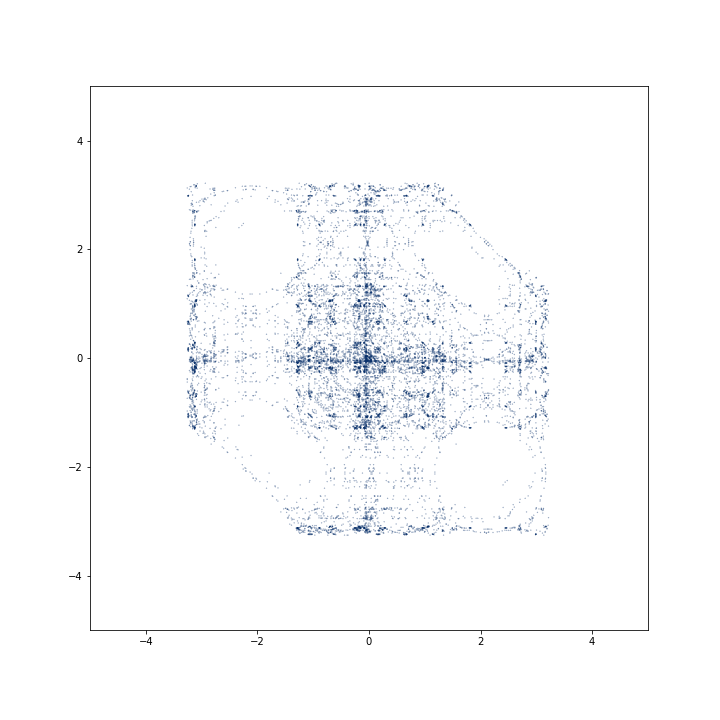}
      \caption{X axis, K=160 \label{fig:minimasXK160}}
    \end{subfigure}\par

    \captionsetup[sub]{labelformat=cnumber}
    \setcounter{subfigure}{0}
    \begin{subfigure}{0.32\textwidth}
      \includegraphics[width=\textwidth]{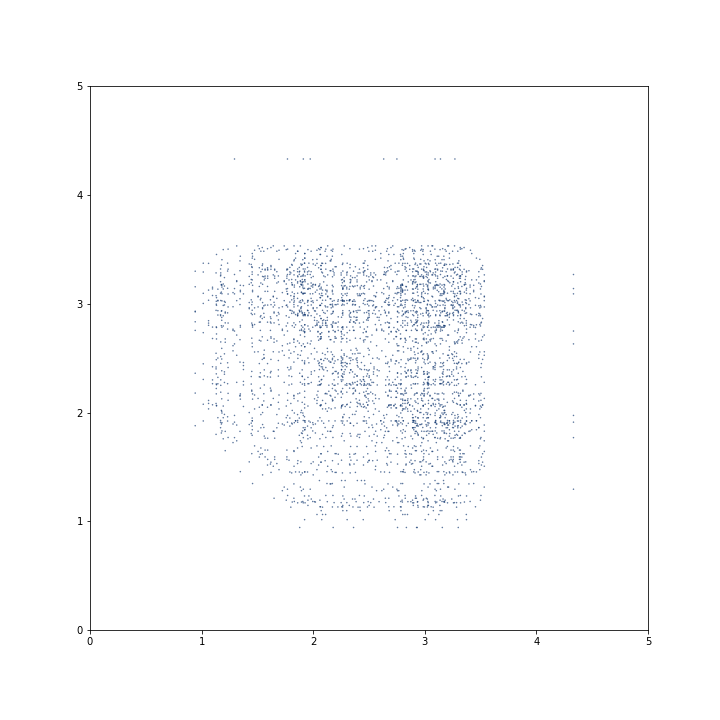}
      \caption{radial, K=40 \label{fig:minimasRK40}}
    \end{subfigure}\par
    \begin{subfigure}{0.32\textwidth}
      \includegraphics[width=\textwidth]{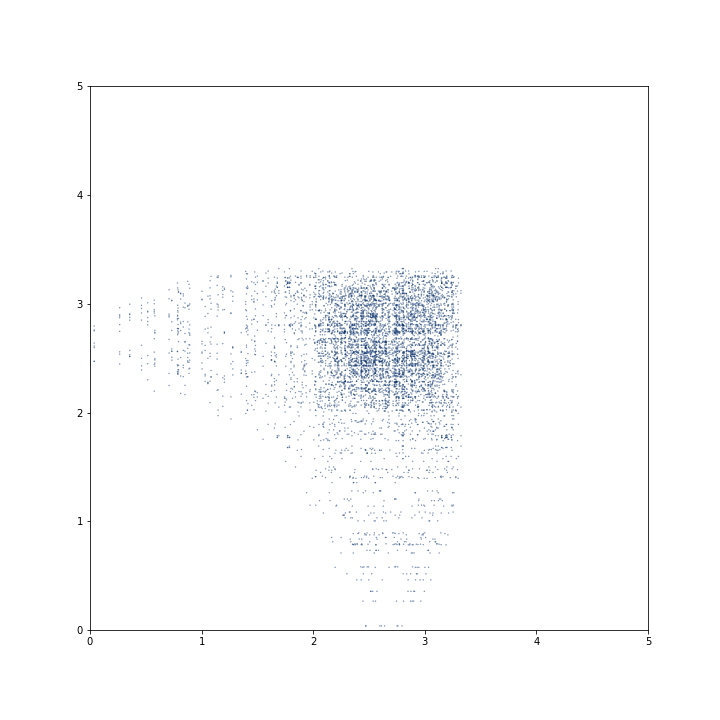}
      \caption{radial, K=80 \label{fig:minimasRK80}}
    \end{subfigure}\par
    \begin{subfigure}{0.32\textwidth}
      \includegraphics[width=\textwidth]{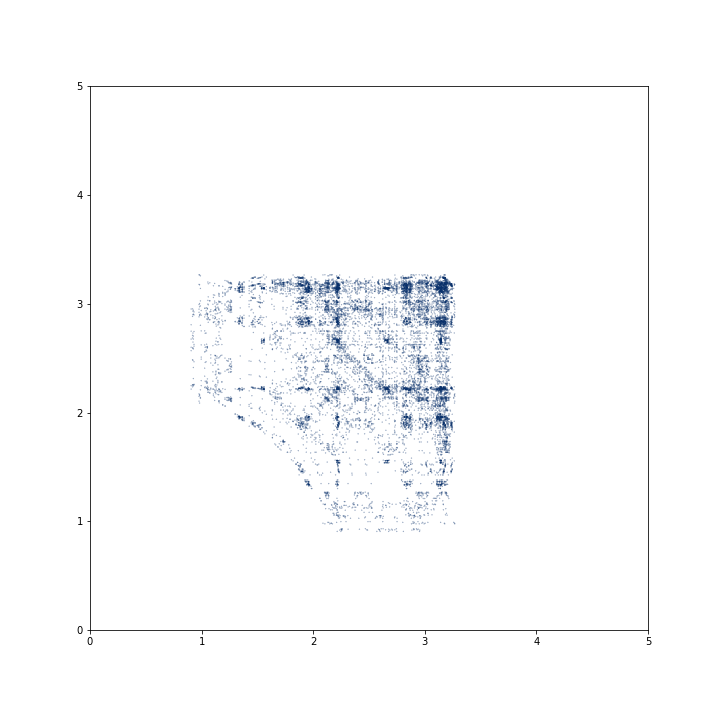}
      \caption{radial, K=160 \label{fig:minimasRK160}}
    \end{subfigure}
  \end{multicols}
  \caption{Optimal transport with $\mu_1$, $M=10$, $N=27$, $\beta_0  = 0$ and $\Delta t_0 = 10^{-4}$, for $K= 40, 80, 160$.
  In figures of column (a) is showed {\small $\frac 1 {MK} \sum_{k=1}^{K} \sum_{m = 1}^{M} \delta_{x^k_{m,1},x^k_{m,2}}$}. In figures of column (b) is showed
  {\small $\frac 1 {M(M-1)K} \sum_{k=1}^{K} \sum_{m \neq m' = 1}^{M} \delta_{x^k_{m,1},x^k_{m',1}}$}. In figures of column (c) is showed {\small $\frac 1 {M(M-1)K} \sum_{k=1}^{K} \sum_{m \neq m' = 1}^{M} \delta_{|x^k_{m}|,|x^k_{m'}|}$}, where {\small $|x^k_{m}| = \sqrt{\sum_{i=1}^3(x^k_{m,i})^2}$}. Corresponding costs can be found in Table \ref{tbl:minimascost}.
  \label{fig:minimas1}}
\end{figure}

\begin{figure}[htp]
  \centering
  \begin{multicols}{3}
    \captionsetup[sub]{labelformat=anumber}
    \begin{subfigure}{0.32\textwidth}
      \setcounter{subfigure}{3}
      \includegraphics[width=\textwidth]{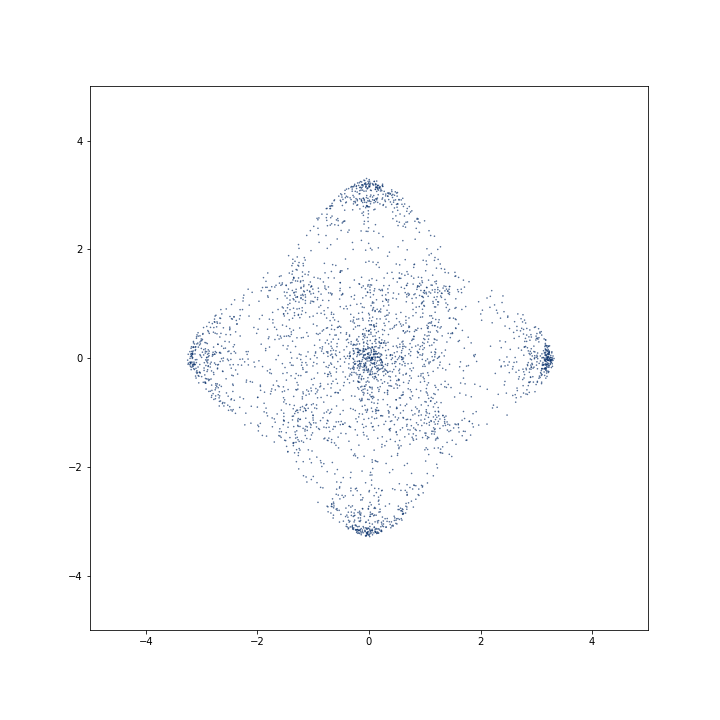}
      \caption{plane XY, K=320 \label{fig:minimasXYK320}}
    \end{subfigure}
    \begin{subfigure}{0.32\textwidth}
      \includegraphics[width=\textwidth]{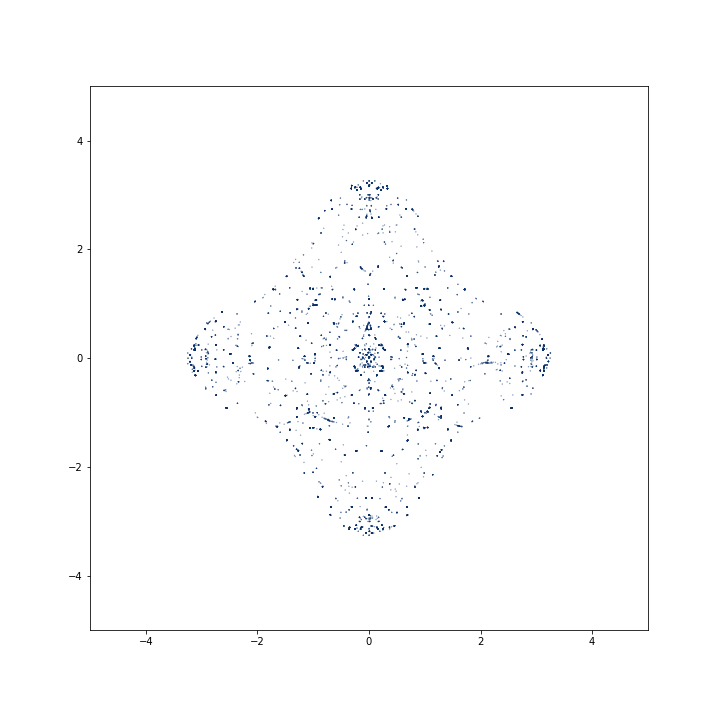}
      \caption{plane XY, K=1000 \label{fig:minimasXYK1000}}
    \end{subfigure}
    \begin{subfigure}{0.32\textwidth}
      \includegraphics[width=\textwidth]{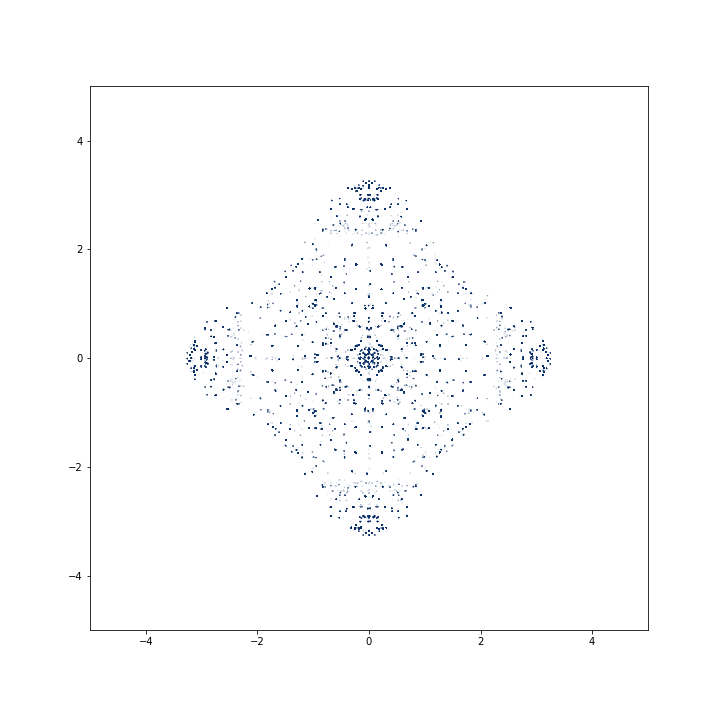}
      \caption{plane XY, K=10000 \label{fig:minimasXYK10000}}
    \end{subfigure}

    \captionsetup[sub]{labelformat=bnumber}
    \setcounter{subfigure}{3}
    \begin{subfigure}{0.32\textwidth}
      \includegraphics[width=\textwidth]{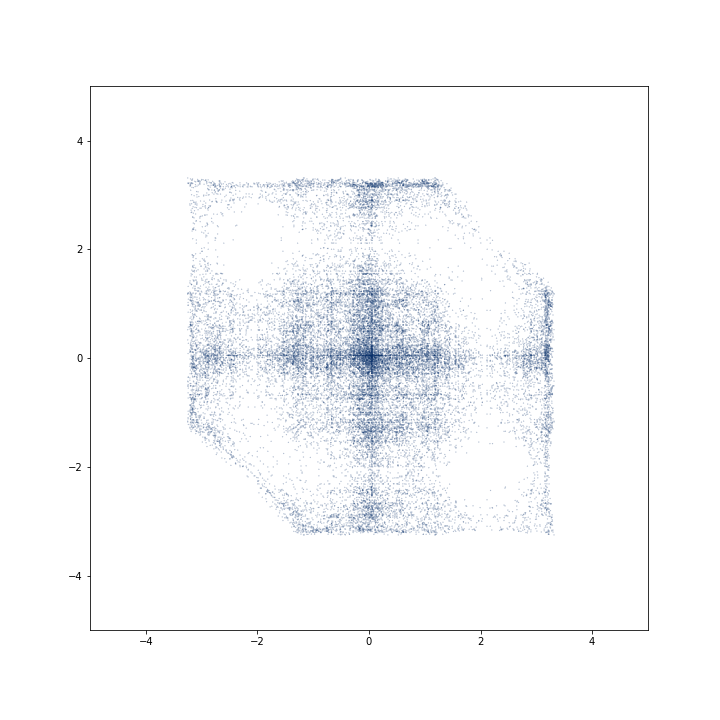}
      \caption{X axis, K=320 \label{fig:minimasXK320}}
    \end{subfigure}
    \begin{subfigure}{0.32\textwidth}
      \includegraphics[width=\textwidth]{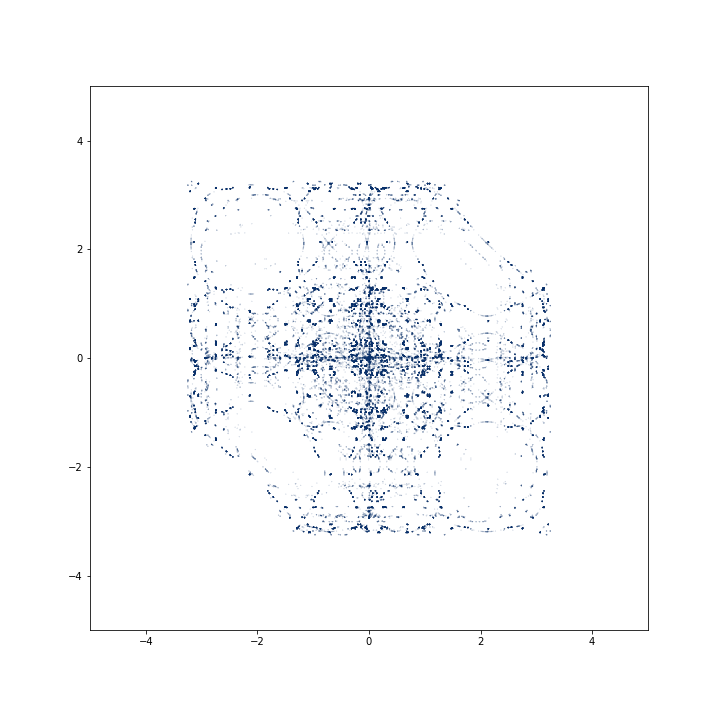}
      \caption{X axis, K=1000 \label{fig:minimasXK1000}}
    \end{subfigure}
    \begin{subfigure}{0.32\textwidth}
      \includegraphics[width=\textwidth]{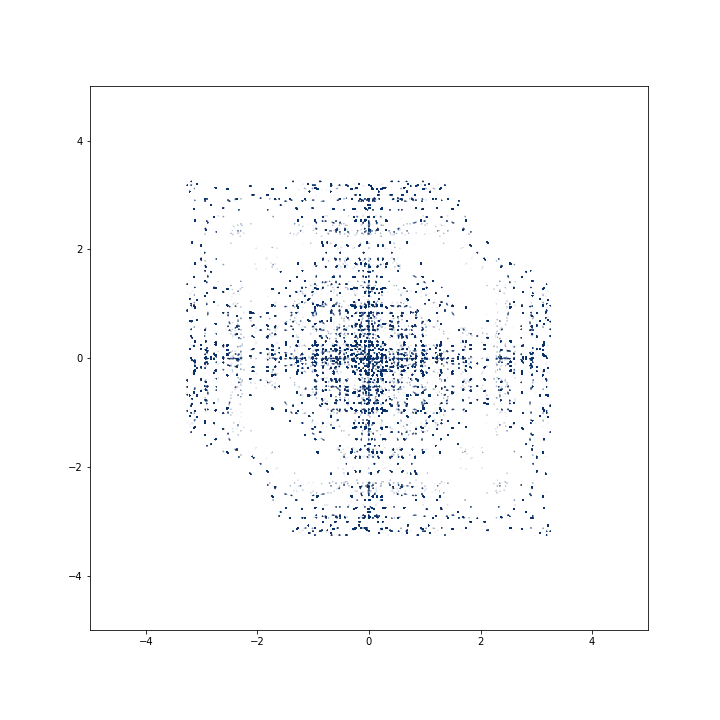}
      \caption{X axis, K=10000 \label{fig:minimasXK10000}}
    \end{subfigure}

    \captionsetup[sub]{labelformat=cnumber}
    \setcounter{subfigure}{3}
    \begin{subfigure}{0.32\textwidth}
      \includegraphics[width=\textwidth]{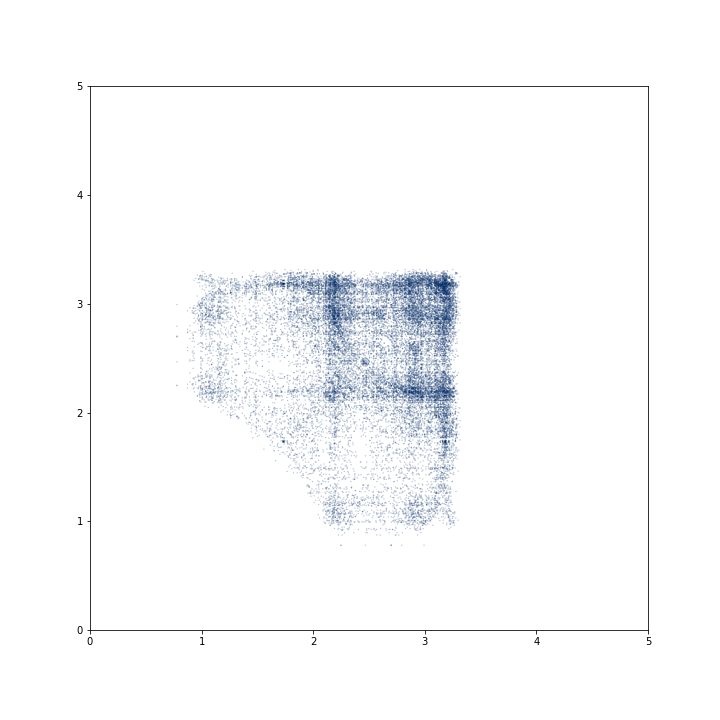}
      \caption{radial, K=320 \label{fig:minimasRK320}}
    \end{subfigure}
    \begin{subfigure}{0.32\textwidth}
      \includegraphics[width=\textwidth]{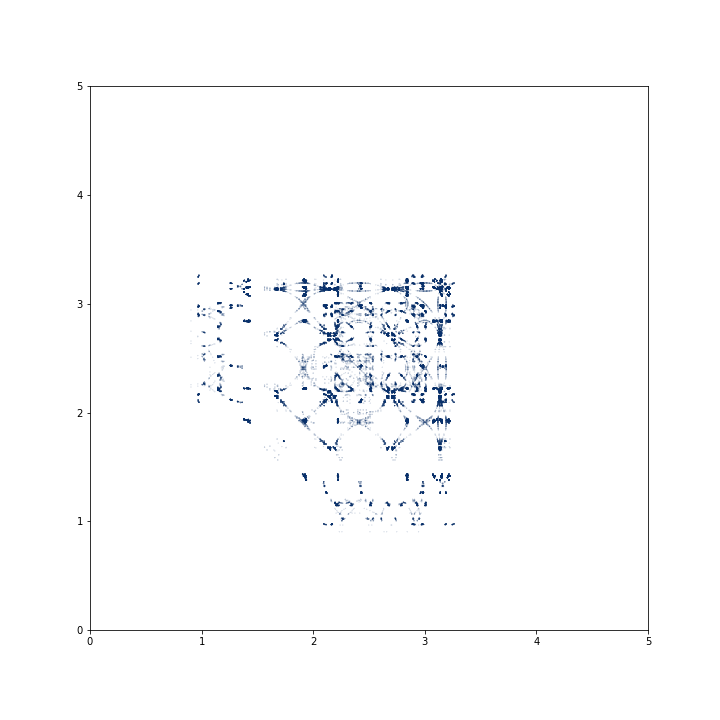}
      \caption{radial, K=1000 \label{fig:minimasRK1000}}
    \end{subfigure}
    \begin{subfigure}{0.32\textwidth}
      \includegraphics[width=\textwidth]{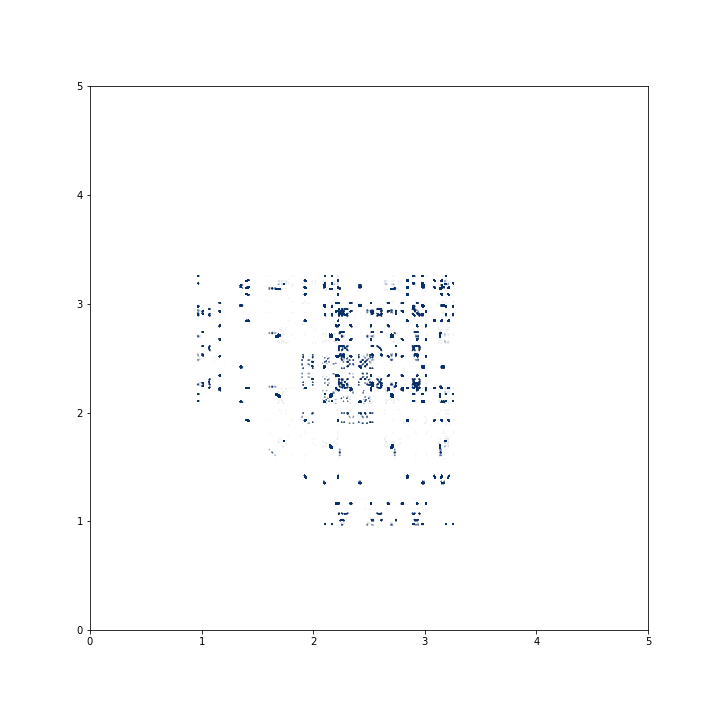}
      \caption{radial, K=10000 \label{fig:minimasRK10000}}
    \end{subfigure}
  \end{multicols}
  \caption{
  Optimal transport with $\mu_1$, $M=10$, $N=27$, $\beta_0  = 0$ and $\Delta t_0 = 10^{-4}$, for $K= 320, 1000, 10000$.
  In figures of column (a) is showed {\small $\frac 1 {MK} \sum_{k=1}^{K} \sum_{m = 1}^{M} \delta_{x^k_{m,1},x^k_{m,2}}$}. In figures of column (b) is showed
  {\small $\frac 1 {M(M-1)K} \sum_{k=1}^{K} \sum_{m \neq m' = 1}^{M} \delta_{x^k_{m,1},x^k_{m',1}}$}. In figures of column (c) is showed {\small $\frac 1 {M(M-1)K} \sum_{k=1}^{K} \sum_{m \neq m' = 1}^{M} \delta_{|x^k_{m}|,|x^k_{m'}|}$}, where {\small $|x^k_{m}| = \sqrt{\sum_{i=1}^3(x^k_{m,i})^2}$}. Corresponding costs can be found in Table \ref{tbl:minimascost}.
  \label{fig:minimas2}}
\end{figure}

\begin{figure}[htp]
  \vspace{-1.5cm}
  \centering
  \begin{multicols}{3}
    \captionsetup[sub]{labelformat=anumber}
    \begin{subfigure}{0.29\textwidth}
      \includegraphics[width=\textwidth]{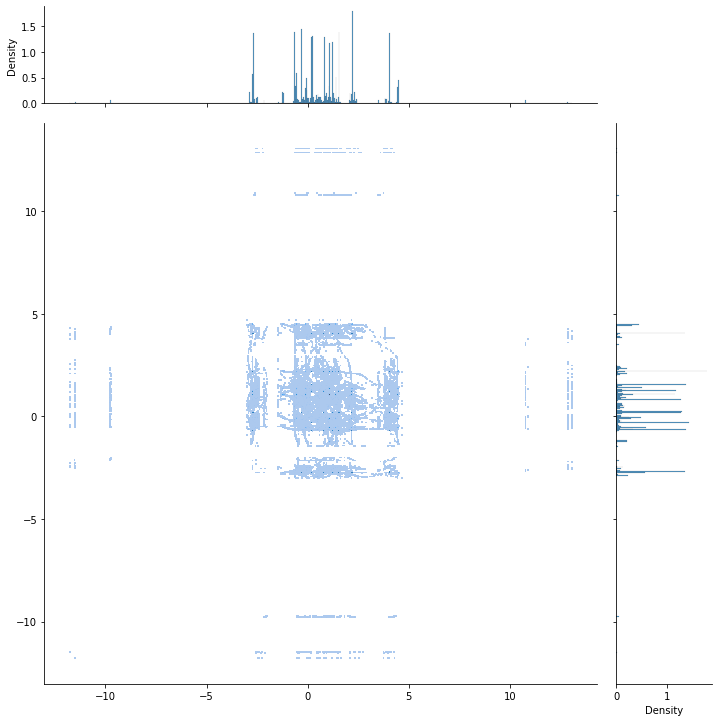}
      \caption{{\scriptsize $\mu_2$, X axis, $M=10$, $N=27$ \label{fig:minimasns2X1027}}}
    \end{subfigure}
    \begin{subfigure}{0.29\textwidth}
      \includegraphics[width=\textwidth]{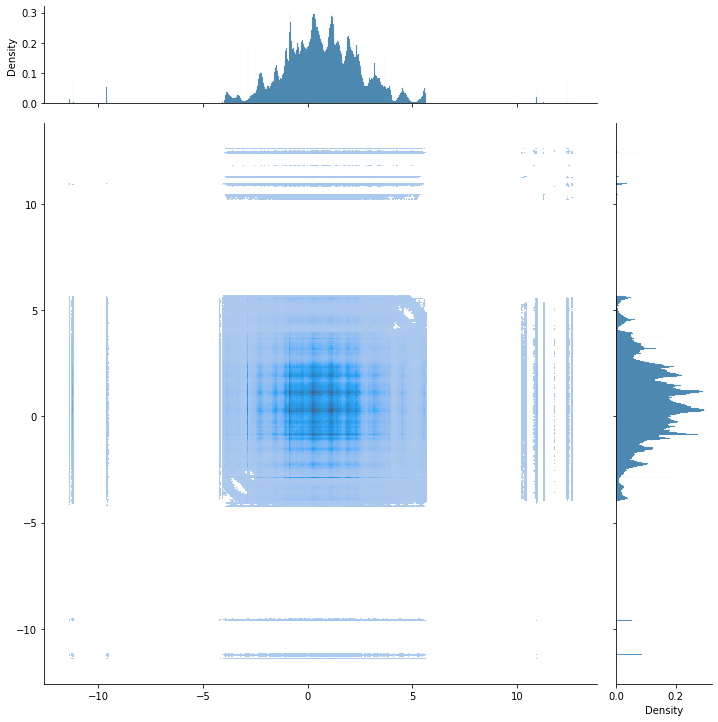}
      \caption{{\scriptsize $\mu_2$, X axis, $M=100$, $N=27$ \label{fig:minimasns2X10027}}}
    \end{subfigure}
    \begin{subfigure}{0.29\textwidth}
      \includegraphics[width=\textwidth]{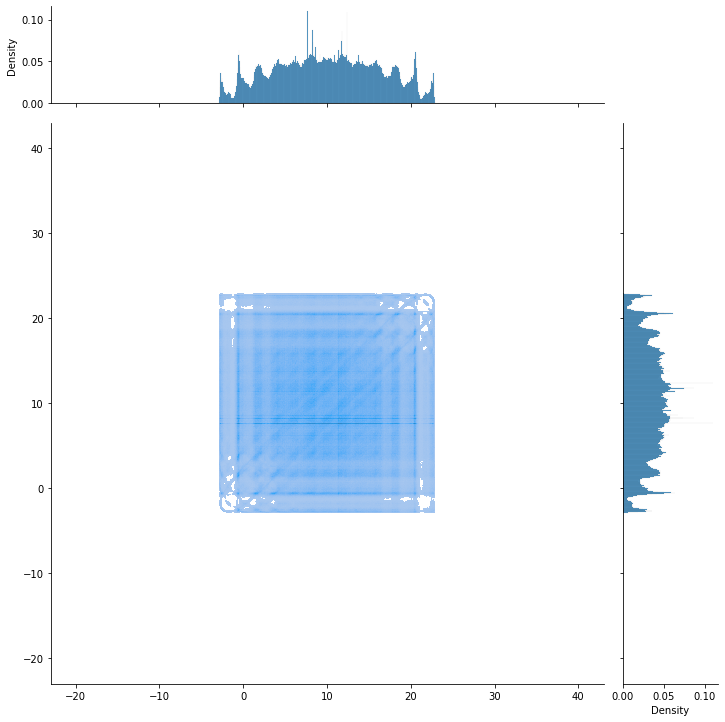}
      \caption{{\scriptsize $\mu_3$, X axis, $M=100$, $N=27$ \label{fig:minimasns3X10027}}}
    \end{subfigure}
    \begin{subfigure}{0.29\textwidth}
      \includegraphics[width=\textwidth]{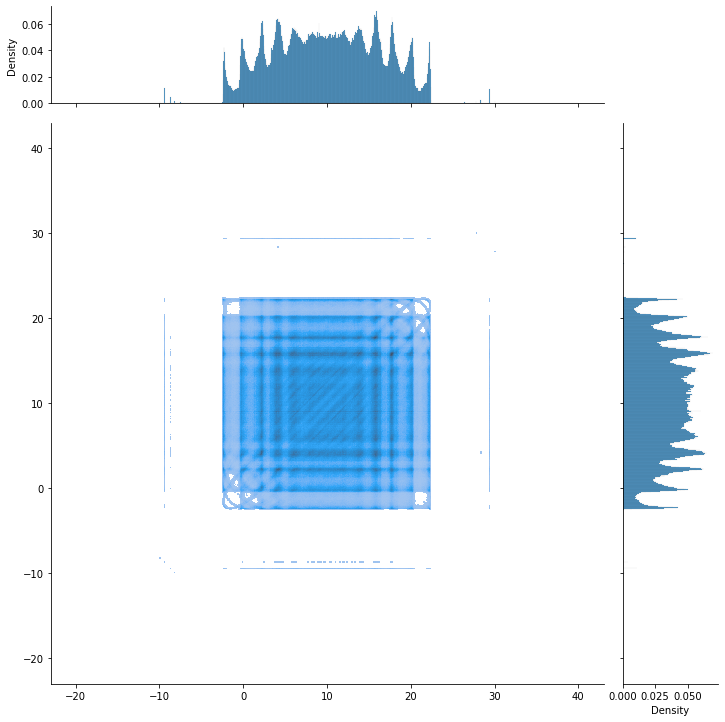}
      \caption{{\scriptsize $\mu_3$, X axis, $M=100$, $N=52$ \label{fig:minimasns3X10052}}}
    \end{subfigure}

    \captionsetup[sub]{labelformat=bnumber}
    \setcounter{subfigure}{0}
    \begin{subfigure}{0.29\textwidth}
      \includegraphics[width=\textwidth]{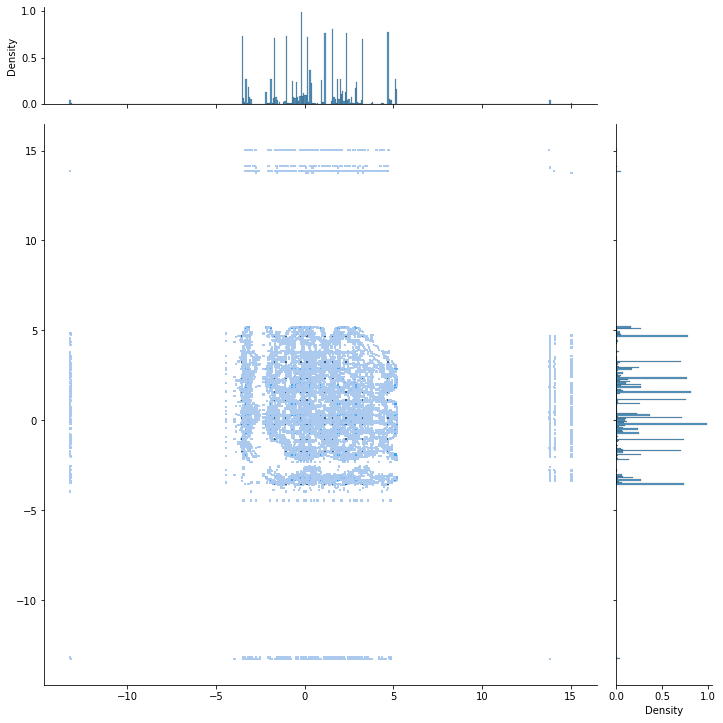}
      \caption{{\scriptsize $\mu_2$, Y axis, $M=10$, $N=27$ \label{fig:minimasns2Y1027}}}
    \end{subfigure}
    \begin{subfigure}{0.29\textwidth}
      \includegraphics[width=\textwidth]{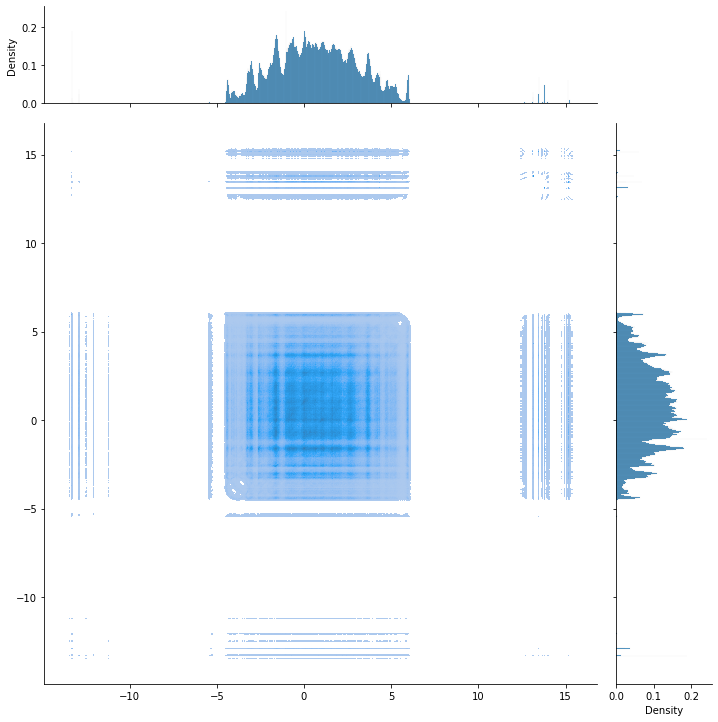}
      \caption{{\scriptsize $\mu_2$, Y axis, $M=100$, $N=27$ \label{fig:minimasns2Y10027}}}
    \end{subfigure}
    \begin{subfigure}{0.29\textwidth}
      \includegraphics[width=\textwidth]{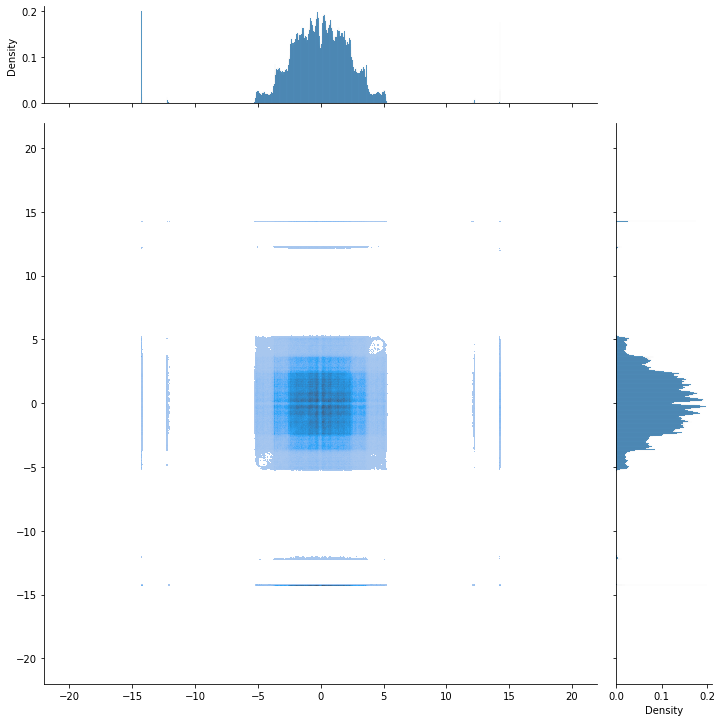}
      \caption{{\scriptsize $\mu_3$, Y axis, $M=100$, $N=27$ \label{fig:minimasns3Y10027}}}
    \end{subfigure}
    \begin{subfigure}{0.29\textwidth}
      \includegraphics[width=\textwidth]{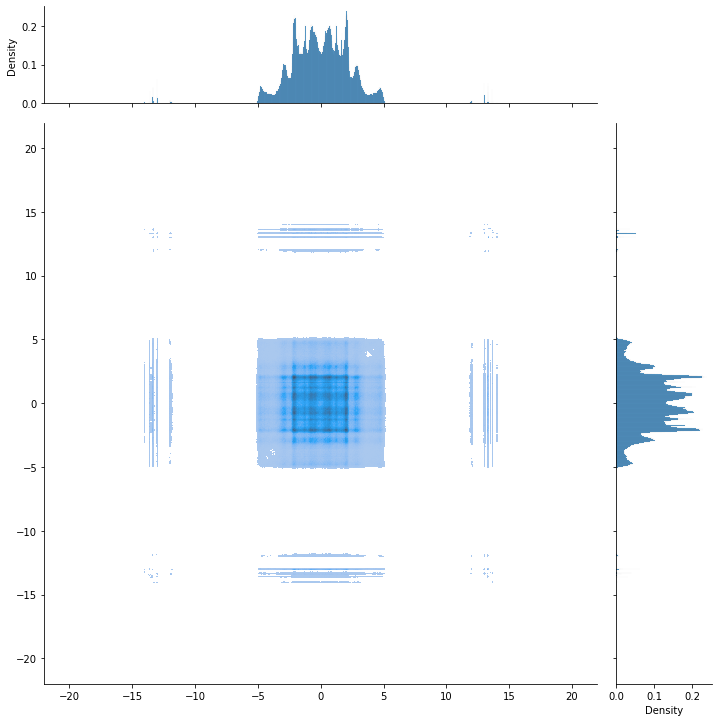}
      \caption{{\scriptsize $\mu_3$, Y axis, $M=100$, $N=52$ \label{fig:minimasns3Y10052}}}
    \end{subfigure}

    \captionsetup[sub]{labelformat=cnumber}
    \setcounter{subfigure}{0}
    \begin{subfigure}{0.29\textwidth}
      \includegraphics[width=\textwidth]{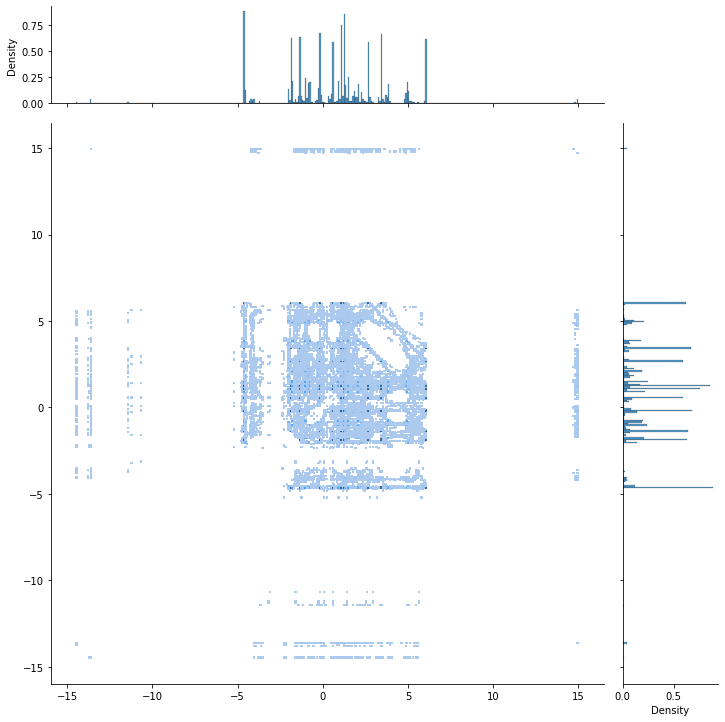}
      \caption{{\scriptsize $\mu_2$, Z axis, $M=10$, $N=27$ \label{fig:minimasns2Z1027}}}
    \end{subfigure}
    \begin{subfigure}{0.29\textwidth}
      \includegraphics[width=\textwidth]{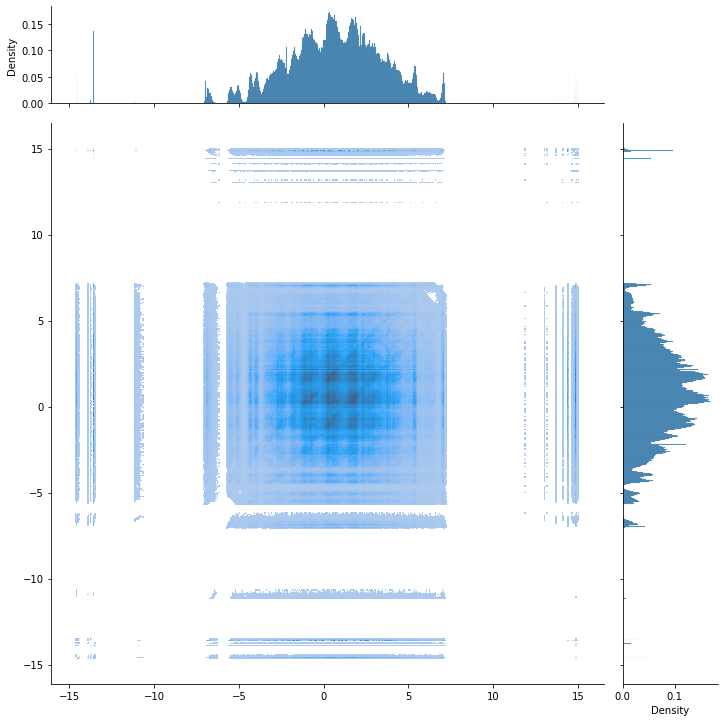}
      \caption{{\scriptsize $\mu_2$, Z axis, $M=100$, $N=27$ \label{fig:minimasns2Z10027}}}
    \end{subfigure}
    \begin{subfigure}{0.29\textwidth}
      \includegraphics[width=\textwidth]{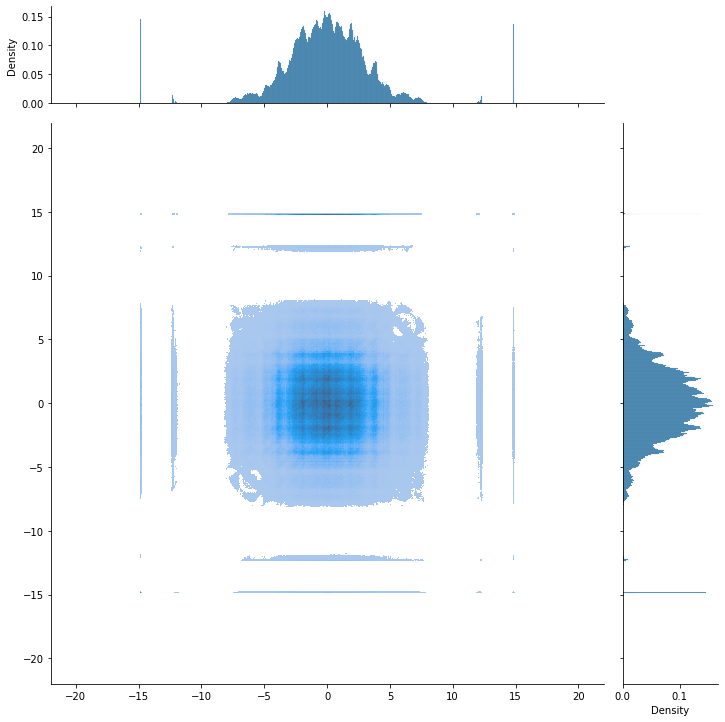}
      \caption{{\scriptsize $\mu_3$, Z axis, $M=100$, $N=27$ \label{fig:minimasns3Z10027}}}
    \end{subfigure}
    \begin{subfigure}{0.29\textwidth}
      \includegraphics[width=\textwidth]{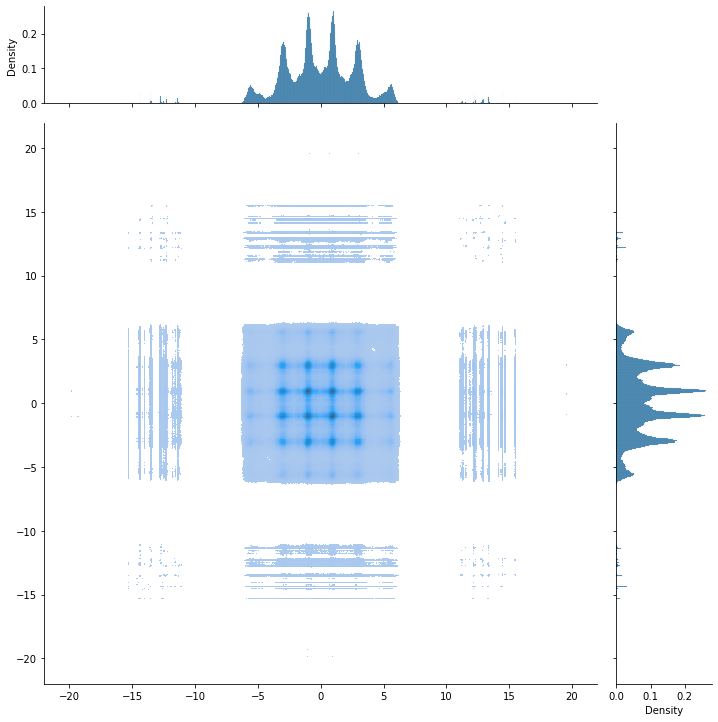}
      \caption{{\scriptsize $\mu_3$, Z axis, $M=100$, $N=52$ \label{fig:minimasns3Z10052}}}
    \end{subfigure}
  \end{multicols}
  \vspace{-1em}
  \caption{
  Optimal transport for $\mu_2$ and $\mu_3$, $M = 10, 100$, $N= 27, 52$, $\beta_0 = 0$, $K=10000$ and $\Delta t_0 = 10^{-4}$
  In figures of column (a) is showed {\small $\frac 1 {MK} \sum_{k=1}^{K} \sum_{m = 1}^{M} \delta_{x^k_{m,1},x^k_{m,2}}$}. In figures of column (b) is showed
  {\small $\frac 1 {M(M-1)K} \sum_{k=1}^{K} \sum_{m \neq m' = 1}^{M} \delta_{x^k_{m,1},x^k_{m',1}}$}. In figures of column (c) is showed {\small $\frac 1 {M(M-1)K} \sum_{k=1}^{K} \sum_{m \neq m' = 1}^{M} \delta_{|x^k_{m}|,|x^k_{m'}|}$}, where {\small $|x^k_{m}| = \sqrt{\sum_{i=1}^3(x^k_{m,i})^2}$}. In order to better distinguish between areas of low and high particles density, plots are represented as 2D histograms.
  \label{fig:minimasnonsym}}
\end{figure}

\begin{figure}[htp]
  \centering
  \begin{subfigure}{0.48\textwidth}
    \centering
    \includegraphics[width=\textwidth]{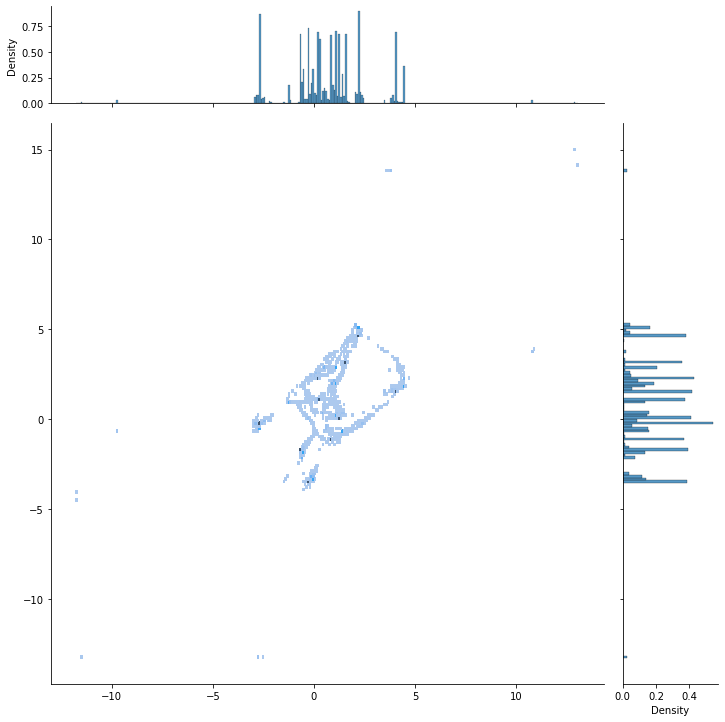}
    \caption{$\mu_2$, plane XY, $M=10$, $N=27$ \label{fig:minimasns2XY1027}}
  \end{subfigure}
  \begin{subfigure}{0.48\textwidth}
    \centering
    \includegraphics[width=\textwidth]{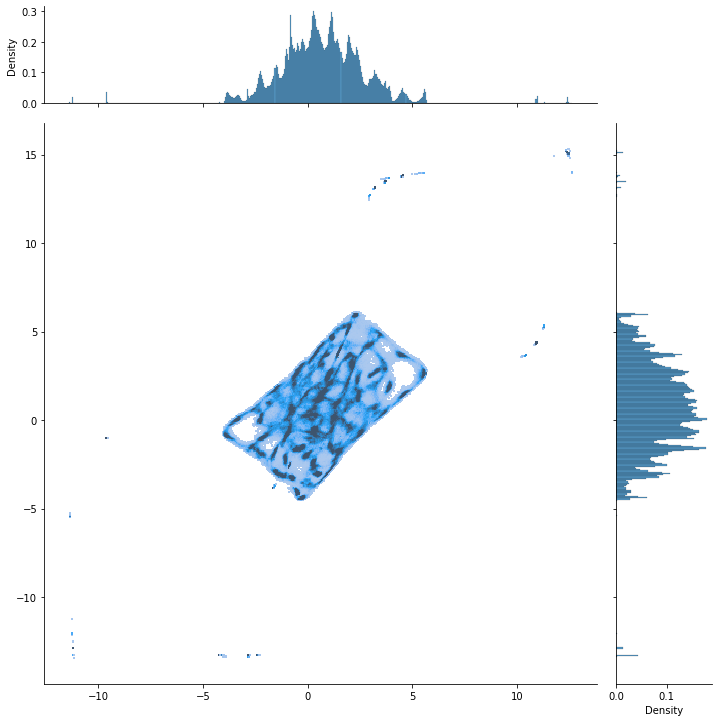}
    \caption{$\mu_2$, plane XY, $M=100$, $N=27$ \label{fig:minimasns2XY10027}}
  \end{subfigure}
  \begin{subfigure}{\textwidth}
    \centering
    \includegraphics[width=0.8\textwidth]{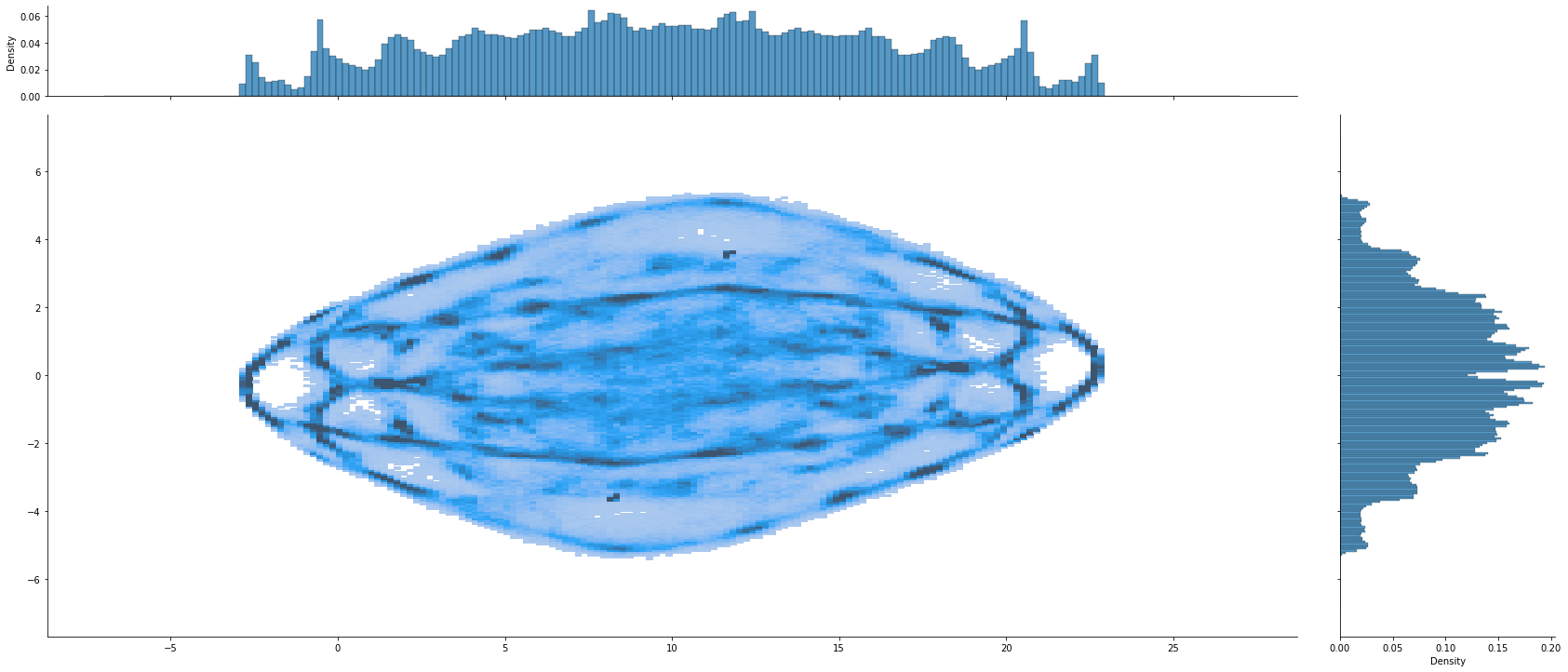}
    \caption{$\mu_3$, plane XY, $M=100$, $N=27$ \label{fig:minimasns3XY10027}}
  \end{subfigure}
  \begin{subfigure}{\textwidth}
    \centering
    \includegraphics[width=0.8\textwidth]{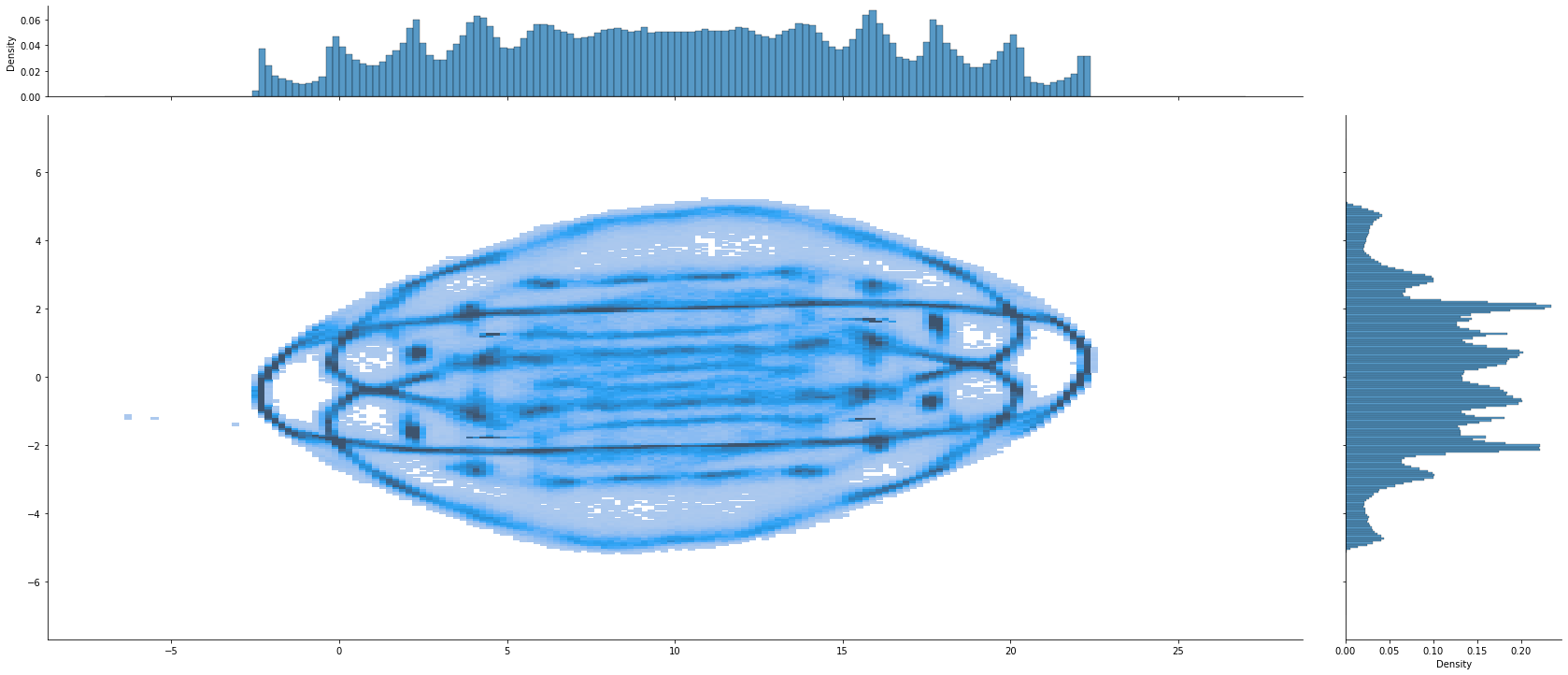}
    \caption{$\mu_3$, plane XY, $M=100$, $N=52$ \label{fig:minimasns3XY10052}}
  \end{subfigure}
  \caption{Optimal transport for $\mu_2$ and $\mu_3$, $M = 10, 100$, $N= 27, 52$, $\beta_0 = 0$, $K=10000$ and $\Delta t_0 = 10^{-4}$
  In each graph, minimizers are represented as {\small $\frac 1 {MK} \sum_{k=1}^{K} \sum_{m = 1}^{M} \delta_{x^k_{m,1},x^k_{m,2}}$}. In order to better distinguish between areas of low and high particles density, plots are represented as 2D histograms. \label{fig:minimasnonsymXY}}
\end{figure}

\subsubsection{Optimization for $\mu_4$ - Figure \ref{fig:jellium}}

Optimal transport for $\mu_4$ with a large number of electrons is of theoretical interest as it might provide approximations for a uniform electronic density in a large space \cite{MR3732693}. Numerical results for its MCOT relaxation with $M=100$ and $N=52$ are presented in Figure \ref{fig:jellium}. Although the cost has been optimized (Figure \ref{fig:jellium:cost}), it is only 3\% lower than the initial uniform sampling (after a Runge-Kutta~3 initialization). Although the 1D marginal laws seem well approximated (Figures \ref{fig:jellium:XY} and \ref{fig:jellium:X}), planar and radial graphs (Figures \ref{fig:jellium:XY} and \ref{fig:jellium:radial}) show that particles are concentrated on two spheres (of radius 0.6 and 1 respectively). Most of the transport takes place inside and between those two spheres.

\begin{figure}[htp]
  \centering
  \begin{subfigure}{0.4\textwidth}
    \centering
    \includegraphics[width=\textwidth]{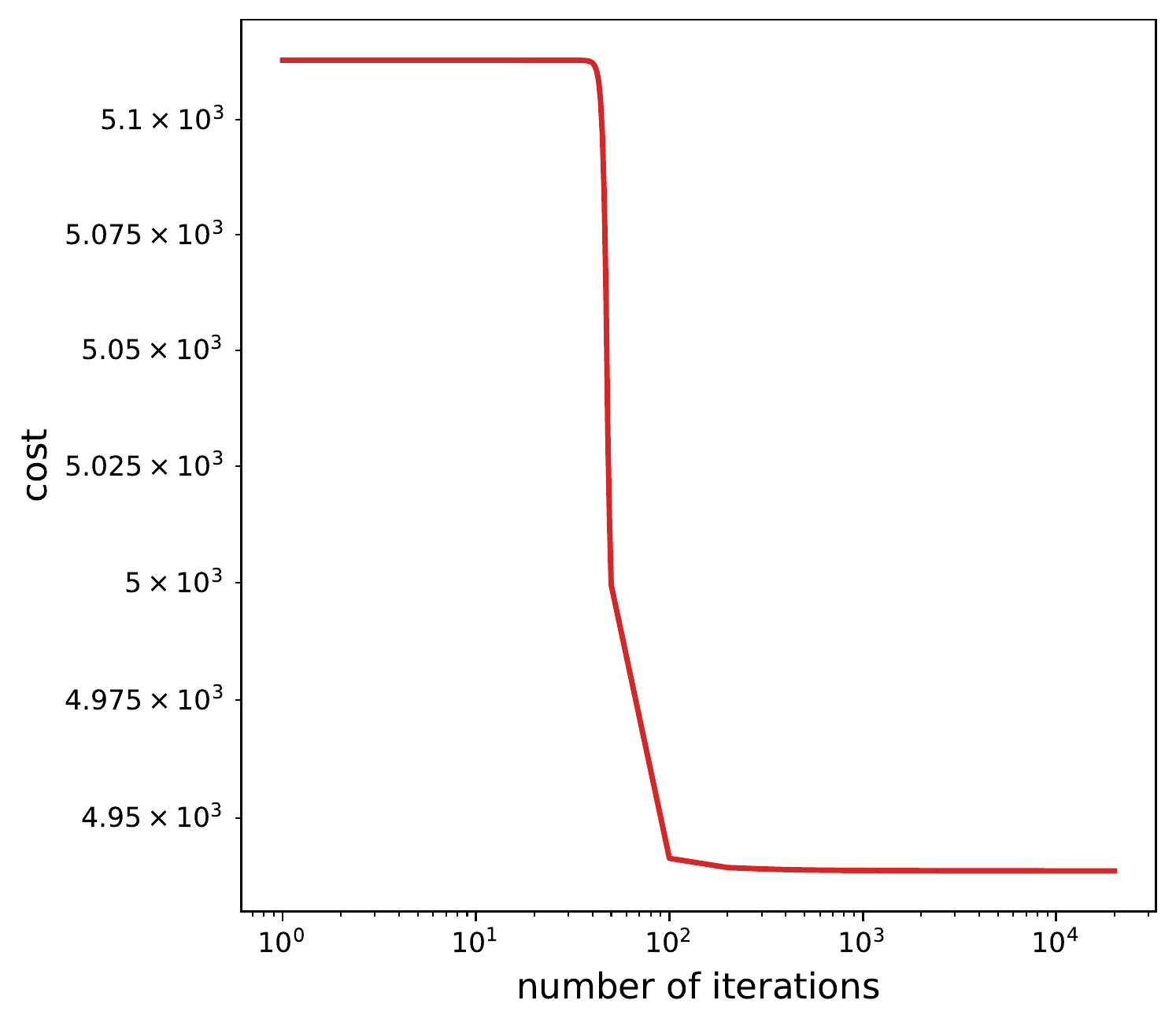}
    \caption{Cost as a function of $n$ \label{fig:jellium:cost}}
  \end{subfigure}
  \begin{subfigure}{0.4\textwidth}
    \centering
    \includegraphics[width=\textwidth]{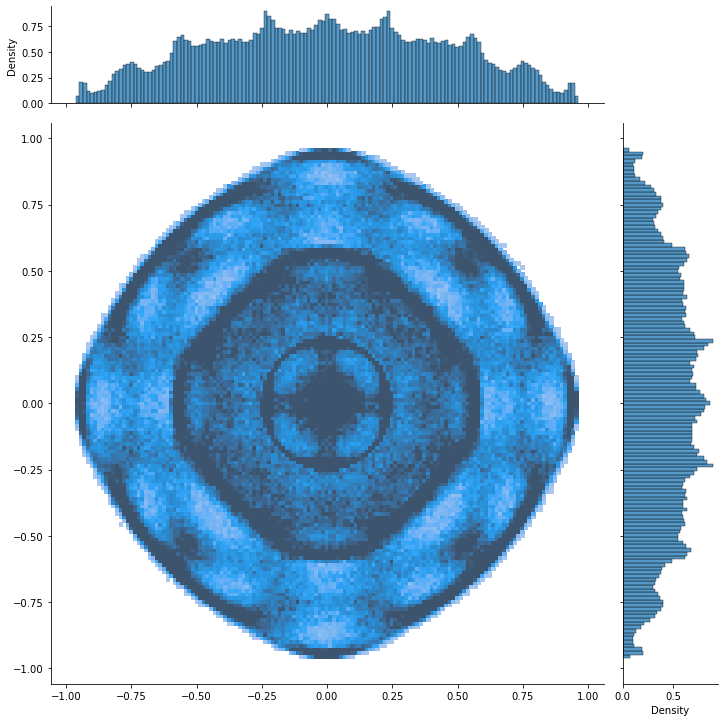}
    \caption{plane XY \label{fig:jellium:XY}}
  \end{subfigure}
  \begin{subfigure}{0.4\textwidth}
    \centering
    \includegraphics[width=\textwidth]{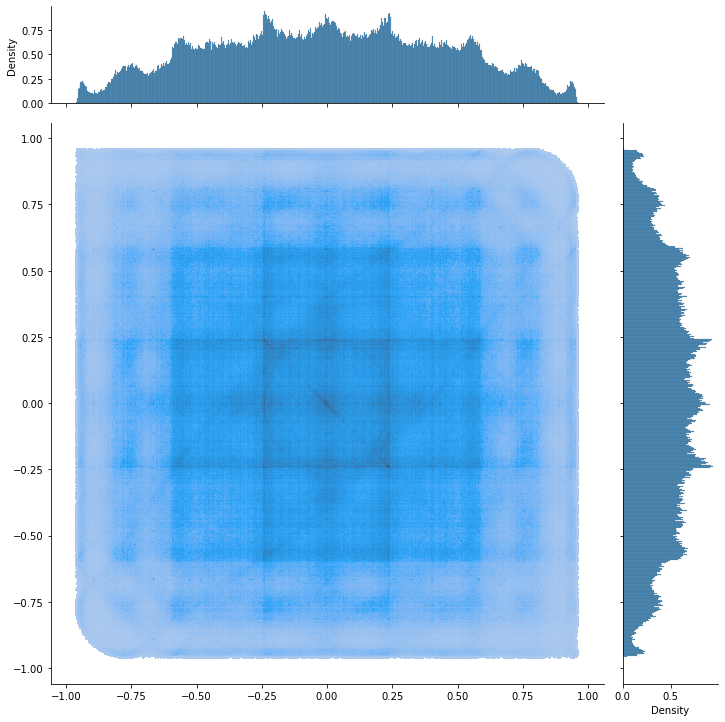}
    \caption{X axis \label{fig:jellium:X}}
  \end{subfigure}
  \begin{subfigure}{0.4\textwidth}
    \centering
    \includegraphics[width=\textwidth]{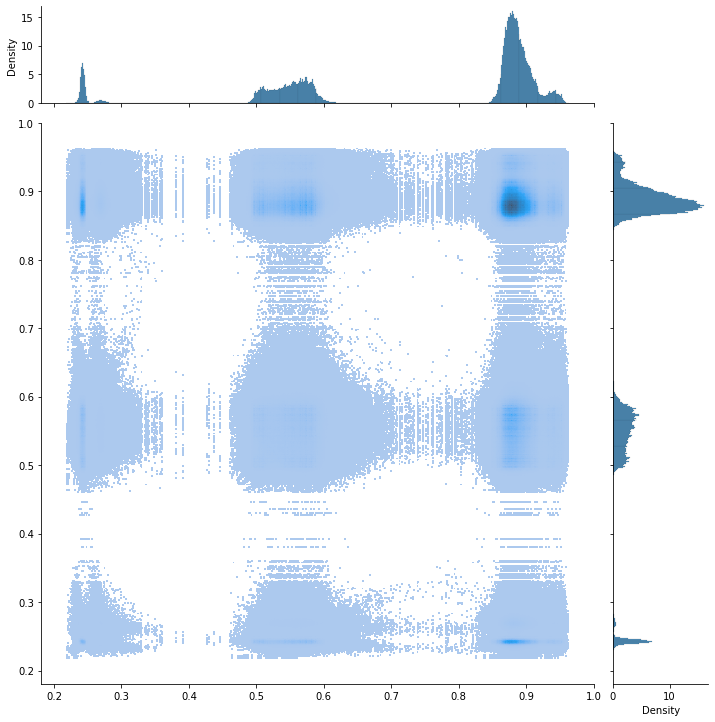}
    \caption{radial \label{fig:jellium:radial}}
  \end{subfigure}
  \caption{
  Evolution of the cost as a function of the number of iteration $n$ (Figure \ref{fig:jellium:cost}) and optimal transport with $\mu_4$, $M=100$, $N=52$, $K=10000$ $\beta_0  = 0$ and $\Delta t_0 = 10^{-4}$.
  In Figure \ref{fig:jellium:XY} is showed {\small $\frac 1 {MK} \sum_{k=1}^{K} \sum_{m = 1}^{M} \delta_{x^k_{m,1},x^k_{m,2}}$}. In Figure \ref{fig:jellium:X} is showed
  {\small $\frac 1 {M(M-1)K} \sum_{k=1}^{K} \sum_{m \neq m' = 1}^{M} \delta_{x^k_{m,1},x^k_{m',1}}$}. In Figure \ref{fig:jellium:radial} is showed {\small $\frac 1 {M(M-1)K} \sum_{k=1}^{K} \sum_{m \neq m' = 1}^{M} \delta_{|x^k_{m}|,|x^k_{m'}|}$}, where {\small $|x^k_{m}| = \sqrt{\sum_{i=1}^3(x^k_{m,i})^2}$}.
  In order to better distinguish between areas of low and high particles density, plots are represented as 2D histograms.\label{fig:jellium}
  }
\end{figure}

%

\clearpage
\section{Proof of Theorem~\ref{lem:monotonecost}}\label{sect:proofs}

The aim of this section is to gather the proofs of our main theoretical results.

\subsection{Tchakaloff's theorem}\label{sec:Tchakaloff}

We present here a corollary of the so-called Tchakaloff theorem which is the backbone of our results concerning the theoretical properties of the MCOT particle problem.
A general version of the Tchakaloff theorem has been proved by Bayer and Teichmann~\cite{bayer2006proof}. Theorem~\ref{cor:Tchakaloff} is an immediate consequence of Tchakaloff's theorem, see Corollary~2 in~\cite{bayer2006proof}.

\begin{theorem}\label{cor:Tchakaloff}
Let $\pi$ be a measure on $\R^d$ concentrated on a Borel set $A \in \cF$, i.e.\ $\pi(\R^d \setminus A) = 0$.
Let $N_0\in \bN^*$ and $\Lambda : \R^d \to \Reel^{N_0}$ a measurable Borel map.
Assume that the first moments of $\Lambda \# \pi$ exist, i.e.\
\[
\int_{\Reel^{N_0}} \Vert u \Vert \dd \Lambda \# \pi(u) =\int_{\R^d} \Vert \Lambda(z) \Vert \dd \pi(z) < \infty,
\]
where $\|\cdot\|$ denotes the Euclidean norm of $\Reel^{N_0}$. Then, there exist an integer $1 \leq K \leq N_0$, points $z_1, ..., z_K \in A$ and weights $p_1, ..., p_K > 0$ such that
\[
\forall 1\leq i \leq N_0, \quad \int_{\R^d} \Lambda_i(z) \dd \pi(z) = \sum_{k = 1}^K p_k \Lambda_i(z_k),
\]
where $\Lambda_i$ denotes the $i$-th component of $\Lambda$.
\end{theorem}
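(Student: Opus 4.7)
The plan is to establish the theorem via Hahn-Banach separation combined with an inductive application of Carath\'eodory's theorem for convex cones.

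First, I introduce the moment vector $m := \int_{\R^d} \Lambda(z) \dd \pi(z) \in \R^{N_0}$, which is well-defined componentwise by the integrability hypothesis, and denote by $\mathcal{C} := \mathrm{cone}(\{\Lambda(z) : z \in A\})$ the convex cone of finite nonnegative combinations of elements of $\Lambda(A)$. Once $m \in \mathcal{C}$ is established, the cone version of Carath\'eodory's theorem in $\R^{N_0}$ immediately yields a representation $m = \sum_{k=1}^{K} p_k \Lambda(z_k)$ with $z_k \in A$, $p_k > 0$, and $\Lambda(z_1), \ldots, \Lambda(z_K)$ linearly independent, which forces $K \leq N_0$ and is precisely the claimed conclusion.

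The core task is therefore to show $m \in \mathcal{C}$. A preliminary step, by Hahn-Banach separation, gives $m \in \overline{\mathcal{C}}$: if not, there would exist $\xi \in \R^{N_0}$ with $\langle \xi, y \rangle \geq 0$ for every $y \in \overline{\mathcal{C}}$ (using that $\overline{\mathcal{C}}$ is a closed convex cone containing $0$) and $\langle \xi, m \rangle < 0$. Applying the inequality to $y = \Lambda(z)$, $z \in A$, and integrating against $\pi$ would give $\langle \xi, m \rangle \geq 0$, a contradiction.

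The main obstacle is to upgrade $m \in \overline{\mathcal{C}}$ to $m \in \mathcal{C}$, since the cone $\mathcal{C}$ need not be closed in general. I would do this by induction on $N_0$. Using the standard identity $\mathrm{ri}(\overline{\mathcal{C}}) = \mathrm{ri}(\mathcal{C}) \subseteq \mathcal{C}$ for convex sets in $\R^{N_0}$, if $m$ is a relative interior point of $\overline{\mathcal{C}}$ then $m \in \mathcal{C}$ directly. Otherwise $m$ lies in a proper face of $\overline{\mathcal{C}}$, and the supporting hyperplane theorem (together with the fact that $\overline{\mathcal{C}}$ is a cone with apex $0$) furnishes a nonzero $\xi \in \R^{N_0}$ with $\langle \xi, \Lambda(z) \rangle \geq 0$ for all $z \in A$ and $\langle \xi, m \rangle = 0$. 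Integration then forces $\langle \xi, \Lambda(z) \rangle = 0$ for $\pi$-a.e.\ $z$, so $\pi$ is carried by the Borel set $A' := A \cap \Lambda^{-1}(\xi^\perp)$ and $\Lambda|_{A'}$ takes values in the $(N_0 - 1)$-dimensional subspace $\xi^\perp$. Identifying $\xi^\perp$ with $\R^{N_0 - 1}$, the induction hypothesis applied to $(\pi, \Lambda|_{A'}, A')$ produces a representation of $m$ with at most $N_0 - 1$ points of $A' \subseteq A$, completing the proof.
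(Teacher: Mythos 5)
Your proposal is correct. It is worth noting, though, that the paper does not actually prove this statement: it is quoted as Corollary~2 of Bayer--Teichmann and the only justification offered is the closing remark of Section~6.1, namely that the conclusion follows from Carath\'eodory's theorem for cones applied to $\int_{\R^{N_0}} u \,\dd \Lambda\#\pi(u)$, which ``lies in the convex cone induced by $\mathrm{spt}(\Lambda\#\pi)$.'' Your argument follows the same conceptual route (moment vector in a finitely generated cone, then Carath\'eodory), but it supplies precisely the step that the paper leaves to the citation and that is the actual content of Bayer--Teichmann's result: that the moment vector lies in the cone itself --- generated by $\Lambda(A)$, not merely by the support of the pushforward --- and not just in its closure. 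Your treatment of that step is sound: Hahn--Banach gives $m\in\overline{\mathcal C}$; the identity $\mathrm{ri}(\overline{\mathcal C})=\mathrm{ri}(\mathcal C)$ handles the relative-interior case; and in the boundary case the supporting functional $\xi$ satisfies $\langle\xi,\Lambda(\cdot)\rangle\ge 0$ on $A$ with integral zero, so $\pi$ is carried by $A\cap\Lambda^{-1}(\xi^\perp)$ and the dimension drops, which is exactly the mechanism that guarantees the representing points end up in $A$ rather than merely in $\mathrm{spt}(\Lambda\#\pi)$ (a distinction the paper's one-line remark glosses over). The only loose ends are degenerate ones inherited from the statement itself: if $m=0$, Carath\'eodory returns the empty combination while the theorem asserts $K\ge 1$, so one must (assuming $A\ne\emptyset$ and $\pi\ne 0$) pad with an arbitrary point of $A$; and the base case of the induction should be stated explicitly (in $\R^1$, or $\R^0$, every finitely generated convex cone is closed, so $m\in\overline{\mathcal C}=\mathcal C$ directly). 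Neither affects the validity of the argument.
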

\noindent We recall here that $\Lambda \# \pi$ is the push-forward of $\pi$ through $\Lambda$, and is defined as  $\Lambda \# \pi (A)=\pi(\Lambda^{-1}(A))$ for any Borel set $A\subset \R^{N_0} $.

Last, let us mention that Theorem \ref{cor:Tchakaloff} is a consequence of Caratheodory's theorem \cite[Corollary 17.1.2]{rockafellar1970convex} applied to $\int_{\R^{N_0}} u \dd \Lambda \# \pi (u)$ which lies in the (convex) cone induced by $\mathrm{spt}(\Lambda \# \pi)$, the support of the measure $\Lambda \# \pi$.

\subsection{Proof of Theorem~\ref{lem:monotonecost}}\label{sec:prooftheorem}

We denote here by $\mathcal S_K$ the set of permutations of the set $\{1, \cdots, K\}$.

\begin{lemma}\label{lem_permutation}
Let $(W,Y)\in \mathcal{U}^N_K$ be such that there exists  $k'$ such that $w_{k'}=0$. Then for any permutation $\sigma \in \mathcal{S}_K$, there exists a polygonal map $\psi:[0,1]\to\mathcal{U}^N_K$ such that $\psi(0)=(W,Y)$, $\psi(1)=(W^\sigma,Y^\sigma)$ and $\mathcal{I}(\psi(t))$ is constant, where
$Y^\sigma:=(X^{\sigma(k)})_{1\leq k \leq K} \in ((\mathbb{R}^d)^M)^K$ and $W^\sigma:=(w_{\sigma(k)})_{1\leq k \leq K} \in (\mathbb{R}_+)^K$.
\end{lemma}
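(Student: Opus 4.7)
My plan is to reduce the lemma to the case where $\sigma$ is a transposition and then realize the swap by a polygonal path built from two types of elementary line segments, each of which trivially preserves both the admissibility set $\mathcal U_K^N$ and the cost $\mathcal I$.

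The two elementary moves I will use are: (a) linearly translating the position $X^{k'}$ of a slot that currently carries zero weight between any two points of $(\R^d)^M$; and (b) linearly redistributing weight between two slots that momentarily share the same position, while keeping their common position fixed. Both types remain in $\mathcal U_K^N$ and leave $\mathcal I$ constant: in case (a) the moving slot contributes nothing to any of the sums $\sum_k w_k \varphi_n(X^k)$, $\sum_k w_k \vartheta(X^k)$, $\sum_k w_k = 1$ or $\mathcal I(W,Y)$; in case (b), the collinear identity $w_a \varphi_n(X) + w_b \varphi_n(X) = (w_a + w_b) \varphi_n(X)$, and its analogues for $\vartheta$, the total mass and the cost $c(X)$, shows that only the conserved sum $w_a + w_b$ enters the constraints. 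Non-negativity of the weights is preserved throughout because we only linearly interpolate between admissible values.

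For a single transposition $\tau = (k_1, k_2)$ with $k' \notin \{k_1, k_2\}$, I will chain seven segments alternating between types (a) and (b), using slot $k'$ as scratch space: bring $X^{k'}$ onto $X^{k_1}$; transfer the weight from $k_1$ to $k'$; push the now massless $X^{k_1}$ onto $X^{k_2}$; transfer the weight from $k_2$ to $k_1$; push the now massless $X^{k_2}$ onto the original $X^{k_1}$; transfer the weight stored at $k'$ back onto $k_2$; finally return $X^{k'}$ to its initial position. This swaps the slots $k_1$ and $k_2$ while leaving the zero-weight slot where it was. If instead $k' \in \{k_1, k_2\}$, say $k' = k_1$, a three-segment variant suffices: move $X^{k'}$ onto $X^{k_2}$, transfer $w_{k_2}$ from $k_2$ to $k'$, then move the now massless $X^{k_2}$ onto the original $X^{k'}$; in this sub-case the zero-weight index migrates from $k'$ to $k_2$.

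For a general $\sigma \in \mathcal S_K$, I will decompose $\sigma = \tau_r \circ \cdots \circ \tau_1$ as a product of transpositions and concatenate the polygonal paths produced for each $\tau_i$. Since every transposition merely permutes the weights, a zero-weight slot survives each step and is available to play the role of scratch space for the next one; the only thing to track is its current index. I do not anticipate a genuine obstacle beyond this combinatorial bookkeeping, as the whole argument rests on the two elementary moves (a) and (b) above.
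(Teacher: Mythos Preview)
Your proposal is correct and follows essentially the same approach as the paper: reduce to transpositions, use the zero-weight slot as scratch space via the two elementary moves you call (a) and (b), and build a seven-segment path in the case $k'\notin\{k_1,k_2\}$ and a three-segment path when $k'\in\{k_1,k_2\}$. Your explicit remark that the zero-weight index may migrate during a transposition and must be tracked when concatenating is a small clarification the paper leaves implicit.
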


\begin{proof}
  For $(W,Y)$ and $(W',Y')$, we will denote $[(W,Y),(W',Y')]$ the segment map $t\in[0,1] \mapsto [(1-t)W+tW',(1-t)Y+tY']$ and we will construct $\psi$ as the concatenation of segments that are clearly in $\mathcal{U}^N_K$ and leaves $\mathcal{I}$ constant.

  It is sufficient to prove this result for transpositions i.e.\ for $\sigma$ such that there exist $i_1<i_2$ such that $\sigma(i_1)=i_2$, $\sigma(i_2)=i_1$ and $\sigma(i)=i$ for $i\not \in \{i_1,i_2\}$. We distinguish two cases.
  \begin{itemize}
  \item  $k'\in\{i_1,i_2\}$, say $k'=i_2$. We then define $Y_1=(X_1^k)_{1\le k\le K}$ by $X_1^{k'}=X^{i_1}$ and $X_1^{k}=X^{k}$ for $k\not=k'$ and consider the segment $[(W,Y),(W,Y_1)]$. We then set $w_1^{k'}=w^{i_1}$, $w_1^{i_1}=0$ and $w_1^k=w^k$ for $k\not \in \{k',i_1\}$ (note that $W_1=W^\sigma$) and consider the segment $[(W,Y_1),(W_1,Y_1)]$. Last, we define $Y_2=(X_2^k)_{1\le k\le K}$ as $X_2^{i_1}=X^{k'}$ and $X_2^{k}=X_1^{k}$ for $k\not = i_1$ (note that $Y_2=Y^\sigma$) and consider the segment  $[(W_1,Y_1),(W_1,Y_2)]$.
  \item $k'\not \in\{i_1,i_2\}$. First, we define $Y_1=(X_1^k)_{1\le k\le K}$ by $X_1^{k'}=X^{i_1}$ and $X_1^{k}=X^{k}$ for $k\not=k'$ and consider the segment $[(W,Y),(W,Y_1)]$. We then set $w_1^{k'}=w^{i_1}$, $w_1^{i_1}=0$ and $w_1^k=w^k$ for $k\not \in \{k',i_1\}$ and consider the segment $[(W,Y_1),(W_1,Y_1)]$. Then, we define $Y_2=(X_2^k)_{1\le k\le K}$ as $X_2^{i_1}=X^{i_2}$ and $X_2^{k}=X_1^{k}$ for $k\not = i_1$, and consider the segment $[(W_1,Y_1),(W_1,Y_2)]$. We the set $w_2^{i_1}=w^{i_2}$, $w_2^{i_2}=0$ and $w_2^k=w_1^k$ for $k\not \in \{i_1,i_2\}$,  and consider the segment    $[(W_1,Y_2),(W_2,Y_2)]$. Now, we define $Y_3=(X_3^k)_{1\le k\le K}$ by $X_3^{i_2}=X^{i_1}$, $X_3^{k}=X_2^{k}$ for $k\not= i_2$ and consider the segment $[(W_2,Y_2),(W_2,Y_3)]$. Then, we define $w_3^{i_2}=w_2^{k'}=w^{i_1}$, $w_3^{k'}=0$ and $w_3^k=w_2^k$ for $k\not \in \{i_2,k'\}$ (note that $W_3=W^\sigma$) and consider the segment $[(W_2,Y_3),(W_3,Y_3)]$. Last, we set $Y_4=(X_4^k)_{1\le k\le K}$ with $X_4^{k'}=X^{k'}$ and $X_4^k=X_3^k$ for $k\not=k'$ (note that $Y_4=Y^\sigma$) and finally consider the segment $[(W_3,Y_3),(W_3,Y_4)]$, which gives the claim.
  \end{itemize}

\end{proof}

\begin{proof}
For $i=0,1$, let $W_i:= (w_{k,i})_{1\leq k \leq K} \in \mathbb{R}_+^K$, $Y_i= (X^k_i)_{1\leq k \leq K} \subset (\mathbb{R}^d)^M$ and $\pi_i:= \sum_{k=1}^K w_{k,i} \delta_{X^k_i} \in \mathcal{P}\left( (\mathbb{R}^d)^M \right)$.
Note that, for $i=0,1$, the support of $\pi_i$ is included in the discrete set $\{ X^k_i, \; 1\leq k \leq K\}$.

For $i=0,1$, using Theorem~\ref{cor:Tchakaloff} with $\pi = \pi_i$ and $\Lambda: (\mathbb{R}^d)^M \to \mathbb{R}^{N+3}$ the map defined such that, for all $X\in (\mathbb{R}^d)^M$,
$$
\Lambda_n(X) = \varphi_n(X), \; \forall 1\leq n \leq N, \quad \Lambda_{N+1}(X) = 1, \quad \Lambda_{N+2}(X) = c(X)\quad \mbox{ and } \quad  \Lambda_{N+3}(X) = \vartheta(X),
$$
it holds that there exists a subset $J^i \subset \{ 1, \cdots , K\}$ such that $K_i:=\# J^i \le N+3$, and weights $(\widetilde{w}_j^i)_{j\in J^i} \subset \mathbb{R}_+$ such that
\begin{align}\label{eq:eq1}
\forall 1 \le n \le N,\, \sum_{j \in J^i} \widetilde{w}^i_j \varphi_n(X^j_i) & = \int_{(\mathbb{R}^d)^M}\varphi_n\,d\pi_i =  \sum_{k=1}^K w_{k,i} \varphi_n(X^k_i) = \mu_n,\\\label{eq:eq2}
\sum_{j \in J^i} \widetilde{w}^i_j & = \int_{(\mathbb{R}^d)^M}\,d\pi_i = \sum_{k=1}^K w_{k,i} = 1,\\\label{eq:eq3}
\sum_{j \in J^i} \widetilde{w}^i_j c(X^j_i) & = \int_{(\mathbb{R}^d)^M}c\,d\pi_i =  \sum_{k=1}^K w_{k,i} c(X^k_i) = \mathcal I(W_i, Y_i),\\\label{eq:eq4}
\sum_{j \in J^i} \widetilde{w}^i_j \vartheta(X^j_i) & = \int_{(\mathbb{R}^d)^M}\vartheta\,d\pi_i =  \sum_{k=1}^K w_{k,i} \vartheta(X^k_i) \leq A.\\\nonumber
\end{align}
Without loss of generality, by using Lemma~\ref{lem_permutation}, we can assume that $J^0 = \llbracket 1, K_0\rrbracket$ where $K_0\leq N+3$ and that $J^1 = \llbracket K -K_1+1, K \rrbracket$ where $K-K_1+1\geq N+4$.

\medskip

We then define $\widetilde{W}_0:=(\widetilde{w}_1^0, \cdots, \widetilde{w}_{K_0}^0, 0, \cdots, 0)\in \mathbb{R}_+^K$ and
$\widetilde{W}_1:=(0, \cdots, 0, \widetilde{w}_{K -K_1+1}^1, \cdots, \widetilde{w}_{K}^1)\in \mathbb{R}_+^K$.
Let us first define the applications
$$
\psi_1: \left[0, \frac{1}{5}\right]\ni t \mapsto \left( W_0 + 5t (\widetilde{W}_0 - W_0), Y_0\right)
$$
and
$$
\psi_5: \left[\frac{4}{5}, 1\right]\ni t \mapsto \left( W_1 + 5(1-t) (\widetilde{W}_1 - W_1), Y_1\right)
$$
so that $\psi_0(0) = (W_0, Y_0)$, $\psi_0(1/5) = (\widetilde{W}_0, Y_0)$,  $\psi_1(1) = (W_1, Y_1)$, $\psi_1(4/5) = (\widetilde{W}_1, Y_1)$. Then, $\psi_0$ and $\psi_1$ are continuous applications and
identities (\ref{eq:eq1})-(\ref{eq:eq2})-(\ref{eq:eq3})-(\ref{eq:eq4}) implies that for all $t\in [0,1/5]$ (respectively all $t \in [4/5, 1]$), $\psi_0(t) \in \mathcal U_N^K$ and $\mathcal I(\psi_0(t)) = \mathcal I(W_0,Y_0)$
(respectively $\psi_1(t)\in \mathcal U_N^K$ and $\mathcal I(\psi_1(t) = \mathcal I(W_1, Y_1)$).

\medskip

We then define $\widetilde{Y}:= \left( X_0^1, \cdots, X_0^{K_0}, 0, \cdots, 0, X_1^{K-K_1+1}, \cdots, X_1^{K}\right) \in ((\mathbb{R}^d)^M)^K$. We then introduce the continuous applications
$$
\psi_2:  \left[\frac{1}{5}, \frac{2}{5}\right]\ni t \mapsto \left( \widetilde{W}_0 , Y_0 + 5(t-1/5) \widetilde{Y} \right)
$$
and
$$
\psi_4:  \left[\frac{3}{5}, \frac{4}{5}\right]\ni t \mapsto \left( \widetilde{W}_1 , Y_1 + 5(4/5 - t) \widetilde{Y} \right).
$$
It thus holds that $\psi_2(1/5) = (\widetilde{W}_0, Y_0)$ and $\psi_2(2/5) = (\widetilde{W}_0, \widetilde{Y})$. Similarly, $\psi_4(4/5) = (\widetilde{W}_1, Y_1)$ and $\psi_4(3/5) = (\widetilde{W}_1, \widetilde{Y})$.
Let us point out here that, by the definition of $\widetilde{Y}$, for any $t\in \left[\frac{1}{5}, \frac{2}{5}\right]$, the $K_0$ first components of $\psi_2(t)$ are equal to $X_0^1, \cdots, X_0^{K_0}$. Thus, since
$\widetilde{W}_0:=(\widetilde{w}_1^0, \cdots, \widetilde{w}_{K_0}^0, 0, \cdots, 0)\in \mathbb{R}_+^K$, this implies that for all $t\in \left[\frac{1}{5}, \frac{2}{5}\right]$, $\psi_2(t)\in \mathcal U_N^K$ and in addition,
$$
\mathcal{I}(\psi_2(t)) = \mathcal{I}(\widetilde{W}_0, Y_0) = \mathcal{I}(\widetilde{W}_0, \widetilde{Y}) = \mathcal{I}(W_0, Y_0).
$$
Similarly, for any $t\in \left[\frac{3}{5}, \frac{4}{5}\right]$, $\psi_4(t)\in \mathcal U_N^K$ and in addition,
$$
\mathcal{I}(\psi_4(t)) = \mathcal{I}(\widetilde{W}_1, Y_1) = \mathcal{I}(\widetilde{W}_1, \widetilde{Y}) = \mathcal{I}(W_1, Y_1).
$$
Notice that in particular, $\mathcal I$ remains constant along the paths in $\mathcal U_N^K$ given by the applications $\psi_1$, $\psi_2$, $\psi_4$ and $\psi_5$.

Last, we introduce the application
$$
\psi_3:  \left[\frac{2}{5}, \frac{3}{5}\right]\ni t \mapsto \left( \widetilde{W}_0  + 5(t-2/5) \widetilde{W}_1, \widetilde{Y} \right)
$$
which is continuous and such that $\psi_3(2/5) = (\widetilde{W}_0, \widetilde{Y})$ and $\psi_3(3/5) = (\widetilde{W}_1, \widetilde{Y})$. Using similar arguments as above, it then holds that for all $t\in[2/5, 3/5]$,
$\psi_3(t)\in \mathcal{P}_N^K$ and
$$
\mathcal I (\psi_3(t)) = \mathcal I\left(\widetilde{W}_0, \widetilde{Y}\right) + 5 (t-2/5) \mathcal I\left(\widetilde{W}_1, \widetilde{Y}\right) = \mathcal I\left(W_0, Y_0\right) + 5 (t-2/5) \mathcal I\left(W_1, Y_1\right).
$$
This implies that $\mathcal I$ monotonically varies along the path given by the application $\psi_3$.

\medskip

We finally consider the application $\psi: [0,1] \to (\mathbb{R}_+)^K \times \left( (\mathbb{R}^d)^M \right)^K$ defined by
$$
\forall t \in [0,1], \quad \psi(t) = \left\{
\begin{array}{ll}
 \psi_1(t) & \mbox{ if } t \in [0, 1/5],\\
  \psi_2(t) & \mbox{ if } t \in [1/5, 2/5],\\
   \psi_3(t) & \mbox{ if } t \in [2/5, 3/5],\\
    \psi_4(t) & \mbox{ if } t \in [3/5, 4/5],\\
     \psi_5(t) & \mbox{ if } t \in [4/5, 1].\\
\end{array}
\right.
$$
Gathering all the results we have obtained so far, it then holds that $\psi$ is continuous, that for all $t\in [0,1]$, $\psi(t)\in \mathcal U_N^K$ and that the application $\mathcal I \circ \psi$ is monotone. Hence the desired result.
  \end{proof}

\section*{Acknowledgements}

The Labex B\'ezout is acknowledged for funding the PhD thesis of Rafa\"el Coyaud. Aur\'elien Alfonsi benefited from the support of the ``Chaire Risques Financiers'', Fondation du Risque.
We are very grateful to Gero Friesecke, Daniela V\"ogler, Tony Leli\`evre, Gabriel Stoltz and Pierre Monmarch\'e for stimulating discussions, as well as Mathieu Lewin for precious comments on the paper. This publication is part of a project that has received funding from the European Research Council (ERC) under the European Union’s
Horizon 2020 Research and Innovation Programme – Grant Agreement $n^{\circ}$ 810367.



%

\bibliographystyle{plain}
\bibliography{bibli}

\begin{thebibliography}{10}

\bibitem{alfonsi2019approximation}
Aur\'{e}lien Alfonsi, Rafa\"{e}l Coyaud, Virginie Ehrlacher, and Damiano
  Lombardi.
\newblock Approximation of optimal transport problems with marginal moments
  constraints.
\newblock {\em Math. Comp.}, 90(328):689--737, 2021.

\bibitem{bayer2006proof}
Christian Bayer and Josef Teichmann.
\newblock The proof of {T}chakaloff’s theorem.
\newblock {\em Proceedings of the American mathematical society},
  134(10):3035--3040, 2006.

\bibitem{BeHLPe}
Mathias Beiglb\"{o}ck, Pierre Henry-Labord\`ere, and Friedrich Penkner.
\newblock Model-independent bounds for option prices---a mass transport
  approach.
\newblock {\em Finance Stoch.}, 17(3):477--501, 2013.

\bibitem{benamou2015iterative}
Jean-David Benamou, Guillaume Carlier, Marco Cuturi, Luca Nenna, and Gabriel
  Peyr{\'e}.
\newblock Iterative bregman projections for regularized transportation
  problems.
\newblock {\em SIAM Journal on Scientific Computing}, 37(2):A1111--A1138, 2015.

\bibitem{benamou2016numerical}
Jean-David Benamou, Guillaume Carlier, and Luca Nenna.
\newblock A numerical method to solve multi-marginal optimal transport problems
  with coulomb cost.
\newblock In {\em Splitting Methods in Communication, Imaging, Science, and
  Engineering}, pages 577--601. Springer, 2016.

\bibitem{MR3714366}
Ugo Bindini and Luigi De~Pascale.
\newblock Optimal transport with {C}oulomb cost and the semiclassical limit of
  density functional theory.
\newblock {\em J. \'{E}c. polytech. Math.}, 4:909--934, 2017.

\bibitem{buttazzo2018continuity}
Giuseppe Buttazzo, Thierry Champion, and Luigi De~Pascale.
\newblock Continuity and estimates for multimarginal optimal transportation
  problems with singular costs.
\newblock {\em Appl. Math. Optim.}, 78(1):185--200, 2018.

\bibitem{buttazzo2012optimal}
Giuseppe Buttazzo, Luigi De~Pascale, and Paola Gori-Giorgi.
\newblock Optimal-transport formulation of electronic density-functional
  theory.
\newblock {\em Physical Review A}, 85(6):062502, 2012.

\bibitem{carlier2012optimal}
Guillaume Carlier.
\newblock Optimal transportation and economic applications.
\newblock {\em Lecture Notes}, 2012.

\bibitem{MR3635459}
Guillaume Carlier, Vincent Duval, Gabriel Peyr\'{e}, and Bernhard Schmitzer.
\newblock Convergence of entropic schemes for optimal transport and gradient
  flows.
\newblock {\em SIAM J. Math. Anal.}, 49(2):1385--1418, 2017.

\bibitem{ciccotti2008projection}
Giovanni Ciccotti, Tony Lelievre, and Eric Vanden-Eijnden.
\newblock Projection of diffusions on submanifolds: Application to mean force
  computation.
\newblock {\em Communications on Pure and Applied Mathematics: A Journal Issued
  by the Courant Institute of Mathematical Sciences}, 61(3):371--408, 2008.

\bibitem{colombo2015multimarginal}
Maria Colombo, Luigi De~Pascale, and Simone Di~Marino.
\newblock Multimarginal optimal transport maps for one--dimensional repulsive
  costs.
\newblock {\em Canadian Journal of Mathematics}, 67(2):350--368, 2015.

\bibitem{cotar2013density}
Codina Cotar, Gero Friesecke, and Claudia Kl{\"u}ppelberg.
\newblock Density functional theory and optimal transportation with coulomb
  cost.
\newblock {\em Communications on Pure and Applied Mathematics}, 66(4):548--599,
  2013.

\bibitem{cotar2018smoothing}
Codina Cotar, Gero Friesecke, and Claudia Kl{\"u}ppelberg.
\newblock Smoothing of transport plans with fixed marginals and rigorous
  semiclassical limit of the hohenberg--kohn functional.
\newblock {\em Archive for Rational Mechanics and Analysis}, 228(3):891--922,
  2018.

\bibitem{MR3887571}
Dinh D\~{u}ng, Vladimir Temlyakov, and Tino Ullrich.
\newblock {\em Hyperbolic cross approximation}.
\newblock Advanced Courses in Mathematics. CRM Barcelona.
  Birkh\"{a}user/Springer, Cham, 2018.
\newblock Edited and with a foreword by Sergey Tikhonov.

\bibitem{de2018entropic}
Hadrien De~March.
\newblock Entropic approximation for multi-dimensional martingale optimal
  transport.
\newblock {\em arXiv preprint arXiv:1812.11104}, 2018.

\bibitem{friesecke2013n}
Gero Friesecke, Christian~B Mendl, Brendan Pass, Codina Cotar, and Claudia
  Kl{\"u}ppelberg.
\newblock N-density representability and the optimal transport limit of the
  hohenberg-kohn functional.
\newblock {\em The Journal of chemical physics}, 139(16):164109, 2013.

\bibitem{friesecke2018breaking}
Gero Friesecke and Daniela V\"ogler.
\newblock Breaking the curse of dimension in multi-marginal kantorovich optimal
  transport on finite state spaces.
\newblock {\em SIAM Journal on Mathematical Analysis}, 50(4):3996--4019, 2018.

\bibitem{galichon2017survey}
Alfred Galichon.
\newblock A survey of some recent applications of optimal transport methods to
  econometrics.
\newblock {\em The Econometrics Journal}, 20(2):C1--C11, 2017.

\bibitem{galichon2018optimal}
Alfred Galichon.
\newblock {\em Optimal transport methods in economics}.
\newblock Princeton University Press, 2018.

\bibitem{henrion2009gloptipoly}
Didier Henrion, Jean-Bernard Lasserre, and Johan L{\"o}fberg.
\newblock Gloptipoly 3: moments, optimization and semidefinite programming.
\newblock {\em Optimization Methods \& Software}, 24(4-5):761--779, 2009.

\bibitem{khoo2019semidefinite}
Yuehaw Khoo, Lin Lin, Michael Lindsey, and Lexing Ying.
\newblock Semidefinite relaxation of multi-marginal optimal transport for
  strictly correlated electrons in second quantization.
\newblock {\em arXiv preprint arXiv:1905.08322}, 2019.

\bibitem{khoo2019convex}
Yuehaw Khoo and Lexing Ying.
\newblock Convex relaxation approaches for strictly correlated density
  functional theory.
\newblock {\em SIAM Journal on Scientific Computing}, 41(4):B773--B795, 2019.

\bibitem{lasserre2008semidefinite}
Jean~B Lasserre.
\newblock A semidefinite programming approach to the generalized problem of
  moments.
\newblock {\em Mathematical Programming}, 112(1):65--92, 2008.

\bibitem{lasserre2010moments}
Jean-Bernard Lasserre.
\newblock {\em Moments, positive polynomials and their applications}, volume~1.
\newblock World Scientific, 2010.

\bibitem{lawson1995solving}
Charles~L Lawson and Richard~J Hanson.
\newblock {\em Solving least squares problems}.
\newblock SIAM, 1995.

\bibitem{leimkuhler2004simulating}
Benedict Leimkuhler and Sebastian Reich.
\newblock {\em Simulating hamiltonian dynamics}, volume~14.
\newblock Cambridge university press, 2004.

\bibitem{stoltz2010free}
Tony Lelievre, Mathias Rousset, and Gabriel Stoltz.
\newblock {\em Free energy computations: A mathematical perspective}.
\newblock World Scientific, 2010.

\bibitem{lelievre2012langevin}
Tony Lelievre, Mathias Rousset, and Gabriel Stoltz.
\newblock Langevin dynamics with constraints and computation of free energy
  differences.
\newblock {\em Mathematics of computation}, 81(280):2071--2125, 2012.

\bibitem{lelievre2019hybrid}
Tony Leli{\`e}vre, Mathias Rousset, and Gabriel Stoltz.
\newblock Hybrid monte carlo methods for sampling probability measures on
  submanifolds.
\newblock {\em Numerische Mathematik}, 143(2):379--421, 2019.

\bibitem{lewin2018semi}
Mathieu Lewin.
\newblock Semi-classical limit of the {L}evy-{L}ieb functional in density
  functional theory.
\newblock {\em C. R. Math. Acad. Sci. Paris}, 356(4):449--455, 2018.

\bibitem{MR3732693}
Mathieu Lewin, Elliott~H. Lieb, and Robert Seiringer.
\newblock Statistical mechanics of the uniform electron gas.
\newblock {\em J. \'{E}c. polytech. Math.}, 5:79--116, 2018.

\bibitem{lewin2019universal}
Mathieu Lewin, Elliott~H Lieb, and Robert Seiringer.
\newblock Universal functionals in density functional theory.
\newblock In \'Eric Canc\`es, Gero Friesecke, and Lin Lin, editors, {\em
  Density Functional Theory}. 2019.
\newblock arXiv preprint arXiv:1912.10424.

\bibitem{mendl2013kantorovich}
Christian~B Mendl and Lin Lin.
\newblock Kantorovich dual solution for strictly correlated electrons in atoms
  and molecules.
\newblock {\em Physical Review B}, 87(12):125106, 2013.

\bibitem{nenna2016numerical}
Luca Nenna.
\newblock {\em Numerical methods for multi-marginal optimal transportation}.
\newblock PhD thesis, PSL Research University, 2016.

\bibitem{pages2018numerical}
Gilles Pag{\`e}s.
\newblock {\em Numerical Probability}.
\newblock Springer, 2018.

\bibitem{parr1980density}
Robert~G Parr.
\newblock Density functional theory of atoms and molecules.
\newblock In {\em Horizons of quantum chemistry}, pages 5--15. Springer, 1980.

\bibitem{peyre2019computational}
Gabriel Peyr{\'e} and Marco Cuturi.
\newblock Computational optimal transport.
\newblock {\em Foundations and Trends{\textregistered} in Machine Learning},
  11(5-6):355--607, 2019.

\bibitem{piazzon2017caratheodory}
Federico Piazzon, Alvise Sommariva, and Marco Vianello.
\newblock Caratheodory-tchakaloff subsampling.
\newblock {\em Dolomites Research Notes on Approximation}, 10(1), 2017.

\bibitem{polak1997optimization}
Elijah Polak.
\newblock {\em Optimization: algorithms and consistent approximations}, volume
  124.
\newblock Springer Science \& Business Media, 1997.

\bibitem{ralston2001first}
Anthony Ralston and Philip Rabinowitz.
\newblock {\em A first course in numerical analysis}.
\newblock Courier Corporation, 2001.

\bibitem{rockafellar1970convex}
R~Tyrrell Rockafellar.
\newblock {\em Convex analysis}.
\newblock Number~28. Princeton university press, 1970.

\bibitem{samaey2011numerical}
Giovanni Samaey, Tony Leli{\`e}vre, and Vincent Legat.
\newblock A numerical closure approach for kinetic models of polymeric fluids:
  exploring closure relations for fene dumbbells.
\newblock {\em Computers \& fluids}, 43(1):119--133, 2011.

\bibitem{santambrogio2015optimal}
Filippo Santambrogio.
\newblock Optimal transport for applied mathematicians.
\newblock {\em Birk{\"a}user, NY}, pages 99--102, 2015.

\bibitem{tchernychova2016caratheodory}
Maria Tchernychova.
\newblock {\em Carath{\'e}odory cubature measures}.
\newblock PhD thesis, University of Oxford, 2016.

\bibitem{villani2003topics}
C{\'e}dric Villani.
\newblock {\em Topics in optimal transportation}.
\newblock Number~58. American Mathematical Soc., 2003.

\bibitem{villani2008optimal}
C{\'e}dric Villani.
\newblock {\em Optimal transport: old and new}, volume 338.
\newblock Springer Science \& Business Media, 2008.

\bibitem{vogler2019kantorovich}
Daniela V{\"o}gler.
\newblock Kantorovich vs. monge: A numerical classification of extremal
  multi-marginal mass transports on finite state spaces.
\newblock {\em arXiv preprint arXiv:1901.04568}, 2019.

\bibitem{zhang2020ergodic}
Wei Zhang.
\newblock Ergodic sdes on submanifolds and related numerical sampling schemes.
\newblock {\em ESAIM: Mathematical Modelling and Numerical Analysis},
  54(2):391--430, 2020.

\end{thebibliography}
\vspace{1cm}
{\footnotesize

\begin{tabular}{rl}
A. Alfonsi & \textsc{Universit\'e Paris-Est, CERMICS (ENPC), INRIA,} \\
 &  F-77455 Marne-la-Vall\'ee, France\\
 &  \textsl{E-mail address}:  {\texttt{aurelien.alfonsi@enpc.fr}} \\
 \\
R. Coyaud & \textsc{Universit\'e Paris-Est, CERMICS (ENPC), INRIA,} \\
 &  F-77455 Marne-la-Vall\'ee, France\\
 &  \textsl{E-mail address}:  {\texttt{rafael.coyaud@enpc.fr}} \\
 \\
V. Ehrlacher & \textsc{Universit\'e Paris-Est, CERMICS (ENPC), INRIA,} \\
 &  F-77455 Marne-la-Vall\'ee, France\\
 &  \textsl{E-mail address}:  {\texttt{virginie.ehrlacher@enpc.fr}} \\
  \\
\end{tabular}
}

\end{document}